\documentclass[letterpaper, 10pt, dvipsnames]{article}

\usepackage{type1ec}
\usepackage[T1]{fontenc}
\usepackage[utf8]{inputenc}    
\usepackage[english]{babel}
\usepackage[normalem]{ulem}
\usepackage{dsfont, bm, pifont, mathrsfs}
\usepackage{stmaryrd}
\usepackage{bbm}
\usepackage{enumitem}
\usepackage{bold-extra}
\usepackage{float, graphicx, wrapfig, tikz}
\usetikzlibrary{positioning, fit}
\usepackage[margin=0.8in]{subcaption}
\usepackage[font=small, margin=0.6in]{caption}

\usepackage{amsthm, amsmath, amsfonts, amssymb, mathrsfs, mathtools}
\usepackage[alphabetic]{amsrefs}

\usepackage{hyperref, xcolor, titlesec} 
\hypersetup{colorlinks = true, urlcolor = RoyalBlue!70!Black,linkcolor=BrickRed, citecolor=NavyBlue, bookmarksopen = true}
\usepackage[nomarginpar]{geometry}
\numberwithin{equation}{section}
\usepackage[boxsize = .3em]{ytableau}
\usepackage{cleveref}
\renewcommand{\ge}{\geqslant}
\renewcommand{\le}{\leqslant}

\DeclareMathOperator{\Id}{Id}

\renewcommand{\Im}{\operatorname{Im}}
\renewcommand{\Re}{\operatorname{Re}}
\DeclareMathOperator{\Tr}{Tr}

\newcommand{\e}{\mathrm{e}}
\renewcommand{\i}{\mathrm{i}}
\let\d\relax
\newcommand{\d}{\mathrm{d}}

\renewcommand{\a}{\mathbf{a}}
\renewcommand{\b}{\mathbf{b}}
\newcommand{\eb}{\mathbf{e}}

\renewcommand{\k}{\mathbf{k}}

\renewcommand{\q}{\mathbf{q}}

\renewcommand{\u}{\mathbf{u}}
\renewcommand{\v}{\mathbf{v}}
\newcommand{\w}{\mathbf{w}}
\newcommand{\x}{\mathbf{x}}
\newcommand{\y}{\mathbf{y}}
\newcommand{\z}{\mathbf{z}}

\newcommand{\E}{\mathbf{E}}
\newcommand{\F}{\mathbf{F}}

\newcommand{\Hbf}{\mathbf{H}}

\renewcommand{\O}{\mathbf{O}}

\newcommand{\Cbb}{\mathbb{C}}

\newcommand{\Nbb}{\mathbb{N}}
\newcommand{\Obb}{\mathbb{O}}

\newcommand{\Rbb}{\mathbb{R}}

\newcommand{\Ccal}{\mathcal{C}}

\newcommand{\Fcal}{\mathcal{F}}
\newcommand{\Gcal}{\mathcal{G}}

\newcommand{\Jcal}{\mathcal{J}}
\newcommand{\Kcal}{\mathcal{K}}

\newcommand{\Mcal}{\mathcal{M}}
\newcommand{\Ncal}{\mathcal{N}}

\newcommand{\Scal}{\mathcal{S}}
\newcommand{\Tcal}{\mathcal{T}}
\newcommand{\Ucal}{\mathcal{U}}

\newcommand{\Dscr}{\mathscr{D}}

\newcommand{\Lscr}{\mathscr{L}}

\newcommand{\Oscr}{\mathscr{O}}
\newcommand{\Pscr}{\mathscr{P}}

\newcommand{\Xscr}{\mathscr{X}}

\newcommand{\Afr}{\mathfrak{A}}

\newcommand{\Efr}{\mathfrak{E}}

\newcommand{\Lfr}{\mathfrak{L}}

\newcommand{\Qfr}{\mathfrak{Q}}

\renewcommand{\epsilon}{\varepsilon}
\renewcommand{\phi}{\varphi}
\renewcommand{\hat}{\widehat}
\renewcommand{\tilde}{\widetilde}

\newcommand{\scp}[2]{\langle #1,#2\rangle}
\let\O\relax
\newcommand{\O}[1]{\mathcal{O}\left(#1\right)}

\newcommand{\Pds}{\mathds{P}}
\newcommand{\Eds}{\mathds{E}}

\newcommand{\unn}[2]{\left[\!\left[#1,#2\right]\!\right]}

\newtheorem{ccounter}{ccounter}[section]
\newtheorem{theorem}[ccounter]{Theorem}
\newtheorem{lemma}[ccounter]{Lemma}
\newtheorem{corollary}[ccounter]{Corollary}
\newtheorem{definition}[ccounter]{Definition}
\newtheorem{proposition}[ccounter]{Proposition}

\theoremstyle{definition}
\newtheorem{remark}[ccounter]{Remark}

\titleformat{\section}[block]{\normalfont\filcenter}{\large\Roman{section}.}{.7em}{\large\scshape}
\newcommand{\appendixswitch}{\titleformat{\section}[block]{\normalfont\filcenter}{\large\Alph{section}.}{.7em}{\large\scshape}}
\titleformat{\subsection}[runin]{\normalfont}{\it\bf \thesubsection .}{.5em}{\it}[.]
\titleformat{\subsubsection}[runin]{\normalfont}{\bf \thesubsubsection .}{.5em}{\bf}[.]
\titleformat*{\paragraph}{\itshape\mdseries}
\renewcommand{\abstractname}{\normalsize\normalfont\scshape Abstract}

\begin{document}
\tikzset{every node/.style={circle, minimum size=.1cm, inner sep = 2pt, scale=1}}

\title{\vspace{-5ex}\bfseries\scshape{Quantitative eigenvector universality for generalized Wigner matrices}}
\author{\textsc{L. Benigni}\thanks{Supported by NSERC RGPIN 2023-03882 \& DGECR 2023-00076 and a FRQNT \'{E}tablissement de la rel\`eve professorale 364387.}\\\vspace{-0.15cm}\footnotesize{\itshape Universit\'e de Montr\'eal}\\\footnotesize{\itshape lucas.benigni@umontreal.ca}\and  \textsc{P. Lopatto}\thanks{Partially supported by NSF grant DMS-2450004}\\\vspace{-0.15cm}\footnotesize{\itshape University of North Carolina at Chapel Hill}\\\footnotesize{\itshape lopatto@unc.edu}}
\date{}
\maketitle
\renewcommand{\abstractname}{\normalsize\normalfont\scshape Abstract}

\begin{abstract}
    
We present a novel approach to eigenvector universality for generalized
Wigner matrices. Our main consequences are asymptotic normality of joint
eigenvector projections everywhere in the spectrum as well as, under
a uniform subexponential decay assumption, convergence of the extremal
process of eigenvector entries and the resulting Gumbel law for the
largest entry of an eigenvector. In the case of smooth entries, we are
able to obtain joint normality of an explicit growing number of
eigenvector projections, and we are also able to obtain explicit rates
of convergence in Kolmogorov distance, which are nearly optimal for
a fixed number of projections. This is based on a new analysis of
the Dyson vector flow which does not rely on the eigenvector moment flow.
  
\end{abstract}
\section{Introduction and main results}
The analysis of the distribution of wave functions of large self-adjoint Hamiltonians was spurred by Porter and Thomas \cite{porterthomas} to understand nuclear reaction widths. While actual nuclear Hamiltonians are beyond mathematical scope, the breakthrough work of Wigner \cite{wigner1955characteristic} introduced the idea of modeling them by random matrices. Wigner matrices, self-adjoint matrices with independent entries, are a natural model for such Hamiltonians which is amenable to mathematical analysis.
The most central example of a Wigner matrix is the Gaussian Orthogonal Ensemble\footnote{in the real symmetric class and the Gaussian Unitary Ensemble (GUE) in the complex Hermitian class}. If $H$ is an $N\times N$ symmetric matrix, the model is defined by
\[
H\sim\mathrm{GOE}_N:\quad 
H_{ii}\sim \mathcal{N}\left(0,\frac{2}{N}\right),\quad H_{ij}\sim \mathcal{N}\left(0,\frac{1}{N}\right)\text{ for }i<j,
\quad (H_{ij})_{i\leqslant j}\text{ independent}.
\]
Due to the orthogonal invariance of the Gaussian distribution, $\mathrm{GOE}_N$ is invariant with respect to orthogonal conjugation, namely if $O\in \Obb_N(\Rbb)$ is an orthogonal matrix independent of $H$, then $OHO^\top$ has the same distribution as $H$. This invariance has many consequences: each $\ell^2$-normalized eigenvector is uniformly distributed on the unit sphere, the whole eigenbasis is Haar-distributed on the orthogonal group and the eigenvectors are independent of the eigenvalues. This exact distribution of eigenvectors for finite dimension $N$ allows for several quantitative or distributional statements on their entries. A classical consequence, going back to Borel \cite{borel}, is that for a fixed number of indices $\alpha_1,\dots, \alpha_q$, if $\u_k$ denotes the $k$-th eigenvector of $H$ then 
\begin{equation}\label{eq:dist_cv}
\left(\sqrt{N}\u_{k}(\alpha_1),\dots, \sqrt{N}\u_k(\alpha_q)\right)
\xrightarrow[N\to\infty]{(d)} (\mathcal{N}_1,\dots,\mathcal{N}_q) \sim \mathcal{N}(0,\Id_q).
\end{equation}
This result was then refined to understand the asymptotic distribution of a growing number of coordinates as well as a growing minor of a Haar-distributed orthogonal matrix in \cites{diaconis1987dozen, jiang2006entries, jiang2019distances, stewart2020total} which show that for $k_1,\dots,k_{p_N}$ and $\alpha_1,\dots,\alpha_{q_N}$ such that $p_Nq_N=o(N)$, we have
\[
\d_{\mathrm{TV}}\left(     
    \left(
        \sqrt{N}\u_{k_i}(\alpha_j)
    \right)_{\substack{1\leqslant i\leqslant p_N\\1\leqslant j\leqslant q_N}},\,
    \mathcal{N}(0,\Id_{p_Nq_N}) \right) \xrightarrow[N\to\infty]{} 0.
\]
If $\u$ is a vector uniformly distributed on the unit sphere, we have
\begin{equation}\label{eq:max_entry}
\Pds\left(N\Vert \u\Vert_\infty^2 -2\log N + \log\log N + \log(\pi) \leqslant x\right)\xrightarrow[N\to\infty]{} \e^{-\e^{-\frac{x}{2}}}
\end{equation}
and this result has been refined in \cite{jiang2005maximal} to obtain Gumbel convergence of the largest entry of a Haar-distributed orthogonal matrix.

More recently, there has been a large program proving universality of eigenvalue and eigenvector statistics of large random matrices. The goal is to show that these quantitative or distributional statements go beyond Gaussian ensembles and are valid irrespective of the distribution of the entries of the matrix, as long as they satisfy some moment conditions. This program was launched by the works \cites{erdHos2011universality,tao2011random} for universality of eigenvalue statistics. For eigenvector statistics, the first quantitative result is eigenvector delocalization, which states that $\sqrt{N}\Vert \u_k\Vert_\infty \leqslant C\sqrt{\log N}$ with very high probability. There were many subsequent improvements on this result in the context of Wigner matrices \cites{erdos2009local, vuwang,alex2014isotropic,BL22LInfinity}. For distributional convergence as in \eqref{eq:dist_cv}, the first results were obtained in \cites{knowles2013eigenvector,tao2012random} for Wigner matrices whose entries matched the first four moments with Gaussian random variables. The moment matching condition was then removed in the breakthrough work \cite{bourgade2017eigenvector} which introduced a dynamical approach to eigenvector universality in analogy with the earlier dynamical approach to eigenvalue statistics from \cite{erdHos2011universality}. This approach was then refined in \cite{marcinek2022high} to obtain the joint universality of a fixed number of projections of eigenvectors in the bulk for many random matrix models, including Wigner matrices. 
    
For the largest entry of an eigenvector as in \eqref{eq:max_entry}, the optimal first-order upper bound was obtained in \cite{BL22LInfinity}, while the recent work \cite{bao2026order} established the Gumbel law and the extremal Poisson process for edge eigenvectors. Recently, the Gumbel fluctuations for the maximal entry for individual eigenvectors was obtained in \cite{osman26} for Wigner matrices both at the bulk and at the edge. However, their result holds at \emph{fixed energy} instead of \emph{fixed index} meaning that it describes the behavior of eigenvectors corresponding to eigenvalues in a small spectral window rather than a specific eigenvector indexed by its position in the ordered spectrum. Additionally, their method allows for the Gumbel fluctuations of the whole eigenbasis for complex Wigner matrices or real Wigner matrices under a four moment matching condition.

\subsection{Main results}In this work, we introduce a new dynamical approach to universality of eigenvector statistics. This approach is based on the Dyson Brownian motion introduced in \cite{dyson1962brownian}, which is now central to the proof of universality of local statistics in random matrix theory, see \cite{erdHos2017dynamical} for a recent monograph. The \emph{eigenvector moment flow} introduced in \cite{bourgade2017eigenvector} has been a very successful program to obtain universality results for eigenvectors, ranging from studying other models beyond Wigner matrices \cites{bourgade2017sparse, benigni2020eigenvectors, aggarwal2021eigenvector, marcinek2022high}, to studying the eigenstate thermalization hypothesis (ETH) which is related to the quantum unique ergodicity of eigenvectors \cites{bourgade2018random, benigni2021fermionic, cipolloni2022normal, BenLop22QUE, benigni2024fluctuations, benigni2025convergence} or studying delocalization properties through the analysis of high moments in \cite{BL22LInfinity}. One caveat of this method is that it is difficult to extract quantitative results from moments of random variables. Thus, while the approach of \cites{bourgade2017eigenvector,marcinek2022high} is based on the analysis of eigenvector moments along the flow, our approach is based on a direct analysis of the Dyson vector flow. We start by introducing the model of random matrices we consider in this article, which is the class of generalized Wigner matrices.
\begin{definition}[Generalized Wigner matrix]\label{def:genwigner}
A generalized Wigner matrix $W$ is a real symmetric or complex Hermitian matrix of size $N\times N$ whose entries $\{w_{ij}\}_{i\leqslant j}$ are independent centered random variables of variances $s_{ij}^2 = \Eds\vert w_{ij}\vert^2$ which satisfy, for some $c,C>0$, 
\[
\sum_{i=1}^N s_{ij}^2 =1\quad \text{for all }j\in\unn{1}{N}\quad\text{and}\quad   
\frac{c}{N}\leqslant s_{ij}^2 \leqslant \frac{C}{N}\quad\text{for all }i,j\in\unn{1}{N}.
\]
Moreover, we assume that the entries have uniformly bounded moments, i.e. for every $p\in\Nbb$ there exists $C_p>0$ such that for every $i,j\in\unn{1}{N}$, we have $\Eds\vert w_{ij}\vert^p \leqslant C_p N^{-\frac{p}{2}}$.
\end{definition}

If $W$ is a generalized Wigner matrix we denote $\lambda_1\leqslant \dots\leqslant \lambda_N$ its eigenvalues and $(\u_1,\dots,\u_N)$ its corresponding eigenvectors. We now state our different results obtained through our novel approach. We note that we state only the case of real symmetric matrices for simplicity and we give the corresponding results for complex Hermitian matrices in Appendix \ref{app:hermitian}. For real symmetric matrices, since eigenvectors are only defined up to a phase, we set the problem by introducing $(\varepsilon_i)_{i\in\unn{1}{N}}$ an i.i.d. family of Rademacher variables independent of $W$ and we study the distribution of $\v_i\coloneqq \sqrt{N}\varepsilon_i\u_i$. We start by stating the universality of joint eigenvector projections for generalized Wigner matrices.
\begin{theorem}
\label{theo:finite}
Let $W$ be a real symmetric generalized Wigner matrix. Let $p\geqslant 1$ and $a_1,\dots,a_p\geqslant 1$ be fixed integers, and set $Q=\sum_{i=1}^p a_i.$
Let $k_1,\dots,k_p\in\unn{1}{N}$ be distinct indices, and for every
$1\leqslant i\leqslant p$, let $\q_{i1},\dots,\q_{ia_i}$
be deterministic unit vectors in $\Rbb^N$. We set
\[
    X_N
    =
    \left(
        \langle \q_{i\alpha},\v_{k_i}\rangle
    \right)_{\substack{1\leqslant i\leqslant p\\1\leqslant \alpha\leqslant a_i}}
    \in\Rbb^Q.
    \]
Let $G_N=(G_{i\alpha})_{1\leqslant i\leqslant p,\,1\leqslant \alpha\leqslant a_i}$ be the centered Gaussian vector in $\Rbb^Q$ with covariance $\Eds\left[G_{i\alpha}G_{j\beta}\right]=    \delta_{ij}\scp{\q_{i\alpha}}{\q_{j\beta}}   $.
Then there exists $\varepsilon=\varepsilon(Q)>0$ such that for every smooth $h$ for which there exist $K_1,\,K_2>0$ with
\[
\Vert h\Vert_\infty,\, \Vert \nabla^2 h\Vert_\infty \leqslant K_1 \quad\text{and}\quad \vert \partial^n  h(\x)\vert \leqslant K_2(1+\vert \x\vert)^{K_2}
\] 
for $\vert n\vert \leqslant 5$, we have
\[
    \left\vert
    \Eds\left[h(X_N)\right]
    -
    \Eds\left[h(G_N)\right]
    \right\vert
    \leqslant C_hN^{-\varepsilon}
\]
for some $C_h = C_h(K_1,K_2,Q)$.
\end{theorem}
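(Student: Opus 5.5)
We follow the standard three-step dynamical strategy, but we run the middle step on the Dyson vector flow itself rather than on eigenvector moments. Let $W_t = W_0 + \sqrt{t}\,\mathrm{GOE}$-type Ornstein--Uhlenbeck perturbation, run up to time $t = N^{-1+\omega}$ for a small $\omega>0$, and take the initial matrix $W_0$ to be a generalized Wigner matrix. The first step is to use a Green function comparison (four-moment or cumulant expansion) to pass from $W$ to a generalized Wigner matrix $W_0$ with $W_t$ matching $W$ up to small error. Concretely, we compare $\Eds[h(X_N)]$ for $W$ and for $W_t$, where $W_0$ is chosen so that the first three moments of $W_t$ match those of $W$ and the fourth moments are close. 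This costs $O(N^{-c})$, using local laws and the control of $\vert\partial^n h\vert$ for $\vert n\vert\le 5$. Since $h$ is not a resolvent functional, we first express $\langle \q,\u_k\rangle^2$ and the joint projections through resolvents smoothed on scale $N^{-1-\delta}$. We then use level repulsion and rigidity to isolate the $k$-th eigenvector by index, and the Rademacher signs $\varepsilon_i$ to handle the sign ambiguity, so that only even functions of each $\v_{k_i}$ appear.

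The second step is the core: show that for $W_t$ the vector $X_N$ is approximately Gaussian with error $N^{-\varepsilon}$. Conditioning on $W_0$, we write the Dyson vector flow
\[
\d \u_k = \frac{1}{\sqrt N}\sum_{l\neq k}\frac{\d B_{kl}}{\lambda_k-\lambda_l}\u_l - \frac{1}{2N}\sum_{l\neq k}\frac{\d t}{(\lambda_k-\lambda_l)^2}\u_k .
\]
We study the characteristic-type functional $F_t = \Eds[\e^{\i \langle \xi, X_N(t)\rangle}]$, or directly $\Eds[h(X_N(t))]$, via Itô's formula. The drift produces a generator that, after using rigidity and the local law to replace $\sum_l \langle\q,\u_l\rangle^2/(N(\lambda_k-\lambda_l)^2)$ by its deterministic proxy, is close to an Ornstein--Uhlenbeck generator. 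That generator contracts $X_N$ toward the Gaussian with covariance $\delta_{ij}\scp{\q_{i\alpha}}{\q_{j\beta}}$ at rate $\sim \sum_l (N(\lambda_k-\lambda_l)^2)^{-1}\gtrsim N^{1-\omega'}$, so by time $t=N^{-1+\omega}$ the memory of the initial condition is damped by $\e^{-N^{c}}$. The cross terms between different $k_i,k_j$ involve $\langle\q,\u_{k_i}\rangle\langle\q',\u_{k_j}\rangle/(\lambda_{k_i}-\lambda_{k_j})^2$. These are controlled using delocalization (isotropic local law: $\vert\langle\q,\u_l\rangle\vert\le N^{\delta-1/2}$) together with the lower bound on gaps from level repulsion. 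We need a weak bound showing that the fraction of times with gap smaller than $N^{-1-\delta}$ is small.

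The main obstacle is this second step: quantitatively replacing the random, eigenvalue-dependent coefficients in the generator by deterministic ones. The error terms are sums $\frac1N\sum_l \frac{\langle\q,\u_l\rangle^2 - N^{-1}}{(\lambda_k-\lambda_l)^2}$. Controlling them requires an isotropic local law with a gain for nearby $l$, plus level repulsion for $l$ near $k$. I would first try to show that these errors, integrated over $[0,t]$, are $O(N^{-c})$ in expectation by bounding them with $\Im\langle \q, G(\lambda_k+\i\eta)\q\rangle$ at $\eta=N^{-1+\omega/2}$ and using the local law. The near-diagonal terms, with $\vert l-k\vert\le N^{\omega/4}$, are handled by a crude level-repulsion estimate. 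The final step combines the two comparisons with the triangle inequality, choosing $\omega$ and $\delta$ depending on $Q$, which gives $\varepsilon(Q)$.
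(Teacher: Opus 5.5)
Your Step~1 is essentially the paper's comparison step: Knowles--Yin with the isotropic extension of Bourgade--Yau, three matched moments, a fourth-moment mismatch of $O(N^{-2}t)$, and polarization to reach products of projections. The problem is in Step~2.

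\textbf{The gap: Step~2 cannot be closed on the tracked eigenvectors.} Write $c_{lk}=\frac{1}{2N(\lambda_l-\lambda_k)^2}$ and $\z_l=(\v_l(\eb_m))_m$. You replace $\sum_{l\neq k}c_{lk}\,\z_l\z_l^\top$ by $\bigl(\sum_{l\neq k}c_{lk}\bigr)\Id_R$, and you treat both the difference and the cross terms $c_{k_ik_j}\,\z_{k_j}^\top\nabla^2_{k_ik_j}g\,\z_{k_i}$ as errors. They are not errors.
\begin{itemize}
\item The sum $\sum_{l\neq k}c_{lk}(\v_l(\q)^2-1)$ is dominated by the first few neighbours of $k$. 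For these, $c_{lk}\asymp N$, which is as large as the whole drift rate, and $\v_l(\q)^2-1$ is an order-one fluctuation with nothing to average against. So this term is as large as the Ornstein--Uhlenbeck part you keep, and its contribution to the Duhamel error is of order one, not $O(N^{-c})$.
\item Level repulsion only bounds $c_{lk}$ from above in probability, so it cannot produce smallness. The local law at $\eta=N^{-1+\omega/2}$ only averages over $|l-k|\gg N^{\omega/2}$.
\item The cross terms fail in the same way when two of the $k_i$ are adjacent. Then $c_{k_ik_j}\asymp N$, the term is even in each Rademacher sign so sign averaging does not remove it, and delocalization only bounds it by $N^{1+2\delta}\Vert\nabla^2 h\Vert_\infty$.
\item In the Gaussian state these large terms do cancel against each other, since both generators preserve the Gaussian law. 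But using that cancellation presupposes joint Gaussianity of neighbouring eigenvectors, which is what you are trying to prove.
\item There is also a time-scale problem. Near the spectral edges $\sum_{l\neq k}c_{lk}\asymp N^{1/3}$, not $N^{1-\omega'}$. At $t=N^{-1+\omega}$ edge eigenvectors have not relaxed at all, yet the theorem covers all indices.
\end{itemize}

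\textbf{The missing idea: keep the short-range dynamics exactly.} The paper does not close the equation on the tracked vectors.
\begin{itemize}
\item It keeps the short-range part $\frac12\sum_{|i-j|<\ell}c_{ij}\Xscr_{ij}^2$ untouched in a reference generator. These rotations leave the product Gaussian measure on all $N$ vectors invariant, so they need no control.
\item Only the long-range part is replaced by an Ornstein--Uhlenbeck generator, with rates $\beta_i=\sum_{|i-j|\geqslant\ell}c_{ij}\gtrsim(N/\ell)^{1/3}$.
\item The reference flow Gaussianizes by the Gaussian log-Sobolev inequality. The initial entropy bound comes from the flow's explicit affine-Gaussian structure.
\item The Duhamel error then contains only long-range interactions. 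The diagonal part is controlled by summation by parts and ETH-type interval sums $\sum_{j\in I}(\v_j(\eb)\v_j(\eb')-\scp{\eb}{\eb'})=O(N^{\xi}|I|^{1/2})$, an input you would need beyond delocalization; this gives $N^{1+\xi}\ell^{-3/2}$. The off-diagonal part is controlled by sign pairing with $c_{ij}\leqslant CN/\ell^2$.
\item The backward function $g_u$ depends on all $\z_j$, so one also needs the $\ell^1$ Hessian bound $\sum_j\Vert\nabla_j^2g_u\Vert_{\mathrm{HS}}\leqslant Q\Vert\nabla^2h\Vert_\infty$. This follows from the contraction $\Vert M_{u,t}\Vert_{\mathrm{op}}\leqslant 1$.
\end{itemize}
Requiring relaxation, $Nt^3/\ell\geqslant N^{c}$, while keeping $tN^{1+\xi}\ell^{-3/2}$ small forces a much longer time, $t=N^{-\theta}$; the paper takes $\theta=\frac{1}{13}$ and $\ell=N^{1-4\theta}$. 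The comparison step still tolerates this, since the fourth-moment mismatch is $O(N^{-2}t)$.
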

We note that this joint universality of eigenvector projections is new, even in the context of generalized Wigner matrices. Indeed, the first proofs of eigenvector universality were obtained in \cites{knowles2013eigenvector,tao2012random} for random matrices whose entries matched the first four moments with Gaussian random variables. We note that only two moments are needed when considering edge eigenvector indices in \cite{knowles2013eigenvector}. In \cite{bourgade2017eigenvector}, the authors introduced a dynamical approach to eigenvector universality but only considered the joint distribution of a fixed projection of eigenvectors, though considering the whole spectrum. In the context of our theorem above, this would correspond to the case $a_i=1$ and $\q_{i1}=\q$ for a fixed unit vector $\q$ for every $i\in\unn{1}{p}$. Finally, \cite{marcinek2022high} obtained the joint universality of eigenvector projections for many random matrix models, including generalized Wigner matrices, but only for a fixed number of projections in the bulk, in the context of our theorem, this would restrict $k_1,\dots, k_p \in \unn{\kappa N}{(1-\kappa)N}$ for any small $\kappa>0$. Our result above gives the first joint eigenvector universality for a finite number of arbitrary projections of arbitrary eigenvectors for generalized Wigner matrices without matching the first four moments.

The quantitative nature of our dynamical result, Theorem \ref{theo:maindyn}, allows us to identify the extremal process of the coordinates of an eigenvector under a uniform subexponential decay assumption. We say that the entries have \emph{uniformly subexponential decay} if there exist $c,C>0$ such that
\[
    \Pds\left(\sqrt N\vert w_{ij}\vert>x\right)
    \leqslant C\e^{-x^c},
    \qquad x\geqslant0,
\]
uniformly in $i,j,N$. Let $\Mcal_p(\Rbb)$ denote the space of locally finite point measures on $\Rbb$, with the vague topology.
\begin{theorem}\label{theo:extremal_process}
Let $W$ be a real symmetric generalized Wigner matrix whose entries have uniformly subexponential decay. For $k\in\unn{1}{N}$, define
\[
    \Xi_{N,k}
    =
    \sum_{\alpha=1}^N
    \delta_{N\vert \u_k(\alpha)\vert^2-b_N},
    \qquad
    b_N=2\log N-\log\log N-\log\pi.
\]
Uniformly in $k$, $\Xi_{N,k}$ converges in distribution in $\Mcal_p(\Rbb)$ to a Poisson point process $\Xi$ with intensity
\[
    \nu(\d x)=\frac12\e^{-x/2}\,\d x.
\]
More precisely, for every nonnegative $f\in\Ccal_c(\Rbb)$,
\[
\sup_{k\in\unn{1}{N}}
\left\vert
    \Eds\left[\exp\left(-\int f\,\d\Xi_{N,k}\right)\right]
    -
    \exp\left(-\int_{\Rbb}(1-\e^{-f(x)})\frac12\e^{-x/2}\,\d x\right)
\right\vert
\xrightarrow[N\to\infty]{}0.
\]
Moreover, for every fixed $m\in\Nbb$, the $m$ largest atoms of
$\Xi_{N,k}$ converge jointly in distribution to the $m$ largest
atoms of $\Xi$, uniformly in $k$.
\end{theorem}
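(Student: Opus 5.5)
We follow the dynamical strategy of the paper. First we prove the statement for the Gaussian-divisible ensemble $W_t=\sqrt{1-t}\,W_0+\sqrt{t}\,\mathrm{GOE}$ with $t=N^{-1+\delta}$, using the quantitative comparison of Theorem~\ref{theo:maindyn}. We then remove the Gaussian component by a Green function comparison, which is where the subexponential decay assumption enters.

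The Laplace functional of a point process is a symmetric function of all $N$ coordinates, so a fixed-number-of-projections result such as Theorem~\ref{theo:finite} is not enough. We reduce the problem to finitely many coordinates by a union bound and inclusion–exclusion. Let $f\geqslant0$ be supported in $[-M,\infty)$. Set $\tau=b_N-M$ and $B_\alpha=\{N|\u_k(\alpha)|^2>\tau\}$. Then
\[
\Eds\left[\e^{-\int f\,\d\Xi_{N,k}}\right]=\Eds\left[\prod_\alpha\left(1-g(N|\u_k(\alpha)|^2-b_N)\right)\right],\qquad g=1-\e^{-f}.
\]
Expanding the product, the $m$-th term is a sum over $m$-tuples of distinct $\alpha$'s of $\Eds\prod_{j\le m} g(\cdot_{\alpha_j})$. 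Bonferroni inequalities control the truncation of this expansion at order $m$, so it suffices to prove, uniformly in the tuple,
\[
\Eds\prod_{j=1}^m g\left(N|\u_k(\alpha_j)|^2-b_N\right)=(1+o(1))\prod_{j=1}^m\Eds\, g\left(\mathcal N_j^2-b_N\right)+o(N^{-m}).
\]
The Gaussian side equals $\left(N^{-1}\int g\,\tfrac12\e^{-x/2}\,\d x\,(1+o(1))\right)^m$. Summing over the $\binom Nm$ tuples and letting $m\to\infty$ reproduces the Poisson Laplace functional.

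The key estimate is therefore a \emph{moderate-deviation} comparison. We must compare the joint law of $(\sqrt N\u_k(\alpha_j))_{j\le m}$ with independent Gaussians on the event that every coordinate is of size $\sqrt{2\log N}$. There the probabilities are of order $N^{-m}$, so an additive error $N^{-\varepsilon}$ as in Theorem~\ref{theo:finite} is useless and we need a relative error. For the dynamics we apply Theorem~\ref{theo:maindyn} to the vector $(\langle\eb_{\alpha_j},\v_k\rangle)_j$. It gives a coupling with a Gaussian vector with error $N^{-c}$ in the coordinates, with overwhelming probability. Because the region $\{x^2\ge b_N-M\}$ has Gaussian density varying on scale $1/\sqrt{\log N}$, a coupling error of $N^{-c}$ perturbs the probability of $B_\alpha$ only by a relative factor $1+O(N^{-c}\sqrt{\log N})$. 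Smoothing $g$ at scale $N^{-c/2}$ handles the boundary. Uniformity in $k$, including edge indices, comes from the fact that Theorem~\ref{theo:maindyn} holds throughout the spectrum.

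The comparison step from $W_t$ to $W$ is the main obstacle. We run a Lindeberg/Itô interpolation on the observable $F=\Eds\prod_j \tilde g\left(N|\u_k(\alpha_j)|^2-b_N\right)$. Here the $\alpha_j$ are fixed, and we express $F$ through resolvent entries $\Im G_{\alpha\alpha}(E+\i\eta)$ integrated against an eigenvalue localization at the index $k$, as in the classical eigenvector comparison. Over the time interval $t$, third-order terms cost $N^{-1/2+\delta}$ relative to the observable. The difficulty is that these terms must be bounded relative to $F\sim N^{-m}$, not additively. We handle this with a self-improving bound. The derivatives of $F$ with respect to $w_{ij}$ are themselves products of the same indicator-type functions times delocalized factors. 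Entry-wise moments are controlled by subexponential decay, and the optimal $\ell^\infty$ bound of \cite{BL22LInfinity} lets us truncate on $\{\|\u_k\|_\infty^2\le (2+o(1))\log N/N\}$. Together these give $|\partial_t F|\le N^{\delta-1/2}(F+N^{-m-c})$, and Grönwall's inequality closes the argument.

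Finally, the joint convergence of the $m$ largest atoms follows from the vague convergence. The Poisson limit has finitely many atoms above any level. Tightness from above holds because $\Pds(\exists\alpha: N|\u_k(\alpha)|^2>b_N+x)\le C\e^{-x/2}$, which is the $m=1$ case of the estimate above.
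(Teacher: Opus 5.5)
There is a genuine gap, and it sits exactly at the step you flagged as critical. Your reduction of the Laplace functional to joint tails of finitely many coordinates (inclusion--exclusion, factorial moments) is sound and matches the paper, and you are right that these tails need a \emph{relative} error.

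\textbf{The dynamical step does not deliver a relative error.} Theorem~\ref{theo:maindyn} is not a coupling statement. It bounds $\vert\Eds h(X)-\Eds h(G)\vert$ by an additive error of size $\Vert\nabla^2h\Vert_\infty N^{-c}$. This sup-norm does not see that a smoothed tail indicator lives on a set of Gaussian mass $N^{-1}$, so the error swamps probabilities of order $N^{-m}$. Any coupling you could extract from it (smoothing plus Strassen's theorem) fails with probability $N^{-c'}$, not with overwhelming probability.

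In addition, $t=N^{-1+\delta}$ is not admissible. The theorem requires $Nt^3/\ell\geqslant c_0$ with $\ell\geqslant N^{2\xi}$, which fails at that time scale. The Ornstein--Uhlenbeck part of the reference generator only relaxes at rate $(N/\ell)^{1/3}$. Making the Duhamel error $N^{1+\xi}t\ell^{-3/2}$ small as well forces $t\gg N^{-1/7}$; the paper takes $t=N^{-1/48}$ and $\ell=N^{11/12}$.

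The missing idea is Proposition~\ref{prop:relative_comparison}. Take self-bounding observables, $\Vert\nabla^2h_N\Vert_{\mathrm{op}}\leqslant L_Nh_N$ with $L_N$ polylogarithmic, for instance products of logistic cutoffs $\chi_{T,w}$ with $w=(\log N)^{-3}$. Then the backward reference evolution satisfies $\sum_j\Vert\nabla_j^2g_u\Vert_{\mathrm{HS}}\leqslant CL_Ng_u$. Hence the diagonal Duhamel term is bounded by $\Phi(u)=\Eds[g_{u\wedge\tau}(\z(u\wedge\tau))\vert\bm\lambda]$ itself. The off-diagonal term is a degree-two Rademacher chaos, and Bonami--Beckner plus H\"older with $q=\lceil\log N\rceil$ bound it by $\Phi(u)^{1-1/q}$. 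Gr\"onwall then yields $\vert\Phi(t)-\Phi(0)\vert\leqslant N^{-c}\Phi(0)+N^{-D}$.

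\textbf{The comparison step has the same defect.} A bound $\vert\partial F\vert\leqslant N^{\delta-1/2}(F+N^{-m-c})$ would require every exceptional event in the resolvent representation of $N\vert\u_k(\alpha)\vert^2$ to have probability $\ll N^{-m}$. Eigenvector Green function comparison rests on level repulsion near $\lambda_k$, which only gives $N^{-c}$; this is the barrier discussed in the introduction. Two further problems compound this:
\begin{itemize}
\item $g=1-\e^{-f}$ is not self-bounding, so $g'$ cannot be controlled by $g$.
\item At an admissible time $t=N^{-c}$, a direct interpolation costs $N^{1/2}t$, so three-moment matching is needed in any case.
\end{itemize}

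The paper avoids relative comparison at this stage altogether. It compares only the top $m$ order statistics of $(N\vert\u_k(\alpha)\vert^2-b_N)_\alpha$ between $W$ and the moment-matched Gaussian-divisible matrix (Proposition~\ref{prop:fixed_order_statistics_comparison}). This uses the soft order statistics $\Tcal_{j,\beta}$ applied to the regularized projections of \cite{BL22LInfinity}. These are $O(1)$-probability quantities, so an additive $o(1)$ error suffices. The Poisson structure, which does need relative precision, is established only for the Gaussian-divisible matrix. The Laplace functional for $W$ is then recovered by truncating $\int f\,\d\Xi_{N,k}$ after the first $m$ atoms.
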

As a direct consequence, we obtain the Gumbel distribution for the largest entry of an eigenvector.
\begin{corollary}\label{theo:gumbel}
Under the assumptions of Theorem \ref{theo:extremal_process}, for every $x\in\Rbb$,
\[
\sup_{k\in\unn{1}{N}}
\left\vert
    \Pds\left(
        N\Vert \u_k\Vert_\infty^2
        -2\log N+\log\log N+\log\pi
        \leqslant x
    \right)
    -\e^{-\e^{-x/2}}
\right\vert
\xrightarrow[N\to\infty]{}0.
\]
\end{corollary}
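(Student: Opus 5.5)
The corollary follows from Theorem \ref{theo:extremal_process} by evaluating the point process on a half-line. Fix $x\in\Rbb$. Since $\Vert \u_k\Vert_\infty^2=\max_\alpha \vert\u_k(\alpha)\vert^2$, we have
\[
\Pds\left(N\Vert \u_k\Vert_\infty^2-b_N\leqslant x\right)=\Pds\left(\Xi_{N,k}((x,\infty))=0\right),
\]
and for the limit $\Pds(\Xi((x,\infty))=0)=\exp(-\nu((x,\infty)))=\exp(-\e^{-x/2})$. So it suffices to show that the void probability of $(x,\infty)$ converges uniformly in $k$. Note that $(x,\infty)$ is not relatively compact, so vague convergence alone does not immediately give this. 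The "$m$ largest atoms" statement with $m=1$ does give it, except for the uniformity in $k$, which I will handle through the Laplace functional estimate.

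The plan is a sandwich argument. Fix $M>x$ and $\delta>0$. Choose nonnegative $f_j\in\Ccal_c(\Rbb)$ with $f_j\uparrow \infty\cdot\mathbf 1_{(x,M)}$ monotonically and $f_j$ supported in $[x,M+\delta]$. Then
\[
\Eds\left[\e^{-\int f_j\,\d\Xi_{N,k}}\right]\geqslant \Pds\left(\Xi_{N,k}((x-\delta',M+\delta])=0\right)
\]
for a suitable $\delta'$, giving bounds in both directions on $\Pds(\Xi_{N,k}((x,M))=0)$. Concretely, I would take $f=t\,\phi$ with $\phi$ a continuous bump between $\mathbf 1_{[x+\delta,M]}$ and $\mathbf 1_{[x,M+\delta]}$, and send $t\to\infty$ after $N\to\infty$. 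By Theorem \ref{theo:extremal_process}, with the supremum over $k$, the Laplace functionals converge uniformly in $k$ to $\exp(-\int(1-\e^{-t\phi})\d\nu)$. Letting $t\to\infty$ and then $\delta\to0$ gives
\[
\sup_k\left\vert\Pds\left(\Xi_{N,k}((x,M))=0\right)-\e^{-\nu((x,M))}\right\vert\to0 .
\]
This uses the fact that $\nu$ has no atoms and that $\Pds(\Xi_{N,k}(A)=0)\leqslant \Eds[\e^{-t\int\phi\,\d\Xi_{N,k}}]$ when $\phi\geqslant \mathbf 1_A$, together with the reverse inequality $\Eds[\e^{-t\int\phi\,\d\Xi_{N,k}}]\leqslant \Pds(\Xi_{N,k}(B)=0)+\e^{-tc}$ whenever $\phi\geqslant c\,\mathbf 1_B$ and $B$ has a margin.

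The main obstacle is the tail $[M,\infty)$: I need $\sup_k\Pds(N\Vert\u_k\Vert_\infty^2\geqslant b_N+M)\to0$ as $M\to\infty$, uniformly in $N$ large. I see two routes.

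\emph{Preferred route: monotonicity in $M$.} The event $\{\Xi_{N,k}([M,M+1])\geqslant1\}$ need not contain the event of an atom above $M+1$, so I cannot use that interval directly. Instead, the "$m=1$ largest atom" convergence of Theorem \ref{theo:extremal_process} already gives tightness of the top atom. To make this uniform in $k$, I would argue by contradiction. Take a sequence $(N_j,k_j)$ violating the uniform bound. The uniform Laplace statement gives convergence of $\Xi_{N_j,k_j}$ to $\Xi$ along that sequence, and the largest-atom convergence, applied as stated uniformly in $k$, gives tightness along it. This is a contradiction.

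\emph{Fallback route: direct union bound.} If needed, apply Theorem \ref{theo:finite} with $p=1$, $a_1=1$, $\q=\eb_\alpha$ together with a union bound. That only yields $N^{-\varepsilon}$ errors per coordinate, which is too weak summed over $N$ coordinates, so I rely on the first route.

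Combining the two steps: $\nu([M,\infty))=\e^{-M/2}\to0$, so letting $N\to\infty$ and then $M\to\infty$ gives the claimed uniform convergence to $\e^{-\e^{-x/2}}$.
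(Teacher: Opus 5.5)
Your argument is correct, but it takes a long detour around a statement you already have. Theorem \ref{theo:extremal_process} says explicitly that the $m$ largest atoms converge jointly in distribution \emph{uniformly in $k$}, so your remark that the case $m=1$ lacks uniformity in $k$ is a misreading. The largest atom of $\Xi_{N,k}$ is $N\Vert\u_k\Vert_\infty^2-b_N$. The law of the largest atom $\eta^{(1)}$ of $\Xi$, $\Pds(\eta^{(1)}\leqslant x)=\e^{-\e^{-x/2}}$, is continuous. So sandwiching $\mathbf 1_{(-\infty,x]}$ between two continuous functions with a $\delta$-margin gives uniform-in-$k$ convergence of the distribution function at every $x$ at once. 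This is exactly the paper's one-line proof.

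Your own tail step invokes this same uniform largest-atom convergence in the contradiction argument. You are right that it must: vague convergence alone cannot prevent atoms from escaping to $+\infty$. But once that statement is available it already proves the corollary, so the Laplace-functional sandwich on $(x,M)$ adds nothing. Conversely, if you did not trust the uniformity in $k$ of the largest-atom statement, your tail argument would have no input. That route would only pay off if you had an independent uniform bound on $\sup_k\Pds(N\Vert\u_k\Vert_\infty^2\geqslant b_N+M)$.

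A minor correction to the sandwich. The inequality $\Pds(\Xi_{N,k}(A)=0)\leqslant\Eds[\e^{-t\int\phi\,\d\Xi_{N,k}}]$ requires $\phi$ to vanish outside $A$, not $\phi\geqslant\mathbf 1_A$. With your bump it holds for $A=[x,M+\delta]$, and $\phi\geqslant\mathbf 1_{[x+\delta,M]}$ gives the reverse bound, so the construction itself is fine.
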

This corollary implies that under the assumptions of Theorem \ref{theo:extremal_process} and for every $\varepsilon>0$,  
\[
    \sup_{k\in\unn{1}{N}}
    \Pds\left(
        \left\vert
        \sqrt{\frac N{\log N}}\Vert \u_k\Vert_\infty-\sqrt2
        \right\vert>\varepsilon
    \right)
    \xrightarrow[N\to\infty]{}0.
\]
This improves the sharp upper bound of \cite{BL22LInfinity}*{Theorem 1.3} to convergence in probability.

There are more quantitative results we can obtain using our dynamical approach through Theorem \ref{theo:maindyn}. Indeed, we keep the dependence on the number of eigenvector projections explicit in the theorem allowing us to obtain universality results for a growing number of eigenvector projections. However, while these statistics can be obtained dynamically, the comparison step to remove the Gaussian component through Green function comparison as in \cite{knowles2013eigenvector} is not quantitative enough to obtain the corresponding result for generalized Wigner matrices. We thus use the reverse heat flow technique which needs a smoothness assumption on the entries of the matrix given by the following definition.
\begin{definition}[Smooth generalized Wigner matrices]\label{def:smooth}   We say that a real symmetric generalized Wigner matrix $W$ is smooth
if the rescaled entries $\sqrt{N}w_{ij}$ have densities    proportional to $\e^{-V_{ij}(x)}$ where $V_{ij}$  are smooth functions such that for every $k\in\Nbb$, there exists $C_k$ such that 
    \[
    \sup_{i,j\in\unn{1}{N}}\vert V_{ij}^{(k)}(x)\vert \leqslant C_k(1+\vert x\vert)^{C_k}.
    \]
\end{definition}
    The smoothness assumption for complex Hermitian matrices is stated
in Appendix \ref{app:hermitian}.    
We have the following analogue of Theorem \ref{theo:finite} for a growing number of eigenvector projections.
\begin{theorem}
\label{theo:growing}
Let $W$ be a smooth real symmetric generalized Wigner matrix. Let $p\geqslant 1$ and $a_1,\dots,a_p\geqslant 1$ and set $Q=\sum_{i=1}^p a_i.$
Let $k_1,\dots,k_p\in\unn{1}{N}$ be distinct indices, and for every
$1\leqslant i\leqslant p$, let $\q_{i1},\dots,\q_{ia_i}$ be deterministic unit vectors in $\Rbb^N$. We denote 
\[
R = \dim\left(\mathrm{Span}\{\q_{i\alpha},\,i\in\unn{1}{p},\alpha\in\unn{1}{a_i}\}\right).
\]
We set
\[
    X_N
    =
    \left(
        \langle \q_{i\alpha},\v_{k_i}\rangle
    \right)_{\substack{1\leqslant i\leqslant p\\1\leqslant \alpha\leqslant a_i}}
    \in\Rbb^Q.
    \]
Let $G_N=(G_{i\alpha})_{1\leqslant i\leqslant p,\,1\leqslant \alpha\leqslant a_i}$ be the centered Gaussian vector in $\Rbb^Q$ with covariance $\Eds\left[G_{i\alpha}G_{j\beta}\right]=    \delta_{ij}\scp{\q_{i\alpha}}{\q_{j\beta}}$.    Assume that there exists $b\in(0,\frac{1}{2}]$ and $C>0$ such that 
\[
Q\leqslant RQ \leqslant CN^{\frac{1}{2}-b}.
\]
    
Let $h$ be a smooth function such that, for some $\delta_1>0$ and $\delta_2<b$,
\[
\Vert h\Vert_\infty\leqslant N^{\delta_1}\quad\text{and}\quad
\Vert \nabla^2 h\Vert_\infty \leqslant N^{\delta_2}.
\]
For every $\kappa>0$, there exists $C_0>0$ such that
\[
    \left\vert
    \Eds\left[h(X_N)\right]
    -
    \Eds\left[h(G_N)\right]
    \right\vert
    \leqslant C_0 N^{-b+\delta_2+\kappa}.
\]
   
\end{theorem}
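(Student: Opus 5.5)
The plan is the usual two-step dynamical strategy, made quantitative throughout. For the dynamics we invoke Theorem~\ref{theo:maindyn}, and for the comparison we use the reverse heat flow, which is why smoothness is assumed.

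\textbf{Dynamical input.} Let $W_t = e^{-t/2}W_0 + \sqrt{1-e^{-t}}\,\mathrm{GOE}$, the Ornstein--Uhlenbeck version of the Dyson flow, with $t = N^{-1+\omega}$. Theorem~\ref{theo:maindyn} is assumed to give, for any initial generalized Wigner matrix, a bound of the form
\[
\bigl|\Eds[h(X_N(t))]-\Eds[h(G_N)]\bigr| \le \|\nabla^2 h\|_\infty\,\frac{RQ\,N^{\kappa}}{\sqrt{Nt}} + \|h\|_\infty N^{-D},
\]
with the dependence on $R$ and $Q$ explicit. This form is the natural one: the error comes from a second-order Taylor/Itô expansion, and the rank $R$ controls how many independent directions the projected eigenvector dynamics must mix. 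The precise form must be read off Theorem~\ref{theo:maindyn}. With $RQ \le CN^{1/2-b}$ and $\|\nabla^2 h\|_\infty\le N^{\delta_2}$, a choice $t = N^{-1+\omega}$ makes this error $N^{-b+\delta_2+\kappa}$ once $\omega$ is close to $1$. We will instead need $t$ small for the comparison step, so $t$ is fixed by balancing the two errors.

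\textbf{Reverse heat flow.} Since the entries have densities $e^{-V_{ij}}$ with $V_{ij}$ controlled, we construct, as in Erdős--Schlein--Yau--Yin, an initial matrix $\tilde W_0$ such that $\tilde W_t$ matches $W$ in total variation up to $O(N^{-D})$ or, more crudely, up to $N^2 t^{K}$. This uses the Ornstein--Uhlenbeck generator $L$: set $\tilde\rho = (1 - tL + \dots + (-tL)^{K-1}/(K-1)!)\rho$ for each entry density $\rho$, so that $e^{tL}\tilde\rho - \rho = O(t^K)$ in $L^1$. The smoothness bounds on the $V^{(k)}$ make $\tilde\rho$ a genuine probability density, with the variance profile preserved up to negligible errors, so $\tilde W_0$ is again a generalized Wigner matrix. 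Then
\[
|\Eds h(X_N)-\Eds h(\tilde X_N(t))| \le \|h\|_\infty\, N^2 (Ct)^K \le N^{\delta_1} N^{2} N^{(-1+\omega)K}.
\]
For any $\omega<1$, choosing $K$ large makes this $N^{-D}$. Hence the comparison error is negligible for every $t=N^{-\epsilon'}$, and we may take $t$ as large as $N^{-\epsilon'}$ with $\epsilon'$ small.

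\textbf{Conclusion and main obstacle.} Apply the dynamical bound to $\tilde W_0$; its constants must be uniform over generalized Wigner initial data satisfying the local laws, which $\tilde W_0$ does. The result is an error of $N^{\delta_2} RQ\, N^{\kappa}(Nt)^{-1/2} \le N^{-b+\delta_2+\kappa+\epsilon'/2}$. Renaming $\kappa$ absorbs $\epsilon'/2$ and gives the claimed bound, with the $\|h\|_\infty N^{\delta_1}$ factor only multiplying the $N^{-D}$ terms.

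The main obstacle is the dependence on $R$ and $Q$ in Theorem~\ref{theo:maindyn}, which must be linear in $RQ$ with the $(Nt)^{-1/2}$ rate. If it is weaker, we would redo the Itô expansion of $h(X_N(t))$ along the Dyson vector flow. In that expansion the drift terms involve $\sum_{l\neq k}(\lambda_k-\lambda_l)^{-2}\langle \q,\u_l\rangle^2$-type quantities. These should be bounded through isotropic local laws applied on the $R$-dimensional span, rather than coordinatewise, so that the cost is $R$ and not $Q^2$.
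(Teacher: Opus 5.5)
Your route is the paper's: apply Theorem~\ref{theo:maindyn} at a time $t=N^{-\theta}$ with $\theta$ small, remove the conditioning on the eigenvalue path using Lemma~\ref{lem:good_event}, and pass to $W$ with the reverse heat flow (Lemma~\ref{lem:reverse_heat_flow}). The $N^{-D}$ total variation error there absorbs $\Vert h\Vert_\infty\leqslant N^{\delta_1}$.

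What needs correcting is the form you assumed for the dynamical input. Theorem~\ref{theo:maindyn} does not give an $RQ(Nt)^{-1/2}$ rate valid down to $t=N^{-1+\omega}$. Instead it gives
\[
C\Vert \nabla^2 h\Vert_\infty\frac{N^{1+\xi}t}{\ell}\Bigl(\frac{RQ}{\sqrt{\ell}}+\frac{Q^2}{\ell}\Bigr)
+C\sqrt{R}\Vert h\Vert_\infty\sqrt{N}\,\e^{-c(Nt^3/\ell)^{1/3}}+C\Vert h\Vert_\infty N^{-\nu},
\]
with $N^{2\xi}\leqslant\ell\leqslant N^{1-\xi}$. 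This requires $Nt^3\gg\ell$, in particular $t\gg N^{-1/3}$, and it gets worse, not better, as $t$ decreases: with the best admissible $\ell$ the main term behaves like $RQ\,N^{-1/2}t^{-7/2}$.

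So there is nothing to balance. The comparison step costs $N^{-D}$ for every $t=N^{-\theta}$, and the paper simply takes
\[
t=N^{-\theta},\qquad \ell=N^{1-4\theta},\qquad \xi=\theta,\qquad 8\theta<\min\{\kappa,b\},\qquad \theta<\tfrac16 .
\]
Then $Nt^3/\ell=N^{\theta}$, so the relaxation term is superpolynomially small. The main term is at most $N^{\delta_2}\bigl(N^{-b+6\theta}+N^{-2b+8\theta}\bigr)\leqslant N^{-b+\delta_2+\kappa}$.

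A $Q^2$ term does appear, coming from the off-diagonal part $\Oscr_t$. However, it carries $\ell^{-1}$ rather than $\ell^{-1/2}$, and it is handled directly by $Q^2\leqslant (RQ)^2\leqslant C^2N^{1-2b}$. So the fallback in your last paragraph, redoing the It\^o expansion to avoid a $Q^2$ cost, is unnecessary.

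With the correct form of Theorem~\ref{theo:maindyn} substituted, your argument goes through.
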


To our knowledge, Theorem \ref{theo:growing} gives the first eigenvector universality result for an explicit growing number of eigenvector projections. We note that \cite{tao2012random} developed a Green function comparison theorem which can be used to compare \(N^\delta\) eigenvector entries with i.i.d. Gaussian random variables, for some implicit \(\delta>0\) under a four moment matching condition. The point of Theorem \ref{theo:growing} is that the admissible number of projections is explicit and depends on the two natural parameters \(Q\), the total number of projections, and \(R\), the dimension of their linear span.

Let us spell out a few consequences.

\begin{itemize}
    \item \textbf{Eigenbasis row.}
    Fix \(\alpha\in\unn{1}{N}\) and consider $\left(
            \sqrt{N}\u_{k_1}(\alpha),\dots,\sqrt{N}\u_{k_p}(\alpha)
        \right).$
    This corresponds to taking \(a_i=1\) and \(\q_{i1}=\eb_\alpha\) for every \(i\in\unn{1}{p}\). In this case \(R=1\) and \(Q=p\). The condition of Theorem \ref{theo:growing} allows
    \[
        p\leqslant N^{\frac12-\varepsilon}
    \]
    for any fixed $\varepsilon>0$, after choosing $b>0$ appropriately. Thus we obtain joint Gaussianity for \(N^{\frac{1}{2}-\varepsilon}\) entries of a fixed row of the eigenbasis. This is not optimal in the GOE case: by \cite{diaconis1987dozen}, for a Haar-distributed orthogonal matrix, any \(o(N)\) entries of a fixed row are asymptotically jointly Gaussian. We note that the result was stated in \cite{diaconis1987dozen} for vectors uniformly distributed on the sphere which is the law of the rows (and the columns) of a Haar-distributed orthogonal matrix.

    \item \textbf{Eigenbasis minor.}
    Consider a minor formed by \(p\) eigenvectors and \(r\) coordinates. Then $Q=pr$ and $R=r$.
    If \(p\) and \(r\) are of the same order, say \(p=r=m\), then the condition \(RQ\leqslant N^{\frac12-\varepsilon}\) allows
    \[
        m\leqslant N^{\frac{1}{6}-\varepsilon}.
    \]
    Hence Theorem \ref{theo:growing} gives joint universality for minors of size  $N^{\frac16-\varepsilon}\times N^{\frac16-\varepsilon}.$
    In the GOE case, the optimal Haar result is stronger since \cite{jiang2006entries} proves joint Gaussianity for minors up to size
        $N^{\frac12-\varepsilon}\times N^{\frac12-\varepsilon}$, and shows that this scale is optimal. If we consider a rectangular minor of $p$ eigenvectors and $r$ coordinates then our condition becomes $pr^2\leqslant N^{\frac{1}{2}-\varepsilon}$ while it was shown in \cites{jiang2019distances, stewart2020total} that for a Haar-distributed orthogonal matrix, the optimal condition is $pr=o(N)$.

    \item \textbf{Eigenvector entries.}
    If we consider $Q$ entries of a single eigenvector, then $p=1$ and $a_1=Q$ and $R=Q$.   The condition $RQ=Q^2\leqslant N^{\frac12-\varepsilon}$ allows    
    \[
        Q\leqslant N^{\frac14-\varepsilon}.
    \]
    Thus we obtain joint universality for \(N^{\frac14-\varepsilon}\) arbitrary entries of one eigenvector. Again this is not expected to be optimal since in the Haar case, \cite{diaconis1987dozen} gives asymptotic Gaussianity for any \(o(N)\) entries.
\end{itemize}
    
We show in Appendix \ref{app:rate} that the exponent $\frac{1}{2}$
in Theorems \ref{theo:growing} and \ref{theo:kolmogorov} for a fixed
number of eigenvector projections cannot be improved in general.

Our dynamical approach also gives bounds on the Kolmogorov distance
between the distribution of eigenvector projections and the Gaussian
distribution.    Define the Kolmogorov distance for two vectors $X$ and $Y$ in $\Rbb^Q$ as
\[
    \d_{\mathrm K}(X,Y)
    =
    \sup_{\y\in\Rbb^Q}
    \left|
    \Pds\left(X\leqslant \y\right)
    -
    \Pds\left(Y\leqslant \y\right)
    \right|,
\]
where the inequality is coordinatewise. We can obtain the following bound valid for a growing number of eigenvector projections under the same assumption on the smoothness of the entries of the matrix.

\begin{theorem}
   
\label{theo:kolmogorov}
Under the assumptions of Theorem \ref{theo:growing}, for every
$\kappa>0$ there exists $C>0$ such that
\[
    \d_{\mathrm K}(X_N,G_N)
    \leqslant
    C N^{-b+\kappa}.
\]
In particular, for a fixed number of eigenvector projections, i.e.
$b=\frac{1}{2}$, we get, for every $\kappa>0$,
\[
    \d_{\mathrm K}(X_N,G_N)
    \leqslant
    C N^{-\frac{1}{2}+\kappa}.
\]
  
\end{theorem}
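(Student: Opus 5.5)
The plan is to deduce Theorem \ref{theo:kolmogorov} from Theorem \ref{theo:growing} by smoothing the indicator of an orthant. Fix $\y\in\Rbb^Q$ and a scale $\eta=N^{-\gamma}$, with $\gamma>0$ to be chosen. Let $\chi$ be a smooth nonincreasing function with $\chi=1$ on $(-\infty,0]$ and $\chi=0$ on $[1,\infty)$. Set
\[
h_\pm(\x)=\prod_{i,\alpha}\chi\bigl((x_{i\alpha}-y_{i\alpha}\mp\eta)/\eta\bigr),
\]
up to the obvious shift, so that $h_-\leqslant \mathbbm{1}_{\{\x\leqslant\y\}}\leqslant h_+$. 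Then
\[
\Pds(X_N\leqslant\y)-\Pds(G_N\leqslant\y)\leqslant \Eds h_+(X_N)-\Eds h_+(G_N)+\Eds h_+(G_N)-\Eds h_-(G_N),
\]
and symmetrically from below. The first difference is handled by Theorem \ref{theo:growing}. The second is a Gaussian anti-concentration term: the probability that $G_N$ lies in the $2\eta$-neighbourhood of the boundary of the orthant.

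For the first term we need derivative bounds on $h_\pm$. We have $\Vert h_\pm\Vert_\infty\leqslant1$, so $\delta_1$ can be taken arbitrarily small. The Hessian of the product is where $Q$ enters. The entries $\partial_{i\alpha}\partial_{j\beta}h$ are bounded by $\eta^{-2}$, but they are nonzero only when both coordinates lie in their transition layers. To avoid paying a factor $Q$ in the operator norm, I would instead invoke the sharper form of Theorem \ref{theo:growing} (or the dynamical Theorem \ref{theo:maindyn} behind it), which presumably only requires $\Vert\nabla^2 h\Vert_\infty$ entrywise. With that, $\delta_2=2\gamma$, and the error is $N^{-b+2\gamma+\kappa}$. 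This already suffices if $\gamma$ is tiny, but it does not give the claimed rate $N^{-b+\kappa}$.

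Gaussian anti-concentration for orthants, via Nazarov's inequality, gives $\Pds(G_N\in\partial_{2\eta})\lesssim\eta\sqrt{\log Q}/\sigma_{\min}$, since the coordinates have unit variance. Balancing this naively yields only $N^{-b/3}$. The main obstacle is therefore to reach $N^{-b+\kappa}$, i.e.\ to take $\eta=N^{-b}$ while losing only $N^{\kappa}$. To do this I would not pass through Theorem \ref{theo:growing} as a black box. Instead I would run the same argument: the reverse heat flow comparison combined with the dynamical estimate, where the rate $N^{-b}$ presumably arises as an error of the form $N^{-b}\Eds\vert\nabla^2 h\vert$ (and third derivatives) evaluated along an interpolation with a Gaussian. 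For $h$ a mollified orthant indicator at scale $\eta$, the quantity $\Eds\vert\partial^2 h(Z)\vert$ under a near-Gaussian law $Z$ is only of order $\eta^{-1}$, not $\eta^{-2}$, because the second derivative lives on a layer of probability $O(\eta)$. The same holds for $\partial^3$, giving $\eta^{-2}$.

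Concretely, the steps are as follows. First, write the error in Theorem \ref{theo:growing} as an expectation of derivatives of $h$ against the interpolating law, which is Gaussian plus a small perturbation. Second, bound these expectations using Gaussian anti-concentration, bootstrapped if needed since the law is only near-Gaussian, obtaining $N^{-b+\kappa}\eta^{-1}$ or better. Third, choose $\eta$ and iterate: use the Kolmogorov bound at a cruder scale to control the layer probability for $X_N$ itself, a Stein/smoothing-inequality type bootstrap, until the bound improves to $N^{-b+\kappa}$, with the $\sqrt{\log Q}$ factors absorbed into $N^\kappa$. Finally, take the supremum over $\y$, which is uniform since no step depends on $\y$.

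I expect this bootstrap, which turns second-derivative control into a sharp Kolmogorov rate without losing a power of $\eta$, to be the main difficulty.
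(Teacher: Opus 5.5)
Your frame (smooth the orthant, compare, then remove the smoothing with Nazarov's inequality for the exact Gaussian) matches the paper's, but the step meant to reach $N^{-b+\kappa}$ is left as an unspecified bootstrap, and the mechanism you sketch cannot reach it.

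First, the source of the error is misidentified. The reverse heat flow (Lemma \ref{lem:reverse_heat_flow}) is a total-variation comparison, so it transfers $\d_{\mathrm K}$ at cost $N^{-D}$ with no derivatives, and no third derivatives appear anywhere. The whole loss is the Duhamel term $\int_0^t\Eds[\mathds{1}_{\{u<\tau\}}(\Dscr_u-\Oscr_u)g_u(\z(u))\mid\bm{\lambda}]\,\d u$ from Theorem \ref{theo:maindyn}. It involves second derivatives of $g_u=\hat{\Pscr}_{u,t}F$, not of $h$.

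Second, suppose your heuristic holds and layer localization turns $\eta^{-2}$ into $\eta^{-1}$. Up to $N^{\kappa}$ factors you then get $\d_{\mathrm K}\lesssim\eta+N^{-b}\eta^{-1}+N^{-b}\eta^{-2}\d_{\mathrm K}$. The best choice $\eta=N^{-b/2}$ gives only $N^{-b/2}$. Iterating cannot beat this, because the $\eta^{-1}$ cost of an $\eta$-layer is intrinsic. Reaching $N^{-b}$ requires a derivative cost that depends only logarithmically on the smoothing scale. Note also that there is no entrywise version of Theorem \ref{theo:growing}: the diagonal term in Lemma \ref{lem:diagonal_term} uses $\Vert\nabla^2h\Vert_{\mathrm{op}}$.

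The missing idea is that the reference flow already smooths the observable with an \emph{exact} Gaussian. Set $a_u=\exp(-c(N/\ell)^{1/3}(t-u))$. Conditionally on the short-range auxiliary Brownian motions, the backward observable is $\x\mapsto P_{\Gamma_{u,t}}H_{\y,w}(\Ucal_{u,t}\x)$, with $\Gamma_{u,t}\succeq(1-a_u)\Id_d$ and $\Ucal_{u,t}\Ucal_{u,t}^\top\preceq a_u\Id_d$.

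If you smooth the orthant by Gaussian convolution at scale $w$ rather than by a product of cutoffs, Lemma \ref{lem:gaussian_polyhedron_smoothing} gives a pointwise, dimension-free bound: $\sup_{\x}\sum_j\Vert\nabla_j^2g_u(\x)\Vert_{\mathrm{HS}}\leqslant CQa_u/(w^2+1-a_u)$. The bound $\sum_i(\mu_i^a(u))^2\leqslant a_u$ gives the same factor in the sign-pairing estimate for $\Oscr_u$.

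Since $\int_0^t a_u(w^2+1-a_u)^{-1}\,\d u\leqslant C(\ell/N)^{1/3}\log(1+w^{-2})$, the factor $w^{-2}$ becomes a logarithm. This yields the error $\Kcal_N\log(1+w^{-2})$ in Proposition \ref{prop:kolmogorov_from_dynamic}. Because it is a sup bound, it combines directly with the good-event bounds on $\Delta_j(u)$ and needs no information about the law of the true flow, so no bootstrap is needed.

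Nazarov's inequality is then applied only to $G_N$, to remove the convolution at cost $w\Lambda\sqrt{1+\log Q}$. Take $w=\Kcal_N/(\Lambda\sqrt{1+\log Q})$, $t=N^{-\theta}$ and $\ell=N^{1-4\theta}$. Then $\Kcal_N\lesssim N^{-b+17\theta/3}+N^{-2b+23\theta/3}$, which gives $N^{-b+\kappa}$ for $\theta$ small.
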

    
We note that a faster rate is possible in the Haar case. For a single
entry of a Haar-distributed orthogonal matrix, the Kolmogorov distance
between its rescaled distribution and the standard Gaussian
distribution is of order $N^{-1}$, which can be seen directly from
its density    
\[
f_{\Obb_N(\Rbb)}(x) = \frac{\Gamma\left(\frac{N}{2}\right)}{\sqrt{\pi N}\Gamma\left(\frac{N-1}{2}\right)}\left(1-\frac{x^2}{N}\right)^{\frac{N-3}{2}}\mathbf{1}_{\vert x\vert \leqslant \sqrt{N}}
=
\frac{1}{\sqrt{2\pi}}\e^{-\frac{x^2}{2}}\left(1+\frac{1}{N}\left(-\frac{3}{4}+\frac{3x^2}{2}-\frac{x^4}{4}\right)+\O{\frac{1+\vert x\vert^6}{N^2}}\right).
\]
However, to our knowledge, this is the first explicit quantitative bound on convergence of eigenvector entries and projections for smooth generalized Wigner matrices.

\subsection{Sketch of the proof} The proof is based on the three-step dynamical approach to universality. The first step consists of obtaining mesoscopic bounds on eigenvalue and eigenvector statistics of generalized Wigner matrices. Similarly to the eigenvector moment flow, we leverage eigenvalue rigidity \cite{erdos2012rigidity}, eigenvector delocalization \cite{alex2014isotropic} as well as an eigenvector fluctuation result coming from the isotropic local law \cite{alex2014isotropic}.

The second step consists of analyzing the dynamics of eigenvectors along the Dyson Brownian motion whose definition we now recall.
\begin{definition}\label{def:dbm}
    Let $B$ be an $N\times N$ matrix such that $B_{ij}=B_{ji}$ and for $i\leqslant j$, $B_{ij}$ are independent Brownian motions with variance $1+\delta_{ij}$. The Dyson Brownian motion starting at $H_0$ is the process defined by
    \[
    H_t = H_0 + \frac{1}{\sqrt{N}}B_t.
    \]
\end{definition}
The stochastic differential equations followed by its eigenvalues and eigenvectors were computed in \cites{dyson1962brownian, bru89, anderson2010introduction} and are given by the following theorem. We additionally give the generators of the eigenvector dynamics which can be found in \cite{bourgade2017eigenvector}*{Lemma 2.4}.
\begin{theorem}\label{theo:dyson}
    Let $H_t$ be the Dyson Brownian motion starting at $H_0=U_0\Lambda_0 U_0^*$. Let $\lambda_1(t)\leqslant \dots\leqslant \lambda_N(t)$ and $U_t = (\u_1(t),\dots,\u_N(t))$ be the strong solutions to the system,
    \[
    \begin{aligned}
    \d \lambda_k(t) &= \frac{\d B_{kk}(t)}{\sqrt{N}} + \frac{1}{N}\sum_{j\neq k} \frac{\d t}{\lambda_k(t)-\lambda_j(t)},\\
    \d \u_k(t) &= \frac{1}{\sqrt{N}}\sum_{\ell\neq k} \frac{\u_\ell(t)\d B_{k\ell}(t)}{\lambda_k(t)-\lambda_\ell(t)} - \frac{1}{2N}\sum_{\ell\neq k} \frac{\u_k(t)\d t}{(\lambda_k(t)-\lambda_\ell(t))^2}.
    \end{aligned}
    \]
    Then the processes $(H_t)_{t\geqslant 0}$ and $(U_t\Lambda_t U_t^*)_{t\geqslant 0}$ have the same distribution.  
    Besides, the generator acting on smooth function $f:\Rbb^{N^2}\to\Rbb$ of the eigenvector dynamics is given by
    \[
    \Lscr_t = \frac{1}{2}\sum_{i\neq j}c_{ij}(t)\Xscr_{ij}^2 \quad
    \text{with}\quad 
       c_{ij}(t) = \frac{1}{2N(\lambda_i(t)-\lambda_j(t))^2}   
    \]
    where   
    \[ 
    \Xscr_{ij} = \u_i\partial_{\u_j}-\u_j\partial_{\u_i} 
     = \sum_{\alpha=1}^N \left(\u_i(\alpha)\partial_{\u_j(\alpha)}-\u_j(\alpha)\partial_{\u_i(\alpha)}\right).
    \]
\end{theorem}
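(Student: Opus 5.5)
We need to prove Theorem \ref{theo:dyson}. It collects classical facts: the eigenvalue and eigenvector SDEs of Dyson Brownian motion and the generator of the eigenvector dynamics. The plan has three steps. First, build a strong solution of the system on the open simplex $\{\lambda_1<\dots<\lambda_N\}$ and show it does not hit its boundary. Second, check via Itô calculus that $U_t\Lambda_tU_t^*$ solves the same matrix SDE as $H_t$, which gives equality in law. Third, read off the generator.

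\textbf{Existence and non-collision.} For $\beta=1$, the eigenvalue equation is the standard Dyson SDE. Starting from simple spectrum (a.s. for $H_0$ with a density, or after an infinitesimal time otherwise), existence of a unique strong solution with $\lambda_1(t)<\dots<\lambda_N(t)$ for all $t$ follows from the classical argument of \cite{anderson2010introduction}. Run the SDE with the singular drift truncated at level $1/M$, and use the Lyapunov function $\sum_{i<j}\log|\lambda_i-\lambda_j|^{-1}+\sum\lambda_i^2$. Its Itô drift is bounded because the $\beta\geqslant1$ repulsion cancels the second-order terms. Hence the collision time is a.s. infinite.

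Given the eigenvalue path, the eigenvector SDE is linear in $U$ with locally Lipschitz coefficients on $\{\min|\lambda_k-\lambda_\ell|>0\}$, so a unique strong solution exists. To see that $U_t$ stays orthogonal, compute $\d(\u_k^*\u_j)$ by Itô's formula. For $k\neq j$, the martingale terms cancel by the antisymmetry of $\frac{\d B_{k\ell}}{\lambda_k-\lambda_\ell}$. The quadratic covariation terms cancel against the $-\frac{1}{2N}\sum\frac{\d t}{(\lambda_k-\lambda_\ell)^2}$ drift. This shows $U_t^*U_t=\Id$ is preserved.

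\textbf{Identification with $H_t$.} Set $\tilde H_t=U_t\Lambda_tU_t^*$ and apply Itô's product rule. The drift terms from $\d\lambda_k$, from the $\u_k$ drift, and from the cross-variations $\d\u_k\,\d\lambda_k$ and $\d\u_k\,\d\u_\ell^*$ should cancel exactly. Writing $\d\tilde H_t=U_t\,\d M_t\,U_t^*$, the martingale part has
\[
\d M_{kk}=N^{-1/2}\d B_{kk},\qquad \d M_{k\ell}=N^{-1/2}\d B_{k\ell}\quad (k\neq\ell),
\]
where the factors $(\lambda_k-\lambda_\ell)$ from $\Lambda$ cancel the denominators. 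So $\tilde H_t=H_0+N^{-1/2}\int_0^t U_s\,\d B_s\,U_s^*$. Orthogonal invariance of the symmetric Brownian motion, with variances $1+\delta_{ij}$, together with Lévy's characterization shows that $\int U_s\,\d B_sU_s^*$ is again such a Brownian motion. Hence $\tilde H$ and $H$ have the same law.

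\textbf{Generator.} For smooth $f$ of the entries of $U$, Itô's formula gives $\d f(U_t)=\d(\text{mart})+\mathcal{A}_tf\,\d t$. Here $\mathcal{A}_t$ is the sum of the first-order drift terms and half the Hessian paired with $\d\langle\u_k(\alpha),\u_\ell(\beta)\rangle$. Computing these brackets, $\d\langle\u_k(\alpha),\u_m(\beta)\rangle$ equals $\frac{1}{N}\sum_{\ell\neq k}\frac{\u_\ell(\alpha)\u_\ell(\beta)}{(\lambda_k-\lambda_\ell)^2}\d t$ for $m=k$, and $-\frac{\u_m(\alpha)\u_k(\beta)}{N(\lambda_k-\lambda_m)^2}\d t$ for $m\neq k$. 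Expanding $\Xscr_{ij}^2$ produces exactly these second-order terms plus the first-order term $-\u_i\partial_{\u_i}-\u_j\partial_{\u_j}$, which matches the drift. Summing over ordered pairs $i\neq j$ with the weight $c_{ij}$ yields $\Lscr_t$.

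\textbf{Main obstacle.} The main obstacle is non-collision of the eigenvalues. Everything else is bookkeeping, and the factor-of-2 conventions in $c_{ij}$ and in the diagonal variance $1+\delta_{ij}$ need care.
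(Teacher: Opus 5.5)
Your proposal is correct. The paper gives no proof of its own for this statement: it imports the SDEs from the classical works of Dyson, Bru and Anderson--Guionnet--Zeitouni, and the generator from Bourgade--Yau. Your three steps are the standard argument behind those citations.

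Your route constructs the solution of the system forward and then shows, via It\^o and L\'evy, that $U_t\Lambda_tU_t^*=H_0+N^{-1/2}\int_0^tU_s\,\d B_s\,U_s^*$, with the integral again a symmetric Brownian motion. This gives exactly the equality in law that is claimed. It also avoids differentiating the spectral map $H\mapsto(\lambda(H),U(H))$ along $H_t$, with the attendant eigenvector sign ambiguity.

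Your brackets are right, and they confirm the factor-of-2 you flagged. Since $\Xscr_{ji}^2=\Xscr_{ij}^2$, one has $\frac12\sum_{i\neq j}c_{ij}\Xscr_{ij}^2=\frac12\sum_{i<j}\Xscr_{ij}^2/(N(\lambda_i-\lambda_j)^2)$. Its first-order part is $-\frac1{2N}\sum_{k\neq\ell}(\lambda_k-\lambda_\ell)^{-2}\u_k\cdot\partial_{\u_k}$, which is exactly the drift, so the stated normalization of $c_{ij}$ checks out.

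Three small things to tighten:
\begin{enumerate}
\item Orthogonality should be phrased as uniqueness for the linear SDE satisfied by $U_t^*U_t$, of which $\Id$ is a solution. The cancellation of the drift against the It\^o correction is the diagonal case $k=j$. Off the diagonal, the martingale parts cancel and every other term vanishes at $\Id$.
\item The eigenvector SDE is singular at $t=0$ unless $\Lambda_0$ has simple spectrum. Starting ``after an infinitesimal time'' only repairs the eigenvalue part, so the statement should be read with $\lambda(0)$ in the open simplex.
\item $\Lscr_t$ is the generator of $U$ conditionally on the eigenvalue path, which is how the paper uses it. This is legitimate because $(B_{kk})$ is independent of the off-diagonal noise driving $U$, and it is worth saying explicitly.
\end{enumerate}
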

Let us describe the main idea in the simplest case of one eigenvector entry, say
\[
    \v_k(t,\alpha)=\sqrt N\,{\u_k(t,\alpha)}.
\]
We split the generator from Theorem \ref{theo:dyson} into a short-range part, corresponding to $|i-j|<\ell$, and a long-range part, corresponding to $|i-j|\geqslant \ell$ for some range $\ell \ll Nt^3$. The short-range part is kept unchanged, while the long-range part is replaced by an Ornstein--Uhlenbeck mechanism. More precisely, the reference generator is
\[
    \hat{\Lscr}_t
    =
    \frac{1}{2}\sum_{|i-j|<\ell}c_{ij}(t)\Xscr_{ij}^2
    +
    \sum_i\beta_i(t)(\partial^2_i-\v_i(\alpha)\partial_i),
    \qquad
    \beta_i(t)=\sum_{|i-j|\geqslant\ell}c_{ij}(t),
\]
    with the convention that $c_{ii}(t) =0$.   
The advantage of $\hat{\Lscr}_t$ is that its long-range part has a genuine log-Sobolev property coming from the Ornstein--Uhlenbeck structure: after evolving backward with the semigroup generated by $\hat{\Lscr}_t$, the observable $h(\v_k(\alpha))$ quantitatively relaxes to its Gaussian expectation. It remains to compare the true flow with this reference flow. Since the two generators differ only in the long-range part, which consists of weak interactions between far-away eigenvalues, this difference is somewhat perturbative and can be written as 
\[
\Lscr_t-\hat{\Lscr}_t
=
\sum_{|i-j|\geqslant \ell}c_{ij}(t)(\v_i^2(\alpha)-1)\partial_j^2
-
\sum_{|i-j|\geqslant \ell}c_{ij}(t)\v_i(\alpha)\v_j(\alpha)\partial^2_{ij}.
\] 
We write the difference of expectations through a Duhamel formula involving \(\Lscr_t-\hat{\Lscr}_t\), and control the resulting error using rigidity, delocalization and ETH fluctuations estimates for the eigenvectors. Thus the proof separates into two conceptually distinct steps: first, prove that the reference dynamics Gaussianizes because of the Ornstein--Uhlenbeck long-range component; second, show that replacing the true long-range eigenvector flow by this OU component changes the expectation only by a small error. Section \ref{sec:dysonvector} is devoted to these two steps in the dynamical part of the proof.    For the extremal process, we prove a relative version of this comparison
for events whose Gaussian probability is small. For the Kolmogorov
bound, we use the Gaussian smoothing in the reference flow to control
the Hessian of the backward observable without losing a power of $N$.    

Finally, the third step consists of removing the Gaussian component through a comparison argument. For the universality of a fixed number of eigenvector projections, we can use the Green function comparison method from \cite{knowles2013eigenvector} which is based on a Lindeberg replacement strategy   and for the extremal process we extend the comparison argument of
\cite{BL22LInfinity} to a fixed number of upper order statistics.     For the universality of a growing number of eigenvector projections, we use the reverse heat flow technique from \cites{erdHos2011universality,bourgade2022extreme}. This is done in Section \ref{sec:proofs}. We note that it would be interesting to see if some of these quantitative results could be transferred through Green function comparison by assuming some moment matching condition as in \cite{zhang2025quantitative} which was carried out for eigenvalue gaps. However, there is a clear barrier in the current Green function comparison method for eigenvectors which is that it relies on level repulsion estimates which, even in the integrable case of GOE, are not quantitative enough to allow for a union bound on $N^{\delta}$ eigenvalues for a possibly large $\delta$.

\subsection{Acknowledgements} The   first    author would like to thank P.\ Bourgade and G.\ Cipolloni for useful discussions and comments on the manuscript.   This paper was written with the assistance of large language models, which suggested arguments and contributed to revisions.   

\section{Analysis of the Dyson vector flow}\label{sec:dysonvector}
We first define the quantiles of the semicircle distribution $(\gamma_k)_{1\leqslant k\leqslant N}$ by the relation
\begin{equation}\label{eq:quantiles}
N\int_{-\infty}^{\gamma_k}\frac{1}{2\pi}\sqrt{(4-x^2)_+}\d x = k.
\end{equation}
We note that the variance of the matrix elements of the Dyson Brownian motion from Definition \ref{def:dbm} depends on time $t$. In particular, if we denote these variances as $s^2_{ij}(t)$ then we have $s^2_{ij}(t) = s^2_{ij}+\frac{(1+\delta_{ij})t}{N}$ where $s^2_{ij}$ comes from the definition of generalized Wigner matrices in Definition \ref{def:genwigner}. If we denote $a(t) = (1+(1+\frac{1}{N})t)^{-\frac{1}{2}}$ then $a(t)H_t$ is a generalized Wigner ensemble and has the same eigenvectors as $H_t$. Its eigenvalues satisfy 
\[
\frac{1}{N(\lambda_i(a(t)H_t)-\lambda_j(a(t)H_t))^2} = \frac{    1  }{    a(t)^2    N(\lambda_i(H_t)-\lambda_j(H_t))^2}
\]
and they are rigid around the quantiles of the semicircle distribution as in \eqref{eq:quantiles}. Since in the time range we are considering $a(t)\asymp 1$, this time change does not affect the estimates on the eigenvalues and eigenvectors of $H_t$ that we will use.
In the rest of the paper, it will always be understood that the time change $t\mapsto \int_0^t a(s)^{2}\d s$ is applied and we identify $\lambda_i(t)$ with $\lambda_i(a(t)H_t)$.

We first define the good events which will be used throughout the proof. 

\begin{definition}\label{def:goodevent}
    Let $\Qfr$ and $\Efr$ be two finite families of deterministic unit vectors in $\Rbb^N$. Let $\xi>0$. For $s\in[0,1]$, we denote
    \[
    \Afr_s(\xi,\Qfr,\Efr,N)
    =
    \Afr_{1,s}(\xi,\Qfr,N)\cap \Afr_{2,s}(\xi,\Efr,N),
    \]
    where
    \[
    \begin{aligned}
    \Afr_{1,s}(\xi,\Qfr,N) &=\bigcap_{i=1}^N\bigcap_{\q \in \Qfr}\left\{
        \vert \v_i(s,\q)\vert^2 \leqslant N^{\xi}
    \right\},\\
    \Afr_{2,s}(\xi,\Efr,N) &=\bigcap_{\eb,\eb'\in\Efr} \left\{
        \frac{1}{\sqrt{\vert I\vert}}\left\vert\sum_{j\in I} \left(
            \v_j(s,\eb)\v_j(s,\eb')-\scp{\eb}{\eb'}
        \right)\right\vert
        \leqslant N^\xi \text{ for all } I\subset\unn{1}{N}\text{ interval}
    \right\}.
    \end{aligned}
    \]
    We also define the event
    \[
    \Afr(\xi,\Qfr,\Efr,N)
    =
    \bigcap_{s\in[0,1]}\Afr_s(\xi,\Qfr,\Efr,N).
    \]
    We define the eigenvalue rigidity event
    \[
    \Afr_3(\xi,N)=\bigcap_{i=1}^N\left\{
        \vert \lambda_i(t)-\gamma_i\vert \leqslant N^{-\frac{2}{3}+\xi}\min\left(i,N-i+1\right)^{-\frac{1}{3}}\text{ for all }t\in[0,1]
    \right\}.
    \]
    For $\nu>0$, we define the set of good eigenvalue paths
    \[
    \Lfr(\xi,\Qfr,\Efr,\nu,N)
    =
    \left\{
        \bm{\lambda},\, \bm{\lambda}\text{ satisfies }\Afr_3(\xi,N)\text{ and }\Pds(\Afr(\xi,\Qfr,\Efr,N)\vert \bm{\lambda})\geqslant 1-N^{-\nu}
    \right\}.
    \]
    Finally, we define the stopping time
    \[
    \tau=\tau(\xi,\Qfr,\Efr,N)
    =
    \inf\left\{s\in[0,1]\,:\, \Afr_s(\xi,\Qfr,\Efr,N)\text{ fails}\right\},
    \]
    with the convention $\inf\emptyset=1$.
\end{definition}

\begin{lemma}\label{lem:good_event}
    Let $\xi>0$ and $N^{2\xi}\leqslant \ell \leqslant N^{1-\xi}$. Suppose that $\vert \Qfr\vert$ and $\vert \Efr\vert$ are bounded by $N^2$. Then for any $D>0$ we have 
    \[
    \Pds\left(\Afr(\xi,\Qfr,\Efr,N)\right)\geqslant 1-N^{-D},
    \qquad
    \Pds\left(\Afr_3(\xi,N)\right)\geqslant 1-N^{-D},
    \]
    and
    \[
    \Pds\left(\Lfr(\xi,\Qfr,\Efr,\nu,N)\right)\geqslant 1-N^{-D}.
    \]
    Additionally, for any $\bm{\lambda}\in \Lfr(\xi,\Qfr,\Efr,\nu,N)$, we have 
    \[
    \Pds(    \tau <  1\vert    \bm{\lambda})\leqslant N^{-\nu}.
    \]
\end{lemma}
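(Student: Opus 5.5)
Since for every fixed $s$ the matrix $a(s)H_s$ is a generalized Wigner matrix, the plan is to prove fixed-time estimates from the known local laws and then make them uniform in $s\in[0,1]$. The last two claims then follow by Markov's inequality.

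\emph{Fixed times.} For fixed $s$, delocalization of projections $\vert\v_i(s,\q)\vert^2\leqslant N^{\xi}$ holds with probability $1-N^{-D}$ for each $i$ and each $\q$, by the isotropic delocalization of \cite{alex2014isotropic}. We take a union bound over the $N$ indices and the at most $N^2$ vectors of $\Qfr$. Rigidity at time $s$, with the stated edge-adapted error, follows from \cite{erdos2012rigidity}.

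\emph{The event $\Afr_{2,s}$.} For an interval $I$, we write $\sum_{j\in I}\v_j(\eb)\v_j(\eb')=N\scp{\eb}{P_I\eb'}$, where $P_I$ is the spectral projection. If $I$ is mesoscopic, we approximate $P_I$ by a contour integral (or a Helffer--Sjöstrand formula) of the resolvent over an energy window $[\gamma_{i_1},\gamma_{i_2}]$, with spectral parameters at scale $\eta\geqslant N^{-1+\xi/2}$. Rigidity controls the boundary error: the eigenvalues near the window endpoints contribute $O(N^{\xi/2})$ terms, each of size $N^{\xi/2}$ by delocalization. The isotropic local law $\vert\scp{\eb}{(G-m)\eb'}\vert\prec(N\eta)^{-1/2}$ then gives
\[
N\scp{\eb}{P_I\eb'}=\vert I\vert\scp{\eb}{\eb'}+O\bigl(N^{\xi/2}\sqrt{\vert I\vert}\bigr),
\]
after integrating the fluctuation bound against the density of states over the window. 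For short intervals, $\vert I\vert\leqslant N^{\xi}$, the bound is immediate from delocalization, since each summand is at most $N^{\xi}$. There are $N^2$ intervals and $N^4$ pairs $(\eb,\eb')$, so a union bound preserves probability $1-N^{-D}$.

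\emph{Uniformity in time.} Take a grid $s_m=mN^{-10}$. Between grid points, the SDE increments of $H_s$ have operator norm at most $N^{-4}$ with overwhelming probability, by a Brownian modulus-of-continuity bound. This controls the eigenvalues by Weyl's inequality. Eigenvectors are not Lipschitz when gaps are small, so we instead use quantities that are stable under perturbation. Delocalization of $\v_i(\q)$ can be read off from $\Im\scp{\q}{G(z)\q}$ with $\eta=N^{-1+\xi/2}$, which is Lipschitz in $H$ with constant $\eta^{-2}$. Similarly, the resolvent representation above makes the quantities in $\Afr_{2,s}$ Lipschitz up to the boundary errors, which are controlled uniformly because rigidity holds on the whole grid. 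Adjusting $\xi$ slightly gives $\Pds(\Afr)\geqslant1-N^{-D}$ and $\Pds(\Afr_3)\geqslant1-N^{-D}$.

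\emph{The path statements.} Let $\bm\lambda$ denote the eigenvalue path. Markov's inequality gives
\[
\Pds\bigl(\Pds(\Afr^c\vert\bm\lambda)>N^{-\nu}\bigr)\leqslant N^{\nu}\Pds(\Afr^c)\leqslant N^{\nu-D}.
\]
Combined with $\Pds(\Afr_3^c)\leqslant N^{-D}$, and $D$ being arbitrary, this proves the bound on $\Lfr$. Finally, $\{\tau<1\}\subset\Afr^c$, so for $\bm\lambda\in\Lfr$ we get $\Pds(\tau<1\vert\bm\lambda)\leqslant N^{-\nu}$.

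The main obstacle is the continuity-in-time step for $\Afr_{2,s}$, because individual eigenvectors may rotate quickly near small gaps. The resolvent smoothing above is designed to bypass this, and I expect it to be the most delicate part.
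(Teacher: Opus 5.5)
Your proposal follows essentially the paper's route. You use fixed-time rigidity and isotropic local laws, a contour representation of the spectral projection with the boundary layer handled by rigidity and delocalization, union bounds over the $N^2$ intervals and the pairs $(\eb,\eb')$, a polynomial time net, and Markov's inequality for $\Lfr$ and $\tau$. The differences are minor. You compare $\scp{\eb}{G\eb'}$ with $m_{\mathrm{sc}}\scp{\eb}{\eb'}$ instead of using the traceless observable $\mathring A_{\eb,\eb'}$. You run the contour argument directly on macroscopic intervals instead of partitioning them. You also spell out the time-continuity step through resolvent Lipschitz bounds, which the paper delegates to a cited H\"older-regularity argument. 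However, one step as you wrote it does not give the claimed bound.

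The local law you quote, $\vert\scp{\eb}{(G-m)\eb'}\vert\prec(N\eta)^{-1/2}$, is too weak near the spectral edges. Take $I=\unn{1}{n}$. The energy window then has length $\vert J\vert\asymp(n/N)^{2/3}$. Integrating $(N\eta)^{-1/2}$ over a contour of height $\asymp\vert J\vert$ gives $N\vert J\vert(N\vert J\vert)^{-1/2}\asymp(N\vert J\vert)^{1/2}\asymp N^{1/6}n^{1/3}$ from the horizontal sides, and the same order from the vertical sides. This exceeds the target $\sqrt{n}$ by a factor $(N/n)^{1/6}$. Since $\Afr_{2,s}$ must hold for every interval, including those at the edges, this is not a negligible regime. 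You need the sharper form $N^{\chi}\bigl(\sqrt{\Im m_{\mathrm{sc}}(z)/(N\eta)}+(N\eta)^{-1}\bigr)$ of the same local law, together with $\Im m_{\mathrm{sc}}\lesssim\vert J\vert^{1/2}$ on the contour. These give $\sqrt{N}\vert J\vert^{3/4}\asymp\sqrt{n}$, which is exactly how the paper handles the edge.

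A smaller bookkeeping point concerns short intervals. With summands bounded by $N^{\xi}$ and $\vert I\vert\leqslant N^{\xi}$, dividing by $\sqrt{\vert I\vert}$ only gives $N^{3\xi/2}$. So the short-interval threshold and the delocalization level used there must be $N^{\varepsilon}$ with $\varepsilon$ small compared to $\xi$, as in the paper.
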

\begin{proof}
    For a fixed time $s$, rigidity was obtained in \cite{erdos2012rigidity}, and the isotropic local law and isotropic delocalization in \cite{alex2014isotropic}. It remains to prove the estimate in $\Afr_{2,s}$.

    Fix $\eb,\eb'\in\Efr$ and define
    \[
        A_{\eb,\eb'}=\sqrt{N}\eb\eb'^\top,
        \qquad
        \mathring A_{\eb,\eb'}=A_{\eb,\eb'}-\langle A_{\eb,\eb'}\rangle \Id_N,
    \]
    Then
    \[
        \v_j(s,\eb)\v_j(s,\eb')-\scp{\eb}{\eb'}
        =
        \sqrt N\langle \u_j(s),\mathring A_{\eb,\eb'}\u_j(s)\rangle .
    \]
    Therefore, for an interval $I\subset\unn{1}{N}$,
    \[
        \frac{1}{\sqrt{|I|}}
        \sum_{j\in I}
        \left(
            \v_j(s,\eb)\v_j(s,\eb')-\scp{\eb}{\eb'}
        \right)
        =
        \sqrt{\frac{N}{|I|}}
        \sum_{j\in I}
        \langle \u_j(s),\mathring A_{\eb,\eb'}\u_j(s)\rangle .
    \]
    If $P_I$ denotes the spectral projection onto the eigenvectors with indices in $I$, this is
    \[
        \frac{1}{\sqrt{\vert I\vert}}\sum_{j\in I}\left(
            \v_j(s,\eb)\v_j(s,\eb')-\scp{\eb}{\eb'}
        \right)
        =\frac{N^{\frac{3}{2}}}{\sqrt{|I|}}\langle P_I \mathring A_{\eb,\eb'}\rangle .
    \]

    We first consider mesoscopic intervals, namely $N^\varepsilon\leqslant |I|\leqslant N^{1-\varepsilon}$, where $\varepsilon>0$ will be chosen sufficiently small with respect to $\xi$.   Let $I=\unn{a}{b}$ and set $\rho_I=(|I|/N)^{2/3}$.
Let $J$ be the deterministic spectral interval defined by    
    \[
        J=[E_-,E_+],
        \qquad
        E_-=
        \begin{cases}
          -2-\rho_I, & a=1,    \\
        \frac{\gamma_{a-1}+\gamma_a}{2},& a>1,
        \end{cases}
        \qquad
        E_+=
        \begin{cases}
          2+\rho_I,& b=N,    \\
        \frac{\gamma_b+\gamma_{b+1}}{2},& b<N.
        \end{cases}
    \]
    By rigidity, the projections $P_I$ and $\mathbf{1}_{J}(H_s)$ differ only by eigenvectors whose indices lie in a boundary layer of size at most $N^{c\varepsilon}$. Using isotropic delocalization, this gives
    \[
        \frac{N^{\frac{3}{2}}}{\sqrt{\vert I\vert}}
        \left|
            \langle P_I \mathring A_{\eb,\eb'}\rangle
            -
            \langle \mathbf{1}_{J}(H_s)\mathring A_{\eb,\eb'}\rangle
        \right|
        \leqslant
        \frac{N^{c\varepsilon}}{\sqrt{\vert I\vert}}
    \]
    with overwhelming probability. This error is $O(N^\xi)$ by choosing $\varepsilon$ small enough with respect to $\xi$.

    We now estimate the deterministic spectral projection. Let $G_s(z)=(H_s-z)^{-1}$ and let $m_{\mathrm{sc}}(z)$ be the Stieltjes transform of the semicircle law. We first record the local law input. Since
    \[
        \langle G_s(z)\mathring A_{\eb,\eb'}\rangle
        =
        \frac{1}{\sqrt N}
        \left(
            \langle \eb',G_s(z)\eb\rangle
            -
            \scp{\eb}{\eb'}\langle G_s(z)\rangle
        \right),
    \]
    the deterministic term $m_{\mathrm{sc}} (z)$ cancels, and the isotropic and averaged local laws from \cite{alex2014isotropic} give, for every $\chi>0$ and $D>0$, with probability at least $1-N^{-D}$,
    \[
        \left|
           \sqrt{N} \langle G_s(z)\mathring A_{\eb,\eb'}\rangle
        \right|
        \leqslant
        N^\chi
        \left(
            \sqrt{\frac{\Im m_{\mathrm{sc}}(z)}{N\eta}}
            +
            \frac{1}{N\eta}
        \right),
        \qquad z=E+\i\eta,
    \]
    uniformly for $|E|\leqslant 3$ and $\eta\geqslant N^{-1+\varepsilon/10}$.

    We apply Pleijel's representation formula for sharp spectral cut-offs, as in \cite{cipolloni2023functional}*{Proof of Theorem 2.3}, and previously used in a similar context in \cite{erdos2016fluctuations}. Let $\eta_0=N^{-1+\varepsilon/10}$ and let $\Gamma_I$ be the rectangular contour around $J$ with horizontal height $\vert J\vert$, truncated at distance $\eta_0$ from the real axis. Then
    \[
        \frac{N^{\frac32}}{\sqrt{\vert I\vert}}
        \langle \mathbf{1}_{J}(H_s)\mathring A_{\eb,\eb'}\rangle
        =
        \frac{N^{\frac32}}{2\pi i\sqrt{\vert I\vert}}
        \int_{\Gamma_I}
        \langle G_s(z)\mathring A_{\eb,\eb'}\rangle \d z
        +
        \O{\frac{N\eta_0}{\sqrt{\vert I\vert}}}.
    \]
    The last error is $O(N^\xi)$ by our choice of $\eta_0$ and since $\vert I\vert\geqslant N^\varepsilon$.

    Using the previous local law on $\Gamma_I$, together with the elementary semicircle estimates
    \[
        \Im m_{\mathrm{sc}}(E+\i\eta)\leqslant C\sqrt{\kappa(E)+\eta},
        \qquad
        \kappa(E)=\mathrm{dist}(E,\{-2,2\}),
    \]
    and the relation between $\vert I\vert$ and $\vert J\vert$, we get for any $\chi>0$,
    \[
        \frac{N^{\frac{3}{2}}}{\sqrt{\vert I\vert}}
        \int_{\Gamma_I}
        \left|
            \langle G_s(z)\mathring A_{\eb,\eb'}\rangle
        \right|
        |\d z|
        \leqslant
        N^{\chi}.
    \]
    Indeed, in the bulk, $\vert J\vert\asymp \frac{\vert I\vert}{N}$, and the horizontal and vertical parts of the contour are bounded by, up to $N^\chi$ terms coming from the local laws,
    \[
        \frac{N^{\frac32}}{\sqrt{\vert I\vert}}
        \vert J\vert\frac{1}{N\sqrt{\vert J\vert}}
        \leqslant C,
        \qquad
        \frac{N^{\frac32}}{\sqrt{\vert I\vert}}
        \int_{\eta_0}^{\vert J\vert}\frac{\d\eta}{N\sqrt\eta}
        \leqslant C.
    \]
    At the edge, $\vert J\vert\asymp N^{-\frac23}\vert I\vert^{\frac23}$ and $\Im m_{\mathrm{sc}}(E+\i\eta)\leqslant C \vert J\vert^{1/2}$ on the relevant contour, giving similarly
    \[
        \frac{N^{\frac32}}{\sqrt{\vert I\vert}}
        \vert J\vert
        \frac{1}{N}\sqrt{\frac{\vert J\vert^{1/2}}{\vert J\vert}}
        \leqslant C,
        \qquad
        \frac{N^{\frac32}}{\sqrt{\vert I\vert}}
        \int_{\eta_0}^{\vert J\vert}
        \frac{1}{N}\sqrt{\frac{\vert J\vert^{1/2}}{\eta}}
        \d\eta
        \leqslant C.
    \]
    We note that the intermediate energy regime works similarly.
    The contribution of the averaged local law term $N^{-3/2}\eta^{-1}$ is smaller and is absorbed into $N^{\chi}$. Choosing $\chi>0$ sufficiently small, we obtain
    \[
        \sup_{\substack{I\subset\unn{1}{N}\,\mathrm{interval}\\
        N^\varepsilon\leqslant |I|\leqslant N^{1-\varepsilon}}}
        \frac{1}{\sqrt{|I|}}
        \left|
        \sum_{j\in I}
        \left(
            \v_j(s,\eb)\v_j(s,\eb')-\scp{\eb}{\eb'}
        \right)
        \right|
        \leqslant N^{\xi/2}
    \]
    with overwhelming probability. A union bound over at most $N^2$ intervals and at most $N^4$ pairs $\eb,\eb'\in\Efr$ keeps the estimate overwhelming.

    It remains to treat microscopic and macroscopic intervals. If $|I|\leqslant N^\varepsilon$, then by isotropic delocalization,
    \[
        \left|
            \v_j(s,\eb)\v_j(s,\eb')-\scp{\eb}{\eb'}
        \right|
        \leqslant N^{\xi/2}
    \]
    uniformly in $j,\eb,\eb'$, with overwhelming probability, provided $\varepsilon$ is chosen small enough. Hence
    \[
        \frac{1}{\sqrt{|I|}}
        \left|
        \sum_{j\in I}
        \left(
            \v_j(s,\eb)\v_j(s,\eb')-\scp{\eb}{\eb'}
        \right)
        \right|
        \leqslant
        N^{\xi/2}|I|^{1/2}
        \leqslant N^\xi .
    \]
    If $|I|\geqslant N^{1-\varepsilon}$, we partition $I$ into consecutive intervals $I_\alpha$ of size between $N^\varepsilon$ and $N^{1-\varepsilon}$, up to one remainder of size at most $N^\varepsilon$. The number of intervals is at most $N^{2\varepsilon}$. Using the mesoscopic estimate on the intervals $I_\alpha$, the microscopic estimate on the possible remainder, and Cauchy--Schwarz, we get
    \[
    \begin{aligned}
        \frac{1}{\sqrt{|I|}}
        \left|
        \sum_{j\in I}
        \left(
            \v_j(s,\eb)\v_j(s,\eb')-\scp{\eb}{\eb'}
        \right)
        \right|
        \leqslant
        \frac{N^{\xi/2}}{\sqrt{|I|}}
        \sum_\alpha \sqrt{|I_\alpha|}
        +
        N^\xi |I|^{-\frac12}  
        \leqslant
        N^{\xi/2}N^\varepsilon+N^\xi |I|^{-\frac12}
        \leqslant N^\xi,
    \end{aligned}
    \]
    after decreasing $\varepsilon$ if necessary. This proves $\Afr_{2,s}$ with overwhelming probability for fixed $s$.

    The estimates above are uniform for $s$ in a polynomial net of $[0,1]$. The extension from the net to all $s\in[0,1]$ follows by the same H\"older regularity argument for the resolvent as in \cite{bourgade2017eigenvector}*{Lemma 4.2}. This gives
    \[
        \Pds\left(\Afr(\xi,\Qfr,\Efr,N)\right)\geqslant 1-N^{-D}
    \]
    for any $D>0$, after choosing the overwhelming-probability exponents in the local laws sufficiently large. The same time-net argument gives the time-uniform rigidity estimate
    \[
        \Pds\left(\Afr_3(\xi,N)\right)\geqslant 1-N^{-D}.
    \]

    Finally, the estimate on $\Lfr(\xi,\Qfr,\Efr,\nu,N)$ follows from Markov's inequality. Indeed,
    \[
    \begin{aligned}
    \Pds\left(\Lfr(\xi,\Qfr,\Efr,\nu,N)^c\right)
    &\leqslant
    \Pds\left(\Afr_3(\xi,N)^c\right)
    +
    \Pds\left(
        \Pds\left(\Afr(\xi,\Qfr,\Efr,N)^c\vert \bm{\lambda}\right)>N^{-\nu}
    \right)\\
    &\leqslant
    \Pds\left(\Afr_3(\xi,N)^c\right)
    +
    N^\nu\Pds\left(\Afr(\xi,\Qfr,\Efr,N)^c\right).
    \end{aligned}
    \]
    Applying the previous overwhelming-probability estimates with exponent $D+\nu$ gives
    \[
        \Pds\left(\Lfr(\xi,\Qfr,\Efr,\nu,N)\right)\geqslant 1-N^{-D}.
    \]
      If $\bm{\lambda}\in\Lfr(\xi,\Qfr,\Efr,\nu,N)$, then 
    $\{\tau < 1\}=\Afr(\xi,\Qfr,\Efr,N)^c$, and therefore
    \[
        \Pds(\tau < 1\vert \bm{\lambda})
        \leqslant N^{-\nu}.
    \]   
\end{proof}
We have the following consequence of rigidity estimates. Since this is classical, we omit the proof.
\begin{lemma}\label{lem:rigidity}
  Let $\xi,\nu>0$ and $N^{2\xi}\leqslant \ell \leqslant N^{1-\xi}$. Suppose that $\bm{\lambda}\in \Lfr(\xi,\Qfr,\Efr,\nu,N)$. There exist $c,C>0$, such that
  \[
    c\left(\frac{N}{\ell}\right)^{\frac{1}{3}}\leqslant \inf_{t\in[0,1]}\inf_{i\in\unn{1}{N}}\beta_i(t) \leqslant \sup_{t\in[0,1]}\sup_{i\in\unn{1}{N}}\beta_i(t)  \leqslant C\frac{N}{\ell} 
  \]
  where $\beta_i(t)\coloneqq \sum_{j,\vert i-j\vert \geqslant \ell} c_{ij}(t)$.
\end{lemma}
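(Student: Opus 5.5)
We plan to reduce everything to deterministic estimates on the rigidity event $\Afr_3(\xi,N)$, which holds for every path $\bm{\lambda}\in\Lfr(\xi,\Qfr,\Efr,\nu,N)$. Fix $t\in[0,1]$ and $i$. By the time change convention, $\lambda_j(t)$ is within $N^{-2/3+\xi}\min(j,N-j+1)^{-1/3}$ of $\gamma_j$. We compare $\lambda_i-\lambda_j$ with $\gamma_i-\gamma_j$ for $\vert i-j\vert\geqslant \ell\geqslant N^{2\xi}$. The semicircle quantiles satisfy $\gamma_{j+1}-\gamma_j\asymp N^{-2/3}\min(j,N-j+1)^{-1/3}$. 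Hence, for $\vert i-j\vert\geqslant\ell$, the quantile distance $\vert\gamma_i-\gamma_j\vert$ is at least of order $\ell\,N^{-2/3}\min(\cdot)^{-1/3}$ evaluated at the endpoint closer to the edge. Since $\ell\geqslant N^{2\xi}\gg N^{\xi}$, it dominates the rigidity errors at both endpoints, because the error at the farther-from-edge endpoint is even smaller. This yields $\vert\lambda_i(t)-\lambda_j(t)\vert\asymp\vert\gamma_i-\gamma_j\vert$ uniformly in $t$ and in $\vert i-j\vert\geqslant\ell$. So it suffices to bound $\sum_{\vert i-j\vert\geqslant \ell}\frac{1}{N(\gamma_i-\gamma_j)^2}$ from above and below.

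For the upper bound, we use that $\vert\gamma_i-\gamma_j\vert\gtrsim\vert i-j\vert/N$ always holds, since the density of quantiles is at most of order $N$ per unit length. Then
\[
\beta_i(t)\lesssim\sum_{\vert i-j\vert\geqslant\ell}\frac{N}{\vert i-j\vert^2}\lesssim\frac{N}{\ell}.
\]
This is sharp in the bulk.

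For the lower bound, the worst case is $i$ near an edge, say $i=1$, where spacings are largest. It suffices to keep the terms with $j$ in a window. Using $\gamma_j+2\asymp(j/N)^{2/3}$ near the left edge, for $\ell\leqslant \vert i-j\vert\leqslant 2\ell$ with $i\lesssim\ell$ we get $\vert\gamma_i-\gamma_j\vert\lesssim(\ell/N)^{2/3}$. Counting about $\ell$ such $j$ gives
\[
\beta_i\gtrsim \ell\cdot\frac{1}{N}\left(\frac{N}{\ell}\right)^{4/3}=\left(\frac{N}{\ell}\right)^{1/3}.
\]
For general $i$, set $m=\min(i,N-i+1)$. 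If $m\lesssim\ell$, the same computation applies. If $m\gtrsim\ell$, the window of $\ell$ indices beyond distance $\ell$ on the bulk side has spacing scale $N^{-2/3}m^{-1/3}$. This gives $\vert\gamma_i-\gamma_j\vert\lesssim\ell N^{-2/3}m^{-1/3}$ and hence
\[
\beta_i\gtrsim \ell\cdot\frac{N^{1/3}m^{2/3}}{\ell^2}=\frac{N^{1/3}m^{2/3}}{\ell}\gtrsim\left(\frac{N}{\ell}\right)^{1/3}.
\]
The last inequality uses $m\gtrsim\ell$. One must check that the window stays inside $\unn{1}{N}$, which is possible since $\ell\leqslant N^{1-\xi}$: choose $j$ on the side away from the nearer edge.

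The main obstacle is the uniform comparison $\vert\lambda_i-\lambda_j\vert\asymp\vert\gamma_i-\gamma_j\vert$ near the edges, where the rigidity error at index $i$ may be comparable to local spacings. We handle it by observing that the error is at most $N^{\xi}$ times the local spacing at $\min(i,j,\dots)$. The sum of at least $\ell\geqslant N^{2\xi}$ spacings from the quantile comparison then absorbs it, using monotonicity of the spacing in the distance to the edge. Uniformity over $t\in[0,1]$ is automatic, since $\Afr_3$ is time-uniform.
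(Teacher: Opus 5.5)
Your overall argument is correct and is the standard one the paper has in mind; the paper omits the proof as classical. You work deterministically on $\Afr_3(\xi,N)$, use $|\gamma_i-\gamma_j|\geqslant \pi|i-j|/N$ for the upper bound, and use a window of about $\ell$ indices at distance in $[\ell,2\ell]$ on the bulk side for the lower bound. Your two lower-bound regimes, $m\lesssim\ell$ and $m\gtrsim\ell$, are computed correctly, and the edge value $(N/\ell)^{1/3}$ is indeed sharp.

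One step is misstated, though, and the justification you give for it does not work. Let $m$ be the edge distance of the endpoint closer to the edge. For $|i-j|\geqslant\ell$ and $m\ll\ell$, the gap $|\gamma_i-\gamma_j|$ is \emph{not} bounded below by $\ell N^{-2/3}m^{-1/3}$. Spacings decrease away from the edge, so monotonicity gives an upper bound of that form, not a lower one. For instance, for $i=1$ and $j=1+\ell$ the gap is $\asymp(\ell/N)^{2/3}\ll\ell N^{-2/3}$. What is true is that each of the first $\ell$ spacings inward from that endpoint is $\gtrsim N^{-2/3}(m+\ell)^{-1/3}$, hence
\[
|\gamma_i-\gamma_j|\gtrsim \ell\,N^{-2/3}(m+\ell)^{-1/3}.
\]
The total rigidity error at the two endpoints is at most $2N^{-2/3+\xi}m^{-1/3}$. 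Against it, the gap gains a factor
\[
\gtrsim N^{-\xi}\ell\,\bigl(m/(m+\ell)\bigr)^{1/3}\geqslant cN^{-\xi}\ell^{2/3}\geqslant cN^{\xi/3}.
\]
So the absorption actually requires $\ell^{2/3}\gg N^{\xi}$, not merely $\ell\gg N^{\xi}$ as your write-up suggests. The hypothesis $\ell\geqslant N^{2\xi}$ covers this, since it gives $\ell^{2/3}\geqslant N^{4\xi/3}$.

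With this correction, $|\lambda_i(t)-\lambda_j(t)|\asymp|\gamma_i-\gamma_j|$ holds uniformly in $t\in[0,1]$ and in $|i-j|\geqslant\ell$. The rest of your proof then goes through unchanged.
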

Let $p\geqslant 1$, let $k_1,\dots,k_p\in\unn{1}{N}$ be distinct indices, let $a_1,\dots,a_p\geqslant 1$ and set $Q=\sum_{i=1}^p a_i$. For every $1\leqslant i\leqslant p$, let $(\q_{i\alpha})_{1\leqslant \alpha\leqslant a_i}$ be a family of deterministic unit vectors in $\Rbb^N$. We set
    \[
    \Jcal=\left\{(i,\alpha),\,1\leqslant i\leqslant p,\ 1\leqslant \alpha\leqslant a_i\right\},
    \qquad
    \q_{(i,\alpha)}=\q_{i\alpha},
    \qquad
    \Qfr=(\q_a)_{a\in\Jcal}.
    \]
    We denote
    \[
    \v_k(t,\q_{i\alpha})=\scp{\q_{i\alpha}}{\v_k(t)}.
    \]
    For each $1\leqslant i\leqslant p$, we define the Gram matrix
    \[
    \Gamma_i=\left(\scp{\q_{i\alpha}}{\q_{i\beta}}\right)_{1\leqslant \alpha,\beta\leqslant a_i}.
    \]
    Set 
    \[
    E \coloneqq \mathrm{Span}\{
        \q_{i\alpha}\text{ for }(i,\alpha)\in\Jcal
    \}\quad\text{and}\quad  
    R \coloneqq \dim(E)\leqslant Q.
    \]
    We fix an orthonormal basis $\Efr \coloneqq \{\eb_1,\dots,\eb_R\}$ of $E$. We have the following quantitative universality result for the Dyson vector flow.
\begin{theorem}\label{theo:maindyn}
    Let $\xi>0$, $\nu>0$ and $N^{2\xi}\leqslant \ell \leqslant N^{1-\xi}$. Let $h:\Rbb^Q\to\Rbb$ be a bounded $\Ccal^2(\Rbb^Q)$ function, which can depend on $N$. Then for   $t\in[0,1)$    such that $\frac{Nt^3}{\ell}\geqslant c_0$, we have for every $\bm{\lambda} \in \Lfr(\xi,\Qfr,\Efr,\nu,N)$,
    \begin{multline*}
    \left\vert
    \Eds\left[
        h\left(\left(\v_{k_i}(t,\q_{i\alpha})\right)_{(i,\alpha)\in\Jcal}\right)\middle\vert \bm{\lambda}
    \right]
    -
    \Eds\left[
        h(G)
    \right]
    \right\vert
    \\\leqslant  
    C\Vert \nabla^2 h\Vert_\infty \frac{N^{1+\xi}t}{\ell}
    \left(
        \frac{RQ}{\sqrt{\ell}}+\frac{Q^2}{\ell}
    \right)
    +
     C\sqrt{R}\Vert h\Vert_\infty \sqrt{N}\exp\left(-c\left(\frac{Nt^3}{\ell}\right)^\frac{1}{3}\right)
    +C\Vert h\Vert_\infty N^{-\nu}
    \end{multline*}
\noindent where $\Vert \nabla^2 h\Vert_\infty=\sup_{x\in\Rbb^Q}\Vert \nabla^2 h(x)\Vert_{\mathrm{op}}$ and $G=(G_{i\alpha})_{1\leqslant i\leqslant p,\,1\leqslant \alpha\leqslant a_i}$ is the centered Gaussian vector in $\Rbb^Q$ with covariance
\[
\Eds\left[G_{i\alpha}G_{j\beta}\right]
=
    \delta_{ij}\scp{\q_{i\alpha}}{\q_{j\beta}}.    
\]
\end{theorem}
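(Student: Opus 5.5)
Conditionally on the eigenvalue path $\bm\lambda\in\Lfr(\xi,\Qfr,\Efr,\nu,N)$, the eigenvector process is a time-inhomogeneous Markov process with generator $\Lscr_s$ from Theorem \ref{theo:dyson}. I will work with the finite family of coordinates $\v_k(s,\eb_r)$, $k\in\unn1N$, $r\in\unn1R$. Every observable $\v_{k_i}(s,\q_{i\alpha})$ is a fixed linear combination of these coordinates, and $\Xscr_{ij}$ acts on them coordinatewise.

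\textbf{Reference generator.} I introduce the reference generator $\hat\Lscr_s$ from the sketch, extended to $R$ directions:
\[
\hat\Lscr_s=\frac12\sum_{|i-j|<\ell}c_{ij}(s)\Xscr_{ij}^2+\sum_i\beta_i(s)\sum_{r=1}^R\left(\partial^2_{\v_i(\eb_r)}-\v_i(\eb_r)\partial_{\v_i(\eb_r)}\right).
\]
Let $\hat f(s,\cdot)$ solve the backward equation $\partial_s\hat f=-\hat\Lscr_s\hat f$ on $[0,t]$, with terminal value $\hat f(t)=F\coloneqq h\bigl((\v_{k_i}(\q_{i\alpha}))\bigr)$. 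Itô's formula, stopped at $\tau\wedge t$, gives the Duhamel identity
\[
\Eds[F(\v(t))\mid\bm\lambda]-\Eds[\hat f(0,\v(0))\mid\bm\lambda]=\Eds\Bigl[\int_0^{t\wedge\tau}(\Lscr_s-\hat\Lscr_s)\hat f(s,\v(s))\,\d s\Bigm|\bm\lambda\Bigr]+\O{\Vert h\Vert_\infty N^{-\nu}}.
\]
The last error comes from Lemma \ref{lem:good_event}, namely $\Pds(\tau<1\mid\bm\lambda)\leqslant N^{-\nu}$. The proof then has two parts.

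\textbf{Part 1: relaxation of the reference flow.} I show that $\hat f(0,\cdot)$ is close to $\Eds[h(G)]$. The OU part acts independently on each coordinate with rate $\beta_i(s)\geqslant c(N/\ell)^{1/3}$ by Lemma \ref{lem:rigidity}. The short-range part is a sum of rotations, which preserve the Gaussian measure $\gamma$ on $\Rbb^{N\times R}$. Hence the semigroup is $\gamma$-symmetric, and its Dirichlet form dominates the OU Dirichlet form. Its log-Sobolev constant therefore relaxes at a rate at least $\int_0^t\inf_i\beta_i\gtrsim t(N/\ell)^{1/3}=(Nt^3/\ell)^{1/3}$.

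Starting from the point mass at the initial state, I need a smoothing step to have finite entropy. I would do this by restricting to the $R$-dimensional projections needed and using the hypercontractive/entropy bound. This costs a prefactor of the form $\sqrt{R}\sqrt N$, which yields the term $\sqrt R\Vert h\Vert_\infty\sqrt N\exp(-c(Nt^3/\ell)^{1/3})$.

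The Gaussian marginal of the relevant coordinates $\v_{k_i}(\q_{i\alpha})$ under $\gamma$ has exactly the covariance $\delta_{ij}\scp{\q_{i\alpha}}{\q_{j\beta}}$, since distinct $k_i$ are independent rows.

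\textbf{Part 2: Duhamel error (the main obstacle).} The difference of generators is
\[
\Lscr_s-\hat\Lscr_s=\sum_{|i-j|\geqslant\ell}c_{ij}\Bigl(\sum_{r,r'}(\v_i(\eb_r)\v_i(\eb_{r'})-\delta_{rr'})\partial_{j r}\partial_{jr'}-\v_i(\eb_r)\v_j(\eb_{r'})\partial_{ir}\partial_{jr'}\Bigr).
\]
To control it I need bounds on second derivatives of $\hat f(s)$ that are uniform in $s$. Since $\hat\Lscr_s$ is built from rotations and OU, both of which commute well with linear maps, one gets $|\partial^2\hat f|\lesssim\Vert\nabla^2h\Vert_\infty$. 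This derivative is only nonzero along directions reachable from the indices $k_1,\dots,k_p$: the OU part is diagonal, and short-range rotations spread mass only within distance $\ell$ per unit of time. In that sense the Hessian of $\hat f$ in row $j$ is a weighted, essentially probability-like profile in $j$, with total mass $\lesssim Q$ in the relevant directions.

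\textbf{The two error terms.} For the first term, I sum over $i$ using $c_{ij}\leqslant CN/\ell^2$ for $|i-j|\geqslant\ell$, decomposed dyadically in $|i-j|$. Summation by parts on intervals, together with the event $\Afr_{2}$, gives
\[
\Bigl|\sum_{i\in I}(\v_i\v_i-\delta)\Bigr|\leqslant N^\xi\sqrt{|I|}.
\]
This gives a gain $\ell^{-1/2}$ over the trivial bound $\beta_j\lesssim N/\ell$, producing $N^{1+\xi}RQ/\ell^{3/2}$ per unit time. The factor $R$ arises from the pairs $(r,r')$. For the cross term I use $\Afr_1$ to get $|\v|\leqslant N^{\xi/2}$, but both $i$ and $j$ must now lie in the Hessian support. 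This gives $Q^2$ times $\sup c_{ij}\leqslant N/\ell^2$, that is $N^{1+\xi}Q^2/\ell^2$. Integrating over $[0,t]$ produces the factor $t$.

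The delicate point is justifying the Hessian localization and the uniform bound for the backward reference flow, including the sum-by-parts of $\Afr_2$ against a smooth profile in $i$. This is where I expect most of the work.
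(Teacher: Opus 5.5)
\textbf{Gap in the cross term.} Your architecture is the paper's: the reference generator with an OU long-range part, the stopped Duhamel formula, log-Sobolev relaxation, and dyadic summation by parts against the monotone weights $c_{ij}$ using $\Afr_2$ for the diagonal term. However, your bound on the cross term $\Oscr_u$ does not hold, because the Hessian of the backward reference evolution is not localized.

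Conditionally on the short-range Brownian motions, the reference flow is affine Gaussian, $\hat X^{u,\x}_t=M_{u,t}\x+\Xi_{u,t}$. So $\nabla^2_{ij}g_u$ carries the factor $(M_{u,t})_{k_r i}(M_{u,t})_{k_s j}$, and the only control on the rows of $M_{u,t}$ is the contraction bound $\sum_j (M_{u,t})_{kj}^2\leqslant 1$.

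\begin{itemize}
\item Your ``probability-like profile of mass $\lesssim Q$'' is correct for the \emph{diagonal} blocks, which involve the squares $(M_{u,t})_{kj}^2$. This is exactly the bound $\sum_j\Vert\nabla_j^2g_u\Vert_{\mathrm{HS}}\leqslant Q\Vert\nabla^2h\Vert_\infty$ that the diagonal term needs.
\item The mixed blocks, however, sum to something like $(\sum_i\vert(M_{u,t})_{ki}\vert)^2$. This is of the order of the number of sites the row is spread over.
\item The short-range rotations do not confine that support ``within distance $\ell$''. Their rates $c_{ij}\asymp N/\vert i-j\vert^2$ are of order $N$ for neighbouring indices, and nothing forces $\sum_i\vert(M_{u,t})_{ki}\vert=O(1)$.
\end{itemize}

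No deterministic argument can repair this. With only $\vert\v_i\vert\leqslant N^{\xi/2}$ and $\sum_i\mu_i^2\leqslant1$, the form $\sum_{\vert i-j\vert\geqslant\ell}c_{ij}\mu_i\mu_jx_iy_j$ can be as large as the operator norm of the kernel $c_{ij}\mathds 1_{\vert i-j\vert\geqslant\ell}$. That norm is $\asymp N/\ell$: take $\mu$ flat on a long bulk interval. This yields a Duhamel error $tN^{1+\xi}/\ell\geqslant c_0N^{\xi}$, since $Nt/\ell\geqslant Nt^3/\ell\geqslant c_0$, which is not small.

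\textbf{The missing idea: sign cancellation.} You need the cancellation coming from the independent Rademacher signs in $\v_i=\sqrt N\varepsilon_i\u_i$. Write $\Oscr_ug_u(\z(u))=\hat{\Eds}\bigl[\sum_{a,b}\partial_{ab}h(Y_{u,t})S_{ab}(u)\bigr]$ with
\[
S_{ab}(u)=\sum_{\vert i-j\vert\geqslant\ell}c_{ij}(u)\mu_j^a(u)\mu_i^b(u)\v_i(u,\q_a)\v_j(u,\q_b),\qquad \mu_i^a(u)=(M_{u,t})_{k_ri}.
\]
The coefficients $\mu$ depend only on the auxiliary randomness. The true flow and the event $\{u<\tau\}$ are invariant under the signs, so only paired indices survive in the second moment:
\[
\Eds\left[\mathds 1_{\{u<\tau\}}\vert S_{ab}(u)\vert^2\middle\vert\bm\lambda\right]\leqslant CN^{2\xi}\sum_{\vert i-j\vert\geqslant\ell}c_{ij}^2(\mu_i^a)^2(\mu_j^b)^2\leqslant C\frac{N^{2+2\xi}}{\ell^4}.
\]
This uses only the $\ell^2$ row bound. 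Cauchy--Schwarz then gives $CQ^2N^{1+\xi}\ell^{-2}\Vert\nabla^2h\Vert_\infty$. The $\ell^1$ localization you hoped for is replaced by a Hilbert--Schmidt estimate.

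\textbf{Secondary point.} Your smoothing step in Part 1 and your Hessian bounds are only asserted. Both follow from the same conditional affine-Gaussian representation, which you should make explicit rather than appeal to hypercontractivity:
\begin{itemize}
\item The covariance satisfies $\Sigma_{0,s}=\Id_N-M_{0,s}M_{0,s}^\top\succeq(1-\e^{-c(N/\ell)^{1/3}s})\Id_N$.
\item This gives an explicit Gaussian relative entropy $\leqslant CNR\bigl(1-\log(1-\e^{-c(N/\ell)^{1/3}t})\bigr)$ at time $t/2$.
\item Convexity of entropy then handles the average over the rotation noise. This is the source of the $\sqrt{RN}$ prefactor.
\end{itemize}
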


For each $i\in\unn{1}{p}$, there exists a matrix $O_i\in \Rbb^{a_i\times R}$ such that 
\[
\q_{i\alpha} = \sum_{k=1}^R (O_i)_{\alpha k}\eb_k\quad\text{for}\quad \alpha\in\unn{1}{a_i}.
\]
By definition of the Gram matrix $\Gamma_i$, we have that  $O_iO_i^\top=\Gamma_i.$
Moreover, since each $\q_{i\alpha}$ is a unit vector, we have
\[\Vert O_i\Vert_{\mathrm{HS}}^2=a_i,
\qquad
\Vert O_i\Vert_{\mathrm{op}}^2=\Vert \Gamma_i\Vert_{\mathrm{op}}\leqslant a_i.
\]
For each $j\in\unn{1}{N}$, we define the vector 
\[
\z_j(t) \coloneqq (\v_j(t,\eb_m))_{m=1}^R\in  \Rbb^R
\]
so that we have $(\v_{k_i}(t,\q_{i\alpha}))_{\substack{1\leqslant i\leqslant p\\1\leqslant \alpha\leqslant a_i}}
=\left(
    O_i\z_{k_i}(t)
\right)_{1\leqslant i\leqslant p}.$
We define the function 
\[
\tilde{h}(\x_1,\dots,\x_p) \coloneqq  h(O_1\x_1,\dots,O_p\x_p)\quad\text{for}\quad \x_i\in \Rbb^R
\]
so that we have $h\left(
    (\v_{k_i}(t,\q_{i\alpha}))_{(i,\alpha)\in\Jcal}  
\right)
=
\tilde{h}\left(
    \z_{k_1}(t),\dots,\z_{k_p}(t)
\right)$
and by construction we have the bound $\Vert \tilde{h}\Vert_\infty \leqslant \Vert h\Vert_\infty$. If $Z_1,\dots,Z_p$ are i.i.d $\mathcal{N}(0,\Id_R)$, then the vector $(O_iZ_i)_{1\leqslant i\leqslant p}$ is centered Gaussian and satisfies
\[
\Eds\left[(O_iZ_i)_\alpha (O_jZ_j)_\beta\right]
=
\delta_{ij}(O_iO_j^\top)_{\alpha\beta}
=
\delta_{ij}\scp{\q_{i\alpha}}{\q_{j\beta}}.   
\]
Therefore $(O_iZ_i)_{1\leqslant i\leqslant p}$ has the same law as $G$, and we have $\Eds\left[\tilde{h}(Z_1,\dots,Z_p)\right] = \Eds[h(G)].$
We thus have that it suffices to prove the theorem for the reduced observable $\tilde{h}\left(\z_{k_1},\dots,\z_{k_p}\right)$.

\subsection{Decomposition of the flow}
We define the deterministic observable $\F_{\k,\q}:\Rbb^{N\times N}\to\Rbb$
\[
\F_{\k,\q}(V)=\tilde{h}\left(\left(\langle \v_{k_i},\eb_m\rangle\right)_{\substack{1\leqslant i\leqslant p\\1\leqslant m\leqslant R}}\right)
=
\tilde{h}(\z_{k_1},\dots, \z_{k_p})
\]
where we denote similarly as above $\z_{k_i} = (\langle \v_{k_i},\eb_m\rangle)_{m=1}^R\in \Rbb^R$. 
We decompose $\mathscr{L}_t$ into its long-range and short-range parts
\[
\Lscr_t = \Lscr^L_t + \Lscr^S_t 
\]
with 
\[
\Lscr^S_t = \frac{1}{2}\sum_{\vert i-j\vert <\ell}c_{ij}(t)\Xscr_{ij}^2\quad\text{and}\quad 
\Lscr^L_t = \frac{1}{2}\sum_{\vert i-j\vert \geqslant \ell}c_{ij}(t)\Xscr_{ij}^2.
\]
If $g=g((\z_i)_{1\leqslant i\leqslant N})$ is a smooth function on $(\Rbb^R)^N$, we denote by $\nabla_i g\in\Rbb^R$ its gradient with respect to $\z_i$, by $\nabla_i^2 g\in\Rbb^{R\times R}$ the corresponding Hessian, by $\nabla_{ij}^2 g\in\Rbb^{R\times R}$ the mixed Hessian and by $\Delta_i g=\Tr(\nabla_i^2 g)$ the Laplacian in the variable $\z_i$. We have that
\[
\Xscr_{ij}^2 g
=
\z_j^\top \nabla_i^2 g\,\z_j + \z_i^\top \nabla_j^2 g\,\z_i -   2\z_j^\top \nabla_{ij}^2 g\,\z_i    - \z_i\cdot\nabla_i g - \z_j\cdot\nabla_j g.
\]
We can therefore rewrite the contribution of the long-range part as
\[
\mathscr{X}_{ij}^2g      
=
\left(\Delta_i-\z_i\cdot\nabla_i\right)g+\left(\Delta_j-\z_j\cdot\nabla_j\right)g
+
\left\langle \z_j\z_j^\top-\Id_R,\nabla_i^2 g\right\rangle_{\mathrm{HS}}    
+
\left\langle \z_i\z_i^\top-\Id_R,\nabla_j^2 g\right\rangle_{\mathrm{HS}}
    -2\z_j^\top\nabla_{ij}^2g\,\z_i.  
\]
We are replacing the long-range part by a generator of an Ornstein--Uhlenbeck process plus some error terms at the cost of controlling interval sums of $\z_i\z_i^\top-\Id_R$ and decorrelations between $\z_i$ and $\z_j$ for $i\neq j$.
  
This suggests writing the full generator as the sum of a reference generator with a remainder term. 
\begin{lemma}\label{lem:decomp_flow}
      We have    
    \[
    \Lscr_t = \hat{\Lscr}_t + \Dscr_t - \Oscr_t,
    \]
      where,    for any smooth function $g$ of the variables $(\z_i)_{1\leqslant i\leqslant N}\in(\Rbb^R)^N$
    \[
    \begin{gathered}
    \hat{\Lscr}_t g = \frac{1}{2}\sum_{\vert i-j\vert <\ell} c_{ij}(t)\Xscr_{ij}^2 g + \sum_{i=1}^N\beta_i(t)\left(
        \Delta_i-\z_i\cdot\nabla_i
    \right)g,\quad \beta_i(t) = \sum_{j,\vert i-j\vert \geqslant \ell} c_{ij}(t)\\
    \Dscr_tg=\sum_{\vert i-j\vert \geqslant \ell}
    c_{ij}(t)\left\langle \z_i\z_i^\top-\Id_R,\nabla_j^2 g\right\rangle_{\mathrm{HS}} 
    \quad\text{and}\quad 
    \Oscr_tg = \sum_{\vert i-j\vert \geqslant \ell} c_{ij}(t)    \z_j^\top\nabla_{ij}^2g\,\z_i   
    \end{gathered}
    \]
\end{lemma}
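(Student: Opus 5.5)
The identity is algebraic, so the plan is to substitute the expansion of $\Xscr_{ij}^2 g$ recorded just before the statement into $\Lscr^L_t$ and sort the terms. Since $\Lscr_t=\Lscr^S_t+\Lscr^L_t$ and $\Lscr^S_t$ already appears verbatim in $\hat{\Lscr}_t$, it suffices to show
\[
\Lscr^L_t g=\sum_{i=1}^N\beta_i(t)\left(\Delta_i-\z_i\cdot\nabla_i\right)g+\Dscr_tg-\Oscr_tg .
\]
Before that, I will record the expansion of $\Xscr_{ij}^2 g$ in the $\z$ variables: $g$ depends on $V$ only through the projections $\z_k=(\scp{\v_k}{\eb_m})_m$, and $\Xscr_{ij}$ acts on $\z_i,\z_j$ as $\z_i\cdot\nabla_j-\z_j\cdot\nabla_i$, with $\Xscr_{ij}\z_i=-\z_j$ and $\Xscr_{ij}\z_j=\z_i$. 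Applying $\Xscr_{ij}$ twice and using the product rule gives the stated formula. Since $\Xscr_{ij}$ is a first-order operator in the $\u$ variables, composing it with the linear map to $\z$ is legitimate. The $\sqrt N$ normalization of $\v$ does not affect the operator, because $\Xscr_{ij}$ is homogeneous of degree zero in the vectors.

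Next I will sum over the ordered pairs with $\vert i-j\vert\geqslant\ell$, with weight $\frac12 c_{ij}(t)$. The summation set and the coefficients are symmetric, $c_{ij}=c_{ji}$, so the terms of the expansion pair up. The OU part $(\Delta_i-\z_i\cdot\nabla_i)g+(\Delta_j-\z_j\cdot\nabla_j)g$ contributes $\frac12\cdot 2\sum_i\big(\sum_{j:\vert i-j\vert\geqslant\ell}c_{ij}\big)(\Delta_i-\z_i\cdot\nabla_i)g=\sum_i\beta_i(\Delta_i-\z_i\cdot\nabla_i)g$. The two Hilbert--Schmidt terms are exchanged under $i\leftrightarrow j$, so together they give $\sum_{\vert i-j\vert\geqslant\ell}c_{ij}\scp{\z_i\z_i^\top-\Id_R}{\nabla_j^2g}_{\mathrm{HS}}=\Dscr_tg$. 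The cross term gives $-\frac12\sum c_{ij}\,2\z_j^\top\nabla^2_{ij}g\,\z_i=-\Oscr_tg$.

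To confirm the rewriting of $\Xscr_{ij}^2$ itself, I will check that $\z_j^\top\nabla_i^2g\,\z_j=\Delta_ig+\scp{\z_j\z_j^\top-\Id_R}{\nabla_i^2g}_{\mathrm{HS}}$. Only the bookkeeping of the factor $\frac12$ against the double counting over ordered pairs needs care; no analytic obstacle is expected.
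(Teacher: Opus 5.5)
Your proposal is correct and follows essentially the paper's route. The paper also substitutes the expansion of $\Xscr_{ij}^2 g$ and symmetrizes using $c_{ij}=c_{ji}$; it does this for the whole generator before splitting off the short-range part, while you expand only $\Lscr^L_t$, which is slightly cleaner. Your bookkeeping of the factor $\frac12$ against ordered pairs, together with the identity $\z_j^\top\nabla_i^2 g\,\z_j=\Delta_i g+\langle \z_j\z_j^\top-\Id_R,\nabla_i^2 g\rangle_{\mathrm{HS}}$, is exactly what is needed.
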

\begin{proof}
    By the previous identity, after symmetrization we get
    \[
    \Lscr_t g = \sum_{i\neq j} c_{ij}(t)\left\langle \z_i\z_i^\top,\nabla_j^2 g\right\rangle_{\mathrm{HS}} -\sum_{i\neq j} c_{ij}(t)    \z_j^\top\nabla_{ij}^2g\,\z_i    - \sum_{i=1}^N\left(\sum_{j\neq i} c_{ij}(t)\right)\z_i\cdot\nabla_i g .
    \]
    If we denote
    \[
    \hat{\Lscr}_t = \frac{1}{2}\sum_{\vert i-j\vert <\ell} c_{ij}(t)\Xscr_{ij}^2 + \sum_{i=1}^N\beta_i(t)\left(
        \Delta_i-\z_i\cdot\nabla_i
    \right)
    \]
    and using $\beta_i(t) = \sum_{j,\vert i-j\vert \geqslant \ell} c_{ij}(t)$ we get the final result.
\end{proof}
If we denote $\Pscr_{s,t}$ the propagator of $\Lscr_t$ and similarly $\hat{\Pscr}_{s,t}$ the propagator of $\hat{\Lscr}_t$ then we can write
\begin{equation}\label{eq:decomp_flow}
\left\vert
\Eds\left[
    \F_{\mathbf{k},\mathbf{q}}(V_t)\middle\vert \bm{\lambda}
\right]
-
\Eds\left[
    h(G)
\right]
\right\vert
\leqslant    
\left\vert
\Eds\left[
    \Pscr_{0,t}\F_{\mathbf{k},\mathbf{q}}(V_0)\middle\vert \bm{\lambda}
\right]
-
\Eds\left[
    \hat{\Pscr}_{0,t}\F_{\mathbf{k},\mathbf{q}}(V_0)\middle\vert \bm{\lambda}
\right]
\right\vert
+
\left\vert
\Eds\left[
    \hat{\Pscr}_{0,t}\F_{\mathbf{k},\mathbf{q}}(V_0)\middle\vert \bm{\lambda}
\right]
-
\Eds\left[
    h(G)
\right]
\right\vert
\end{equation}

\subsection{Analysis of the reference generator} We now consider only the reference generator $\hat{\Lscr}_t$. 
In this subsection, we are going to focus on the second term of the decomposition above. The first term will be treated in the next subsection as a perturbation of the reference dynamics. We first need to understand the invariant measure of $\hat{\Lscr}_t$. We have the following lemma.
\begin{lemma}
    The measure 
    \[
    \phi_{N,R}(\d \z) = \prod_{i=1}^N \frac{1}{(2\pi)^{\frac{R}{2}}}\e^{-\frac{\Vert \z_i\Vert^2}{2}}\d \z_i
    \]
    is invariant for the dynamics generated by $\hat{\Lscr}_t$.
\end{lemma}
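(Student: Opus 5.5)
Invariance means $\int \hat{\Lscr}_t g \,\d\phi_{N,R}=0$ for every smooth, polynomially bounded $g$ on $(\Rbb^R)^N$, and I will check this term by term. The reference generator $\hat{\Lscr}_t$ has two kinds of pieces: the short-range rotation pieces $c_{ij}(t)\Xscr_{ij}^2$ with $\vert i-j\vert<\ell$, and the Ornstein--Uhlenbeck pieces $\beta_i(t)(\Delta_i-\z_i\cdot\nabla_i)$. The coefficients $c_{ij}(t)$ and $\beta_i(t)$ depend only on the eigenvalue path $\bm{\lambda}$ and not on $\z$, so it is enough to show that each piece, without its coefficient, integrates to zero against $\phi_{N,R}$.

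For the Ornstein--Uhlenbeck pieces, write $\d\phi_{N,R}=\prod_j \gamma_R(\z_j)\d\z_j$ with $\gamma_R$ the standard Gaussian density on $\Rbb^R$. Since $\nabla\gamma_R(\z)=-\z\,\gamma_R(\z)$, integration by parts in $\z_i$ gives
\[
\int (\Delta_i g)\gamma_R(\z_i)\,\d\z_i=\int \z_i\cdot\nabla_i g\,\gamma_R(\z_i)\,\d\z_i .
\]
So $\int(\Delta_i-\z_i\cdot\nabla_i)g\,\d\phi_{N,R}=0$. This is the standard fact that the Gaussian is reversible for the OU generator.

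For the rotation pieces, observe that $\Xscr_{ij}=\z_i\cdot\nabla_j-\z_j\cdot\nabla_i$. Written coordinatewise in the $R$ components $m$, it is $\sum_m\big(z_i(m)\partial_{z_j(m)}-z_j(m)\partial_{z_i(m)}\big)$. This is the infinitesimal generator of a simultaneous rotation in the planes $(z_i(m),z_j(m))$, $m=1,\dots,R$. The density of $\phi_{N,R}$ depends only on $\sum_k\Vert\z_k\Vert^2$, which these rotations preserve, so $\Xscr_{ij}(\text{density})=0$. The vector field $\Xscr_{ij}$ is also divergence free, since $\partial_{z_j(m)}z_i(m)=0$. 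Together these give $\int (\Xscr_{ij}f)\,\d\phi_{N,R}=-\int f\,\Xscr_{ij}(\text{density})\,\d\z=0$ for any suitable $f$. Applying this with $f=\Xscr_{ij}g$ gives $\int\Xscr_{ij}^2g\,\d\phi_{N,R}=0$. Thus $\Xscr_{ij}$ is antisymmetric in $L^2(\phi_{N,R})$, and $\Xscr_{ij}^2$ is symmetric with zero integral. Summing over $\vert i-j\vert<\ell$ and over $i$ with the nonnegative, $\z$-independent weights gives $\int\hat{\Lscr}_t g\,\d\phi_{N,R}=0$ for every $t$. Hence $\phi_{N,R}$ is invariant for the time-inhomogeneous propagator $\hat{\Pscr}_{s,t}$: differentiating $\int\hat{\Pscr}_{s,t}g\,\d\phi_{N,R}$ in $s$ gives zero. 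In fact each piece is symmetric, so $\phi_{N,R}$ is reversible, which is useful later for the log-Sobolev argument.

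The only point needing care is the justification of the integrations by parts. The boundary terms vanish because of the Gaussian decay, for $g$ smooth with polynomially bounded derivatives (such as $\tilde h$ composed with the coordinates). One should also note that $\hat{\Lscr}_t$ acts on functions of $(\z_i)_{i}\in(\Rbb^R)^N$: the operator $\Xscr_{ij}$ restricted to such functions is exactly the expression given before Lemma~\ref{lem:decomp_flow}, so no constraint from the orthogonality of the full eigenbasis enters. I do not expect a real obstacle; the main thing to verify is this closure of $\Xscr_{ij}$ on functions of the $\z$ variables.
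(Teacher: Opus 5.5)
Your proposal is correct and follows essentially the same route as the paper: Gaussian integration by parts handles the Ornstein--Uhlenbeck pieces. For the short-range pieces, you use that $\Xscr_{ij}$ is a divergence-free vector field annihilating the radial Gaussian density, so $\int \Xscr_{ij} f\,\d\phi_{N,R}=0$, and then take $f=\Xscr_{ij}g$. Your extra remarks on reversibility, on invariance for the time-inhomogeneous propagator $\hat{\Pscr}_{s,t}$, and on the Gaussian decay killing the boundary terms are consistent with how the lemma is used later in the entropy decay argument.
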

\begin{proof} 
For the part involving $\Xscr_{ij}^2$, since the $c_{ij}(t)$ do not depend on $\z$, we note that since $\phi_{N,R}$ has a density only involving $\sum_{j=1}^N\Vert \z_j\Vert^2$ we have $\Xscr_{ij}\phi_{N,R}=0$ since 
\[
\Xscr_{ij}\left(
    \sum_{k=1}^N\Vert \z_k\Vert^2
\right)
=
2\z_i\cdot\z_j - 2\z_j\cdot\z_i = 0.
\]
We note also that we can write $\Xscr_{ij} = \b_{ij}\cdot \nabla$ with $\b_{ij}$ a vector field such that $\mathrm{div}(\b_{ij})=0$ so that we can integrate by parts to get
\[
\int \Xscr_{ij} f \, \phi_{N,R}(\d \z) = 
-\int f \mathrm{div}(\b_{ij}\phi_{N,R})\d \z =
-\int f \mathrm{div}(\b_{ij})\phi_{N,R}\d \z - \int f (\b_{ij}\cdot\nabla) \phi_{N,R}\d \z =0 
\]
for any smooth function $f$. If now we replace $f$ by $\Xscr_{ij} g$ for some smooth function $g$, we get
\[
\int \Xscr_{ij}^2 g \, \phi_{N,R}(\d \z) = 0.
\]
For the second part of the generator, we can simply use Gaussian integration by parts to get
\[
\int \left(\Delta_i-\z_i\cdot\nabla_i\right) f \, \phi_{N,R}(\d \z) = 0.
\]
This shows that $\phi_{N,R}$ is invariant under the dynamics generated by $\hat{\Lscr}_t$.

\end{proof}
We are going to prove the following proposition. 
\begin{proposition}\label{prop:refgen}
    With our choice of parameters, conditionally on $\bm{\lambda}\in \Lfr(\xi,\Qfr,\Efr,\nu,N)$, we have 
    \[
    \left\vert
    \hat{\Pscr}_{0,{t}}\F_{\mathbf{k},\mathbf{q}}(V_0) - \Eds\left[\tilde{h}(Z_1,\dots,Z_p)\right]
    \right\vert   
    \leqslant C\sqrt{R}\Vert h\Vert_\infty\sqrt{N\left(1-\log\left({1-\e^{-c\left(\frac{N}{\ell}\right)^{\frac{1}{3}}t}}\right)\right)}\e^{-c\left(\frac{Nt^3}{\ell}\right)^{\frac{1}{3}}}.
    \]
\end{proposition}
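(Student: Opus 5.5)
We will prove the bound by an entropy (log-Sobolev) argument for the time-inhomogeneous reference dynamics, combined with a pointwise bound obtained from an initial regularization. Let $\mu_0$ be the law of the configuration $(\z_i(0))_{i}$. Actually, for $\hat{\Pscr}_{0,t}\F$ evaluated at a fixed initial configuration, we use the dual picture: $\hat{\Pscr}_{0,t}\F(V_0)=\int \F\,\d\mu_t$, where $\mu_t$ is the law at time $t$ of the $\hat{\Lscr}$-process started from the Dirac mass at $V_0$. Since $\phi_{N,R}$ is invariant, we have
\[
\left|\int\F\,\d\mu_t-\int\F\,\d\phi_{N,R}\right|\leqslant \Vert\tilde h\Vert_\infty\sqrt{2\,\mathrm{H}(\mu_t\vert\phi_{N,R})}
\]
by Pinsker. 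It remains to bound the relative entropy, and $\int \F\,\d\phi_{N,R}=\Eds[\tilde h(Z_1,\dots,Z_p)]$ because the marginals of $\phi_{N,R}$ on $\z_{k_1},\dots,\z_{k_p}$ are i.i.d.\ standard Gaussians.

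We first argue entropy decay. Write $f_s=\d\mu_s/\d\phi_{N,R}$. Differentiating along the forward equation and using invariance, we get
\[
\partial_s \mathrm{H}(\mu_s\vert\phi)=-\tfrac12\sum_{|i-j|<\ell}c_{ij}\int\frac{(\Xscr_{ij}f_s)^2}{f_s}\d\phi-\sum_i\beta_i(s)\int\frac{|\nabla_if_s|^2}{f_s}\d\phi .
\]
The first sum is nonpositive and is dropped; this is where the antisymmetry of $\Xscr_{ij}$ under $\phi$, shown in the previous lemma, enters. For the second, Lemma~\ref{lem:rigidity} gives $\beta_i(s)\geqslant c(N/\ell)^{1/3}$ uniformly on good paths. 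The Gaussian log-Sobolev inequality for $\phi_{N,R}$ (constant $1$, tensorized) then yields $\partial_s\mathrm{H}\leqslant -2c(N/\ell)^{1/3}\mathrm{H}$.

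The main obstacle is that a Dirac initial condition has infinite entropy, so we must first smooth. We split $[0,t]$ at a time $s_0$ and control $\mathrm{H}(\mu_{s_0}\vert\phi)$ directly. On $[0,s_0]$ we compare with the pure OU part: by Jensen/convexity, the rotation part $\sum c_{ij}\Xscr_{ij}^2$ generates isometries of the sphere structure and preserves $\phi$. The plan is to bound the entropy by that of a product of OU kernels. Each coordinate block $\z_i\in\Rbb^R$ started from a point $\x$ with $\Vert\x\Vert^2\lesssim RN^{\xi}$ (using $\Afr_{1,0}$ and delocalization) has, after OU time $\theta_i=\int_0^{s_0}\beta_i$, relative entropy of order
\[
\tfrac R2\left(-\log(1-\e^{-2\theta_i})\right)+\tfrac12\e^{-2\theta_i}\Vert\x\Vert^2/(1-\e^{-2\theta_i}) .
\]
Summing over $N$ blocks gives $\mathrm{H}(\mu_{s_0})\lesssim NR\bigl(1-\log(1-\e^{-c(N/\ell)^{1/3}s_0})\bigr)$ after absorbing bounded $\Vert\x\Vert^2$ terms. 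To justify that the short-range rotational part does not increase entropy beyond the OU bound, we use the conservation of $\sum_i\Vert \z_i\Vert^2$ by $\Xscr_{ij}$ together with the convexity of entropy under the rotation semigroup, which is a mixture of orthogonal maps preserving $\phi$. I expect this commuting/convexity step to be the delicate point. If direct commutation fails, the fallback is a coupling or Girsanov argument restricted to the good event, with the $N^{-\nu}$ loss accounted for elsewhere.

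Finally, we take $s_0=t/2$ and apply exponential decay on $[t/2,t]$ to get
\[
\mathrm{H}(\mu_t)\lesssim NR\Bigl(1-\log\bigl(1-\e^{-c(N/\ell)^{1/3}t}\bigr)\Bigr)\e^{-c(N/\ell)^{1/3}t}.
\]
Taking square roots and noting $(N/\ell)^{1/3}t=(Nt^3/\ell)^{1/3}$ gives the stated bound, after adjusting $c$.
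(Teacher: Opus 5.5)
Your architecture matches the paper's: Pinsker, then entropy decay on $[t/2,t]$ from the Gaussian log-Sobolev inequality and $\beta_i\geqslant c(N/\ell)^{1/3}$, then a crude entropy bound at time $t/2$. The gap is that crude bound at time $t/2$, the step you flag as delicate, which is not established, and the route you sketch fails. The short-range rotations do not commute with the OU part. A direct computation gives $[\beta_i(\Delta_i-\z_i\cdot\nabla_i)+\beta_j(\Delta_j-\z_j\cdot\nabla_j),\Xscr_{ij}]=(\beta_i-\beta_j)(2\nabla_i\cdot\nabla_j-\z_i\cdot\nabla_j-\z_j\cdot\nabla_i)$. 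The $\beta_i(s)$ genuinely vary with $i$ and $s$; Lemma~\ref{lem:rigidity} only pins them between $c(N/\ell)^{1/3}$ and $CN/\ell$. So the flow does not factor as the product OU semigroup followed by a random $\phi_{N,R}$-preserving rotation. Consequently neither your per-block entropy formula with $\theta_i=\int_0^{s_0}\beta_i$ nor the convexity-under-rotations step is justified. Conservation of $\sum_i\Vert\z_i\Vert^2$ alone does not control entropy, and the coupling/Girsanov fallback is left unspecified.

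The missing idea is that the reference SDE is linear with additive noise. Condition on the short-range Brownian motions $\hat B_{ij}$. Then the solution of $\d\hat X_u=\sum_{|i-j|<\ell}\sqrt{2c_{ij}(u)}A^{ij}\hat X_u\circ\d\hat B_{ij}(u)-D_u\hat X_u\,\d u+\sqrt{2D_u}\,\d\hat W_u$ is exactly Gaussian, with mean $M_{0,u}\x$ and covariance $\Sigma_{0,u}\otimes\Id_R$. Moreover $\Sigma_{0,u}=\Id_N-M_{0,u}M_{0,u}^\top$, because both sides solve the same matrix SDE with zero initial datum. Since $A^{ij}$ is skew-symmetric, $\frac{\d}{\d u}\Vert M_{0,u}\x\Vert^2=-2\langle M_{0,u}\x,D_uM_{0,u}\x\rangle\leqslant -c(N/\ell)^{1/3}\Vert M_{0,u}\x\Vert^2$. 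Hence $M_{0,u}M_{0,u}^\top\preceq\e^{-c(N/\ell)^{1/3}u}\Id_N$ and $\Sigma_{0,u}\succeq(1-\e^{-c(N/\ell)^{1/3}u})\Id_N$. Plugging into $\Hbf(\Ncal(\mu,\Sigma)\vert\Ncal(0,\Id_d))=\frac12(\Vert\mu\Vert^2+\Tr\Sigma-d-\log\det\Sigma)$ gives the conditional bound $CNR(1-\log(1-\e^{-c(N/\ell)^{1/3}t}))$ at $u=t/2$. Convexity of relative entropy in the average over $\hat B_{ij}$ then concludes. This is the rigorous version of your mixture intuition, with exact Gaussian conditional laws (non-orthogonal $M_{0,u}$, non-diagonal covariance) in place of rotated product OU kernels. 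Finally, bound the mean term by Parseval, $\Vert M_{0,u}\x\Vert^2\leqslant\sum_i\Vert\z_i\Vert^2=NR$, rather than by delocalization. Delocalization holds only on the good event and costs an $N^\xi$ factor that is absent from the statement.
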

The proof of Proposition \ref{prop:refgen} relies on the following lemma.
\begin{lemma}\label{lem:mixentropy}
    We have for a smooth function $\psi$ of $\x = (x_i)_{1\leqslant i\leqslant N}\in(\Rbb^R)^N$,
    \[
    \left\vert    
        \hat{\Pscr}_{0,t}\psi(\x)-\Eds_{\phi_{N,R}}\left[\psi(X)\right]
    \right\vert
    \leqslant \Vert \psi\Vert_\infty  \sqrt{2\Hbf_{\phi_{N,R}}(q_{0,t}(\x,\cdot))},
    \]
    where $q_{0,t}$ is the density at time $t$ of the reference flow started at $\x$ at time $0$ with respect to $\phi_{N,R}$, in particular we have 
    \[
    \hat{\Pscr}_{0,t}\psi(\x) = \int \psi(\u)q_{0,t}(\x,\u)\phi_{N,R}(\d \u)
    \]
    which exists since $\phi_{N,R}$ is invariant for the dynamics generated by $\hat{\Lscr}_t$ and $\Hbf_{\phi_{N,R}}$ is the relative entropy with respect to $\phi_{N,R}$ defined as
    \[
    \Hbf_{\phi_{N,R}}(q) = \int q(\u)\log q(\u)\phi_{N,R}(\d \u).
    \]
\end{lemma}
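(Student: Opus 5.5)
The plan is to combine the transition density representation with Pinsker's inequality. Since $\phi_{N,R}$ is invariant for the reference dynamics, the law of the flow at time $t$ started from $\x$ is absolutely continuous with respect to $\phi_{N,R}$. This holds because the Ornstein--Uhlenbeck part $\sum_i\beta_i(t)(\Delta_i-\z_i\cdot\nabla_i)$ is uniformly elliptic, with $\beta_i(t)\geqslant c(N/\ell)^{1/3}>0$ by Lemma \ref{lem:rigidity}. We therefore write
\[
\hat{\Pscr}_{0,t}\psi(\x)=\int \psi(\u)\,q_{0,t}(\x,\u)\,\phi_{N,R}(\d\u),
\]
with $q_{0,t}(\x,\cdot)\geqslant 0$ and $\int q_{0,t}(\x,\u)\phi_{N,R}(\d\u)=1$. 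The existence and smoothness of $q_{0,t}$ I would take from standard parabolic theory for the time-inhomogeneous generator. The added $\Xscr_{ij}^2$ terms are degenerate but do not spoil ellipticity, and the coefficients $c_{ij}(t)$ are bounded on the good eigenvalue path, so the generator is smooth with bounded coefficients in time.

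Next, subtract the Gaussian expectation $\Eds_{\phi_{N,R}}[\psi(X)]=\int\psi\,\d\phi_{N,R}$ to get
\[
\left\vert\hat{\Pscr}_{0,t}\psi(\x)-\Eds_{\phi_{N,R}}[\psi(X)]\right\vert
=\left\vert\int\psi(\u)\left(q_{0,t}(\x,\u)-1\right)\phi_{N,R}(\d\u)\right\vert
\leqslant\Vert\psi\Vert_\infty\int\left\vert q_{0,t}(\x,\u)-1\right\vert\phi_{N,R}(\d\u).
\]
The last integral is twice the total variation distance between the probability measures $\mu=q_{0,t}(\x,\cdot)\phi_{N,R}$ and $\phi_{N,R}$. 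Pinsker's inequality gives $\Vert\mu-\phi_{N,R}\Vert_{\mathrm{TV}}\leqslant\sqrt{\tfrac12\Hbf(\mu\,\vert\,\phi_{N,R})}$, where $\Hbf(\mu\,\vert\,\phi_{N,R})=\int q\log q\,\d\phi_{N,R}=\Hbf_{\phi_{N,R}}(q_{0,t}(\x,\cdot))$. Hence $\int\vert q-1\vert\,\d\phi_{N,R}\leqslant\sqrt{2\Hbf_{\phi_{N,R}}(q)}$, which is exactly the claimed bound.

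The only step needing care is the existence of the density, which is where I expect the main (mild) obstacle. At $t=0$ the law is the Dirac mass at $\x$ and the entropy is infinite, so the bound is meaningful only for $t>0$, where the OU smoothing yields a density. If the density is finite but $\Hbf_{\phi_{N,R}}$ is infinite, the inequality holds trivially, so no integrability assumption beyond the definition is needed. Pinsker itself I would quote as standard; if a self-contained argument were wanted, I would use the elementary inequality $3(q-1)^2\leqslant(4+2q)(q\log q-q+1)$ together with Cauchy--Schwarz.
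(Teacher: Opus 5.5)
Your proof is correct and matches the paper's. Both write $\hat{\Pscr}_{0,t}\psi(\x)$ as an integral against $q_{0,t}(\x,\cdot)\,\phi_{N,R}$, bound the difference by $\Vert\psi\Vert_\infty\Vert q_{0,t}(\x,\cdot)-1\Vert_{L^1(\phi_{N,R})}$, and apply Pinsker's inequality. Your justification of the density through the nondegenerate Ornstein--Uhlenbeck noise is more accurate than the paper's appeal to invariance alone, which does not by itself give a density. It can be made fully explicit with the conditionally Gaussian representation from the proof of Lemma \ref{lem:entropy_bound}, whose covariance satisfies $\Sigma_{0,t}\succeq(1-\e^{-c(N/\ell)^{1/3}t})\Id_N$.
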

\begin{proof}
    We can write 
    \[
    \hat{\Pscr}_{0,t}\psi(\x) = \int \psi(\u)q_{0,t}(\x,\u)\phi_{N,R}(\d \u)
    \]
    and thus by triangle inequality,
    \[
    \left\vert    
        \hat{\Pscr}_{0,t}\psi(\x)-\Eds_{\phi_{N,R}}\left[\psi(X)\right]
    \right\vert
    =
    \left\vert
        \int \psi(\u)(q_{0,t}(\x,\u)-1)\phi_{N,R}(\d \u)
    \right\vert
    \leqslant
    \Vert \psi\Vert_\infty  \int \vert q_{0,t}(\x,\u)-1\vert\phi_{N,R}(\d \u).
    \]
    Now we can use Pinsker's inequality to get 
    \[
    \int \vert q_{0,t}(\x,\u)-1\vert\phi_{N,R}(\d \u) 
    = \Vert q_{0,t}(\x,\cdot)-1\Vert_{L^1(\phi_{N,R})} 
    \leqslant \sqrt{2\Hbf_{\phi_{N,R}}(q_{0,t}(\x,\cdot))}.
    \]
\end{proof}
The following lemma gives us an entropy decay estimate for the reference dynamics.
\begin{lemma}\label{lem:entropy_decay}
    Let $f_u$ solve the forward equation $\partial_u f_u = \hat{\Lscr}_u f_u$\footnote{We note that $\hat{\Lscr}_u$ is symmetric in $L^2(\phi_{N,R})$ so that the forward equation is driven by $\hat{\Lscr}=\hat{\Lscr}^*$.}. Then for $u\leqslant v$ we have conditionally on $\bm{\lambda}\in \Lfr(\xi,\Qfr,\Efr,\nu,N)$,
    \[
    \Hbf_{\phi_{N,R}}(f_{v})\leqslant \e^{-c\left(\frac{N(v-u)^3}{\ell}\right)^{\frac{1}{3}}}\Hbf_{\phi_{N,R}}(f_{u})
    \]
\end{lemma}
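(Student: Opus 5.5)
Differentiate the entropy along the forward flow and use a log-Sobolev inequality that comes only from the Ornstein--Uhlenbeck part of $\hat{\Lscr}_u$. Since $\phi_{N,R}$ is invariant and $\hat{\Lscr}_u$ is symmetric in $L^2(\phi_{N,R})$, the standard computation gives
\[
\partial_u \Hbf_{\phi_{N,R}}(f_u)=\int (\hat{\Lscr}_u f_u)\log f_u\,\d\phi_{N,R}=-\Dscr^{\hat{\Lscr}_u}(\sqrt{f_u})\cdot 4,
\]
where the Dirichlet form splits as
\[
\frac{1}{2}\sum_{|i-j|<\ell}c_{ij}(u)\int \frac{(\Xscr_{ij}f_u)^2}{f_u}\d\phi_{N,R}+\sum_{i=1}^N\beta_i(u)\int\frac{\Vert\nabla_i f_u\Vert^2}{f_u}\d\phi_{N,R}.
\]
The short-range term is nonnegative because the $c_{ij}(u)$ are nonnegative, so I drop it. By Lemma~\ref{lem:rigidity}, $\beta_i(u)\geqslant c(N/\ell)^{1/3}$ uniformly in $i$ and $u\in[0,1]$ on $\Lfr$. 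The remaining term is therefore at least $c(N/\ell)^{1/3}$ times the full Fisher information of $f_u$ relative to the standard Gaussian $\phi_{N,R}$. The Gaussian log-Sobolev inequality $\Hbf(f)\leqslant \frac12\int \Vert\nabla f\Vert^2/f\,\d\phi$ (dimension-free, constant one) then gives
\[
\partial_u\Hbf_{\phi_{N,R}}(f_u)\leqslant -2c\left(\frac{N}{\ell}\right)^{1/3}\Hbf_{\phi_{N,R}}(f_u).
\]
Gronwall between $u$ and $v$ yields decay $\e^{-c(N/\ell)^{1/3}(v-u)}$, and $(N/\ell)^{1/3}(v-u)=(N(v-u)^3/\ell)^{1/3}$. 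This is exactly the claimed rate.

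The steps, in order, are as follows. First, check that $\hat{\Lscr}_u$ is symmetric with respect to $\phi_{N,R}$. For the $\Xscr_{ij}^2$ part this follows from the divergence-free representation in the invariance lemma, which makes $\Xscr_{ij}$ antisymmetric, so $\Xscr_{ij}^2$ is symmetric. For the OU part it is Gaussian integration by parts. From this, obtain the integration-by-parts identities $\int (\Xscr_{ij}^2 f)\log f\,\d\phi=-\int (\Xscr_{ij}f)^2/f\,\d\phi$ and $\int(\Delta_i-\z_i\cdot\nabla_i)f\log f\,\d\phi=-\int\Vert\nabla_if\Vert^2/f\,\d\phi$. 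Second, use mass conservation, $\int\hat{\Lscr}_uf_u\,\d\phi=0$, to remove the "$+1$" term in the derivative of $f\log f$. Third, apply the lower bound on $\beta_i$ together with the Gaussian LSI, which tensorizes over the $N$ blocks in $\Rbb^R$.

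The main technical point is justifying the entropy derivative computation. The forward equation is time-inhomogeneous, and $f_u$ may vanish or be singular; at $u=0$ the initial datum is a point mass, which is why the paper later pays a $\log$ factor. I would handle this by the usual regularization: work with $f_u+\epsilon$, or start at a positive time where $f_u$ is smooth and positive. Smoothness follows from hypoellipticity, since the OU part is uniformly elliptic with coefficients $\beta_i\geqslant c(N/\ell)^{1/3}>0$. Then let $\epsilon\to0$ by monotone convergence. The coefficients $c_{ij}(u)$ are only conditionally deterministic given $\bm{\lambda}$, and they are bounded on $\Lfr$ since rigidity keeps the eigenvalues separated along the path. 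So conditionally on $\bm{\lambda}\in\Lfr$ the generator is a bona fide time-dependent diffusion with bounded coefficients, and all these manipulations are legitimate.
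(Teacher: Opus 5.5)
Your argument is correct and matches the paper's proof: differentiate the entropy, use invariance of $\phi_{N,R}$ to kill the $\int\hat{\Lscr}_u f_u\,\d\phi_{N,R}$ term, drop the nonnegative short-range Dirichlet form, bound $\beta_i(u)\geqslant c(N/\ell)^{1/3}$ via Lemma~\ref{lem:rigidity}, and close with the Gaussian log-Sobolev inequality and Gronwall. One inaccuracy in your regularity remarks: rigidity on $\Lfr$ does not keep adjacent eigenvalues separated, since it only places each $\lambda_i$ within about $N^{\xi}$ local spacings of $\gamma_i$, so the short-range $c_{ij}(u)$ need not be bounded; this does not affect the argument, which only uses $c_{ij}(u)\geqslant 0$ for those terms.
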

\begin{proof}
    We can compute the derivative of the entropy as 
    \[
    \frac{\d}{\d u}\Hbf_{\phi_{N,R}}(f_u) = \int \hat{\Lscr}_u f_u \log f_u \, \phi_{N,R}(\d \w) + \int \hat{\Lscr}_u f_u \, \phi_{N,R}(\d \w).
    \]
    The second term is zero since $\phi_{N,R}$ is invariant for the dynamics generated by $\hat{\Lscr}_u$. For the first term, we can use the definition of $\hat{\Lscr}_u$ to write
    \begin{align*}
    \int \hat{\Lscr}_u f_u \log f_u \, \phi_{N,R}(\d\w) &= 
      -4\sum_{\substack{i<j\\ |i-j|<\ell}} c_{ij}(u)    \int (\Xscr_{ij}\sqrt{f_u})^2\phi_{N,R}(\d\w) - 
    4\sum_{i=1}^N\beta_i(u)\int \Vert\nabla_i\sqrt{f_u}\Vert^2\phi_{N,R}(\d\w)\\
    &= -4\hat{\mathscr{E}}(\sqrt{f_u},\sqrt{f_u})
    \end{align*}
    where $\hat{\mathscr{E}}$ is the Dirichlet form associated to $\hat{\Lscr}_t$. Now we have, since $c_{ij}(t)\geqslant 0$, 
    \[
    \hat{\mathscr{E}}(f,f)
    \geqslant 
    \sum_{i=1}^N\beta_i(u)\int \Vert\nabla_i f\Vert^2\phi_{N,R}(\d\w)
    \geqslant c\left(\frac{N}{\ell}\right)^{\frac{1}{3}}\sum_{i=1}^N\int \Vert\nabla_i f\Vert^2\phi_{N,R}(\d\w)
    \geqslant \frac{1}{2}c\left(\frac{N}{\ell}\right)^{\frac{1}{3}}\Hbf_{\phi_{N,R}}(f^2)
    \]
    where we use Lemma \ref{lem:rigidity} to get the second inequality
    and we use the log-Sobolev inequality for the Gaussian measure to get the last inequality. Finally, if we apply this to $\sqrt{f_u}$ we get that 
    \[
    \frac{\d}{\d u}\Hbf_{\phi_{N,R}}(f_u)
    =
    -4\hat{\mathscr{E}}(\sqrt{f_u},\sqrt{f_u}) \leqslant -c\left(\frac{N}{\ell}\right)^{\frac{1}{3}}\Hbf_{\phi_{N,R}}(f_u)
    \]
    which finally gives, by Gronwall's lemma,
    \[
    \Hbf_{\phi_{N,R}}(f_v)\leqslant \e^{-c\left(\frac{N(v-u)^{3}}{\ell}\right)^{\frac{1}{3}}}\Hbf_{\phi_{N,R}}(f_u)
    \]
    which gives the desired result.
\end{proof}

We thus see that we need a crude bound on the entropy at a time before $t$ to get a good bound at time $t$. We have the following lemma. 
\begin{lemma}\label{lem:entropy_bound}
    We have for a path $\bm{\lambda}\in \Lfr(\xi,\Qfr,\Efr,\nu,N)$ and for any 
    $\x=(\x_i)_{1\leqslant i\leqslant N}\in(\Rbb^R)^N$ such that $\sum_{i=1}^N \Vert \x_i\Vert^2\leqslant NR,$
    \[
    \Hbf_{\phi_{N,R}}(q_{0,\frac{t}{2}}(\x,\cdot)) \leqslant CNR\left(1-\log\left({1-\e^{-c\left(\frac{N}{\ell}\right)^{\frac{1}{3}}t}}\right)\right).
    \]
\end{lemma}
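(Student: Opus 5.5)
The plan is to exploit the fact that the Ornstein--Uhlenbeck part of $\hat{\Lscr}_u$ acts independently on each block $\z_i$ and commutes with the rotational part $\frac12\sum_{|i-j|<\ell}c_{ij}(u)\Xscr_{ij}^2$. Since $\Xscr_{ij}$ generates rotations in the $(\z_i,\z_j)$ plane, it commutes with the radial quantity $\sum_k\Vert\z_k\Vert^2$. It also commutes with the sum $\sum_k(\Delta_k-\z_k\cdot\nabla_k)$, which is rotation invariant. The OU coefficients $\beta_i(u)$ depend on $i$, however, so exact commutation is not available. I would therefore avoid factorization and bound the entropy of $q_{0,t/2}(\x,\cdot)$ directly by a log-Sobolev/Harnack-type argument.

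The concrete approach uses the entropy-of-heat-kernel bound obtained from the Bakry--Émery $\Gamma_2$ calculus, or equivalently a Wang-type log-Harnack inequality. First, I would check a curvature-dimension condition. For the OU part $\sum_i\beta_i(u)(\Delta_i-\z_i\cdot\nabla_i)$ with weights $\beta_i\geqslant \rho\coloneqq c(N/\ell)^{1/3}$ from Lemma \ref{lem:rigidity}, the $\Gamma_2$ criterion gives curvature $\rho$. The rotational part is a sum of squares of Killing fields for the Gaussian measure: $\Xscr_{ij}$ is divergence free and preserves $\phi_{N,R}$, as shown in the invariance lemma. Its contribution to $\Gamma_2$ is nonnegative, since $\Xscr_{ij}$ commutes with $\nabla$ up to a rotation, which is isometric. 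Granting this, the time-inhomogeneous semigroup $\hat{\Pscr}_{s,t}$ satisfies the gradient commutation bound $|\nabla\hat{\Pscr}_{s,t}f|\leqslant \e^{-\rho(t-s)}\hat{\Pscr}_{s,t}|\nabla f|$, in the metric weighted by the $\beta_i$. The standard consequence is Wang's log-Harnack inequality
\[
\hat{\Pscr}_{0,s}(\log f)(\x)\leqslant \log \hat{\Pscr}_{0,s}f(\y)+\frac{\rho\,\Vert \x-\y\Vert^2}{2(\e^{2\rho s}-1)}\cdot C .
\]
Taking $f=q_{0,s}(\x,\cdot)$ and integrating in $\y$ against $\phi_{N,R}$ then yields
\[
\Hbf_{\phi_{N,R}}(q_{0,s}(\x,\cdot))\leqslant C\int\frac{\rho\Vert\x-\y\Vert^2}{\e^{2\rho s}-1}\phi_{N,R}(\d\y)\leqslant C\frac{\rho(NR+\Vert\x\Vert^2)}{\e^{2\rho s}-1}.
\]
With $\Vert\x\Vert^2\leqslant NR$ and $s=t/2$, this is $\leqslant CNR\,\rho/(\e^{\rho t}-1)$.

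It remains to convert this bound into the stated form $CNR(1-\log(1-\e^{-\rho t}))$. When $\rho t\gtrsim 1$, the ratio $\rho/(\e^{\rho t}-1)$ is exponentially small, and in particular it is $\lesssim \rho\e^{-\rho t}\lesssim 1$. When $\rho t$ is small, the ratio behaves like $1/t$ rather than like $\log(1/(\rho t))$. So if the Harnack route only gives the $1/t$ rate, I would instead use a more robust fallback that produces the logarithm directly. I would discard the rotations by convexity, and compare the density with the Gaussian at short times through the Gaussian transition density of the OU part with variance $1-\e^{-2\beta_i s}$ in each block. The relative entropy of $\mathcal N(\e^{-\beta s}\x_i,(1-\e^{-2\beta s})\Id_R)$ with respect to $\mathcal N(0,\Id_R)$ equals
\[
\frac R2\bigl(-\log(1-\e^{-2\beta s})-\e^{-2\beta s}\bigr)+\tfrac12\e^{-2\beta s}\Vert \x_i\Vert^2 .
\]
Summing over $i$ and using $\Vert\x\Vert^2\leqslant NR$ gives exactly $CNR(1-\log(1-\e^{-\rho t}))$, with $\beta_i\geqslant\rho$. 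To justify discarding the rotations, I would use that the rotation part generates a random orthogonal transformation $\Theta$, independent of the OU noise in law, and that it preserves $\phi_{N,R}$. The flow at time $s$ is then a mixture over $\Theta$-paths of Gaussian laws. Joint convexity of relative entropy bounds the entropy of the mixture by the averaged entropy of the Gaussians. Each Gaussian has a covariance, interlaced with the rotations, of the form $\int$ (rotated OU contributions), whose eigenvalues stay in $[1-\e^{-2\rho s},1]$, and its mean has norm at most $\Vert\x\Vert$.

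The main obstacle is this last point: with non-constant $\beta_i$ and interleaved rotations, the conditional law given the rotation path is Gaussian, since the dynamics is linear, but its covariance is not explicit. I would control it by noting that the dynamics is a linear SDE $\d\z=-B\z\,\d u+\sqrt{2B}\,\d W+(\text{rotation noise})$ that preserves $\phi_{N,R}$. Its conditional covariance $\Sigma_s$ therefore satisfies $\Sigma_s\leqslant \Id$ and $\Sigma_s\geqslant (1-\e^{-2\rho s})\Id$, the lower bound following from the OU noise alone via Duhamel. The Gaussian entropy formula then depends only on these eigenvalue bounds and on the mean norm.
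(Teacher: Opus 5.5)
Your fallback argument is the paper's proof. You condition on the short-range rotation noise, so that the law of $\hat X_{t/2}$ is Gaussian with mean $M_{0,t/2}\x$ of norm at most $\Vert\x\Vert$ and covariance $\Sigma_{0,t/2}\otimes\Id_R$ satisfying $(1-\e^{-\rho t})\Id_N\preceq\Sigma_{0,t/2}\preceq\Id_N$; you then insert this into the explicit Gaussian relative entropy formula and average over the rotations by convexity of the entropy. The log-Harnack route can be dropped, and its $\Gamma_2$ step is in any case not justified when the $\beta_i$ differ.

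The paper obtains both covariance bounds at once from the identity $\Sigma_{0,s}=\Id_N-M_{0,s}M_{0,s}^\top$, which holds because $\phi_{N,R}$ is invariant conditionally on each rotation path. It combines this with $\Vert M_{0,s}\x\Vert^2\leqslant\e^{-2\rho s}\Vert\x\Vert^2$, which uses skew-symmetry of the rotation noise. Your stationarity and Duhamel step should take this form: a Duhamel lower bound using only $D_u\succeq\rho\Id_N$ would lose a factor $\rho/\max_i\beta_i$.
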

\begin{proof}
    The idea is to return to the stochastic differential equation associated to the reference generator. We identify 
\(
(\Rbb^R)^N\simeq \Rbb^{N\times R}
\)
and let matrices on $\Rbb^N$ act on $\Rbb^{N\times R}$ by left multiplication. Equivalently, an $N\times N$ matrix $A$ acts as $A\otimes \Id_R$ on $\Rbb^N\otimes \Rbb^R$. For $1\leqslant i<j\leqslant N$, define the skew-symmetric matrix $A^{ij}$ by
\[
A^{ij}e_i=e_j,\qquad A^{ij}e_j=-e_i,\qquad A^{ij}e_k=0\quad\text{for }k\notin\{i,j\}.
\]
Thus, for $\x=(\x_1,\dots,\x_N)\in(\Rbb^R)^N$,
\begin{equation}\label{eq:defA}
    (A^{ij}\x)_i=-\x_j,\qquad
    (A^{ij}\x)_j=\x_i,\qquad
    (A^{ij}\x)_k=0\quad\text{for }k\notin\{i,j\}.
\end{equation}
With this convention,
\[
(A^{ij}\x)\cdot\nabla
=
-\x_j\cdot\nabla_i+\x_i\cdot\nabla_j
=
\Xscr_{ij}
\]
    and $A^{ij}$ is skew-symmetric. We then introduce an auxiliary probability space, independent of the true Dyson vector flow, with expectation denoted by $\hat{\Eds}$. The corresponding auxiliary Brownian motions are denoted by \(\hat{B}_{ij}\) and \(\hat{W}\). They should not be confused with the randomness of the true eigenvectors. If we also define the diagonal matrix $D_t$ with entries $\beta_i(t)$, then we can write the stochastic differential equation associated to $\hat{\Lscr}_t$, in the It\^o--Stratonovich form, as 
    \[
    \d \hat{X}_t 
    =
    \sum_{i<j,\vert i-j\vert < \ell} \sqrt{2c_{ij}(t)}A^{ij}\hat{X}_t\circ \d \hat{B}_{ij}(t) -D_t\hat{X}_t\d t +\sqrt{2D_t}\d \hat{W}_t,
    \]
    where $\hat{W}_t$ is a standard Brownian motion in $\Rbb^{N\times R}$ independent of the Brownian motions $\hat{B}_{ij}(t)$, and where $D_t$ and $\sqrt{D_t}$ act by left multiplication on $\Rbb^{N\times R}$. If we condition on the path of eigenvalues, and thus on $c_{ij}(t)$ and $D_t$, and on the Brownian motions $\hat{B}_{ij}(t)$, then only the additive noise $\hat{W}_t$ remains random. If we start at $\hat{X}_{0}=\x$, then this conditional law of $\hat{X}_{\frac{t}{2}}$ is Gaussian. The conditional mean is given by 
    \[
    \mu_{0,u}(\x) = \hat{\Eds}\left[\hat{X}_u\middle\vert \hat{B}_{ij},\hat{X}_{0}=\x\right] = M_{0,u}\x     
    \]
    with $M$ the solution of the equation
    \[
    \d M_{0,u}
    =
    \sum_{i<j,\vert i-j\vert < \ell}   
    \sqrt{2c_{ij}(u)}A^{ij}M_{0,u}\circ \d \hat{B}_{ij}(u) - D_uM_{0,u}\d u,\quad M_{0,0}=\Id_N.
    \]
    If we consider the centered process $Z_u = \hat{X}_u - \mu_{0,u}(\x)$, then we see that it solves the stochastic equation
    \[
    \d Z_u = \sum_{i<j,\vert i-j\vert < \ell} \sqrt{2c_{ij}(u)}A^{ij}Z_u\circ \d \hat{B}_{ij}(u) - D_uZ_u\d u +\sqrt{2D_u}\d \hat{W}_u.
    \]
    We want to control the covariance of $\hat{X}_u$. Conditionally on the short-range Brownian motions, the $R$ columns of $Z_u$ are independent and have the same covariance matrix. We denote this $N\times N$ covariance matrix by 
    \[
    \Sigma_{0,u}
    =
    \hat{\Eds}\left[Z_u^{(m)}(Z_u^{(m)})^\top \middle\vert \hat{B}_{ij}, \hat{X}_{0}=\x\right],
    \]
    where $Z_u^{(m)}$ is any fixed column of $Z_u$. If we consider the differential of $Z_u^{(m)}(Z_u^{(m)})^\top$, we get 
    \[
    \d\left(Z_u^{(m)}(Z_u^{(m)})^\top\right) 
    =
    \sum_{i<j,\vert i-j\vert < \ell}\sqrt{2c_{ij}(u)}\left(
        A^{ij}Z_u^{(m)}(Z_u^{(m)})^\top - Z_u^{(m)}(Z_u^{(m)})^\top A^{ij}
    \right)\circ \d \hat{B}_{ij}(u)
    \]
    \[
    \hspace{3cm}
    -\left(
        D_uZ_u^{(m)}(Z_u^{(m)})^\top + Z_u^{(m)}(Z_u^{(m)})^\top D_u - 2D_u
    \right)\d u  
    +\d N_u,
    \]
    with $(N_u)_u$ a matrix-valued martingale whose conditional expectation is zero. If we now take the conditional expectation, we get that $\Sigma_{0,u}$ solves the following matrix-valued differential equation
    \[
    \d \Sigma_{0,u} = 
    \sum_{i<j,\vert i-j\vert < \ell}\sqrt{2c_{ij}(u)}\left(
        A^{ij}\Sigma_{0,u} - \Sigma_{0,u} A^{ij}
    \right)\circ \d \hat{B}_{ij}(u)    
    -\left(
        D_u\Sigma_{0,u} + \Sigma_{0,u}D_u - 2D_u
    \right)\d u,
    \quad \Sigma_{0,0} = 0.
    \]
    If we now consider $S_{0,u} = M_{0,u}M_{0,u}^\top$, we similarly get 
    \[
    \d S_{0,u} 
    =
    \sum_{i<j,\vert i-j\vert < \ell}\sqrt{2c_{ij}(u)}\left(
        A^{ij}S_{0,u} - S_{0,u}A^{ij}
    \right)\circ \d \hat{B}_{ij}(u)    
    -\left(
        D_uS_{0,u} + S_{0,u}D_u
    \right)\d u,
    \quad S_{0,0} = \Id_N.
    \]
    Thus if we consider $\tilde{S}_{0,u}=\Id_N-S_{0,u}$, we get
    \[
    \d\tilde{S}_{0,u}
    =
    \sum_{i<j,\vert i-j\vert < \ell}\sqrt{    2c_{ij   }(u)}\left(
        A^{ij}\tilde{S}_{0,u} - \tilde{S}_{0,u}A^{ij}
    \right)\circ \d \hat{B}_{ij}(u)    
    -\left(
        D_u\tilde{S}_{0,u} + \tilde{S}_{0,u}D_u-2D_u
    \right)\d u,
    \quad \tilde{S}_{0,0} = 0.
    \]
    This is the same equation as $\Sigma_{0,u}$ with the same initial condition. By uniqueness of the solution of this equation, we get that 
    \[
    \Sigma_{0,u} = \tilde{S}_{0,u} = \Id_N-M_{0,u}M_{0,u}^\top.
    \]
    Thus we need to control $M_{0,u}$ in order to control $\Sigma_{0,u}$. We can compute for $\x\in(\Rbb^R)^N$, using the Stratonovich chain rule,
    \[
    \d \Vert M_{0,u} \x \Vert_{\mathrm{HS}}^2
    =
    2\langle M_{0,u} \x, \d (M_{0,u}\x)\rangle_{\mathrm{HS}}
    =
    -2\left\langle M_{0,u} \x, D_u M_{0,u} \x\right\rangle_{\mathrm{HS}}\d u,
    \]
    since the noise is skew-symmetric and thus does not contribute to the norm. Now we can use the lower bound on $D_u$ to get
    \[
    \frac{\d}{\d u} \Vert M_{0,u} \x \Vert_{\mathrm{HS}}^2
    \leqslant -c\left(\frac{N}{\ell}\right)^{\frac{1}{3}}\Vert M_{0,u} \x\Vert_{\mathrm{HS}}^2
    \]
    and thus by Gronwall's lemma,
    \[  
    \Vert M_{0,u} \x\Vert_{\mathrm{HS}}^2 \leqslant \e^{-c\left(\frac{N}{\ell}\right)^{\frac{1}{3}}u}\Vert \x\Vert_{\mathrm{HS}}^2.
    \]
    In particular,
    \[
    M_{0,u}M_{0,u}^\top \preceq \e^{-c\left(\frac{N}{\ell}\right)^{\frac{1}{3}}u}\Id_N
    \quad\text{and thus}\quad 
    \Sigma_{0,u} \succeq \left(1-\e^{-c\left(\frac{N}{\ell}\right)^{\frac{1}{3}}u}\right)\Id_N.
    \]
     We know that the conditional law of $\hat{X}_u$ is Gaussian with mean $\mu_{0,u}(\x)$ and covariance $\Sigma_{0,u}\otimes \Id_R$. Besides, the relative entropy of a $\mathcal{N}(\mu,\Sigma)$ with respect to $\mathcal{N}(0,\Id_d)$ is given by 
    \[
    \Hbf(\Ncal(\mu,\Sigma)\vert \Ncal(0,\Id_d))
    =
    \frac{1}{2}\left(
        \Vert \mu\Vert^2+\Tr(\Sigma)-d-\log\det \Sigma
    \right).
    \]
    Therefore, if we denote by $\tilde{q}_{0,\frac{t}{2}}(\x,\cdot)$ the conditional density with respect to the short-range Brownian motions, we get
    \[
    \Hbf_{\phi_{N,R}}(\tilde{q}_{0,\frac{t}{2}}(\x,\cdot))
    =
    \frac{1}{2}\left(
        \Vert M_{0,\frac{t}{2}}\x\Vert_{\mathrm{HS}}^2
        +R\Tr \Sigma_{0,\frac{t}{2}} -NR
        -R\log\det\Sigma_{0,\frac{t}{2}}
    \right).
    \] 
    We can use the estimates obtained earlier in the proof. In particular, by the contraction property of $M$, we have
    \[
    \Vert M_{0,\frac{t}{2}}\x\Vert_{\mathrm{HS}}^2\leqslant \Vert \x\Vert_{\mathrm{HS}}^2\leqslant NR.
    \]
    In our application, this follows from Parseval since
    \[
    \sum_{i=1}^N\Vert \z_i\Vert^2
    =
    \sum_{m=1}^R\sum_{i=1}^N \v_i(\eb_m)^2
    =
    NR.
    \]
    Now, since $\Id_N \succeq \Sigma_{0,u}$, we have $\Tr(\Sigma_{0,\frac{t}{2}})\leqslant N$. Finally, since we additionally have that 
    \[
    \Sigma_{0,u}\succeq \left(1-\e^{-c\left(\frac{N}{\ell}\right)^{\frac{1}{3}}u}\right) \Id_N,
    \]
    we get that 
    \[
    -\log\det \Sigma_{0,\frac{t}{2}} \leqslant -N\log\left({1-\e^{-c\left(\frac{N}{\ell}\right)^{\frac{1}{3}}t}}\right),
    \] 
    up to changing the value of $c>0$. This finally gives 
    \[
    \Hbf_{\phi_{N,R}}\left(
        \tilde{q}_{0,\frac{t}{2}}(\x,\cdot)
    \right)
    \leqslant \frac{NR}{2} -\frac{NR}{2}\log\left({1-\e^{-c\left(\frac{N}{\ell}\right)^{\frac{1}{3}}t}}\right) 
    \leqslant CNR\left(1-\log\left({1-\e^{-c\left(\frac{N}{\ell}\right)^{\frac{1}{3}}t}}\right)\right).
    \]
    We then finish using the convexity of the relative entropy to write, with $\tilde{\Eds}$ the expectation over the short-range Brownian motions,
    \[
    \Hbf_{\phi_{N,R}}\left(
        q_{0,\frac{t}{2}}(\x,\cdot)
    \right)
    =
    \Hbf_{\phi_{N,R}}\left(
        \tilde{\Eds}\left[
            \tilde{q}_{0,\frac{t}{2}}(\x,\cdot)
        \right]
    \right)
    \leqslant    
    \tilde{\Eds}\left[
        \Hbf_{\phi_{N,R}}\left(
            \tilde{q}_{0,\frac{t}{2}}(\x,\cdot)
        \right)
    \right]
    \leqslant CNR\left(1-\log\left({1-\e^{-c\left(\frac{N}{\ell}\right)^{\frac{1}{3}}t}}\right)\right).
    \]
\end{proof}
We are now ready to prove Proposition \ref{prop:refgen}.
\begin{proof}[Proof of Proposition \ref{prop:refgen}]
    By Lemma \ref{lem:mixentropy}, applied to the observable 
    \[
    \F_{\mathbf{k},\mathbf{q}}(\z)=\tilde{h}(\z_{k_1},\dots,\z_{k_p}),
    \]
    we have that
    \[
    \left\vert    
        \hat{\Pscr}_{0,t}\F_{\mathbf{k},\mathbf{q}}(\x) - \Eds_{\phi_{N,R}}\left[\F_{\mathbf{k},\mathbf{q}}(Z)\right]
    \right\vert
    \leqslant \Vert \tilde{h}\Vert_\infty  \sqrt{2\Hbf_{\phi_{N,R}}(q_{0,t}(\x,\cdot))}.
    \]
    Since under $\phi_{N,R}$ the random vectors $Z_{k_1},\dots,Z_{k_p}$ are independent standard Gaussian vectors in $\Rbb^R$, we have
    \[
    \Eds_{\phi_{N,R}}\left[\F_{\mathbf{k},\mathbf{q}}(Z)\right]
    =
    \Eds\left[\tilde{h}(Z_1,\dots,Z_p)\right],
    \]
    where $Z_1,\dots,Z_p$ are i.i.d. $\Ncal(0,\Id_R)$. Using Lemmas \ref{lem:entropy_decay} and \ref{lem:entropy_bound}, we get that
    \[
    \Hbf_{\phi_{N,R}}(q_{0,t}(\x,\cdot))
    \leqslant 
    \e^{-c\left(\frac{Nt^3}{8\ell}\right)^{\frac{1}{3}}}\Hbf_{\phi_{N,R}}(q_{0,\frac{t}{2}}(\x,\cdot))
    \leqslant CNR\left(1-\log\left({1-\e^{-c\left(\frac{N}{\ell}\right)^{\frac{1}{3}}t}}\right)\right)\e^{-c\left(\frac{Nt^3}{\ell}\right)^{\frac{1}{3}}}.
    \]
    Since $\Vert \tilde{h}\Vert_\infty\leqslant \Vert h\Vert_\infty$, we get the bound
    \[
    \left\vert    
        \hat{\Pscr}_{0,t}\F_{\mathbf{k},\mathbf{q}}(\x) - \Eds\left[\tilde{h}(Z_1,\dots,Z_p)\right]
    \right\vert
    \leqslant C\sqrt{R}\Vert h\Vert_\infty\sqrt{N\left(1-\log\left({1-\e^{-c\left(\frac{N}{\ell}\right)^{\frac{1}{3}}t}}\right)\right)}\e^{-c\left(\frac{Nt^3}{\ell}\right)^{\frac{1}{3}}}.
    \]
\end{proof}
\subsection{Comparison of the full dynamics with the reference dynamics} 
We now consider the first term in \eqref{eq:decomp_flow}. In this subsection, we work conditionally on a path 
\(
    \bm{\lambda}\in \Lfr(\xi,\Qfr,\Efr,\nu,N).
\)
In particular, the rigidity estimates from Lemma \ref{lem:rigidity} hold deterministically along the path $\bm{\lambda}$. Throughout this section, we use the backward flow with respect to the reference generator given in the following definition. 

\begin{definition}\label{def:backwardflow}
Fix a path \(\bm{\lambda}\in \Lfr(\xi,\Qfr,\Efr,\nu,N)\), \(t\in[0,1]\), and \(\mathbf{k}=(k_1,\dots,k_p)\), \(\a=(a_1,\dots,a_p)\) of the theorem. For \(0\le u\le t\), we define
\[
        g_u(\x):=\hat{\Pscr}_{u,t}\F_{\mathbf{k},\mathbf{q}}(\x),
\]
that is, \(g_u\) is the solution of the backward equation
\[
        \partial_u g_u+\hat{\Lscr}_u g_u=0,
        \qquad
        g_t(\x)=\F_{\mathbf{k},\mathbf{q}}(\x)
        =
        \tilde{h}(\x_{k_1},\dots,\x_{k_p}).
\]
Equivalently, \(g_u\) admits the following stochastic representation. Let
\((\hat{B}_{ij})_{i<j}\) and \(\hat{W}\) be auxiliary independent Brownian motions,
independent of the true Dyson vector flow. Let \(\hat{X}^{u,\x}_s\), \(s\in[u,t]\),
solve the reference SDE
\[
\begin{aligned}
        \d \hat{X}^{u,\x}_s
        &=
        \sum_{i<j,\,{\vert i-j|<\ell}}
        \sqrt{2c_{ij}(s)}\,A^{ij}\hat{X}^{u,\x}_s
        \circ \d \hat{B}_{ij}(s)
        -
        D_s\hat{X}^{u,\x}_s\,\d s
        +
        \sqrt{2D_s}\,\d \hat{W}_s,  \quad
        \hat{X}^{u,\x}_u=\x,
\end{aligned}
\]
where \(D_s:=\operatorname{diag}(\beta_1(s),\dots,\beta_N(s))\), acting by left multiplication on \((\Rbb^R)^N\simeq \Rbb^{N\times R}\), and \(A^{ij}\)
is defined in \eqref{eq:defA}. Then
\[
        g_u(\x)
        =
        \hat{\Eds}\left[
        \tilde{h}\left(
            (\hat{X}^{u,\x}_t)_{k_1},\dots,(\hat{X}^{u,\x}_t)_{k_p}
        \right)
        \right],
\]
where \(\hat{\Eds}\) denotes expectation only over the auxiliary
reference-flow randomness. Moreover, conditionally on the auxiliary Brownian motions
\((\hat{B}_{ij})_{i<j}\), the reference flow is affine Gaussian:
\[
        \hat{X}^{u,\x}_t
        =
        M_{u,t}\x+\Xi_{u,t}.
\]
Here \(M_{u,s}\), \(s\in[u,t]\), is an \(N\times N\) matrix acting on \((\Rbb^R)^N\) by left multiplication and solves
\[
\begin{aligned}
        \d M_{u,s}
        &=
        \sum_{i<j,|i-j|<\ell}
        \sqrt{2c_{ij}(s)}\,A^{ij}M_{u,s}
        \circ \d \hat{B}_{ij}(s)
        -
        D_sM_{u,s}\,\d s, \quad
        M_{u,u}=\mathrm{Id}.
\end{aligned}
\]
Conditionally on \((\hat{B}_{ij})_{i<j}\), \(\Xi_{u,t}\) is a centered
Gaussian vector in \((\Rbb^R)^N\), independent of \(\x\), with covariance
\[
        \Sigma_{u,t}\otimes \Id_R,
        \qquad
        \Sigma_{u,t}=I-M_{u,t}M_{u,t}^{\top}.
\]
Consequently,
\[
        g_u(\x)
        =
        \hat{\Eds}\left[
        \tilde{h}\left(
            (M_{u,t}\x+\Xi_{u,t})_{k_1},\dots,
            (M_{u,t}\x+\Xi_{u,t})_{k_p}
        \right)
        \right].
\]
\end{definition}

We emphasize that the auxiliary Brownian motions \((\hat{B}_{ij})_{i<j}\) and \(\hat{W}\) are used only to represent the deterministic function \(g_u=\hat{\Pscr}_{u,t}\F_{\mathbf{k},\mathbf{q}}\). They are independent of the true Dyson vector flow. In particular, the events \(\Afr_s(\xi,\Qfr,\Efr,N)\), \(\Afr(\xi,\Qfr,\Efr,N)\), and the stopping time \(\tau\) are defined only in terms of the true flow and do not involve the auxiliary reference randomness.

We first use the Duhamel principle to bound the difference of the two semigroups.
\begin{lemma}\label{lem:duhamel_stopped}
      For $t\in[0,1)$ and    $\bm{\lambda}\in\Lfr(\xi,\Qfr,\Efr,\nu,N)$,
    \[
    \left\vert
    \Eds\left[
        \F_{\mathbf{k},\mathbf{q}}(\z(t))\middle\vert \bm{\lambda}
    \right]
    -
    \Eds\left[
        g_0(\z(0))\middle\vert \bm{\lambda}
    \right]
    -
    \int_0^t \Eds\left[
        \mathds{1}_{\{u<\tau\}}
        \left(
            \Lscr_u-\hat{\Lscr}_u
        \right)g_u\left(
            \z(u)
        \right)\middle\vert\bm{\lambda}
    \right]
    \d u
    \right\vert
    \leqslant 2\Vert h\Vert_\infty N^{-\nu}.
    \]
    Equivalently, using Lemma \ref{lem:decomp_flow},
    \[
    \left\vert
    \Eds\left[
        \F_{\mathbf{k},\mathbf{q}}(\z(t))\middle\vert \bm{\lambda}
    \right]
    -
    \Eds\left[
        g_0(\z(0))\middle\vert \bm{\lambda}
    \right]
    -
    \int_0^t \Eds\left[
        \mathds{1}_{\{u<\tau\}}
        \left(
            \Dscr_u-\Oscr_u
        \right)g_u\left(
            \z(u)
        \right)\middle\vert\bm{\lambda}
    \right]
    \d u
    \right\vert
    \leqslant 2\Vert h\Vert_\infty N^{-\nu}.
    \]
    Here $\z(u)=(\z_i(u))_{1\leqslant i\leqslant N}$ is the projected true Dyson vector flow, namely
    \[
        \z_i(u)=\bigl(\v_i(u,\eb_m)\bigr)_{m=1}^R\in\Rbb^R.
    \]
\end{lemma}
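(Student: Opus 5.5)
The plan is a stopped Duhamel/Itô argument. Conditionally on the eigenvalue path $\bm{\lambda}$, the projected true flow $\z(u)$ is a Markov process with time-dependent generator $\Lscr_u$ (Theorem \ref{theo:dyson}). The function $g_u=\hat{\Pscr}_{u,t}\F_{\mathbf{k},\mathbf{q}}$ of Definition \ref{def:backwardflow} solves $\partial_u g_u+\hat{\Lscr}_u g_u=0$ with $g_t=\F_{\mathbf{k},\mathbf{q}}$. It is smooth with bounded derivatives on compact sets, thanks to the affine Gaussian representation $g_u(\x)=\hat{\Eds}[\tilde h((M_{u,t}\x+\Xi_{u,t})_{k_i})]$, and it satisfies $\Vert g_u\Vert_\infty\leqslant\Vert h\Vert_\infty$. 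We apply Itô's formula to $u\mapsto g_u(\z(u))$ on $[0,t\wedge\tau]$:
\[
g_{t\wedge\tau}(\z(t\wedge\tau))-g_0(\z(0))=\int_0^{t\wedge\tau}\left(\partial_u g_u+\Lscr_u g_u\right)(\z(u))\,\d u+\mathcal M_{t\wedge\tau}=\int_0^{t}\mathds{1}_{\{u<\tau\}}\left(\Lscr_u-\hat{\Lscr}_u\right)g_u(\z(u))\,\d u+\mathcal M_{t\wedge\tau},
\]
where $\mathcal M$ is a local martingale.

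Next we take conditional expectations given $\bm{\lambda}$. To kill the martingale term we need it to be a true martingale up to $t\wedge\tau$. Its quadratic variation involves $c_{ij}(u)\,|\Xscr_{ij}g_u|^2$. On the rigidity event $\Afr_3$ the eigenvalues are separated, so $c_{ij}(u)$ is finite. Since the eigenvectors are unit vectors, $\z$ stays in the compact set $\sum_i\Vert\z_i\Vert^2=NR$, on which $\nabla g_u$ is bounded for $u<t$. This is where $t<1$ and $\bm\lambda\in\Lfr$ enter: we use a non-collision bound on $[0,t]$, and localize further if needed. Hence $\Eds[\mathcal M_{t\wedge\tau}\vert\bm\lambda]=0$. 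The same compactness, together with the finiteness of the $c_{ij}$, also gives integrability of the drift term, so Fubini justifies swapping expectation and the $\d u$ integral.

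It remains to replace $\Eds[g_{t\wedge\tau}(\z(t\wedge\tau))\vert\bm\lambda]$ by $\Eds[\F_{\mathbf{k},\mathbf{q}}(\z(t))\vert\bm\lambda]=\Eds[g_t(\z(t))\vert\bm\lambda]$. The two agree on $\{\tau\geqslant t\}$; recall $\tau=1>t$ when no failure occurs. The difference is therefore at most
\[
\left(\Vert g_{t\wedge\tau}\Vert_\infty+\Vert\F\Vert_\infty\right)\Pds(\tau<1\vert\bm\lambda)\leqslant 2\Vert h\Vert_\infty N^{-\nu},
\]
by Lemma \ref{lem:good_event}. The second form of the statement then follows immediately from Lemma \ref{lem:decomp_flow}, since $\Lscr_u-\hat{\Lscr}_u=\Dscr_u-\Oscr_u$.

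The main obstacle is the regularity and integrability bookkeeping. First, $\z(u)$ must genuinely be Markov with generator $\Lscr_u$ acting on functions of $\z$ alone. This holds because $\Xscr_{ij}$ preserves functions of the projections $\langle\v_i,\eb_m\rangle$. Second, $g_u$ is $\Ccal^2$ with controlled derivatives: we would get this directly from the Gaussian smoothing in the representation, since $\tilde h\in\Ccal^2$ and $M_{u,t}$ is a contraction. Third, the local martingale is a true martingale, which needs the $c_{ij}$ bounded along the fixed good path up to time $t$.
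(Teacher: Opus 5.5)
Your proposal is correct and follows essentially the same route as the paper: Itô's formula for the stopped process $g_{u\wedge\tau}(\z(u\wedge\tau))$ using $(\partial_u+\hat{\Lscr}_u)g_u=0$, vanishing of the martingale term after conditioning on $\bm{\lambda}$, and a $2\Vert h\Vert_\infty N^{-\nu}$ bound (via Lemma \ref{lem:good_event} and $t<1$) for replacing the stopped terminal value by $\F_{\mathbf{k},\mathbf{q}}(\z(t))$. One small inaccuracy in your extra bookkeeping is the claim that rigidity on $\Afr_3$ separates the eigenvalues; it does not, because the rigidity window is larger than the local spacing, so finiteness of the $c_{ij}(u)$ along the fixed path should instead be attributed to almost-sure non-collision of the Dyson eigenvalues, which your localization remark already accommodates.
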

\begin{proof}
    We consider the stopped process
    \[
        u\mapsto g_{u\wedge\tau}\bigl(\z(u\wedge\tau)\bigr),
    \]
    where $\z(u)$ is the solution to the stochastic differential equation associated to $\Lscr_u$, i.e. the true Dyson vector flow projected on the basis $\Efr$. By definition of $g_u$, we have on the event $\{u<\tau\}$ that
    \[
    \d g_u(\z(u))
    = \partial_u g_u(\z(u))\d u + \Lscr_u g_u(\z(u))\d u + \d N_u
    = (\Lscr_u-\hat{\Lscr}_u)g_u(\z(u))\d u + \d N_u,
        \]
    where $N_u$ is a martingale and we used the fact that
    \[
        (\partial_u+\hat{\Lscr}_u)g_u=0.
    \]
    Therefore,
    \[
    \d g_{u\wedge\tau}\bigl(\z(u\wedge\tau)\bigr)
    =
    \mathds{1}_{\{u<\tau\}}
    (\Lscr_u-\hat{\Lscr}_u)g_u(\z(u))\d u
    +
    \mathds{1}_{\{u<\tau\}}\d N_u.
    \]
    Integrating from $0$ to $t$ and taking conditional expectation with respect to $\bm{\lambda}$ gives
    \[
    \Eds\left[
        g_{t\wedge\tau}\bigl(\z(t\wedge\tau)\bigr)
        \middle\vert \bm{\lambda}
    \right]
    -
    \Eds\left[
        g_0(\z(0))\middle\vert \bm{\lambda}
    \right]
    =
    \int_0^t \Eds\left[
        \mathds{1}_{\{u<\tau\}}
        (\Lscr_u-\hat{\Lscr}_u)g_u(\z(u))
        \middle\vert\bm{\lambda}
    \right]\d u.
    \]
    We now compare the stopped terminal value with the unstopped terminal value. Since $g_t=\F_{\mathbf{k},\mathbf{q}}$, we have on the event $\{\tau> t\}$, 
    \[
        g_{t\wedge\tau}\bigl(\z(t\wedge\tau)\bigr)
        =
        g_t(\z(t))
        =
        \F_{\mathbf{k},\mathbf{q}}(\z(t)).
    \]
    On the complementary event $\{\tau\leqslant t\}$, we use the trivial bounds
    \[
        \left\vert \F_{\mathbf{k},\mathbf{q}}(\z(t))\right\vert
        \leqslant \Vert h\Vert_\infty,
        \qquad
        \left\vert g_{t\wedge\tau}\bigl(\z(t\wedge\tau)\bigr)\right\vert
        \leqslant \Vert h\Vert_\infty,
    \]
    the second one following from the Markov property of $\hat{\Pscr}_{u,t}$ and the bound $\Vert g_u\Vert_\infty\leqslant \Vert h\Vert_\infty$. Hence, for $\bm{\lambda}\in\Lfr(\xi,\Qfr,\Efr,\nu,N)$,
    \[
    \left\vert
    \Eds\left[
        \F_{\mathbf{k},\mathbf{q}}(\z(t))\middle\vert\bm{\lambda}
    \right]
    -
    \Eds\left[
        g_{t\wedge\tau}\bigl(\z(t\wedge\tau)\bigr)
        \middle\vert\bm{\lambda}
    \right]
    \right\vert
    \leqslant
    2\Vert h\Vert_\infty
    \Pds(\tau\leqslant t\vert\bm{\lambda})
    \leqslant
    2\Vert h\Vert_\infty N^{-\nu}.
    \]
    Combining this estimate with the stopped Duhamel identity gives the first statement. The second statement follows from the decomposition
    \[
        \Lscr_u-\hat{\Lscr}_u=\Dscr_u-\Oscr_u
    \]
    from Lemma \ref{lem:decomp_flow}.
\end{proof}
It is important to note that while $g_u$ is defined from the reference flow, we evaluate it on the full projected flow $\z(u)$ and not along the reference flow. This is crucial because we only have control of the actual eigenvectors along the exact Dyson vector flow and not on the reference flow. The stopping time $\tau$ will ensure that whenever the error terms are evaluated, the estimates from the good event are available.

We are now going to bound the diagonal term in Lemma \ref{lem:decomp_flow}.
\begin{lemma}\label{lem:diagonal_term}
    Let $\xi,\nu>0$ and $N^{2\xi}\leqslant \ell \leqslant N^{1-\xi}$. We have that for $\bm{\lambda}\in\Lfr(\xi,\Qfr,\Efr,\nu,N)$,
    \[
    \left\vert \int_{0}^t\Eds\left[\mathds{1}_{\{u<\tau\}}\Dscr_ug_u(\z(u))\mid \bm{\lambda}\right]\d u \right\vert 
    \leqslant C RQ\Vert \nabla^2 h\Vert_\infty\frac{N^{1+\xi}t}{\ell^{\frac{3}{2}}}.
   \]
\end{lemma}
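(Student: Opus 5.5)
The plan is to exploit that $\nabla_j^2 g_u$ vanishes unless the column $j$ of the reference propagator feeds into the observed indices $k_1,\dots,k_p$, and that it depends smoothly on $j$ at scale $\ell$. This lets us sum $\z_i\z_i^\top-\Id_R$ over intervals and apply the bound built into $\Afr_2$.

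\textbf{Step 1: formula for the Hessian.} From the affine Gaussian representation in Definition \ref{def:backwardflow},
\[
g_u(\x)=\hat{\Eds}\left[\tilde h\left(\left((M_{u,t}\x+\Xi_{u,t})_{k_r}\right)_{r}\right)\right].
\]
Hence
\[
\nabla_j^2 g_u(\x)=\hat{\Eds}\left[\sum_{r,r'}(M_{u,t})_{k_rj}(M_{u,t})_{k_{r'}j}\,\nabla^2_{k_rk_{r'}}\tilde h(\cdots)\right].
\]
Write $m_j=((M_{u,t})_{k_rj})_r\in\Rbb^p$. Since $\nabla^2\tilde h=O^\top\nabla^2h\,O$ with $O=\mathrm{diag}(O_i)$, we get
\[
\vert\langle A,\nabla_j^2g_u\rangle_{\mathrm{HS}}\vert\leqslant \Vert\nabla^2h\Vert_\infty\,\hat{\Eds}\left[\Vert m_j\Vert^2\right]\max_i\Vert O_i\Vert_{\mathrm{op}}^2\,\Vert A\Vert_{\mathrm{op}}\cdot\mathrm{rank}\text{-type factor}.
\]
This is where the factor $Q$ should come from. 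Since $\sum_j\Vert m_j\Vert^2=\sum_r\Vert (M_{u,t}^\top) e_{k_r}\Vert^2\leqslant p$ by the contraction $M M^\top\preceq\Id$, the total Hessian mass summed over $j$ is bounded by $p\Vert O\Vert^2\lesssim Q$.

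\textbf{Step 2: summation by parts in $i$.} Write
\[
\Dscr_ug_u=\sum_j\Big\langle \sum_{i:\vert i-j\vert\geqslant\ell}c_{ij}(u)(\z_i\z_i^\top-\Id_R),\nabla_j^2g_u\Big\rangle.
\]
For fixed $j$, split the $i$-sum into the two half-lines $i\geqslant j+\ell$ and $i\leqslant j-\ell$. On each, $c_{ij}$ is monotone in $i$. Abel summation turns the sum into
\[
\sum_i\vert c_{ij}-c_{i+1,j}\vert\,\Big\Vert\sum_{i'\in I_i}(\z_{i'}\z_{i'}^\top-\Id_R)\Big\Vert
\]
plus a boundary term, where $I_i$ are intervals starting at $j\pm\ell$. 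On $\{u<\tau\}$, $\Afr_{2,u}$ applied entrywise to all $R^2$ pairs $(\eb_m,\eb_{m'})$ gives
\[
\Big\Vert\sum_{i'\in I}(\z_{i'}\z_{i'}^\top-\Id_R)\Big\Vert_{\mathrm{HS}}\leqslant RN^\xi\sqrt{\vert I\vert}.
\]
Finer bookkeeping, using that only $R$-dimensional quadratic forms are tested against a rank-controlled Hessian, should replace $R$ by the needed $\sqrt{R}$-type loss; I would aim for an overall factor $R$ combined with the $Q$ from Step 1. This reduces the problem to bounding
\[
\sum_{i}\vert c_{ij}-c_{i+1,j}\vert\sqrt{\vert i-j\vert}+c_{j\pm\ell,j}\sqrt{\cdot}.
\]

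\textbf{Step 3: the eigenvalue sum.} Rigidity from $\Afr_3$ gives $c_{ij}\asymp N/\vert i-j\vert^2$ in the bulk, so $\vert c_{ij}-c_{i+1,j}\vert\lesssim N/\vert i-j\vert^3$. Then
\[
\sum_{\vert i-j\vert\geqslant\ell}\frac{N}{\vert i-j\vert^{3}}\sqrt{\vert i-j\vert}\lesssim \frac{N}{\ell^{3/2}}.
\]
At the edges the scaling of $\gamma_i$ changes the computation. I expect this to be the main obstacle: near the edge $c_{ij}$ need not decay like $N/\vert i-j\vert^2$ uniformly, and the monotonicity in $i$ must be verified with rigidity errors. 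I would first try to prove the bound $\sum_{\vert i-j\vert\geqslant\ell}\vert\partial_i c_{ij}\vert\vert i-j\vert^{1/2}\leqslant CN^{1+\xi}\ell^{-3/2}$ uniformly in $j$ by splitting dyadically in $\vert i-j\vert$ and using the quantile spacing $\gamma_{i+1}-\gamma_i\asymp N^{-2/3}\min(i,N-i)^{-1/3}$. The $N^\xi$ absorbs the rigidity errors, and one uses that $\ell\geqslant N^{2\xi}$ dominates the rigidity scale.

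\textbf{Step 4: conclusion.} Combining these, $\vert\Dscr_ug_u(\z(u))\vert\mathds 1_{u<\tau}\leqslant CRQ\Vert\nabla^2h\Vert_\infty N^{1+\xi}\ell^{-3/2}$ uniformly in $u$. Integrating over $[0,t]$ gives the claim.
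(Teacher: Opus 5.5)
Your architecture is the paper's. You write $\Dscr_ug_u=\sum_j\langle\Delta_j(u),\nabla_j^2g_u\rangle_{\mathrm{HS}}$ with $\Delta_j(u)=\sum_{|i-j|\geqslant\ell}c_{ij}(u)(\z_i\z_i^\top-\Id_R)$. You control $\sup_j\Vert\Delta_j(u)\Vert$ by summation by parts in $i$ together with $\Afr_2$ and rigidity, and you control $\sum_j\Vert\nabla_j^2g_u\Vert$ by the contraction of $M_{u,t}$. The smoothness of $\nabla_j^2g_u$ in $j$ that you announce at the start is never needed. However, two steps do not go through as written.

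\textbf{Step 1.} The claim $p\max_i\Vert O_i\Vert_{\mathrm{op}}^2\lesssim Q$ is false in general. Take $a_1\asymp Q$ with all $\q_{1\alpha}$ equal, so $\Vert O_1\Vert_{\mathrm{op}}^2=a_1$, and $a_2=\dots=a_p=1$ with $p\asymp Q$. Then $p\max_i\Vert O_i\Vert_{\mathrm{op}}^2\asymp Q^2$. You must weight block by block in Hilbert--Schmidt norm. Let $T_j(u)$ be the $Q\times R$ matrix whose $r$-th block is $(M_{u,t})_{k_rj}O_r$. Then $\nabla_j^2g_u=\hat{\Eds}[T_j^\top\nabla^2h\,T_j]$, so $\Vert\nabla_j^2g_u\Vert_{\mathrm{HS}}\leqslant\Vert\nabla^2h\Vert_\infty\hat{\Eds}\Vert T_j\Vert_{\mathrm{HS}}^2$. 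Using $\Vert O_r\Vert_{\mathrm{HS}}^2=a_r$ and the contraction, $\sum_j\Vert T_j\Vert_{\mathrm{HS}}^2=\sum_ra_r\sum_j(M_{u,t})_{k_rj}^2\leqslant Q$. Pairing this with $\Vert\Delta_j\Vert_{\mathrm{HS}}\leqslant R\max_{a,b}\vert(\Delta_j)_{ab}\vert$ gives exactly the factor $RQ$. Your Step 2 Hilbert--Schmidt estimate is already what is needed. The ``rank-type factor'' and the hoped-for $\sqrt R$ refinement are neither needed nor justified.

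\textbf{Step 3.} The inference ``$c_{ij}\asymp N/|i-j|^2$, so $|c_{ij}-c_{i+1,j}|\lesssim N/|i-j|^3$'' is a non sequitur, since pointwise bounds do not control increments. Moreover, $\Afr_3$ only locates each $\lambda_i$ to within $N^{\xi}$ local spacings. Bounding $\lambda_{i+1}-\lambda_i$ through rigidity therefore costs an extra $N^\xi$, and you would end with $N^{1+2\xi}$; your one factor $N^\xi$ is already spent on $\Afr_2$.

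The missing observation is that no increment bound is needed. Because the eigenvalues are ordered, $i\mapsto c_{ij}(u)$ is monotone on each side of $j$, so $\sum_i\vert c_{ij}-c_{i+1,j}\vert$ telescopes. The paper applies this on dyadic blocks $2^m\ell\leqslant\pm(i-j)\leqslant 2^{m+1}\ell$. Each block contributes at most $C\max_{\text{block}}c_{ij}\cdot N^\xi 2^{m/2}\sqrt{\ell}$, and $\sum_m \frac{N}{2^{2m}\ell^2}N^\xi 2^{m/2}\sqrt\ell\lesssim N^{1+\xi}\ell^{-3/2}$. On your full half-lines, a second summation by parts against the increasing weight $\sqrt{|i-j|}$ works equally well.

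Only the upper bound $c_{ij}(u)\leqslant CN|i-j|^{-2}$ for $|i-j|\geqslant\ell$ is used, and it holds uniformly up to the spectral edges. The semicircle density is bounded, so $|\gamma_i-\gamma_j|\geqslant c|i-j|/N$, and the rigidity errors are negligible once $|i-j|\geqslant\ell\geqslant N^{2\xi}$; near the edges $c_{ij}$ is only smaller. So the edge difficulty you flag as the main obstacle does not arise. The real obstacle, control of individual gaps, is the one avoided by monotonicity.
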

\begin{proof}
    We recall that we have 
    \[
    \Dscr_ug_u(\z(u))
    =
    \sum_{\vert i-j\vert \geqslant \ell}
    c_{ij}(u)\left\langle \z_i(u)\z_i(u)^\top-\Id_R,\nabla_j^2g_u(\z(u))\right\rangle_{\mathrm{HS}}.
    \]
    We therefore define
    \[
    \Delta_j(u)
    =
    \sum_{i,\vert i-j\vert \geqslant \ell}
    c_{ij}(u)\left(\z_i(u)\z_i(u)^\top-\Id_R\right)
    \]
    so that
    \[
    \vert \Dscr_ug_u(\z(u))\vert
    \leqslant
    \sup_{j\in\unn{1}{N}}\Vert \Delta_j(u)\Vert_{\mathrm{HS}}
    \sum_{j=1}^N\Vert \nabla_j^2g_u(\z(u))\Vert_{\mathrm{HS}}.
    \]
    To control the first term on the event $\{u<\tau\}$, we control each matrix entry of $\Delta_j(u)$. For $a,b\in\unn{1}{R}$, we have
    \[
    \left(\Delta_j(u)\right)_{ab}
    =
    \sum_{i,\vert i-j\vert \geqslant \ell}
    c_{ij}(u)\left(\v_i(u,\eb_a)\v_i(u,\eb_b)-\delta_{ab}\right).
    \]
    We perform a dyadic decomposition and define 
    \[
    A_m(j) = \{i,\,2^m\ell\leqslant \vert i-j\vert<  2^{m+1}\ell\}.
    \]
    We note that $A_m(j)$ is the union of two integer intervals of size at most $2^m\ell$ and that on each interval, since $j$ is fixed and the sum is over $i$, $c_{ij}(u)$ is a monotone sequence. Thus we can write $A_m(j)$ as the union of two sets $A_m^+(j)$ and $A_m^-(j)$ defined as
\[
A_m^+(j) = \{i,\, 2^m\ell \leqslant i-j\leqslant 2^{m+1}\ell\}\quad\text{and}\quad  
A_m^-(j) = \{i,\, 2^m\ell \leqslant j-i\leqslant 2^{m+1}\ell\}.
\]
Near the spectral edges we implicitly intersect these intervals with \(\unn{1}{N}\); this only shortens the intervals and does not affect the estimate.
Thus we can use a discrete integration by parts to write, for the interval $A_m^+(j)$,
\begin{align*}
\left\vert\sum_{i=j+2^m\ell}^{j+2^{m+1}\ell}c_{ij}(u)\left(\v_i(u,\eb_a)\v_i(u,\eb_b)-\delta_{ab}\right)\right\vert
\leqslant
C\max_{i\in A_m^+(j)}c_{ij}(u)\max_{n\in A_m^+(j)}\left\vert\sum_{k=j+2^m\ell}^{n} \left(\v_k(u,\eb_a)\v_k(u,\eb_b)-\delta_{ab}\right)\right\vert.
\end{align*}
The same inequality holds for the sum over $A_m^-(j)$. Now, on the event $\{u<\tau\}$, we can use the quantum ergodicity estimate from Definition \ref{def:goodevent} to say that 
\[
\max_{n\in A_m^+(j)}\left\vert\sum_{k=j+2^m\ell}^{n} \left(\v_k(u,\eb_a)\v_k(u,\eb_b)-\delta_{ab}\right)\right\vert
\mathds{1}_{\{u<\tau\}} \leqslant N^\xi2^{\frac{m}{2}}\sqrt{\ell}.
\]
Using eigenvalue rigidity since $\bm{\lambda}\in\Lfr(\xi,\Qfr,\Efr,\nu,N)$, we can also bound, for $\vert i-j\vert \geqslant \ell$, $c_{ij}(u)\leqslant C\frac{N}{\vert i-j\vert^2}$. Finally, we get the bound, uniform in $j\in \unn{1}{N}$, $u\in[0,1]$, and $a,b\in\unn{1}{R}$,
\[
\left\vert 
    \sum_{i,\vert i-j\vert \geqslant \ell} c_{ij}(u)
    \left(\v_i(u,\eb_a)\v_i(u,\eb_b)-\delta_{ab}\right)
\right\vert \mathds{1}_{\{u<\tau\}}
\leqslant 
C\sum_{m\geqslant 0} \frac{N}{2^{2m}\ell^2} N^{\xi} 2^{\frac{m}{2}}\sqrt{\ell} 
\leqslant 
CN^{1+\xi} \ell^{-\frac{3}{2}}.
\]
Consequently,
\[
\sup_{j\in\unn{1}{N}}\Vert \Delta_j(u)\Vert_{\mathrm{HS}}\mathds{1}_{\{u<\tau\}}
\leqslant C R N^{1+\xi}\ell^{-\frac{3}{2}}.
\]

We now need the $\ell^1$ bound 
\[
\sum_{j=1}^N \Vert \nabla_j^2 g_u(\z(u))\Vert_{\mathrm{HS}}
\leqslant Q\Vert \nabla^2 h\Vert_\infty.
\]
Using Definition \ref{def:backwardflow}, we can write
\[
g_u(\x)
=
\hat{\Eds}\left[
h\left(
\left(O_i(M_{u,t}\x+\Xi_{u,t})_{k_i}\right)_{1\leqslant i\leqslant p}
\right)
\right].
\]
For each $j\in\unn{1}{N}$, define the linear map $T_j(u):\Rbb^R\to\Rbb^Q$ by
\[
T_j(u)\y
=
\left(
    (M_{u,t})_{k_i j}O_i\y
\right)_{1\leqslant i\leqslant p}.
\]
Equivalently, $T_j(u)$ is the $Q\times R$ block matrix whose $i$-th block is $(M_{u,t})_{k_i j}O_i\in\Rbb^{a_i\times R}$.
Then, by the chain rule,
\[
\nabla_j^2 g_u(\x)
=
\hat{\Eds}\left[
T_j(u)^\top
\nabla^2 h\left(
\left(O_i(M_{u,t}\x+\Xi_{u,t})_{k_i}\right)_{1\leqslant i\leqslant p}
\right)
T_j(u)
\right].
\]
Thus
\[
\Vert \nabla_j^2 g_u(\x)\Vert_{\mathrm{HS}}
\leqslant
\Vert \nabla^2 h\Vert_\infty
\hat{\Eds}\!\left[\Vert T_j(u)\Vert_{\mathrm{HS}}^2\right].  
\]
Moreover,
\[
\Vert T_j(u)\Vert_{\mathrm{HS}}^2
=
\sum_{i=1}^p (M_{u,t})_{k_i j}^2\Vert O_i\Vert_{\mathrm{HS}}^2
=
\sum_{i=1}^p a_i (M_{u,t})_{k_i j}^2.
\]
For $Y_s = M_{u,s} \x$ we have
\[
\d \Vert Y_s\Vert_2^2 = -2\left\langle Y_s, D_sY_s\right\rangle \d s \leqslant 0
\]
since $A^{ij}$ is skew-symmetric and does not contribute to the norm. Thus we get that $\Vert M_{u,s} \x\Vert^2 \leqslant \Vert \x\Vert^2$ and therefore $\Vert M_{u,s}\Vert_\mathrm{op}\leqslant 1$. This finally gives that 
\begin{equation}\label{eq:boundMus}
\sum_{j=1}^N (M_{u,t})_{k_i j}^2  = \Vert M_{u,t}^\top \mathbf{e}_{k_i}\Vert^2 \leqslant 1.
\end{equation}
Therefore,
\[
\sum_{j=1}^N \Vert T_j(u)\Vert_{\mathrm{HS}}^2
=
\sum_{i=1}^p a_i\sum_{j=1}^N (M_{u,t})_{k_i j}^2
\leqslant
\sum_{i=1}^p a_i
=
Q,
\]
and hence
\[
\sum_{j=1}^N \Vert \nabla_j^2 g_u(\z(u))\Vert_{\mathrm{HS}}
\leqslant Q\Vert \nabla^2 h\Vert_\infty.
\]
Combining the two estimates, we get
\[
\left\vert
\Eds\left[
\mathds{1}_{\{u<\tau\}}\Dscr_ug_u(\z(u))\middle\vert \bm{\lambda}
\right]
\right\vert
\leqslant
C RQ\Vert \nabla^2 h\Vert_\infty N^{1+\xi}\ell^{-\frac{3}{2}}.
\]
Integrating over $u\in[0,t]$ gives
\[
\left\vert \int_{0}^t\Eds\left[\mathds{1}_{\{u<\tau\}}\Dscr_ug_u(\z(u))\mid \bm{\lambda}\right]\d u \right\vert 
\leqslant C RQ\Vert \nabla^2 h\Vert_\infty\frac{N^{1+\xi}t}{\ell^{\frac{3}{2}}}
\]
as desired.
\end{proof}
The following lemma controls the off-diagonal term in Lemma \ref{lem:decomp_flow}.
\begin{lemma}\label{lem:offdiagonal_term}
   Let $\xi,\nu>0$ and $N^{2\xi}\leqslant \ell \leqslant N^{1-\xi}$. We have that for $\bm{\lambda}\in \Lfr(\xi,\Qfr,\Efr,\nu,N)$, 
    \[
    \left\vert\int_{0}^t \Eds\left[ \mathds{1}_{\{u<\tau\}}\Oscr_u g_u(\z(u))\middle\vert \bm{\lambda}\right] \d u \right\vert
    \leqslant C Q^2\frac{N^{1+\xi}t}{\ell^2}\Vert \nabla^2 h\Vert_\infty.
    \]
\end{lemma}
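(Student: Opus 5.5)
The plan is to expand the mixed Hessian of the backward observable $g_u$ in the representation of Definition \ref{def:backwardflow}, exactly as in the proof of Lemma \ref{lem:diagonal_term}, and then bound the resulting bilinear form. With $T_j(u)$ the $Q\times R$ block matrix defined there, the chain rule gives $\nabla^2_{ij}g_u(\x)=\hat{\Eds}\left[T_i(u)^\top\nabla^2h(\cdot)\,T_j(u)\right]$. Hence
\[
\Oscr_ug_u(\z(u))=\hat{\Eds}\Big[\sum_{|i-j|\geqslant\ell}c_{ij}(u)\,\big(T_i(u)\z_j\big)^\top\nabla^2h(\cdot)\,\big(T_j(u)\z_i\big)\Big].
\]
Writing $T_i\z_j=\big((M_{u,t})_{k_r i}\,O_r\z_j\big)_{1\leqslant r\leqslant p}$ and $O_r\z_j=(\v_j(u,\q_{r\alpha}))_{\alpha}$, the summand decomposes over pairs of blocks $(r,r')$. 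This gives
\[
|\Oscr_ug_u|\leqslant\Vert\nabla^2h\Vert_\infty\sum_{r,r'}\sum_{|i-j|\geqslant\ell}c_{ij}(u)\,|(M_{u,t})_{k_ri}|\,|(M_{u,t})_{k_{r'}j}|\;\Vert O_r\z_j\Vert\,\Vert O_{r'}\z_i\Vert.
\]
On $\{u<\tau\}$ the event $\Afr_{1,u}$ gives $\Vert O_r\z_j\Vert^2\leqslant a_rN^{\xi}$, which is where $N^\xi$ and the factor $\sqrt{a_ra_{r'}}$ enter. Summing over $(r,r')$ with Cauchy--Schwarz, $\sum_{r,r'}\sqrt{a_ra_{r'}}\leqslant Q^2$, which gives the $Q^2$ of the statement (rather than $RQ$).

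What remains is the kernel estimate: we need $\sum_{|i-j|\geqslant\ell}c_{ij}(u)|x_i||y_j|$ bounded by $CN\ell^{-2}$ for the rows $x=M_{u,t}^\top\eb_{k_r}$ and $y=M_{u,t}^\top\eb_{k_{r'}}$, which satisfy $\Vert x\Vert,\Vert y\Vert\leqslant1$ by \eqref{eq:boundMus}. Rigidity on $\bm{\lambda}\in\Lfr$ gives $c_{ij}(u)\leqslant CN|i-j|^{-2}$. A Schur test on this truncated kernel only yields $\sup_i\beta_i(u)\leqslant CN/\ell$ (Lemma \ref{lem:rigidity}), i.e. $Q^2N^{1+\xi}t\ell^{-1}$, one factor $\ell^{-1}$ short of the claim.

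The main obstacle is recovering this extra $\ell^{-1}$, and I would attack it by exploiting cancellations rather than absolute values. First, I would sum over $j$ at fixed $i$ and use the dyadic decomposition $A_m^\pm(i)$ from Lemma \ref{lem:diagonal_term}. On each block, the weight $c_{ij}$ is monotone in $j$, and the sum $\sum_j(M_{u,t})_{k_{r'}j}\,\v_j(u,\q_{r\alpha})$ runs over a block of length $2^m\ell$. Second, I would control that block sum using orthogonality of the eigenvectors. Concretely, this sum is the $\q_{r\alpha}$-projection of $\sqrt N$ times a restricted combination $\sum_j y_j\u_j$, so its square is at most $N\sum_{j\in A_m}y_j^2$ by Bessel. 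This avoids paying $N^{\xi}$ times the $\ell^1$ norm of $y$. Third, summation by parts against the monotone $c_{ij}$ then gives a block contribution of order $\frac{N}{2^{2m}\ell^2}\,|x_i|\,\big(N\sum_{j\in A_m(i)}y_j^2\big)^{1/2}$. The difficulty is that this carries $\sqrt N$ instead of $N^{\xi/2}$.

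If this route loses too much, I would instead symmetrize the pair $(i,j)$ and apply Cauchy--Schwarz with weights $c_{ij}^{1/2}$ on each side. I would then use that the squared quantities $\sum_j c_{ij}\,\v_j(\q)^2$ are controlled by $\Afr_{2,u}$ through the same dyadic argument as in Lemma \ref{lem:diagonal_term}. Schematically, $\sum_j c_{ij}(\v_j^2-1)$ is $O(N^{1+\xi}\ell^{-3/2})$ and $\sum_j c_{ij}=\beta_i$, and the goal is to balance these against the $\ell^2$-normalized rows of $M_{u,t}$. Finally, integrating in $u\in[0,t]$ would supply the factor $t$. I expect this balancing step to be the genuinely delicate one; the remaining steps (the Hessian representation, $\Vert M_{u,t}\Vert_{\mathrm{op}}\leqslant1$, and the stopping-time restriction to the good event) are routine.
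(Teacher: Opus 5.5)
\textbf{There is a genuine gap.} Everything up to your kernel estimate is fine, but neither proposed fix recovers the missing $\ell^{-1}$.

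\emph{Why your two routes fail.} Once you bound $|\Oscr_u g_u|$ pathwise with absolute values, you are controlling a positive kernel $c_{ij}\mathds 1_{\{|i-j|\geqslant\ell\}}$ tested against unit vectors. Its natural size is the Schur/operator-norm scale $N/\ell$. Your second route reproduces exactly this scale. Cauchy--Schwarz with weights $c_{ij}^{1/2}$ leads to $\sum_i x_i^2\sum_j c_{ij}\v_j(u,\q)^2$. The inner sum is $\beta_i(u)$ up to the lower-order error you quote, and $\beta_i\asymp N/\ell$ in the bulk. The Bessel route is worse by $\sqrt N$, as you observed yourself.

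\emph{Why no pathwise argument can work.} The good event, rigidity, and orthonormality of the eigenbasis are all invariant under flipping the sign of individual eigenvectors. The good event only sees $|\v_j(u,\q)|$ and same-index products $\v_j(u,\eb)\v_j(u,\eb')$. Take $a,b$ in the same block, so $x=y$. The sign choice $\varepsilon_i=\mathrm{sgn}(x_i\u_i(u,\q_a))$ makes every term of $\sum_{|i-j|\geqslant\ell}c_{ij}x_ix_j\v_i(u,\q_a)\v_j(u,\q_a)$ nonnegative. So a bound valid on $\{u<\tau\}$ for every sign configuration can be no better than a bound on the absolute-value sum. The $\Afr_{2}$ interval sums are the right input for $\Dscr_u$, where the eigenvector index is repeated, but not for $\Oscr_u$.

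\emph{The missing idea: average over the Rademacher signs.} The extra $\ell^{-1}$ comes from the independent signs built into $\v_i=\sqrt N\varepsilon_i\u_i$. This is what the paper does.
\begin{enumerate}
\item Write $\Oscr_ug_u(\z(u))=\hat{\Eds}[\sum_{a,b\in\Jcal}\partial_{ab}h(Y_{u,t})S_{ab}(u)]$, where $S_{ab}(u)=\sum_{|i-j|\geqslant\ell}c_{ij}(u)\mu^a_j\mu^b_i\v_i(u,\q_a)\v_j(u,\q_b)$ and $\mu^a_i=(M_{u,t})_{k_ri}$ for $a=(r,\alpha)$.
\item Bound $|\partial_{ab}h|\leqslant\Vert\nabla^2h\Vert_\infty$. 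Freeze the auxiliary randomness and use $\Eds[\mathds 1_{\{u<\tau\}}|S_{ab}|\mid\bm\lambda]\leqslant(\Eds[\mathds 1_{\{u<\tau\}}|S_{ab}|^2\mid\bm\lambda])^{1/2}$.
\item The event $\{u<\tau\}$ is sign-invariant, and $i\neq j$ in every term. So $S_{ab}$ is a centered second-order chaos in the signs. Expanding $|S_{ab}|^2$, only the pairings $(k,l)=(i,j)$ and $(k,l)=(j,i)$ survive.
\item Delocalization on $\{u<\tau\}$ bounds the surviving fourth-order products by $N^{2\xi}$. This leaves $CN^{2\xi}\sum_{|i-j|\geqslant\ell}c_{ij}^2(\mu_i^a)^2(\mu_j^b)^2$.
\item This is a Hilbert--Schmidt quantity rather than an operator-norm one. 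Using $\sum_i(\mu_i^a)^2\leqslant1$, it is at most $(\sup_{|i-j|\geqslant\ell}c_{ij})^2\leqslant CN^2\ell^{-4}$. No dyadic decomposition or summation by parts is needed.
\item Summing over the $Q^2$ pairs $(a,b)$ and integrating in $u$ gives the lemma.
\end{enumerate}
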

\begin{proof}
    We recall that we have 
    \[
    \Oscr_ug_u(\z(u))   
    =
    \sum_{\vert i-j\vert \geqslant \ell} c_{ij}(u)    \z_j(u)^\top\nabla_{ij}^2g_u(\z(u))\z_i(u)  .
    \]
    Using Definition \ref{def:backwardflow}, we can compute the mixed second derivatives of $g_u$ in the following way. Recall that for each $j\in\unn{1}{N}$, the linear map $T_j(u):\Rbb^R\to\Rbb^Q$ is given by
    \[
        T_j(u)\y
        =
        \left(
            (M_{u,t})_{k_i j}O_i\y
        \right)_{1\leqslant i\leqslant p}.
    \]
    Thus
    \[
    \nabla_{ij}^2 g_u(\x)
    =
    \hat{\Eds}\left[
        T_i(u)^\top
        \nabla^2h\left(
            \left(O_r(M_{u,t}\x+\Xi_{u,t})_{k_r}\right)_{1\leqslant r\leqslant p}
        \right)
        T_j(u)
    \right].
    \]
    Consequently,
    \[
    \Oscr_ug_u(\z(u))
    =
    \hat{\Eds}\left[
        \sum_{\vert i-j\vert \geqslant \ell} c_{ij}(u)
          \left(T_i(u)\z_j(u)\right)^\top
        \nabla^2h(Y_{u,t})
        \left(T_j(u)\z_i(u)\right)    
    \right],
    \]
    where we have set
    \[
        Y_{u,t}
        =
        \left(O_r(M_{u,t}\z(u)+\Xi_{u,t})_{k_r}\right)_{1\leqslant r\leqslant p}
        \in\Rbb^Q.
    \]
    For $a=(r,\alpha)\in\Jcal$, define $\mu_i^a(u)=(M_{u,t})_{k_r i},$
    since
    \[
          \left(T_i(u)\z_j(u)\right)_a
        =
        \mu_i^a(u)\v_j(u,\q_a),   
    \]
    we can rewrite the off-diagonal term as
    \[
    \Oscr_ug_u(\z(u))
    =
    \hat{\Eds}\left[
        \sum_{a,b\in\Jcal}
        \partial_{ab}h(Y_{u,t})
        S_{ab}(u)
    \right],
    \]
    where
    \[
    S_{ab}(u)
    =
    \sum_{\vert i-j\vert \geqslant \ell}
      c_{ij}(u)\mu_j^a(u)\mu_i^b(u)   
    \v_i(u,\q_a)\v_j(u,\q_b).
    \]
    Hence, by Cauchy--Schwarz and Fubini's theorem,
    \[
    \left\vert    
        \Eds\left[
            \mathds{1}_{\{u<\tau\}}\Oscr_ug_u(\z(u))\middle\vert \bm{\lambda}
        \right]  
    \right\vert
    \leqslant
    \Vert \nabla^2 h\Vert_\infty
    \sum_{a,b\in\Jcal}
    \hat{\Eds}\left[
        \left(
            \Eds\left[
                \mathds{1}_{\{u<\tau\}}\vert S_{ab}(u)\vert^2
                \middle\vert \bm{\lambda}
            \right]
        \right)^{\frac{1}{2}}
    \right].
    \]
    We now fix the auxiliary reference randomness temporarily. Then the coefficients $\mu_i^a(u)$ and $\mu_j^b(u)$ are deterministic with respect to the expectation over the true Dyson vector flow and the Rademacher signs. We define
    \[
        \Theta_{ij}^{ab}(u)
        =
          c_{ij}(u)\mu_j^a(u)\mu_i^b(u)   \mathds{1}_{\{\vert i-j\vert\geqslant \ell\}}.
    \]
    If $V(u)=(\v_1(u)\vert \dots\vert \v_N(u))$ is the whole renormalized eigenbasis, then
    \[
    S_{ab}(u)
    =
    \q_a^\top V(u)\Theta^{ab}(u)V(u)^\top\q_b.
    \]
    Denote $\E_\tau[X]=\Eds\left[X\mathds{1}_{\{u<\tau\}}\middle\vert \bm{\lambda}\right].$
    We can compute
    \[
    \begin{aligned}
    \E_\tau\left[
        \left\vert \q_a^\top V(u)\Theta^{ab}(u)V(u)^\top\q_b\right\vert^2
    \right]
    &=
    \sum_{i,j,k,l=1}^N
    \E_\tau\left[
        \v_i(u,\q_a)\v_j(u,\q_b)\v_k(u,\q_a)\v_l(u,\q_b)
    \right]
    \Theta_{ij}^{ab}(u)\Theta_{kl}^{ab}(u).
    \end{aligned}
    \]
    Note that $\Theta_{ii}^{ab}(u)=0$ by definition. The event $\{u<\tau\}$ depends only on quantities which are invariant under the Rademacher signs. Using the symmetry of the Rademacher random variables, the only non-zero terms in the sum are those for which the indices $i,j,k,l$ are equal in pairs. Thus we get
    \[
    \begin{aligned}
    \E_\tau\left[
        \left\vert \q_a^\top V(u)\Theta^{ab}(u)V(u)^\top\q_b\right\vert^2
    \right]
    &=
    \sum_{i\neq j}
    \E_\tau\left[
        \v_i(u,\q_a)^2\v_j(u,\q_b)^2
    \right]
    \left(\Theta_{ij}^{ab}(u)\right)^2\\
    &\quad+
    \sum_{i\neq j}
    \E_\tau\left[
        \v_i(u,\q_a)\v_i(u,\q_b)\v_j(u,\q_a)\v_j(u,\q_b)
    \right]
    \Theta_{ij}^{ab}(u)\Theta_{ji}^{ab}(u).
    \end{aligned}
    \]
    On the event $\{u<\tau\}$, we can use the delocalization estimate from Definition \ref{def:goodevent}. Thus,  we have
    \[
    \left\vert
        \E_\tau\left[
            \v_i(u,\q_a)^2\v_j(u,\q_b)^2
        \right]
    \right\vert
    +
    \left\vert
        \E_\tau\left[
            \v_i(u,\q_a)\v_i(u,\q_b)\v_j(u,\q_a)\v_j(u,\q_b)
        \right]
    \right\vert
    \leqslant   2N^{2\xi}.   
    \]
    Hence
    \[
    \E_\tau\left[
        \left\vert \q_a^\top V(u)\Theta^{ab}(u)V(u)^\top\q_b\right\vert^2
    \right]
    \leqslant
    C N^{2\xi}
    \sum_{i\neq j}
    \left(
        \left(\Theta_{ij}^{ab}(u)\right)^2
        +
        \left\vert\Theta_{ij}^{ab}(u)\Theta_{ji}^{ab}(u)\right\vert
    \right).
    \]
    Using $2|xy|\leqslant x^2+y^2$ and symmetry of the summation, we get
    \[
    \E_\tau\left[
        \left\vert \q_a^\top V(u)\Theta^{ab}(u)V(u)^\top\q_b\right\vert^2
    \right]
    \leqslant
    C N^{2\xi}
    \sum_{\vert i-j\vert\geqslant \ell}
    c_{ij}^2(u)(\mu_i^a(u))^2(\mu_j^b(u))^2.
    \]
    Therefore,
    \[
    \left\vert    
        \Eds\left[
            \mathds{1}_{\{u<\tau\}}\Oscr_ug_u(\z(u))\middle\vert \bm{\lambda}
        \right]  
    \right\vert
    \leqslant
    C N^\xi\Vert \nabla^2 h\Vert_\infty
    \sum_{a,b\in\Jcal}
    \hat{\Eds}\left[
        \left(
            \sum_{\vert i-j\vert \geqslant \ell}
            c_{ij}^2(u)(\mu_i^a(u))^2(\mu_j^b(u))^2
        \right)^{\frac{1}{2}}
    \right].
    \]
    Now we can use the fact that, for any $a=(r,\alpha)\in\Jcal$,
    \[
    \sum_{i=1}^N (\mu_i^a(u))^2
    =
    \sum_{i=1}^N (M_{u,t})_{k_r i}^2
    =
    \Vert M_{u,t}^\top \mathbf{e}_{k_r}\Vert^2
    \leqslant 1
    \]
    from \eqref{eq:boundMus}. Using also the rigidity bound, valid since $\bm{\lambda}\in \Lfr(\xi,\Qfr,\Efr,\nu,N)$,
    \[
    c_{ij}(u)\leqslant C\frac{N}{\vert i-j\vert^2}
    \leqslant C\frac{N}{\ell^2}
    \quad\text{for }\vert i-j\vert\geqslant \ell,
    \]
    we obtain
    \[
    \sum_{\vert i-j\vert \geqslant \ell}
    c_{ij}^2(u)(\mu_i^a(u))^2(\mu_j^b(u))^2
    \leqslant
    C\frac{N^2}{\ell^4}
    \left(\sum_{i=1}^N(\mu_i^a(u))^2\right)
    \left(\sum_{j=1}^N(\mu_j^b(u))^2\right)
    \leqslant C\frac{N^2}{\ell^4}.
    \]
    This gives
    \[
    \left\vert    
        \Eds\left[
            \mathds{1}_{\{u<\tau\}}\Oscr_ug_u(\z(u))\middle\vert \bm{\lambda}
        \right]  
    \right\vert
    \leqslant
    C Q^2 \frac{N^{1+\xi}}{\ell^2}\Vert \nabla^2 h\Vert_\infty.
    \]
    Finally, integrating over $u\in[0,t]$ gives the final bound
    \[
    \left\vert\int_{0}^t \Eds\left[ \mathds{1}_{\{u<\tau\}}\Oscr_u g_u(\z(u))\middle\vert \bm{\lambda}\right] \d u \right\vert
    \leqslant C Q^2\frac{N^{1+\xi}t}{\ell^2}\Vert \nabla^2 h\Vert_\infty.
    \]
\end{proof}
We are now ready to prove Theorem \ref{theo:maindyn}.
\begin{proof}[Proof of Theorem \ref{theo:maindyn}]
     By Lemma \ref{lem:duhamel_stopped}, we have
    \begin{multline*}
    \left\vert
    \Eds\left[
        \F_{\mathbf{k},\mathbf{q}}(\z(t))\middle\vert \bm{\lambda}
    \right]
    -
    \Eds\left[
        g_0(\z(0))\middle\vert \bm{\lambda}
    \right]
    \right\vert\\
    \qquad\leqslant
    \left\vert
    \int_0^t
    \Eds\left[
        \mathds{1}_{\{u<\tau\}}\Dscr_u g_u(\z(u))
        \middle\vert \bm{\lambda}
    \right]\d u
    \right\vert
    +
    \left\vert
    \int_0^t
    \Eds\left[
        \mathds{1}_{\{u<\tau\}}\Oscr_u g_u(\z(u))
        \middle\vert \bm{\lambda}
    \right]\d u
    \right\vert
    +
    C\Vert h\Vert_\infty N^{-\nu}.
    \end{multline*}
    Applying Lemma \ref{lem:diagonal_term}, we get
    \[
    \left\vert
    \int_0^t
    \Eds\left[
        \mathds{1}_{\{u<\tau\}}\Dscr_u g_u(\z(u))
        \middle\vert \bm{\lambda}
    \right]\d u
    \right\vert
    \leqslant
    C RQ\Vert \nabla^2 h\Vert_\infty
    \frac{N^{1+\xi}t}{\ell^{\frac{3}{2}}}.
    \]
    Similarly, by Lemma \ref{lem:offdiagonal_term},
    \[
    \left\vert
    \int_0^t
    \Eds\left[
        \mathds{1}_{\{u<\tau\}}\Oscr_u g_u(\z(u))
        \middle\vert \bm{\lambda}
    \right]\d u
    \right\vert
    \leqslant
    C Q^2\Vert \nabla^2 h\Vert_\infty
    \frac{N^{1+\xi}t}{\ell^2}.
    \]
    Therefore,
    \[
    \left\vert
    \Eds\left[
        \F_{\mathbf{k},\mathbf{q}}(\z(t))\middle\vert \bm{\lambda}
    \right]
    -
    \Eds\left[
        g_0(\z(0))\middle\vert \bm{\lambda}
    \right]
    \right\vert
    \leqslant
    C\Vert \nabla^2 h\Vert_\infty
    \frac{N^{1+\xi}t}{\ell}
    \left(
        \frac{RQ}{\sqrt{\ell}}+\frac{Q^2}{\ell}
    \right)
    +
    C\Vert h\Vert_\infty N^{-\nu}.
    \]

    It remains to compare the reference flow with its invariant Gaussian distribution. By Proposition \ref{prop:refgen}, applied pointwise to the initial projected vector $\z(0)$, and then taking conditional expectation with respect to $\bm{\lambda}$, we obtain
    \[
    \left\vert
    \Eds\left[
        g_0(\z(0))\middle\vert \bm{\lambda}
    \right]
    -
    \Eds\left[\tilde{h}(Z_1,\dots,Z_p)\right]
    \right\vert
    \leqslant
    C\sqrt{R}\Vert h\Vert_\infty
    \sqrt{N\left(1-\log\left({1-\e^{-c\left(\frac{N}{\ell}\right)^{\frac{1}{3}}t}}\right)\right)}
    \e^{-c\left(\frac{Nt^3}{\ell}\right)^{\frac{1}{3}}}.
    \]
    Since $\frac{Nt^3}{\ell}\geqslant c_0$, the logarithmic factor can be absorbed into the exponential term, up to changing the constants $c,c_0,C$. Thus
    \[
    \left\vert
    \Eds\left[
        g_0(\z(0))\middle\vert \bm{\lambda}
    \right]
    -
    \Eds\left[\tilde{h}(Z_1,\dots,Z_p)\right]
    \right\vert
    \leqslant
    C\sqrt{R}\Vert h\Vert_\infty
    \sqrt{N}
    \exp\left(-c\left(\frac{Nt^3}{\ell}\right)^{\frac{1}{3}}\right).
    \]
and we get the final result by combining the two estimates.
\end{proof}
\section{Proof of main results}\label{sec:proofs}
\subsection{Eigenvector universality for generalized Wigner matrices}
The proof of Theorem \ref{theo:finite} is now a direct consequence of Theorem \ref{theo:maindyn} combined with the density property of the Dyson Brownian motion \cite{erdHos2017dynamical}*{Lemma 16.2} and the comparison scheme from \cite{knowles2013eigenvector}.

\begin{proof}[Proof of Theorem \ref{theo:finite}]
    Let $t=N^{-\theta}$ with $\theta\in(0,\frac{1}{3})$. We set $\ell=N^{1-4\theta}$,
    so that $\frac{Nt^3}{\ell}=N^\theta.$
    We will choose $\theta>0$ sufficiently small below.

    By the density of Dyson Brownian motion from \cite{erdHos2017dynamical}*{Lemma 16.2}, there exists a generalized Wigner matrix $H_0$ such that, if $H_t$ denotes the Dyson Brownian motion started from $H_0$ and   $T=a(t) H_t$,    
    then the entries of $T$ match the entries of $W$ up to order three and satisfy
    \[
    \Eds\left[
        w_{ij}^k
    \right]
    =
    \Eds\left[
        t_{ij}^k
    \right]
    \quad\text{for }k=1,2,3,
    \quad\text{and}\quad   
    \left\vert\Eds\left[
        w_{ij}^4
    \right]-\Eds\left[
        t_{ij}^4
    \right]
    \right\vert
    \leqslant CN^{-2}t.
    \]
      Since multiplication by the scalar $a(t)$ does not change eigenvectors,
the eigenvectors of $T$ are the eigenvectors of $H_t$.    

    We denote by $X_N^{(t)}$ the analogue of $X_N$ constructed from the eigenvectors of $H_t$. Let $\Qfr$ be the family of vectors appearing in the theorem, and let $\Efr$ be an orthonormal basis of their span, as in Theorem \ref{theo:maindyn}. Since $Q$ is fixed, we have $R\leqslant Q=O(1)$ and the hypotheses of Lemma \ref{lem:good_event} are satisfied.

    We apply Theorem \ref{theo:maindyn} to the Dyson vector flow started from $H_0$. For every $\bm{\lambda}\in\Lfr(\xi,\Qfr,\Efr,\nu,N)$,
    we get
    \[
    \left\vert
    \Eds\left[
        h(X_N^{(t)})\middle\vert \bm{\lambda}
    \right]
    -
    \Eds\left[h(G_N)\right]
    \right\vert
    \leqslant
    C_h\frac{N^{1+\xi}t}{\ell}
    \left(
        \frac{RQ}{\sqrt{\ell}}+\frac{Q^2}{\ell}
    \right)
    +
    C_h\sqrt{RN}\exp\left(-c\left(\frac{Nt^3}{\ell}\right)^{\frac13}\right)
    +
    C_hN^{-\nu}.
    \]
    Since $Q$ and $R$ are fixed, and since $t=N^{-\theta}$ and $\ell=N^{1-4\theta}$, the first term, and the exponential term for $N$ large enough, are bounded by
    \[
    C_h\left(
        N^{-\frac12+\xi+5\theta}
        +
        N^{-1+\xi+7\theta}
    \right).
    \]
Choosing for instance $\xi=\theta$ and $\theta=\frac{1}{13}$, we obtain
    \[
    -\frac12+\xi+5\theta
    =
    -\frac12+\frac{6}{13}
    =
    -\frac{1}{26}
    \quad\text{and}\quad 
    -1+\xi+7\theta
    =
    -1+\frac{8}{13}
    =
    -\frac{5}{13}\leqslant -\frac{1}{26}.
    \]
    Taking $\nu$ large enough, we therefore obtain, for every
    $\bm{\lambda}\in\Lfr(\xi,\Qfr,\Efr,\nu,N)$,
    \[
    \left\vert
    \Eds\left[
        h(X_N^{(t)})\middle\vert \bm{\lambda}
    \right]
    -
    \Eds\left[h(G_N)\right]
    \right\vert
    \leqslant C_hN^{-\frac{1}{26}}.
    \]
    By Lemma \ref{lem:good_event}, we know that
    \[
        \Pds\left(\Lfr(\xi,\Qfr,\Efr,\nu,N)\right)\geqslant 1-N^{-D}
    \]
    for any fixed $D>0$, by choosing the overwhelming probability exponent in Lemma \ref{lem:good_event} sufficiently large. Hence, taking $D$ large enough and using the bound $\Vert h\Vert_\infty\leqslant C_h$, we get
    \[
    \left\vert
    \Eds\left[
        h(X_N^{(t)})
    \right]
    -
    \Eds\left[h(G_N)\right]
    \right\vert
    \leqslant C_hN^{-\frac{1}{26}}.
    \]
   
It remains to compare the original matrix $W$ with the Gaussian divisible matrix $T$. We use the eigenvector comparison argument of \cite{knowles2013eigenvector}, together with its isotropic extension in \cite{bourgade2017eigenvector}*{Theorem 5.2}. Although the latter result is stated for squared eigenvector projections, this suffices after averaging over the independent eigenvector signs. The resulting observable depends only on products of pairs of projections, which are recovered from squared projections by polarization using the directions $\q_{i\alpha}\pm\q_{i\beta}$. Applying the comparison argument to this enlarged fixed family, followed by a standard smoothing approximation, only changes the  error exponent; see also \cite{knowles2013eigenvector}*{Section 4}. This gives

    \[
    \left\vert
    \Eds\left[
        h(X_N)
    \right]
    -
    \Eds\left[
        h(X_N^{(t)})
    \right]
    \right\vert
    \leqslant C_h N^{-a}
    \]
    for some $a>0$. The required eigenvalue repulsion estimates are available for generalized Wigner matrices throughout the spectrum, for example by \cite{BL22LInfinity}*{Proposition B.17}.

    Combining the previous two estimates gives
    \[
    \left\vert
    \Eds\left[
        h(X_N)
    \right]
    -
    \Eds\left[h(G_N)\right]
    \right\vert
    \leqslant
    C_hN^{-\frac{1}{26}}
    +
    C_hN^{-a}.
    \]
    This concludes the proof by setting $\varepsilon=\min\left(\frac{1}{26},a\right)>0$.
\end{proof}

\subsection{The extremal process of eigenvector entries}

We start with proving a relative version of our dynamical comparison which will be useful for events whose Gaussian probability is small. Throughout this subsection, we set
\[
    t=N^{-\frac{1}{48}},
    \qquad
    \ell=N^{\frac{11}{12}},
    \qquad
    \xi=\frac{1}{48}.
\]
For $T,w>0$, define
\[
    \chi_{T,w}(x)
    =
    \frac{1}{1+\e^{-\frac{x-T}{w}}}
    +
    \frac{1}{1+\e^{-\frac{-x-T}{w}}}.
\]
Then $0<\chi_{T,w}\leqslant 1$ and
\[
    \vert \chi_{T,w}'\vert\leqslant w^{-1}\chi_{T,w},
    \qquad
    \vert \chi_{T,w}''\vert\leqslant w^{-2}\chi_{T,w}.
\]
Consequently, for fixed $r$, defining
\[
    h(\x)=\prod_{a=1}^r\chi_{T_a,w}(x_a),
\]
we have
\begin{equation}\label{eq:self_bounding_hessian_extremal}
    \Vert\nabla^2h(\x)\Vert_{\mathrm{op}}
    \leqslant C_rw^{-2}h(\x).
\end{equation}
Moreover, if $T\to+\infty$ and $wT\to0$, then, for $g\sim\Ncal(0,1)$,
\begin{equation}\label{eq:gaussian_logistic_tail_extremal}
    \Eds[\chi_{T,w}(g)]
    =
    (1+O(wT))\Pds(\vert g\vert>T).
\end{equation}
The last estimate follows directly from the Gaussian density and Mills's ratio, which we recall is
\[
    \Pds(g>T)
    =
    \frac{\phi(T)}{T}
    \left(1+O\left(T^{-2}\right)\right),
    \qquad
    T\to+\infty.
\]

\begin{proposition}\label{prop:relative_comparison}
Let $r\geqslant1$ be fixed and let $\alpha_1,\dots,\alpha_r$ be distinct coordinates. Let $h_N:\Rbb^r\to[0,1]$ be smooth and assume that
\[
    \Vert\nabla^2h_N(\x)\Vert_{\mathrm{op}}
    \leqslant L_Nh_N(\x),
    \qquad
    L_N\leqslant C_0(\log N)^{A_0},
\]
for some fixed constants $C_0,A_0>0$.
Let
\[
    \z_i(u)=\left(\v_i(u,\eb_{\alpha_a})\right)_{a=1}^r
\]
and let $G_r\sim\Ncal(0,\Id_r)$. Then there exists $c>0$ such that, for every $D>0$, there exists $C>0$ such that 
\begin{equation}\label{eq:relative_comparison_extremal}
    \left\vert
        \Eds[h_N(\z_k(t))]-\Eds[h_N(G_r)]
    \right\vert
    \leqslant
    C \left(
        N^{-c}\Eds[h_N(G_r)]+N^{-D}
    \right),
\end{equation}
uniformly in $k$, in the choice of
$\alpha_1,\dots,\alpha_r$, and in $h_N$ satisfying the above
assumptions.
\end{proposition}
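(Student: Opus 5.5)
The plan is to rerun the proof of Theorem \ref{theo:maindyn} in the special case $p=1$, $a_1=Q=R=r$, $\q_{1a}=\eb_{\alpha_a}$ (so $O_1=\Id_r$ and $\Efr=\Qfr$). The point is to replace every use of $\Vert\nabla^2h\Vert_\infty$ by the self-bounding hypothesis $\Vert\nabla^2 h_N\Vert_{\mathrm{op}}\leqslant L_Nh_N$. This should turn the additive Duhamel errors into multiplicative ones, which we then close by a Gronwall argument. With $t=N^{-1/48}$, $\ell=N^{11/12}$, $\xi=1/48$ we have $Nt^3/\ell=N^{1/48}$. Proposition \ref{prop:refgen} (with $R=r$ fixed) then gives
\[
\bigl\vert g_0(\z(0))-\Eds[h_N(G_r)]\bigr\vert\leqslant C\sqrt N\e^{-cN^{1/144}}\leqslant N^{-D}
\]
for every $\bm\lambda\in\Lfr$. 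We take $\nu=D$ in Lemma \ref{lem:good_event}. Then both the contribution of $\Lfr^c$ and the stopping-time error of Lemma \ref{lem:duhamel_stopped} are $O(N^{-D})$, since $h_N\leqslant 1$.

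\textbf{Key step: relative Hessian bounds for $g_u$.} From the representation in Definition \ref{def:backwardflow},
\[
\nabla^2_{ij}g_u(\x)=\hat\Eds\bigl[T_i^\top\nabla^2h_N(Y)T_j\bigr],
\qquad\text{so}\qquad
\Vert\nabla^2_{ij}g_u(\x)\Vert\leqslant L_N\,\hat\Eds\bigl[\Vert T_i\Vert\Vert T_j\Vert\, h_N(Y)\bigr].
\]
Here $\Vert T_j\Vert_{\mathrm{HS}}^2=r(M_{u,t})_{kj}^2$ and $\sum_j (M_{u,t})_{kj}^2\leqslant1$ by \eqref{eq:boundMus}. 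Summing over $j$ therefore gives
\[
\sum_j\Vert\nabla_j^2g_u(\x)\Vert_{\mathrm{HS}}\leqslant rL_N\,g_u(\x).
\]
Combined with the dyadic bound $\sup_j\Vert\Delta_j(u)\Vert\mathds 1_{u<\tau}\leqslant CrN^{1+\xi}\ell^{-3/2}$ from Lemma \ref{lem:diagonal_term}, this yields
\[
\bigl\vert\mathds 1_{u<\tau}\Dscr_ug_u(\z(u))\bigr\vert\leqslant Cr^2L_NN^{1+\xi}\ell^{-3/2}\,g_u(\z(u)).
\]
For the off-diagonal term, we write as before
\[
\Oscr_ug_u=\hat\Eds\Bigl[\sum_{a,b}\partial_{ab}h_N(Y)\,S_{ab}\Bigr],
\qquad\text{so}\qquad
\vert\Oscr_ug_u\vert\leqslant L_N\,\hat\Eds\Bigl[h_N(Y)\sum_{a,b}\vert S_{ab}\vert\Bigr].
\]
The $L^2$ bound of Lemma \ref{lem:offdiagonal_term} is not enough here, because the weight $h_N(Y)$ is correlated with $S_{ab}$. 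Instead, I would use the Rademacher structure. Conditionally on $\bm\lambda$, on the moduli of the eigenvector entries and on the auxiliary randomness, $S_{ab}$ is a decoupled order-two Rademacher chaos in the signs $\varepsilon_i$, with vanishing diagonal and variance at most $CN^{2\xi}N^2\ell^{-4}$. Hypercontractivity then gives $\vert S_{ab}\vert\leqslant N^{2\xi}N\ell^{-2}$ outside a sign event of probability $N^{-D}$. Including this event (uniformly over a time net and $\hat\Eds$-averaged via Markov) in the stopping time yields
\[
\bigl\vert\mathds 1_{u<\tau}\Oscr_ug_u(\z(u))\bigr\vert\leqslant Cr^2L_NN^{1+2\xi}\ell^{-2}\,g_u(\z(u))
\]
up to an $N^{-D}$ error.

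\textbf{Gronwall and the main obstacle.} Set $\Phi(u)=\Eds[\mathds 1_{u<\tau}g_u(\z(u))\mid\bm\lambda]$ and
\[
\varepsilon_N=CL_NN^{1+2\xi}\ell^{-3/2}\leqslant C(\log N)^{A_0}N^{1+\frac1{24}-\frac{11}{8}}\leqslant N^{-c}.
\]
The stopped Duhamel identity of Lemma \ref{lem:duhamel_stopped}, applied on $[0,u]$, gives $\vert\Phi'(u)\vert\leqslant\varepsilon_N\Phi(u)+N^{-D}$ (the stopping indicator only decreases $\Phi$ and is handled as in that lemma). Hence
\[
\Phi(t)=\e^{O(\varepsilon_N)}\Phi(0)+O(N^{-D}),
\]
with $\Phi(0)=\Eds[h_N(G_r)]+O(N^{-D})$ and $\Eds[h_N(\z_k(t))]=\Phi(t)+O(N^{-D})$. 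Since every constant is uniform in $k$, in the $\alpha_a$ and in $h_N$, this gives \eqref{eq:relative_comparison_extremal}. I expect the off-diagonal step to be the main obstacle. Obtaining a \emph{deterministic} high-probability bound on $S_{ab}$ that survives multiplication by the correlated weight $h_N(Y)$ requires the sign-chaos concentration to be uniform in $u$ and in the auxiliary coefficients $\mu_i^a$. If this fails, the fallback is a weighted Cauchy--Schwarz
\[
\Eds\bigl[h_N\vert S\vert\bigr]\leqslant\bigl(\Eds[h_N]\bigr)^{1/2}\bigl(\Eds[h_NS^2]\bigr)^{1/2},
\]
expanding $h_NS^2$ over sign pairings. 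This yields only a weaker relative exponent, which still suffices.
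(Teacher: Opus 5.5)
Your proposal is correct and follows the paper's proof essentially step by step. The shared steps are: relative Hessian bounds for $g_u$ from $\Vert\nabla^2h_N\Vert_{\mathrm{op}}\leqslant L_Nh_N$; hypercontractivity of the degree-two Rademacher chaos $S_{ab}(u)$, to decouple it from the correlated weight $h_N(Y_u)$; Gronwall; and Proposition \ref{prop:refgen} at time $0$.

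The only real difference is how the hypercontractive estimate becomes a relative bound. The paper uses H\"older with $q=\lceil\log N\rceil$, $h_N^{q'}\leqslant h_N$ and $x^{1-1/q}\leqslant C_K(x+N^{-K})$. Your tail-bound version works equally well. It needs no time net and no change of $\tau$: at each fixed $u$, split on $\{\max_{a,b}\vert S_{ab}(u)\vert>N^{1+2\xi}\ell^{-2}\}$ and bound the bad part by Cauchy--Schwarz, using the second-moment bound $CN^{2+2\xi}\ell^{-4}$ from Lemma \ref{lem:offdiagonal_term} (uniform in the auxiliary randomness). So the off-diagonal step you flagged is not an obstacle, and the fallback is unnecessary. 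It would also not prove the statement: since $h_N(Y_u)$ itself depends on the signs, it only yields a $\sqrt{\Phi}$-type additive error, not the relative form $N^{-c}\Eds[h_N(G_r)]+N^{-D}$.

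One small fix: run Gronwall on $\Eds[g_{u\wedge\tau}(\z(u\wedge\tau))\mid\bm\lambda]$, which satisfies the stopped Duhamel identity. Your $\Eds[\mathds 1_{\{u<\tau\}}g_u(\z(u))\mid\bm\lambda]$ does not satisfy it, though it lies below the stopped quantity by at most $N^{-\nu}$.
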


\begin{proof}
We set
\[
    \Qfr=\Efr=\{\eb_{\alpha_1},\dots,\eb_{\alpha_r}\}
\]
and fix $\bm\lambda\in\Lfr(\xi,\Qfr,\Efr,\nu,N)$. In the  representation of the backward reference flow in Definition~\ref{def:backwardflow}, we take $p=1$, $a_1=r$, $k_1=k$, $O_1=\Id_r$ and $\widetilde h=h_N$. We can write
\[
    g_u(\x)=\hat{\Eds}[h_N(Y_u)],
    \qquad
    Y_u=\sum_{i=1}^N\mu_i(u)\x_i+\Xi_u,
    \qquad
    \mu_i(u)=(M_{u,t})_{ki},
\]
where $\Xi_u=(\Xi_{u,t})_k$. By \eqref{eq:boundMus},
\begin{equation}\label{eq:relative_mu_contraction_extremal}
    \sum_{i=1}^N\mu_i(u)^2\leqslant1.
\end{equation}
We set
\[
    \Phi(u)
    =
    \Eds\left[
        g_{u\wedge\tau}(\z(u\wedge\tau))
        \middle\vert\bm\lambda
    \right].
\]
For later use below, we note that since $0\leqslant h_N\leqslant1$ and the reference evolution is Markov,
we have $0\leqslant g_u\leqslant1$ and therefore $0\leqslant\Phi(u)\leqslant1$. 
Repeating the  It\^o calculation from the proof of
Lemma~\ref{lem:duhamel_stopped} on an arbitrary interval
$[s,v]\subset[0,t]$, we obtain
\[
    \Phi(v)-\Phi(s)
    =
    \int_s^v
    \Eds\left[
        \mathds 1_{\{u<\tau\}}
        (\Dscr_u-\Oscr_u)g_u(\z(u))
        \middle|\bm\lambda
    \right]\d u.
\]
This implies that for almost every $u$,
\begin{equation}\label{eq:relative_phi_derivative_extremal}
    \Phi'(u)
    =
    \Eds\left[
        \mathds 1_{\{u<\tau\}}
        (\Dscr_u-\Oscr_u)g_u(\z(u))
        \middle|\bm\lambda
    \right].
\end{equation}

By the chain rule,
\[
    \nabla_j^2g_u(\x)
    =
    \hat{\Eds}\left[
        \mu_j(u)^2\nabla^2h_N(Y_u)
    \right].
\]
Using the assumption on $h_N$ and \eqref{eq:relative_mu_contraction_extremal}, we obtain
\[
    \sum_{j=1}^N\Vert\nabla_j^2g_u(\x)\Vert_{\mathrm{HS}}
    \leqslant
    C L_Ng_u(\x).
\]
Combining this with the bound on $\Delta_j(u)$ from the proof of Lemma \ref{lem:diagonal_term} gives
\begin{equation}\label{eq:relative_diagonal_extremal}
\left\vert
    \Eds\left[
        \mathds 1_{\{u<\tau\}}\Dscr_ug_u(\z(u))
        \middle\vert\bm\lambda
    \right]
\right\vert
\leqslant
CL_NN^{1+\xi}\ell^{-\frac{3}{2}}\Phi(u).
\end{equation}

For the off-diagonal term, we use the same decomposition as in the proof of Lemma \ref{lem:offdiagonal_term}. Namely,
\[
    \Oscr_ug_u(\z(u))
    =
    \hat{\Eds}\left[
        \sum_{a,b=1}^r
        \partial_{ab}h_N(Y_u)S_{ab}(u)
    \right],
\]
where 
\[
    S_{ab}(u)
    =
    \sum_{|i-j|\geqslant\ell}
    c_{ij}(u)\mu_i(u)\mu_j(u)
    \v_i(u,\eb_{\alpha_a})\v_j(u,\eb_{\alpha_b}).
\]
Let $\Gcal_u$ be the $\sigma$-field generated by the eigenvalue path, the
eigenvectors modulo their independent Rademacher signs, and all the
auxiliary reference-flow randomness. We condition on $\Gcal_u$ and
write $\Eds_\varepsilon$ for expectation with respect to the remaining
Rademacher signs. Since the event $\{u<\tau\}$ is invariant under these
signs, it is $\Gcal_u$-measurable. Under this conditional law,
$S_{ab}(u)$ is a degree-two polynomial in the independent Rademacher
signs. The sign-pairing computation in the proof of Lemma
\ref{lem:offdiagonal_term} gives
\[
    \mathds 1_{\{u<\tau\}}
    \Eds_\varepsilon\left[|S_{ab}(u)|^2\right]
    \leqslant
    C N^{2+2\xi}\ell^{-4}.
\]
Hence, by the Bonami--Beckner inequality \cites{Bon70,Bec75}, for every
$q\geqslant2$,
\begin{equation}\label{eq:relative_S_Lq_extremal}
    \mathds 1_{\{u<\tau\}}
    \left(
        \Eds_\varepsilon\left[|S_{ab}(u)|^q\right]
    \right)^{\frac1q}
    \leqslant
    CqN^{1+\xi}\ell^{-2}.
\end{equation}
Set
\[
    m_u
    =
    \Eds_\varepsilon\left[h_N(Y_u)\right].
\]
Writing $q'=q/(q-1)$, conditional H\"older's inequality,
\eqref{eq:relative_S_Lq_extremal}, and
$\vert\partial_{ab}h_N\vert\leqslant L_Nh_N$ give
\[
\begin{aligned}
&\Eds_\varepsilon\left[
    \mathds 1_{\{u<\tau\}}
    \vert\partial_{ab}h_N(Y_u)S_{ab}(u)\vert
\right]\leqslant
CqL_NN^{1+\xi}\ell^{-2}
\mathds 1_{\{u<\tau\}}m_u^{1-\frac1q},
\end{aligned}
\]
where we used $h_N^{q'}\leqslant h_N$.  Applying H\"older's inequality once more and then averaging over the
remaining randomness gives
\[
\begin{aligned}
    \Eds\left[
        \mathds 1_{\{u<\tau\}}m_u^{1-\frac1q}
        \middle\vert\bm\lambda
    \right]
    &\leqslant
    \left(
        \Eds\left[
            \mathds 1_{\{u<\tau\}}m_u
            \middle\vert\bm\lambda
        \right]
    \right)^{1-\frac1q} =
    \left(
        \Eds\left[
            \mathds 1_{\{u<\tau\}}g_u(\z(u))
            \middle\vert\bm\lambda
        \right]
    \right)^{1-\frac1q}
    \leqslant
    \Phi(u)^{1-\frac1q}.
\end{aligned}
\]
Consequently,
\begin{equation}\label{eq:relative_offdiagonal_extremal}
\left\vert
    \Eds\left[
        \mathds 1_{\{u<\tau\}}\Oscr_ug_u(\z(u))
        \middle\vert\bm\lambda
    \right]
\right\vert
\leqslant
CqL_NN^{1+\xi}\ell^{-2}\Phi(u)^{1-\frac1q}.
\end{equation}

We choose $q=\lceil\log N\rceil$. For every fixed $K>0$,
\[
    x^{1-\frac{1}{q}}\leqslant C_K(x+N^{-K}),
    \qquad 0\leqslant x\leqslant1.
\]
It follows from \eqref{eq:relative_phi_derivative_extremal}--\eqref{eq:relative_offdiagonal_extremal} that
\[
    \vert\Phi'(u)\vert
    \leqslant
    \Lambda_N(\Phi(u)+N^{-K}),
\]
where
\[
    \Lambda_N
    =
    C_{K,r}L_N\left(
        N^{1+\xi}\ell^{-\frac{3}{2}}
        +qN^{1+\xi}\ell^{-2}
    \right).
\]
With our choice of $t,\ell$ and $\xi$,
\[
    tN^{1+\xi}\ell^{-\frac{3}{2}}=N^{-\frac{3}{8}},
    \qquad
    tN^{1+\xi}\ell^{-2}=N^{-\frac{5}{6}}.
\]
Since $L_N$ is bounded by a power of $\log N$, Gronwall's lemma gives, for some $c>0$,
\[
    \vert\Phi(t)-\Phi(0)\vert
    \leqslant
    CN^{-c}\Phi(0)+C_DN^{-D}.
\]
Finally, Proposition \ref{prop:refgen} gives
\[
    \Phi(0)=\Eds[h_N(G_r)]+O(N^{-D})
\]
for every $D>0$, since $(Nt^3/\ell)^{1/3}=N^{1/144}$. Moreover, since
$g_t(\z(t))=h_N(\z_k(t))$ on $\{\tau>t\}$ and both quantities are
bounded by one,
\[
    \left\vert
        \Eds\left[h_N(\z_k(t))\middle\vert\bm\lambda\right]
        -\Phi(t)
    \right\vert
    \leqslant
    \Pds(\tau\leqslant t\mid\bm\lambda)
    \leqslant N^{-\nu}.
\]
Combining these estimates with the bound on
$\vert\Phi(t)-\Phi(0)\vert$, taking $\nu$ sufficiently large, and then
taking the overwhelming probability exponent in Lemma
\ref{lem:good_event} sufficiently large and averaging over the
eigenvalue trajectory proves the proposition.
\end{proof}

We will also use the same comparison scheme as in the proof of Theorem \ref{theo:finite}. By \cite{erdHos2017dynamical}*{Lemma 16.2}, there exists a generalized Wigner matrix $H_0$ such that, if $H_t$ is the Dyson Brownian motion started from $H_0$ and   $T=a(t)H_t$    
then the entries of $T$ match those of $W$ up to order three and
\begin{equation}\label{eq:extremal_moment_matching}
    \Eds[t_{ij}^a]=\Eds[w_{ij}^a],
    \quad a=1,2,3,
    \qquad
    \left\vert\Eds[t_{ij}^4]-\Eds[w_{ij}^4]\right\vert
    \leqslant CN^{-2}t.
\end{equation}
As in the comparison argument of \cite{BL22LInfinity}, the construction can be taken inside a uniform subexponential generalized Wigner class.

To compare the extremal process for $W$ and $T$, we need a smooth approximation of a fixed number of upper order statistics. If $\x=(x_1,\dots,x_N)$, write $x_{(1)}\geqslant\cdots\geqslant x_{(N)}$ for its decreasing rearrangement and define, for $j\geqslant1$,
\[
    \Scal_{j,\beta}(\x)
    =
    \frac{1}{\beta}
    \log\sum_{\substack{I\subset\unn{1}{N}\\ \vert I\vert=j}}
    \e^{\beta\sum_{i\in I}x_i},
    \qquad
    \Scal_{0,\beta}=0,
    \qquad
    \Tcal_{j,\beta}=\Scal_{j,\beta}-\Scal_{j-1,\beta}.
\]

\begin{lemma}\label{lem:soft_order_statistics}
Let $\beta\geqslant1$. For every $1\leqslant j\leqslant N$,
\[
    x_{(j)}-\frac{1}{\beta}\log\binom{N}{j-1}
    \leqslant
    \Tcal_{j,\beta}(\x)
    \leqslant
    x_{(j)}+\frac{1}{\beta}\log\binom{N}{j}.
\]
Moreover, for every fixed $m$, every $\Psi\in\Ccal^5(\Rbb^m)$, and
$1\leqslant d\leqslant5$, if
\[
    F_{m,\beta}(\x)
    =
    \Psi\left(
        \Tcal_{1,\beta}(\x),\dots,\Tcal_{m,\beta}(\x)
    \right),
\]
then
\begin{equation}\label{eq:soft_order_derivative_extremal}
    \sum_{i_1,\dots,i_d=1}^N
    \left\vert
        \partial_{i_1}\cdots\partial_{i_d}F_{m,\beta}(\x)
    \right\vert
    \leqslant
    C_{m,d}\beta^{d-1}
    \max_{1\leqslant q\leqslant d}\Vert D^q\Psi\Vert_\infty.
\end{equation}
\end{lemma}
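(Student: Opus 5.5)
The two-sided bound on $\Tcal_{j,\beta}$ follows from bounding each $\Scal_{j,\beta}$ separately. The largest term in the sum defining $\Scal_{j,\beta}$ is $\e^{\beta(x_{(1)}+\dots+x_{(j)})}$, and there are $\binom Nj$ terms. Hence
\[
\sum_{i\leqslant j}x_{(i)}\leqslant \Scal_{j,\beta}(\x)\leqslant \sum_{i\leqslant j}x_{(i)}+\frac1\beta\log\binom Nj .
\]
Subtracting the analogous bounds for $\Scal_{j-1,\beta}$ gives the claimed inequalities for $\Tcal_{j,\beta}$, with the constants $\frac1\beta\log\binom N{j}$ and $\frac1\beta\log\binom N{j-1}$ appearing on the correct sides.

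For the derivative bound, I will view $\Scal_{j,\beta}$ as a log-partition function. Let $\pi_j$ be the Gibbs measure on $j$-subsets $I$ with weights proportional to $\e^{\beta\sum_{i\in I}x_i}$. Then $\partial_i\Scal_{j,\beta}=\pi_j(i\in I)$, and the higher derivatives are $\beta^{d-1}$ times joint cumulants of the indicators $\mathbf 1_{i\in I}$ under $\pi_j$:
\[
\partial_{i_1}\cdots\partial_{i_d}\Scal_{j,\beta}=\beta^{d-1}\kappa_{\pi_j}\left(\mathbf 1_{i_1\in I},\dots,\mathbf 1_{i_d\in I}\right).
\]
The key estimate I want is
\[
\sum_{i_1,\dots,i_d}\bigl\vert\kappa_{\pi_j}(\mathbf 1_{i_1\in I},\dots,\mathbf 1_{i_d\in I})\bigr\vert\leqslant C_{j,d}.
\]
To prove it, write each joint cumulant through the moment–cumulant formula as a signed sum over set partitions of products of joint moments $\pi_j(i_{b}\in I\ \forall b\in B)$. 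For each block these moments sum over the distinct indices in the block to at most $\Eds_{\pi_j}|I|^{|B|}=j^{|B|}$. Since the blocks in a partition share indices, I will organize the sum over $(i_1,\dots,i_d)$ by the coincidence pattern of the indices. For a pattern with $r$ distinct values, the sum is bounded by $\Eds_{\pi_j}[\#\{\text{injections into }I\}]\leqslant j^{r}$. The number of partitions and patterns is bounded by a constant depending only on $d$. This gives
\[
\sum_{i_1,\dots,i_d}\vert\partial_{i_1}\cdots\partial_{i_d}\Scal_{j,\beta}\vert\leqslant C_{j,d}\beta^{d-1},
\]
and therefore the same bound for $\Tcal_{j,\beta}$ with $j\leqslant m$.

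Finally I expand $\partial_{i_1}\cdots\partial_{i_d}F_{m,\beta}$ by Faà di Bruno. Each term has the form $D^q\Psi$ applied to $\partial_{P_1}\Tcal_{j_1,\beta},\dots,\partial_{P_q}\Tcal_{j_q,\beta}$, where $\{P_1,\dots,P_q\}$ is a partition of the index positions. Because the blocks use disjoint sets of summation indices, the sum over $i_1,\dots,i_d$ factorizes into a product of $\ell^1$ norms of the derivative tensors. These are bounded by $\prod_b C\beta^{|P_b|-1}=C\beta^{d-q}\leqslant C\beta^{d-1}$, using $\beta\geqslant1$. Summing over the finitely many partitions and $j_b\leqslant m$ gives \eqref{eq:soft_order_derivative_extremal}.

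The main obstacle is the uniform-in-$N$ $\ell^1$ cumulant bound. The risk is that raw moment expansions produce many diagonal terms, and a naive estimate would lose powers of $N$. The combinatorial step is to use that every joint moment of membership indicators is controlled by counting tuples inside $I$, which has only $j$ elements. If this bookkeeping becomes messy, my first fallback is induction on $d$ via $\partial_i\pi_j(F)=\beta\,\mathrm{Cov}_{\pi_j}(F,\mathbf 1_{i\in I})$, proving directly that $\sum_i\vert\mathrm{Cov}_{\pi_j}(F,\mathbf 1_{i\in I})\vert\leqslant 2j\,\sup\vert F\vert$ for bounded functions $F$ of $I$.
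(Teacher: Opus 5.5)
Your proposal is correct and follows the paper's proof step for step: the sandwich bound on $\Scal_{j,\beta}$ subtracted at $j$ and $j-1$, the identification of $\partial_{i_1}\cdots\partial_{i_d}\Scal_{j,\beta}$ with $\beta^{d-1}$ times a joint cumulant of membership indicators under the Gibbs measure on $j$-subsets, the moment--cumulant formula combined with $\sum_i \mathbf 1_{i\in I}=j$, and Fa\`a di Bruno with factorization over blocks. The coincidence-pattern bookkeeping in the cumulant bound is unnecessary, and as written it is inaccurate: the bound by the expected number of injections into $I$ holds for a single joint moment, but not for a product of block moments, since these indicators are negatively correlated. Your factorization argument from the Fa\`a di Bruno step already handles this: positions in different blocks of a set partition are independent summation variables and all moments are nonnegative, so each partition contributes exactly $\prod_{B}\Eds_{\pi_j}\vert I\vert^{\vert B\vert}=j^d$, which is the paper's one-line $C_dj^d$ bound and involves no powers of $N$.
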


\begin{proof}
The largest term in the sum defining $\Scal_{j,\beta}$ is obtained by taking the $j$ largest coordinates. Thus
\[
    \sum_{a=1}^jx_{(a)}
    \leqslant
    \Scal_{j,\beta}(\x)
    \leqslant
    \sum_{a=1}^jx_{(a)}+\frac{1}{\beta}\log\binom{N}{j},
\]
and the first claim follows by subtracting the corresponding bounds for $j$ and $j-1$.

For the derivative estimate, let $\mu_{j,\beta,\x}$ be the probability measure on the subsets $I\subset\unn{1}{N}$ with $\vert I\vert=j$ whose weights are proportional to
\[
    \e^{\beta\sum_{i\in I}x_i},
\]
and set $X_i(I)=\mathds 1_{\{i\in I\}}$. Differentiating the logarithm gives
\[
    \partial_{i_1}\cdots\partial_{i_d}\Scal_{j,\beta}(\x)
    =
    \beta^{d-1}\kappa_{\mu_{j,\beta,\x}}(X_{i_1},\dots,X_{i_d}).
\]
Since $\sum_iX_i=j$, the moment--cumulant formula gives
\[
    \sum_{i_1,\dots,i_d=1}^N
    \left\vert
        \partial_{i_1}\cdots\partial_{i_d}\Scal_{j,\beta}(\x)
    \right\vert
    \leqslant
    C_dj^d\beta^{d-1}.
\]
The same bound, with a constant depending on $m$, holds for
$\Tcal_{j,\beta}$, $1\leqslant j\leqslant m$. Every $d$-th derivative of
$F_{m,\beta}$ is a finite sum of terms involving $q$ derivatives of
$\Psi$ and derivatives of the $\Tcal_{j,\beta}$ whose orders sum to
$d$. After summing over the coordinate indices, such a term is bounded
by
\[
    C_{m,d}\beta^{d-q}\Vert D^q\Psi\Vert_\infty
    \leqslant
    C_{m,d}\beta^{d-1}\Vert D^q\Psi\Vert_\infty,
\]
where we used $\beta\geqslant1$. This proves
\eqref{eq:soft_order_derivative_extremal}.
\end{proof}
We then have the following comparison statement. 
\begin{proposition}\label{prop:fixed_order_statistics_comparison}
For a matrix $H$, let
\[
    Y_{N,k}^{(1)}(H)\geqslant\cdots\geqslant Y_{N,k}^{(N)}(H)
\]
be the decreasing rearrangement of
\[
    \left(N\vert\u_k(H,\alpha)\vert^2-b_N\right)_{\alpha=1}^N.
\]
Then, for every fixed $m$ and every bounded smooth $h:\Rbb^m\to\Rbb$ with bounded derivatives through order five,
\begin{equation}\label{eq:fixed_order_comparison_extremal}
\sup_{k\in\unn{1}{N}}
\left\vert
    \Eds h\left(Y_{N,k}^{(1)}(W),\dots,Y_{N,k}^{(m)}(W)\right)
    -
    \Eds h\left(Y_{N,k}^{(1)}(T),\dots,Y_{N,k}^{(m)}(T)\right)
\right\vert
\xrightarrow[N\to\infty]{}0.
\end{equation}
\end{proposition}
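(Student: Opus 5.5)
Replace the sharp order statistics by their smooth surrogates from Lemma \ref{lem:soft_order_statistics}, and then run the Green function (Lindeberg) comparison of \cite{knowles2013eigenvector,BL22LInfinity} on the smooth functional
\[
F_{m,\beta}(\x)=h\bigl(\Tcal_{1,\beta}(\x),\dots,\Tcal_{m,\beta}(\x)\bigr),\qquad x_\alpha = N\vert\u_k(\alpha)\vert^2-b_N .
\]

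\emph{Smoothing step.} Choose $\beta=N^{\delta}$ with $\delta>0$ small. Lemma \ref{lem:soft_order_statistics} gives $\vert\Tcal_{j,\beta}-x_{(j)}\vert\leqslant C_m\beta^{-1}\log N$ for $j\leqslant m$. Since $h$ is Lipschitz, replacing the sharp statistics by the soft ones costs $O(N^{-\delta}\log N)$, deterministically, for both $W$ and $T$.

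\emph{Comparison step.}
1. Following \cite{BL22LInfinity}, write $\vert\u_k(\alpha)\vert^2$ through $\eta^{-1}\Im G_{\alpha\alpha}$ integrated against a smoothed indicator of the $k$-th eigenvalue. Use $\eta=N^{-1-\epsilon}$, and localize the energy window by rigidity and level repulsion, available via \cite{BL22LInfinity}*{Proposition B.17}.
2. Apply the Lindeberg replacement entry by entry, from $W$ to $T$, expanding $F_{m,\beta}$ to fifth order in each swapped entry.
3. The first three orders cancel by \eqref{eq:extremal_moment_matching}. The fourth-order term is weighted by $N^{-2}t$ and summed over $N^2$ entries, so its prefactor is $t=N^{-1/48}$.
4. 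The fifth-order term is bounded by $N^{-5/2}\cdot N^2$ times derivative factors.
5. The derivative factors come from $\partial^d F_{m,\beta}$ contracted with derivatives of the $x_\alpha$ in the matrix entry. Bound them using \eqref{eq:soft_order_derivative_extremal}. This is the key point: the sum over all $N$ coordinates $\alpha$ of the $d$-th derivatives costs only $\beta^{d-1}$, not a power of $N$.
6. The derivatives of $x_\alpha$ in the entries are bounded by $N^{C\epsilon}$ through isotropic delocalization and the local law, with overwhelming probability. The uniform subexponential decay is used to truncate the entries and to handle the exceptional events.
7. Altogether the error is $O(N^{C\epsilon}\beta^4(t+N^{-1/2}))$, which tends to $0$ once $\delta,\epsilon\ll 1/48$. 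All bounds are uniform in $k$ because rigidity and delocalization are uniform.

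\emph{Main obstacle.} The delicate part is step 5. The generic comparison theorem handles a fixed number of coordinates, whereas $F_{m,\beta}$ depends on all $N$ coordinates. When we differentiate in $w_{ij}$, each $\partial x_\alpha/\partial w_{ij}$ involves resolvent entries $G_{\alpha i}G_{j\alpha}$, and we must not lose $N$ from the sum over $\alpha$. My plan is to pair the $\ell^1$-type cumulant bound of Lemma \ref{lem:soft_order_statistics}, which makes $\sum_\alpha\vert\partial_\alpha F\vert\leqslant C$ the relevant weight, with the deterministic pointwise bound $\vert\partial x_\alpha/\partial w_{ij}\vert\leqslant N^{C\epsilon}$. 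The point is to take a supremum over $\alpha$ instead of summing. Mixed higher derivatives are handled the same way, by Hölder over the index tuples.

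The eigenvalue-smoothing part is handled as in \cite{knowles2013eigenvector}, where the $\theta$-cutoff derivatives are controlled by level repulsion. I would verify that this step does not interact badly with $\beta$, since the cutoff is applied before $F_{m,\beta}$.
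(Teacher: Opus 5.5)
Your proposal is correct and follows essentially the paper's route. You apply the soft order statistics $\Tcal_{j,\beta}$ with $\beta=N^\delta$ to the regularized Green-function projections of \cite{BL22LInfinity}, then run a Lindeberg replacement using \eqref{eq:extremal_moment_matching}. There, pairing the $\ell^1$ bound \eqref{eq:soft_order_derivative_extremal} with pointwise $N^{C\epsilon}$ bounds on the entry-derivatives of the regularized coordinates keeps all derivatives at $N^{C\delta}$, which gives the error $N^{-1/48+C\delta}+N^{-1/2+C\delta}$.

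The one step you leave implicit is that the regularized coordinates are within $N^{-c}$ of $N\vert\u_k(\alpha)\vert^2-b_N$ simultaneously for all $\alpha$, with probability $1-N^{-c}$. The paper makes this explicit via \cite{BL22LInfinity}*{Lemma 5.5 and Proposition 5.7}, and it suffices because each $\Tcal_{j,\beta}$ is $(2j-1)$-Lipschitz in $\ell^\infty$.
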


\begin{proof}
We use the regularized eigenvector projections from \cite{BL22LInfinity}*{Definition 4.2}. For $H\in\{W,T\}$, set
\[
    \w_k^{\mathrm{reg}}(H)
    =
    \left(
        Nv_k^{\mathrm{reg}}(H,\eb_\alpha)-b_N
    \right)_{\alpha=1}^N.
\]
We choose the regularization parameters as in the proof of \cite{BL22LInfinity}*{Theorem 1.3} and let $\beta=N^\delta$, where $\delta>0$ will be chosen sufficiently small. The estimates from \cite{BL22LInfinity}*{Lemma 5.5} and
\cite{BL22LInfinity}*{Proposition 5.7} imply that, for some $c>0$,
uniformly in $k$ and $H\in\{W,T\}$,
\[
    \Pds\left(
        \left\Vert
            \w_k^{\mathrm{reg}}(H)
            -
            \left(
                N\vert\u_k(H,\alpha)\vert^2-b_N
            \right)_{\alpha=1}^N
        \right\Vert_\infty
        >N^{-c}
    \right)
    \leqslant N^{-c}.
\]
By Lemma \ref{lem:soft_order_statistics}, it is therefore enough to compare
\[
    h\left(
        \Tcal_{1,\beta}(\w_k^{\mathrm{reg}}(H)),\dots,
        \Tcal_{m,\beta}(\w_k^{\mathrm{reg}}(H))
    \right)
\]
for $H=W$ and $H=T$.

We use the replacement argument from the proof of \cite{BL22LInfinity}*{Theorem 1.3}. Set
\[
    \Fcal_k(H)
    =
    h\left(
        \Tcal_{1,\beta}(\w_k^{\mathrm{reg}}(H)),\dots,
        \Tcal_{m,\beta}(\w_k^{\mathrm{reg}}(H))
    \right).
\]
The derivative estimates from \cite{BL22LInfinity}*{Equation (4.61)}, together with \eqref{eq:soft_order_derivative_extremal}, show that all derivatives through order five which appear in the replacement argument are bounded by $N^{C_m\delta}$ (on a high-probability event whose complement is manifestly negligible). A standard moment-matching argument based on a fourth-order Taylor expansion and \eqref{eq:extremal_moment_matching} therefore gives
\[
    \sup_{k\in\unn{1}{N}}
    \left\vert
        \Eds\Fcal_k(W)-\Eds\Fcal_k(T)
    \right\vert
    \leqslant
    C\left(
        N^{-\frac{1}{48}+C_m\delta}
        +N^{-\frac{1}{2}+C_m\delta}
    \right)
    +o(1).
\]
Choosing $\delta>0$ sufficiently small proves the proposition.
\end{proof}

We now study the extremal process along the Dyson Brownian motion
$H_t$ introduced above. For $x\in\Rbb$, let
\[
    T_N(x)=\sqrt{b_N+x},
    \qquad
    q_N(x)=\Pds(\vert g\vert>T_N(x)),
    \qquad g\sim\Ncal(0,1).
\]
By Mills's ratio, uniformly for $x$ in compact sets,
\begin{equation}\label{eq:gumbel_gaussian_tail_extremal}
    Nq_N(x)\xrightarrow[N\to\infty]{}\e^{-\frac{x}{2}}.
\end{equation}

\begin{proposition}\label{prop:gumbel_joint_tails}
Let $r\geqslant1$ be fixed and let $K\subset\Rbb$ be compact. Set
\[
    X_{\alpha,k}=\v_k(t,\eb_\alpha).
\]
Then
\[
\sup_{\substack{k\in\unn{1}{N},\ x_1,\dots,x_r\in K\\
\alpha_1,\dots,\alpha_r\ \text{distinct}}}
\left\vert
    \frac{
        \Pds\left(
            \vert X_{\alpha_a,k}\vert>T_N(x_a)
            \text{ for every }a\in\unn{1}{r}
        \right)
    }{
        \prod_{a=1}^rq_N(x_a)
    }
    -1
\right\vert
\xrightarrow[N\to\infty]{}0.
\]
\end{proposition}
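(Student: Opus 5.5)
The natural route is to sandwich the indicator of the joint tail event between two smooth observables of the form used in Proposition \ref{prop:relative_comparison}, and then read off the Gaussian asymptotics from \eqref{eq:gaussian_logistic_tail_extremal}. Concretely, I would fix a small $\eta>0$ and take the width $w=(\log N)^{-1}$, so that $w T_N(x)\asymp (\log N)^{-1/2}\to0$. I would then define
\[
h^{\pm}_N(\x)=\prod_{a=1}^r\chi_{T_N(x_a)\mp w^{1/2},\,w}(x_a).
\]
On $\{|x_a|>T_N(x_a)\}$ the shifted logistic $\chi_{T_N(x_a)-w^{1/2},w}$ is at least $1-\e^{-w^{-1/2}}$. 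Off a slightly larger window, $\chi_{T_N(x_a)+w^{1/2},w}$ is exponentially small in $w^{-1/2}$. Since $\e^{-w^{-1/2}}=\e^{-\sqrt{\log N}}$ decays faster than any power of $\log N$ but not polynomially, this is too weak as it stands. So I would instead take the shift to be $s=w\,(\log N)=1$-scale divided appropriately: set the width to $w=(\log N)^{-2}$ and the shift to $s=(\log N)^{-1}\log\log N\cdot$ const. The requirements are $s/w\geqslant C\log N$, so that the logistic error is $N^{-C}$, together with $sT\to0$ and $wT\to0$. Taking $w=(\log N)^{-2}$ and $s=C(\log N)^{-1}$ gives $s/w=C\log N$, $sT\asymp(\log N)^{-1/2}$ and $wT\to0$. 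With this choice,
\[
h^-_N(\x)-rN^{-C}\leqslant \mathds 1_{\{|x_a|>T_N(x_a)\,\forall a\}}\leqslant h^+_N(\x)+rN^{-C}
\]
holds, after relabelling so that $h^+$ uses threshold $T-s$ and $h^-$ uses threshold $T+s$. By \eqref{eq:self_bounding_hessian_extremal}, $\|\nabla^2h^\pm_N\|_{\mathrm{op}}\leqslant C_r w^{-2}h^\pm_N$, and $w^{-2}=(\log N)^4$ is admissible with $A_0=4$.

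Next I would apply Proposition \ref{prop:relative_comparison} to $\z_k(t)=(X_{\alpha_a,k})_a$. This gives $\Eds[h^\pm_N(\z_k(t))]=(1+O(N^{-c}))\Eds[h^\pm_N(G_r)]+O(N^{-D})$, uniformly in $k$, in the distinct $\alpha_a$ and in $x_a\in K$; uniformity holds because the constants depend only on $C_0,A_0,r$. Since $G_r$ has independent coordinates, $\Eds[h^\pm_N(G_r)]=\prod_a\Eds[\chi_{T_N(x_a)\pm s,w}(g)]$. By \eqref{eq:gaussian_logistic_tail_extremal}, each factor equals $(1+O(wT))\Pds(|g|>T_N(x_a)\pm s)$. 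Mills's ratio then gives $\Pds(|g|>T\pm s)/\Pds(|g|>T)=\e^{\mp sT+O(s^2)}(1+o(1))=1+o(1)$, since $sT\to0$.

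Finally, the additive errors $N^{-D}$ and $N^{-C}$ must be negligible relative to $\prod_a q_N(x_a)\asymp N^{-r}$ for $x_a\in K$, by \eqref{eq:gumbel_gaussian_tail_extremal}. This is achieved by taking $D,C>r+1$. Dividing through by $\prod_a q_N(x_a)$ and letting $N\to\infty$ gives the two-sided ratio limit $1$ uniformly. The main obstacle is this sandwich step: the smoothing must be sharp enough that the logistic tails contribute only polynomially small errors, which is needed because the target probability is itself of order $N^{-r}$. At the same time it must keep $L_N$ polylogarithmic and the threshold shift below $T^{-1}$. The choices $w=(\log N)^{-2}$ and $s\asymp(\log N)^{-1}$ are designed to meet all three constraints. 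I would double-check that \eqref{eq:gaussian_logistic_tail_extremal} holds uniformly in thresholds near $\sqrt{2\log N}$, i.e.\ that the $O(wT)$ error is uniform in $T$.
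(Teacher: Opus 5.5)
Your proposal is correct and matches the paper's proof: bound the joint tail indicator above and below by products of shifted logistic cutoffs $\chi_{T_N(x_a)\mp s,w}$, and apply Proposition~\ref{prop:relative_comparison} using the self-bounding Hessian estimate \eqref{eq:self_bounding_hessian_extremal} and the Gaussian tail asymptotics \eqref{eq:gaussian_logistic_tail_extremal}. The only differences are the parameters (the paper takes $w=(\log N)^{-3}$ and shift $(3r+10)(\log N)^{-2}$, with a multiplicative upper sandwich) — your $w=(\log N)^{-2}$, $s=C(\log N)^{-1}$ with $C>r$ meets the same three constraints ($s/w\gtrsim\log N$, $sT_N\to0$, polylogarithmic $L_N$), and you should drop the false start with $w=(\log N)^{-1}$ and fix the clash between the thresholds $x_a$ and the coordinate variables in $h^\pm_N$.
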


\begin{proof}
Write $\x=(x_1,\dots,x_r)$ and set $w=(\log N)^{-3}$. We define
\[
    h_{N,\x}(\y)
    =
    \prod_{a=1}^r\chi_{T_N(x_a),w}(y_a).
\]
By \eqref{eq:self_bounding_hessian_extremal},
\[
    \Vert\nabla^2h_{N,\x}(\y)\Vert_{\mathrm{op}}
    \leqslant
    C_r(\log N)^6h_{N,\x}(\y).
\]
Proposition \ref{prop:relative_comparison} and \eqref{eq:gaussian_logistic_tail_extremal} give, uniformly in the parameters appearing in the statement,
\begin{equation}\label{eq:gumbel_logistic_relative_extremal}
\Eds\left[
    \prod_{a=1}^r
    \chi_{T_N(x_a),w}(X_{\alpha_a,k})
\right]
=
(1+o(1))\prod_{a=1}^rq_N(x_a).
\end{equation}
The error $N^{-D}$ in Proposition \ref{prop:relative_comparison} is negligible by taking $D$ sufficiently large, since $\prod_aq_N(x_a)=N^{-r+o(1)}$.

It remains to remove the smooth cutoffs. Let
\[
    \rho_N=(3r+10)\log N,
    \qquad
    \delta_N=\rho_Nw.
\]
We apply \eqref{eq:gumbel_logistic_relative_extremal} with thresholds $T_N(x_a)\pm\delta_N$. Since $T_N(x)\delta_N\to0$ uniformly for $x\in K$, the corresponding
Gaussian probabilities are $(1+o(1))q_N(x)$. Indeed, writing
$A_a=\{\vert X_{\alpha_a,k}\vert>T_N(x_a)\}$, we have
\[
    \chi_{T_N(x_a)-\delta_N,w}\geqslant1-\e^{-\rho_N}
    \quad\text{on }A_a,
    \qquad
    \chi_{T_N(x_a)+\delta_N,w}\leqslant2\e^{-\rho_N}
    \quad\text{on }A_a^c.
\]
Since $0\leqslant\chi_{T,w}\leqslant1$, it follows that
\[
\begin{aligned}
    \Eds\left[
        \prod_{a=1}^r
        \chi_{T_N(x_a)+\delta_N,w}(X_{\alpha_a,k})
    \right]
    -2\e^{-\rho_N}
    &\leqslant
    \Pds\left(\bigcap_{a=1}^r A_a\right)\le 
    (1-\e^{-\rho_N})^{-r}
    \Eds\left[
        \prod_{a=1}^r
        \chi_{T_N(x_a)-\delta_N,w}(X_{\alpha_a,k})
    \right].
\end{aligned}
\]
Since $\e^{-\rho_N}=o(\prod_aq_N(x_a))$, the shifted comparison
estimates imply the desired conclusion.
\end{proof}

\begin{proposition}\label{prop:extremal_process_flow}
Define
\[
    \Xi_{N,k}(t)
    =
    \sum_{\alpha=1}^N
    \delta_{\vert X_{\alpha,k}\vert^2-b_N}.
\]
Then, uniformly in $k\in\unn{1}{N}$, $\Xi_{N,k}(t)$ converges in distribution in $\Mcal_p(\Rbb)$ to a Poisson point process of intensity
\[
    \frac{1}{2}\e^{-\frac{x}{2}}\d x.
\]
Moreover, every fixed number of largest atoms converge jointly and uniformly in $k$.
\end{proposition}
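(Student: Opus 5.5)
The natural route is the method of factorial moments (Kallenberg's criterion for simple point processes), with Proposition \ref{prop:gumbel_joint_tails} as the only probabilistic input.

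\emph{Reduction to union-of-intervals counts.} Since the limit $\Xi$ is a simple Poisson process with diffuse intensity $\nu$, it suffices to show two things, uniformly in $k$, for every finite union $B=\bigcup_{a}(x_a,y_a]$ of disjoint bounded intervals:
\begin{itemize}
\item $\Eds[\Xi_{N,k}(t)(B)]\to\nu(B)$;
\item $\Pds(\Xi_{N,k}(t)(B)=0)\to\e^{-\nu(B)}$.
\end{itemize}
Equivalently, and more robustly, we show that for every fixed $r$ the factorial moments $\Eds[(\Xi_{N,k}(t)(B))_r]$ converge to $\nu(B)^r$. Convergence of all factorial moments to those of a Poisson variable forces Poisson convergence of $\Xi_{N,k}(t)(B)$. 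Joint convergence over several disjoint sets $B_1,\dots,B_s$ follows by the same computation with multivariate factorial moments, giving $\prod_s\nu(B_s)^{r_s}$.

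Uniformity in $k$ is inherited, because every estimate below only uses Proposition \ref{prop:gumbel_joint_tails}, which is uniform in $k$, $\alpha$ and $x$ in compacts.

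\emph{Computing factorial moments.} For $B=(x,y]$ we have
\[
\{\vert X_{\alpha,k}\vert^2-b_N\in B\}=\{\vert X_{\alpha,k}\vert>T_N(x)\}\setminus\{\vert X_{\alpha,k}\vert>T_N(y)\}.
\]
Then
\[
\Eds[(\Xi(B))_r]=\sum_{\alpha_1,\dots,\alpha_r\text{ distinct}}\Pds\left(\vert X_{\alpha_a,k}\vert^2-b_N\in B_{j_a}\ \forall a\right).
\]
Expand each indicator of $(x,y]$ as a difference of two tail indicators, and expand unions of disjoint intervals additively. Each probability then becomes a signed combination of $2^r$ (or finitely many) joint tail probabilities of the form treated in Proposition \ref{prop:gumbel_joint_tails}. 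That proposition gives each joint tail as $(1+o(1))\prod_a q_N(\cdot)$ uniformly. Summing over the $N^r(1+o(1))$ ordered tuples of distinct coordinates and using \eqref{eq:gumbel_gaussian_tail_extremal} yields
\[
\prod_a\left(\e^{-x_a/2}-\e^{-y_a/2}\right)=\prod_a\nu((x_a,y_a]).
\]

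\textbf{Main obstacle: the signed cancellation.} The relative errors $o(1)\prod q_N$ in the individual terms must not overwhelm the differences. This is harmless here: each individual term is itself $O(N^{-r})$, there are finitely many terms, and so the total error is $o(1)$ after multiplying by $N^r$. The only delicacy is that $B$ must be bounded, so all thresholds lie in a compact $K$.

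\emph{Vague convergence and order statistics.} The bounded-interval factorial moments give finite-dimensional convergence on bounded sets. By Kallenberg's theorem this yields convergence in $\Mcal_p(\Rbb)$, hence convergence of Laplace functionals $\Eds[\exp(-\int f\,\d\Xi_{N,k}(t))]$ for nonnegative $f\in\Ccal_c(\Rbb)$. Uniformity in $k$ holds because all quantities are controlled uniformly, so we may argue along an arbitrary sequence $k_N$.

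For the $m$ largest atoms, vague convergence alone does not control mass escaping to $+\infty$. We therefore add a tightness bound at the top:
\[
\Pds\left(\Xi_{N,k}(t)((M,\infty))\geqslant1\right)\leqslant\Eds\left[\Xi_{N,k}(t)((M,\infty))\right]=Nq_N(M)(1+o(1))\to\e^{-M/2},
\]
using the case $r=1$ of Proposition \ref{prop:gumbel_joint_tails}. The case $M$ outside a compact needs only a monotone comparison with a fixed large $M$, which is fine because the tail event is monotone in the threshold.

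Combining this with $\Pds(\Xi(B)<m)$ convergence for $B=(x,\infty)$, truncated at $M$, gives joint convergence of the top $m$ order statistics. Indeed, $\{Y^{(j)}\leqslant x_j\ \forall j\}$ is determined by the counts $\Xi((x_j,\infty))$ for $j\leqslant m$. Finally, let $M\to\infty$.
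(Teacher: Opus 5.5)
Your proposal is correct and follows essentially the paper's route: mixed factorial moments computed by inclusion--exclusion from Proposition \ref{prop:gumbel_joint_tails}, summed over the $(N)_r$ ordered tuples of distinct coordinates, give Poisson limits for the counts, from which the Laplace functionals and the top order statistics follow. The only difference is in the right tail. The paper handles right half-lines $(a,\infty)$ directly, since each is a single tail event already covered by Proposition \ref{prop:gumbel_joint_tails}. Your separate truncation-at-$M$ tightness step is valid but not needed.
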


\begin{proof}
Let $I_1,\dots,I_s$ be pairwise disjoint bounded intervals with $I_j=(a_j,b_j]$ and set
\[
    Z_{N,k}(I_j)=\Xi_{N,k}(t)(I_j).
\]
Fix $r_1,\dots,r_s\in\Nbb_0$ and let $r=\sum_jr_j$. We write $(m)_q=m(m-1)\cdots(m-q+1)$. For every family of distinct indices
\[
    \alpha_{j,a},
    \qquad
    j\in\unn{1}{s},\quad a\in\unn{1}{r_j},
\]
the inclusion--exclusion principle and Proposition \ref{prop:gumbel_joint_tails} give
\[
\begin{aligned}
&\Pds\left(
    \vert X_{\alpha_{j,a},k}\vert^2-b_N\in I_j
    \text{ for every }j\in\unn{1}{s},\ a\in\unn{1}{r_j}
\right)=
(1+o(1))
\prod_{j=1}^s
\left(q_N(a_j)-q_N(b_j)\right)^{r_j},
\end{aligned}
\]
uniformly in all the indices. Summing over the $(N)_r$ ordered choices and using \eqref{eq:gumbel_gaussian_tail_extremal}, we obtain
\[
    \Eds\prod_{j=1}^s\left(Z_{N,k}(I_j)\right)_{r_j}
    \xrightarrow[N\to\infty]{}
    \prod_{j=1}^s
    \left(
        \int_{I_j}\frac{1}{2}\e^{-\frac{x}{2}}\d x
    \right)^{r_j}
\]
uniformly in $k$. The same argument applies when one of the intervals is a right half-line. The convergence of the mixed factorial moments gives the independent Poisson limits for the count vectors. Approximating a non-negative compactly supported continuous function by step functions gives the convergence of the Laplace functionals. The right half-line version also gives the joint convergence of every fixed number of largest atoms.
\end{proof}

We are now ready to prove Theorem \ref{theo:extremal_process}.
\begin{proof}[Proof of Theorem \ref{theo:extremal_process}]
Let $T$ be the Gaussian divisible matrix defined above \eqref{eq:extremal_moment_matching} and use the notation $Y_{N,k}^{(j)}(H)$ from Proposition \ref{prop:fixed_order_statistics_comparison}. Let
\[
    \eta^{(1)}>\eta^{(2)}>\cdots
\]
denote the atoms of the limiting Poisson point process in decreasing order. Proposition \ref{prop:extremal_process_flow} gives, for every fixed $m$,
\[
    \left(
        Y_{N,k}^{(1)}(T),\dots,Y_{N,k}^{(m)}(T)
    \right)
    \xrightarrow[N\to\infty]{(d)}
    \left(
        \eta^{(1)},\dots,\eta^{(m)}
    \right)
\]
uniformly in $k$. Proposition \ref{prop:fixed_order_statistics_comparison}, applied to smooth approximations of bounded continuous functions, gives the same convergence with $W$ in place of $T$.

It remains to prove the convergence of the point process. Fix a non-negative $f\in\Ccal_c(\Rbb)$ and choose $x_0$ below the support of $f$. We truncate $\int f\d\Xi_{N,k}$ after the first $m$ atoms. The resulting quantity is a bounded continuous function of the first $m$ order statistics. The truncation can differ from the original functional only if $Y_{N,k}^{(m+1)}(W)>x_0$. By the convergence of the first $m+1$ order statistics,
\[
    \limsup_{N\to\infty}\sup_{k\in\unn{1}{N}}
    \Pds\left(Y_{N,k}^{(m+1)}(W)>x_0\right)
    \leqslant
    \Pds\left(\eta^{(m+1)}>x_0\right),
\]
and the right-hand side tends to zero as $m\to\infty$. The same argument applies to the limiting point process. This proves the convergence of the Laplace functionals and hence the theorem.
\end{proof}

\begin{proof}[Proof of Corollary \ref{theo:gumbel}]
The largest atom of $\Xi_{N,k}$ is $
    N\Vert\u_k\Vert_\infty^2-b_N$. 
The probability that the limiting Poisson process has no point in $(x,\infty)$ is
\[
    \e^{-\int_x^\infty\frac{1}{2}\e^{-\frac{y}{2}}\d y}
    =
    \e^{-\e^{-\frac{x}{2}}}.
\]
The result follows from Theorem \ref{theo:extremal_process}.
\end{proof}

\subsection{Quantitative eigenvector universality for smooth generalized Wigner matrices} To transfer our quantitative dynamical estimate to smooth generalized Wigner matrices, we use the quantitative reverse heat flow technique from \cite{bourgade2022extreme}*{Lemma 4.1} based on \cite{erdHos2011universality}*{Proposition 4.1}.
\begin{lemma}[\cite{bourgade2022extreme}]
    \label{lem:reverse_heat_flow}
    Let $W$ be a  smooth   real symmetric    generalized Wigner matrix as in Definition \ref{def:smooth}. If $t=N^{-\theta}$ with $\theta\in(0,1),$ then for any $D>0$ there exists $C>0$ and a generalized Wigner matrix $H_0$ such that   
    \[ 
      \d_{TV}\left(
    W,
    \left(1+\frac{t}{N}\right)^{-\frac12}
    \left(\sqrt{1-t}H_0+\sqrt{t}\mathrm{GOE}_N\right)
\right)\leqslant CN^{-D}.    
    \]
\end{lemma}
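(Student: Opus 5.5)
The plan is to work entry by entry, using the independence of the entries. Total variation distance between product measures is bounded by the sum of the total variation distances of the marginals. It therefore suffices to construct, for each pair $i\leqslant j$, a centered random variable $h_{ij}$ with
\[
\d_{TV}\left(w_{ij},\ \left(1+\frac tN\right)^{-\frac12}\left(\sqrt{1-t}\,h_{ij}+\sqrt{\tfrac{t(1+\delta_{ij})}{N}}\,g\right)\right)\leqslant CN^{-D-2},
\]
with $g\sim\Ncal(0,1)$ independent of $h_{ij}$. The family $(h_{ij})$ must also have the variance profile of a generalized Wigner matrix and uniformly bounded moments.

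\textbf{Variances.} The second moments force
\[
s^{(0)2}_{ij}=\frac{(1+t/N)s_{ij}^2-t(1+\delta_{ij})/N}{1-t}.
\]
Summing over $i$ gives
\[
\sum_i s^{(0)2}_{ij}=\frac{1+t/N-t(N+1)/N}{1-t}=1,
\]
and the bounds $c'/N\leqslant s^{(0)2}_{ij}\leqslant C'/N$ persist for $t=N^{-\theta}$ small. After rescaling by the standard deviation, each entry problem becomes a one-dimensional reverse Ornstein--Uhlenbeck problem. Let $u=\rho/\phi$ be the density of the normalized entry relative to the standard Gaussian $\phi$, and let $\mathcal A=\partial_x^2-x\partial_x$ be the OU generator. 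Mixing with Gaussian noise in the above way acts on $u$ as $\e^{\tau\mathcal A}$ with $\tau=-\frac12\log(1-t')\asymp t$, where $t'\asymp t$ is the effective noise fraction.

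\textbf{Construction.} Fix $K$ with $(K+1)\theta>D+3$ and set
\[
u_0=\sum_{k=0}^{K}\frac{(-\tau)^k}{k!}\mathcal A^k u .
\]
Then $\e^{\tau\mathcal A}u_0-u=\O{\tau^{K+1}}$ in $L^1(\phi)$. This follows by Taylor expansion in $\tau$ of $\e^{\tau\mathcal A}\e^{-\tau\mathcal A}$, together with the bound $\Vert \e^{s\mathcal A}\mathcal A^{K+1}u\Vert_{L^1(\phi)}\leqslant \Vert\mathcal A^{K+1}u\Vert_{L^1(\phi)}$. Since $\phi\,\mathcal A^k u$ is $e^{-V}$ times a polynomial expression in the derivatives of $V$ and in $x$, Definition \ref{def:smooth} gives $\Vert\mathcal A^{K+1}u\Vert_{L^1(\phi)}\leqslant C_K$. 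Moreover $\int u_0\phi=1$, and the first two moments are preserved because $\mathcal A$ annihilates $1$ and $x$ and maps $x^2$ to $2-2x^2$. The moment computation is routine and consistent with the variance choice above.

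\textbf{Positivity (main obstacle).} The function $u_0$ need not be a probability density. Writing $u_0=u\bigl(1+\sum_{k\geqslant1}\tau^k P_k\bigr)$, the $P_k$ are polynomially bounded by the growth assumption on the $V^{(k)}$. Hence $u_0>0$ on $\{|x|\leqslant N^{\theta/C}\}$ for $C$ large. Outside this region, the mass of $|u_0|\phi$ is $\O{N^{-D-3}}$, using the decay of $e^{-V}$ implied by the bounded moments combined with the polynomial control. I will therefore replace $u_0$ by $(u_0)_+\mathbf 1_{\{|x|\leqslant N^{\theta/C}\}}$, renormalize, and restore the mean and variance by a tiny compactly supported correction. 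Each of these steps costs $\O{N^{-D-3}}$ in $L^1$, and $\e^{\tau\mathcal A}$ is an $L^1(\phi)$ contraction, so the total variation bound survives. The uniformly bounded moments of the resulting $h_{ij}$ follow from the explicit form of the density. Summing over the $N^2$ entries gives the claimed $CN^{-D}$.
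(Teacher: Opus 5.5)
Your proposal is correct and follows essentially the same route as the paper. The paper invokes the reverse heat flow construction of the cited lemma: a truncated Taylor expansion of the backward Ornstein--Uhlenbeck semigroup, positivity on a cutoff region of size $N^{\theta/C}$, renormalization and correction of the first two moments, then tensorization of an entrywise error smaller than $N^{-D-2}$. The paper's remark likewise replaces the subexponential tail assumption by Markov bounds from the uniformly bounded moments, exactly as you do. Your check that the prescribed $s^{(0)2}_{ij}$ keep $\sum_i s^{(0)2}_{ij}=1$ correctly explains the role of the prefactor $(1+t/N)^{-1/2}$.
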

    
\begin{remark}
We note that the additional subexponential decay assumption in \cite{bourgade2022extreme}*{Lemma 4.1} is not needed here. Indeed, in the proof of that lemma it is used only to control the contribution outside the cutoff introduced in the reverse heat flow construction, and to ensure that the resulting density satisfies the same tail assumption. Under Definition \ref{def:genwigner}, for every fixed $a,M>0$ and every rescaled entry $X=\sqrt{N}w_{ij}$,
\[
    \Eds\left[
        (1+\vert X\vert)^a
        \mathds 1_{\{\vert X\vert>N^\varepsilon\}}
    \right]
    \leqslant C_{a,M}N^{-\varepsilon M}.
\]
Then the same proof, choosing the cutoff exponent sufficiently small and then $M$ sufficiently large, gives an arbitrarily large polynomial gain in the cutoff errors. The normalization and first two moment corrections can be handled as in the cited proof, and the resulting entries retain uniformly bounded moments of every fixed order. After taking the entrywise error smaller than $N^{-D-2}$ and tensorizing over the matrix entries, this gives the stated $N^{-D}$ total variation bound.
\end{remark}
  
We are now ready to prove Theorem \ref{theo:growing}. 
\begin{proof}[Proof of Theorem \ref{theo:growing}]
    It suffices to consider $\kappa$ values satisfying 
$0<\kappa<b-\delta_2$, since a bound with any smaller positive value
of $\kappa$ implies the claim for larger $\kappa$. Choose $\theta>0$
sufficiently small so that
\[
    8\theta<\min\{\kappa,b\},
    \qquad
    \theta<\frac{1}{6}.
\]
  
We set
\[
    t=N^{-\theta},
    \qquad
    \ell=N^{1-4\theta},
    \qquad
    \xi=\theta
\]
so the hypotheses of Theorem \ref{theo:maindyn} are satisfied for $N$ large enough.

We first prove the estimate for the Gaussian divisible matrix. Let $H_0$ be a generalized Wigner matrix and let $H_t$ be the corresponding Dyson Brownian motion at time $t=N^{-\theta}$. Denote its eigenvectors by $\u_k(t)$, and set
\[
    X_N(t)
    =
    \left(
        \langle \q_{i\alpha},\v_{k_i}(t)\rangle
    \right)_{\substack{1\leqslant i\leqslant p\\1\leqslant \alpha\leqslant a_i}}.
\]
Applying Theorem \ref{theo:maindyn}, and then removing the conditioning on the eigenvalue trajectory using the overwhelming-probability estimate for the good set $\Lfr$, gives
\[
\begin{aligned}
\left|
    \Eds\left[      
        h(X_N(t))
        \right]-\Eds [h(G_N)]
\right|
&\leqslant
C\Vert \nabla^2h\Vert_\infty
\frac{N^{1+\xi}t}{\ell}
\left(
    \frac{RQ}{\sqrt\ell}+\frac{Q^2}{\ell}
\right)  \\
&\quad
+
C\sqrt R\,\Vert h\Vert_\infty\sqrt N
\exp\left(-c\left(\frac{Nt^3}{\ell}\right)^{1/3}\right)
+
C\Vert h\Vert_\infty N^{-\nu}
+
C\Vert h\Vert_\infty N^{-D},
\end{aligned}
\]
where \(D,\nu>0\) can be chosen arbitrarily large.

We now estimate the first term, with our choice of parameters, we get
\[
\frac{N^{1+\xi}t}{\ell}
\frac{RQ}{\sqrt\ell}
\leqslant
N^{4\theta}N^{-\frac{1}{2}+2\theta}N^{\frac{1}{2}-b}
=
N^{-b+6\theta}
\quad\text{and}\quad
\frac{N^{1+\xi}t}{\ell}
\frac{Q^2}{\ell}
\leqslant
N^{4\theta}N^{-1+4\theta}N^{1-2b}
=
N^{-2b+8\theta}.
\]
Using \(\Vert\nabla^2h\Vert_\infty\leqslant N^{\delta_2}\), this yields
\[
\Vert \nabla^2h\Vert_\infty
\frac{N^{1+\xi}t}{\ell}
\left(
    \frac{RQ}{\sqrt\ell}+\frac{Q^2}{\ell}
\right)
\leqslant
C\left(
    N^{\delta_2-b+6\theta} + N^{\delta_2-2b+8\theta}
\right).
\]
   
By the choice of $\theta$, this is bounded by $
    C N^{-b+\delta_2+\kappa}$. 
The relaxation term is smaller than any power of $N$. Choosing
\[
    \nu,D>\delta_1+b-\delta_2+1,
\]
the last two error terms are also
$O(N^{-b+\delta_2+\kappa})$. Therefore
\[
    \left|
        \Eds h(X_N(t))-\Eds h(G_N)
    \right|
    \leqslant
    C_0 N^{-b+\delta_2+\kappa}.
\]

It remains to pass from the Gaussian divisible matrix back to $W$. By Lemma \ref{lem:reverse_heat_flow}, for the above choice $t=N^{-\theta}$ and for any $D>0$, there exists a generalized Wigner matrix $H_0$ such that
\[
   \d_{\mathrm{TV}}\left(
    W,
    \left(1+\frac{t}{N}\right)^{-\frac12}
    \left(\sqrt{1-t}\,H_0+\sqrt t\,\mathrm{GOE}_N\right)
\right)
\leqslant
N^{-D}.    
\]
Up to the harmless deterministic normalization of the flow and renormalizing by $(1-t)^{-\frac{1}{2}}$, the matrix $\sqrt{1-t}\,H_0+\sqrt t\,\mathrm{GOE}_N$
has the same eigenvectors as the Gaussian divisible matrix \(H_s\) for $s={\frac{t}{1-t}}\asymp t$ considered above. Hence, writing \(X_N(W)\) for the vector of projections associated with \(W\),
\[
\left|
      \Eds [h(X_N(W))]-\Eds [h(X_N(s))]   
\right|
\leqslant
2\Vert h\Vert_\infty N^{-D}
\leqslant
2N^{\delta_1-D}.
\]
    
Choosing $D>\delta_1+b-\delta_2+1$,
this error is $O(N^{-b+\delta_2+\kappa})$.

Combining the Gaussian divisible estimate with the reverse heat flow
comparison gives
\[
    \left|
        \Eds h(X_N(W))-\Eds h(G_N)
    \right|
    \leqslant
    C_0 N^{-b+\delta_2+\kappa},
\]
after increasing the value of $C_0$. 
This proves the theorem.    
\end{proof}
   
We now prove the Kolmogorov bound for the Dyson Brownian motion by
refining the comparison argument in the proof of Theorem
\ref{theo:maindyn}.   
We first state Nazarov's inequality, which is an anti-concentration inequality of Gaussian vectors.
\begin{lemma}[\cite{nazarov2003maximal}]
\label{lem:nazarov}
Let \(G=(G_1,\dots,G_Q)\) be a centered Gaussian vector in \(\Rbb^Q\), and assume that $\Eds[G_a^2]\geqslant \sigma^2$ for every $a\in\unn{1}{Q}$
for some \(\sigma>0\). Then, for every \(\eta>0\),
\[
    \sup_{\y\in\Rbb^Q}
    \Pds\left(
        \left|\max_{1\leqslant a\leqslant Q}(G_a-\y_a)\right|\leqslant \eta
    \right)
    \leqslant
    \frac{2\eta}{\sigma}\left(\sqrt{2\log Q}+2\right).
\]
\end{lemma}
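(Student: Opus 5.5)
We would prove the lemma by showing that the distribution function $F(x)=\Pds\left(\max_a(G_a-\y_a)\leqslant x\right)$ is Lipschitz with constant $\sigma^{-1}(\sqrt{2\log Q}+2)$, uniformly in $\y$. Since the event in the statement is $\{x-\eta\leqslant \max_a(G_a-\y_a)\leqslant x+\eta\}$ with $x=0$, this gives the bound $2\eta$ times that constant. This is the argument of Chernozhukov--Chetverikov--Kato for Nazarov's inequality.

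\textbf{Reduction.} First we reduce to nondegenerate coordinates. We may assume $\sigma_a^2=\Eds[G_a^2]\geqslant\sigma^2>0$ for every $a$ and that the covariance is nonsingular. A degenerate covariance can be handled by adding an independent $\epsilon\,\Ncal(0,\Id_Q)$ and letting $\epsilon\to0$, since the probability of the closed event is upper semicontinuous under this perturbation. Next we set $W_a=(G_a-\y_a)/\sigma_a$, so that $\Eds W_a=-\y_a/\sigma_a$ and $\mathrm{Var}(W_a)=1$. Then
\[
F(x)=\Pds\left(W_a\leqslant (x+\ldots)\right)=\Pds\left(W_a\leqslant \frac{x}{\sigma_a}\ \text{for all } a\right).
\]
With a smooth joint density, $F$ is absolutely continuous, and differentiating in $x$ gives
\[
F'(x)=\sum_{a=1}^Q \frac{1}{\sigma_a}\,\phi\left(\frac{x}{\sigma_a}-m_a\right)\Pds\left(W_j\leqslant \frac{x}{\sigma_j}\ \forall j\neq a\ \middle\vert\ W_a=\frac{x}{\sigma_a}\right),
\]
where $m_a=\Eds W_a$. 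Using $\sigma_a\geqslant\sigma$, it suffices to bound the sum $\sum_a \phi(\cdot)\Pds(\cdot\mid\cdot)$ by $\sqrt{2\log Q}+2$.

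\textbf{Conditional decomposition.} We write $W_j-m_j=\rho_{ja}(W_a-m_a)+V_j^{(a)}$, where $V^{(a)}$ is independent of $W_a$ and $\rho_{ja}\in[-1,1]$. On $\{W_a=x/\sigma_a\}$, the condition $W_j\leqslant x/\sigma_j$ is a deterministic inequality on $V^{(a)}$. The terms of the sum are then exactly the densities of disjoint events of the form ``the maximum of the standardized coordinates is attained at $a$''. We split the indices according to whether the standardized level $z_a=x/\sigma_a-m_a$ satisfies $z_a\leqslant\sqrt{2\log Q}$ or not.
\begin{itemize}
\item For indices with large $z_a$, we use $\sum_a\phi(z_a)\leqslant Q\phi(\sqrt{2\log Q})\leqslant 1$, after a short refinement.
\item For the remaining indices, we use the Mills-ratio identity $\phi(z)=\int_z^\infty s\,\phi(s)\,\d s$. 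This rewrites each term as an expectation of $(W_a-m_a)_+$ restricted to the event that $a$ is the argmax. Summing over $a$, the disjointness of these events bounds the total by $\Eds\left[\max_a (W_a-m_a)_+\right]+1$. The standard Gaussian maximal inequality bounds this by $\sqrt{2\log Q}+1$.
\end{itemize}
Adding the two contributions gives the constant $\sqrt{2\log Q}+2$.

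\textbf{Main obstacle.} The delicate step is the second one: converting the conditional probabilities into the disjoint-argmax form so that the sum over $a$ telescopes into a single expectation of a maximum. This must be done without assuming equal variances or zero means, and the shifts $m_a$ enter asymmetrically. Our first attempt would be to follow the CCK proof verbatim in the unit-variance case with arbitrary means. The $\sigma_a$-heterogeneity is absorbed only through the factor $1/\sigma_a\leqslant 1/\sigma$ and the $x$-dependent thresholds $x/\sigma_a$. We would then check that the monotonicity in $x$ used there survives these $a$-dependent slopes. If it does not, the fallback is to cite \cite{nazarov2003maximal} directly, as the paper does.
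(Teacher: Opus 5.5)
The paper gives no proof of this lemma; it imports it from Nazarov. Your choice of the Chernozhukov--Chetverikov--Kato argument is therefore the natural self-contained route. Its skeleton is right: density of the maximum, regression on the maximizing coordinate, the Mills-ratio identity, disjoint argmax events, and the Gaussian maximal inequality.

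\textbf{The gap.} It sits exactly at the step you flag as the main obstacle, and the check you propose there would fail. If you run the Mills-ratio argument in the variable $x$, the conditional factor is $K_a(x)=\Pds(W_j\leqslant x/\sigma_j\ \forall j\neq a\mid W_a=x/\sigma_a)$. Writing $W_j-m_j=\rho_{ja}(W_a-m_a)+V^{(a)}_j$, the constraint becomes $V^{(a)}_j\leqslant(\sigma_j^{-1}-\rho_{ja}\sigma_a^{-1})x+\mathrm{const}$. This slope is negative as soon as $\rho_{ja}\sigma_j>\sigma_a$, for example $\sigma_a=1$, $\sigma_j=2$, $\rho_{ja}=0.9$. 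So $K_a$ need not be nondecreasing, and you cannot replace $K_a(x)$ by $K_a(x')$ for $x'\geqslant x$. Relatedly, your terms are not densities of ``$a$ is the argmax of the standardized coordinates'': on $\{W_a=x/\sigma_a\}$, the event $\{W_j\leqslant x/\sigma_j\}$ is not $\{W_j\leqslant W_a\}$.

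\textbf{The fix: freeze $x$.} For fixed $x$, the sum $\sum_a\phi(z_a)K_a(x)$ is exactly the density at level $0$ of $\max_a\tilde W_a$. Here $\tilde W_a=W_a-x/\sigma_a$ has unit variance and mean $\tilde m_a=m_a-x/\sigma_a$, so the heterogeneity is entirely absorbed into the means. Equivalently, use the CCK reduction: with $W_a=(G_a-y_a+\eta)/\sigma_a$, the probability in the statement equals $\Pds(W_a\leqslant 2\eta/\sigma_a\ \forall a)-\Pds(W_a\leqslant0\ \forall a)$, which is at most $\Pds(0<\max_aW_a\leqslant 2\eta/\sigma)$.

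In the unit-variance setting, the monotone variable is a common level $s$. The function $G_a(s)=\Pds(\tilde W_j\leqslant s\ \forall j\neq a\mid\tilde W_a=s)$ is nondecreasing because the slope is now $1-\rho_{ja}\geqslant0$. You should state this explicitly; it is the heart of the argument.

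With it, $\phi(z)\leqslant\int_z^\infty s_+\phi(s)\,\d s$ gives, for every $a$ and with no sign condition on $z_a$,
\[
\phi(-\tilde m_a)G_a(0)\leqslant\int_0^\infty(u-\tilde m_a)_+\,\phi(u-\tilde m_a)\,G_a(u)\,\d u .
\]
Summing over $a$ bounds the density by $\Eds[(\max_aX_a)_+]\leqslant\sqrt{2\log Q}+(2\pi)^{-1/2}$, with $X_a$ standard normal. Consequently:
\begin{itemize}
\item Your split at $\sqrt{2\log Q}$ is harmless but unnecessary.
\item Your bookkeeping ``$\Eds[\max_a(W_a-m_a)_+]+1\leqslant\sqrt{2\log Q}+1$'' is wrong: for $Q=1$ the left side is about $1.4>1$. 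The constant has enough room to absorb this.
\end{itemize}

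\textbf{Degenerate covariances.} Your justification runs the wrong way. For a closed set $F$, weak convergence gives $\limsup_{\epsilon\to0}\Pds(G+\epsilon Z\in F)\leqslant\Pds(G\in F)$, but you need the reverse inequality. Instead, note that $\partial F\subset\bigcup_a\{x_a-y_a=\pm\eta\}$ is $G$-null because $\Eds[G_a^2]\geqslant\sigma^2>0$. Hence $\Pds(G+\epsilon Z\in F)\to\Pds(G\in F)$, and the perturbed variances still exceed $\sigma^2$.
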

   
We also require the following Gaussian smoothing estimate, which allows us to leverage the Gaussian regularization from the reference dynamics.  
\begin{lemma}\label{lem:gaussian_polyhedron_smoothing}
Let $A\subset\Rbb^d$ be a Borel set and, for $w>0$, define
\[
    H_{A,w}(\x)
    =
    \Eds\left[\mathds 1_A(\x+wZ)\right],
    \qquad Z\sim\Ncal(0,\Id_d).
\]
Let $\Gamma$ be a positive semidefinite matrix such that
$\Gamma\succeq v\Id_d$ for some $v\geqslant0$. If $P_\Gamma$
denotes convolution with the centered Gaussian distribution of
covariance $\Gamma$, then there exists $C>0$, independent of $d$, $A$, $w$, and $\Gamma$, such that 
\[
    \sup_{\x\in\Rbb^d}
    \left\Vert
        \nabla^2 P_\Gamma H_{A,w}(\x)
    \right\Vert_{\mathrm{op}}
    \leqslant
    \frac{C}{w^2+v}.
\]
\end{lemma}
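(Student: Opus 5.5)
The plan is to reduce the problem to a single Gaussian convolution and then differentiate the Gaussian density twice. Since $\mathds 1_A$ is bounded by one, and the sum of independent Gaussians is Gaussian, we have
\[
P_\Gamma H_{A,w}(\x)=\Eds\left[\mathds 1_A(\x+Y)\right],\qquad Y\sim\Ncal(0,\Sigma),\quad \Sigma=w^2\Id_d+\Gamma\succeq (w^2+v)\Id_d .
\]
It therefore suffices to show that for any Borel set $A$ and any covariance $\Sigma\succeq s\Id_d$ with $s>0$, the function $F(\x)=\int \mathds 1_A(\y)\,\phi_\Sigma(\y-\x)\,\d\y$ satisfies $\Vert\nabla^2F\Vert_{\mathrm{op}}\leqslant C/s$, where $\phi_\Sigma$ denotes the Gaussian density. (If $w=v=0$ the statement is vacuous, so we may assume $s=w^2+v>0$.)

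Next, differentiate under the integral; this is justified by the Gaussian decay of $\phi_\Sigma$ and the boundedness of $\mathds 1_A$. Using $\nabla^2_{\x}\phi_\Sigma(\y-\x)=\left(\Sigma^{-1}(\y-\x)(\y-\x)^\top\Sigma^{-1}-\Sigma^{-1}\right)\phi_\Sigma(\y-\x)$, we get, for a unit vector $\u$,
\[
\u^\top\nabla^2F(\x)\u=\Eds\left[\mathds 1_A(\x+Y)\left((\u^\top\Sigma^{-1}Y)^2-\u^\top\Sigma^{-1}\u\right)\right].
\]
The random variable $\u^\top\Sigma^{-1}Y$ is a centered one-dimensional Gaussian with variance $\u^\top\Sigma^{-1}\Sigma\Sigma^{-1}\u=\u^\top\Sigma^{-1}\u\leqslant 1/s$. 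Because $0\leqslant\mathds 1_A\leqslant 1$, the absolute value is bounded by
\[
\Eds\left[\left\vert(\u^\top\Sigma^{-1}Y)^2-\u^\top\Sigma^{-1}\u\right\vert\right]\leqslant 2\,\u^\top\Sigma^{-1}\u\leqslant\frac{2}{s}.
\]
Since $\nabla^2F$ is symmetric, its operator norm equals the supremum of $\vert\u^\top\nabla^2F\u\vert$ over unit vectors $\u$. This gives the bound with $C=2$, independent of $d$, $A$, $w$ and $\Gamma$.

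The only point needing care is the dimension independence. It is exactly what the one-directional reduction achieves: we never bound the full matrix entrywise, only the quadratic form along a single direction, and that reduces to a scalar Gaussian. The differentiation under the integral is routine, via dominated convergence with the Gaussian tail bound, so I do not expect a genuine obstacle.
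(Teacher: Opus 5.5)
Your proposal is correct and follows essentially the paper's route: combine the two convolutions into $P_{\Gamma+w^2\Id_d}\mathds 1_A$, differentiate the Gaussian density twice to obtain the weight $\langle \mathbf a,C_0^{-1}Y\rangle\langle \mathbf b,C_0^{-1}Y\rangle-\langle \mathbf a,C_0^{-1}\mathbf b\rangle$, and bound its first absolute moment by a constant over $w^2+v$. The only difference is cosmetic: you restrict to quadratic forms $\mathbf a=\mathbf b=\u$ using symmetry of the Hessian, whereas the paper bounds the bilinear form via Cauchy--Schwarz.
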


\begin{proof}
We have
\[
    P_\Gamma H_{A,w}
    =
    P_{\Gamma+w^2\Id_d}\mathds 1_A.
\]
Put $C_0=\Gamma+w^2\Id_d$, so that
$C_0\succeq(w^2+v)\Id_d$. For unit vectors
$\mathbf a,\mathbf b\in\Rbb^d$ and $Y\sim\Ncal(0,C_0)$, Gaussian
integration by parts gives
\[
\begin{aligned}
    \partial_{\mathbf a}\partial_{\mathbf b}
    P_{C_0}\mathds 1_A(\x)
    =
    \Eds\Big[\mathds 1_A(\x+Y)
    \big(&\langle \mathbf a,C_0^{-1}Y\rangle
          \langle \mathbf b,C_0^{-1}Y\rangle
          -\langle \mathbf a,C_0^{-1}\mathbf b\rangle\big)\Big].
\end{aligned}
\]
By Cauchy--Schwarz,
\[
    \Eds\left[
        \left|\langle \mathbf a,C_0^{-1}Y\rangle
        \langle \mathbf b,C_0^{-1}Y\rangle\right|
    \right]
    \leqslant
    \frac{1}{w^2+v},
\]
and the deterministic term is bounded in the same way. Taking the
supremum over $\mathbf a$ and $\mathbf b$ proves the lemma.
\end{proof}
  
We thus can obtain the following Kolmogorov bound for the Dyson Brownian motion.
    
\begin{proposition}
\label{prop:kolmogorov_from_dynamic}
Let $H_t$ be the Gaussian divisible matrix considered in Theorem
\ref{theo:maindyn}. Suppose $t,\ell,\xi$ are admissible in Theorem~\ref{theo:maindyn} and such that $N t^3 \ge N^{\kappa} \ell$ for some $\kappa>0$. Define
\[
    \Kcal_N
    =
    RQ\,N^{\frac23+\xi}\ell^{-\frac76}
    +
    Q^2N^{\frac23+\xi}\ell^{-\frac53}.
\]
Then, for every $D>0$, if $\Kcal_N\leqslant\frac12$,
\[
\begin{aligned}
    \d_{\mathrm K}(X_N(t),G_N)
    \leqslant{}&
    C\Kcal_N\log\left(\frac{\e Q}{\Kcal_N}\right)
    +
    C\sqrt{RN}
    \exp\left(-c\left(\frac{Nt^3}{\ell}\right)^{\frac13}\right)
    +N^{-D}.
\end{aligned}
\]
\end{proposition}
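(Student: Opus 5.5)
Fix $\y\in\Rbb^Q$ and write the orthant $A_\y=\{\x:\x\leqslant\y\}$. The plan is to replace $\mathds 1_{A_\y}$ by the Gaussian-smoothed indicator $H_{A_{\y\pm\eta\mathbf 1},w}$ of Lemma~\ref{lem:gaussian_polyhedron_smoothing}, with $w$ and a shift $\eta$ to be tuned. This yields a sandwich of the form
\[
\Eds H_{A_{\y-\eta\mathbf 1},w}(X)-\Pds(\Vert wZ\Vert_\infty>\eta)\leqslant \Pds(X\leqslant \y)\leqslant \Eds H_{A_{\y+\eta\mathbf 1},w}(X)+\Pds(\Vert wZ\Vert_\infty>\eta),
\]
valid for $X=X_N(t)$ and for $X=G_N$. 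Since the $G_{i\alpha}$ have unit variance, Nazarov's inequality (Lemma~\ref{lem:nazarov}) bounds the cost of the shift on the Gaussian side by $C\eta\sqrt{\log Q}$. Taking $\eta= Cw\sqrt{\log(Q/\Kcal_N)}$ makes the tail term $Q\e^{-\eta^2/2w^2}$ of order $\Kcal_N$. It then suffices to show
\[
\bigl|\Eds H_{A,w}(X_N(t))-\Eds H_{A,w}(G_N)\bigr|\lesssim \Kcal_N
\]
uniformly in orthants $A$, together with the relaxation and $N^{-D}$ errors.

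For the smoothed comparison I rerun the proof of Theorem~\ref{theo:maindyn} with $h=H_{A,w}$, which has $\Vert h\Vert_\infty\leqslant1$. Proposition~\ref{prop:refgen} gives the relaxation term, and Lemma~\ref{lem:duhamel_stopped} gives the $N^{-\nu}$ term. The improvement has to come in Lemmas~\ref{lem:diagonal_term} and~\ref{lem:offdiagonal_term}. There, instead of bounding $\Vert\nabla^2 g_u\Vert$ by $\Vert\nabla^2 h\Vert_\infty\sim w^{-2}$, I use the Gaussian regularization built into the backward flow. By Definition~\ref{def:backwardflow}, conditionally on $\hat B$, the function $g_u$ is $h$ evaluated at an affine image of $\x$ plus an independent Gaussian. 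The relevant coordinates $(O_i\Xi_{u,t})_{k_i}$ have covariance $\Gamma$ built from $(\Sigma_{u,t})_{k_ik_j}\otimes O_iO_j^\top$.

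The key claim is that $\Sigma_{u,t}\succeq (1-\e^{-c(N/\ell)^{1/3}(t-u)})\Id$, exactly as shown in the proof of Lemma~\ref{lem:entropy_bound}. Hence the smoothing variance is at least $v(u)\asymp\min(1,(N/\ell)^{1/3}(t-u))$ in the directions of $E$. A subtlety is that $\Gamma$ is degenerate off the range of the $O_i$. I would handle this by working with the reduced observable $\tilde h$ on $(\Rbb^R)^p$, where the Gaussian is nondegenerate, and by applying Lemma~\ref{lem:gaussian_polyhedron_smoothing} to the pulled-back set. This gives
\[
\Vert\nabla^2 g_u\Vert\lesssim \frac{1}{w^2+v(u)}\quad\text{(weighted through the } T_j(u)\text{ as before)}.
\]
Inserting this bound into the diagonal and off-diagonal estimates and integrating in $u$ yields
\[
\int_0^t\frac{\d u}{w^2+v(u)}\lesssim 1+\Bigl(\frac{\ell}{N}\Bigr)^{1/3}\log\Bigl(1+\frac{(N/\ell)^{1/3}}{w^{2}}\Bigr),
\]
so the whole Duhamel error is at most
\[
N^{1+\xi}\bigl(RQ\ell^{-3/2}+Q^2\ell^{-2}\bigr)\Bigl(t+(\ell/N)^{1/3}\log(\cdots)\Bigr).
\]
The factor $(\ell/N)^{1/3}$ turns $N\ell^{-3/2}$ into $N^{2/3}\ell^{-7/6}$ and $N\ell^{-2}$ into $N^{2/3}\ell^{-5/3}$, matching $\Kcal_N$. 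The part of the integral over $u\leqslant t-(\ell/N)^{1/3}$, where $v\asymp1$, must not produce the factor $t$. For that piece I would instead exploit that the diagonal and off-diagonal terms can be restarted: on that range $\nabla^2 g_u$ is bounded by $\e^{-c(N/\ell)^{1/3}(t-u)}$ times the previous bound, because for $u$ with $t-u\gg(\ell/N)^{1/3}$ the contraction $\Vert M_{u,t}\Vert\leqslant\e^{-c(N/\ell)^{1/3}(t-u)/2}$ multiplies each $T_j(u)$. Each factor $T_j$ carries a power of $M_{u,t}$, so the integrand decays exponentially and the integral is again $O((\ell/N)^{1/3})$.

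I expect the main obstacle to be this careful use of both the smoothing lower bound on $\Sigma$ and the decay of $M$ to obtain integrability without a logarithm in $w$. The logarithm only appears through the choice $w\sim\Kcal_N$; choosing $w$ polynomially small then gives at worst a $\log(\e Q/\Kcal_N)$ factor, combined with Nazarov's $\sqrt{\log Q}$. Finally, the $N^{-\nu}$ and $\Lfr^c$ errors are absorbed into $N^{-D}$, giving the stated bound when $\Kcal_N\leqslant\frac12$.
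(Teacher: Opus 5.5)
Your proposal follows essentially the paper's proof. It uses Gaussian-smoothed orthant indicators with Nazarov's inequality, and reruns the Duhamel comparison so that the backward observable's Hessian weight becomes $\frac{a_u}{w^2+1-a_u}$, where $a_u=\e^{-c(N/\ell)^{1/3}(t-u)}$: the smoothing comes from $\Sigma_{u,t}\succeq(1-a_u)\Id_N$ and the contraction from $M_{u,t}M_{u,t}^\top\preceq a_u\Id_N$. The time integral of this weight is at most $C(\ell/N)^{1/3}\log(1+w^{-2})$; once $v(u)$ is capped at $1$, there is no $(N/\ell)^{1/3}$ inside the logarithm, unlike in your formula.

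The one point the paper makes precise where you are vague is the degeneracy. It factors $O_i=L_iP_i$, with $P_i$ having orthonormal rows and $L_i$ having unit rows, and smooths the pulled-back set $\{\x\in\Rbb^d: L\x\leqslant\y\}$ isotropically in $\Rbb^d$, where $d=\sum_i\operatorname{rank}O_i\leqslant Q$. This gives both $\Gamma_{u,t}\succeq(1-a_u)\Id_d$ and $\sum_j\Vert\Ucal_j(u)\Vert_{\mathrm{HS}}^2\leqslant Qa_u$. A naive count in $(\Rbb^R)^p$ would instead put $pR$ in place of $Q$ in $\Kcal_N$.
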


\begin{proof}
Recall the matrices
$O_i\in\Rbb^{a_i\times R}$ introduced after Theorem
\ref{theo:maindyn}, and define $
    r_i=\operatorname{rank}(O_i)$. 
Choose a matrix $P_i\in\Rbb^{r_i\times R}$ whose rows form an
orthonormal basis of the row space of $O_i$, and choose a  matrix
$L_i\in\Rbb^{a_i\times r_i}$ such that $O_i=L_iP_i$. Since the rows of $O_i$ have
norm one and the rows of $P_i$ are orthonormal, the rows of $L_i$ also
have norm one, and $L_iL_i^\top = O_iO_i^\top$. 
Define 
\[
    d=\sum_{i=1}^p r_i,
    \qquad
    L=\operatorname{diag}(L_1,\dots,L_p),
\]
and define the linear transformation 
$\Pi_{\mathbf k}:(\Rbb^R)^N\to\Rbb^d$ by $\Pi_{\mathbf k}\x = \left(P_i\x_{k_i}\right)_{1\leqslant i\leqslant p}$.
Then
\[
    X_N(t)=L\Pi_{\mathbf k}\z(t).
\]
Moreover, if $Z_d\sim\Ncal(0,\Id_d)$, then $LZ_d$ has the same
distribution as $G_N$ by the fact that $L_iL_i^\top = O_iO_i^\top$.

Fix $\y\in\Rbb^Q$ and consider
\[
    A_\y
    =
    \left\{\x\in\Rbb^d:L\x\leqslant\y\right\},
\]
which satisfies
\[
\Pds ( X_N(t) \le \y) = \Pds ( \Pi_{\mathbf k}\z(t) \in A_\y).
\]
For $w>0$, let $H_{\y,w}=H_{A_\y,w}$ be the function from
Lemma \ref{lem:gaussian_polyhedron_smoothing}, and set
\[
    F_{\y,w}(\z)
    =
    H_{\y,w}(\Pi_{\mathbf k}\z).
\]
The rows of the matrices $P_i$, expressed in the orthonormal basis
$\Efr=\{\eb_1,\dots,\eb_R\}$, determine at most $Q$ additional
deterministic unit vectors in $E$ (defined before Theorem~\ref{theo:maindyn}). Throughout this proof, we enlarge
$\Qfr$ by these vectors. Lemma \ref{lem:good_event} and all estimates
below remain unchanged.

Let
\[
    g_u=\hat{\Pscr}_{u,t}F_{\y,w}
\]
be the backward reference evolution and put
\[
    a_u=\exp\left(-c\left(\frac{N}{\ell}\right)^{\frac13}(t-u)\right).
\]
We adopt the notation of the proof of Lemma \ref{lem:entropy_bound}. That proof, applied on the interval
$[u,t]$, gives 
\begin{equation}\label{e:Mut}
    M_{u,t}M_{u,t}^\top\preceq a_u\Id_N,
    \qquad
    \Sigma_{u,t}\succeq(1-a_u)\Id_N.
\end{equation}
Conditionally on the auxiliary short-range Brownian motions, define $\Ucal_{u,t}=\Pi_{\mathbf k}(M_{u,t}\otimes\Id_R)$, 
and let $\Gamma_{u,t}=\Pi_{\mathbf k}(\Sigma_{u,t}\otimes\Id_R)\Pi_{\mathbf k}^\top$
be the conditional covariance of $\Pi_{\mathbf k}\Xi_{u,t}$.
Since the indices $k_i$ are distinct and the rows of each $P_i$ are
orthonormal,
\[
    \Pi_{\mathbf k}\Pi_{\mathbf k}^\top
    =
    \operatorname{diag}(P_1P_1^\top,\dots,P_pP_p^\top)
    =
    \Id_d.
\]
Hence
\[
\begin{aligned}
    \Ucal_{u,t}\Ucal_{u,t}^\top
    &=
    \Pi_{\mathbf k}
    \bigl(M_{u,t}M_{u,t}^\top\otimes\Id_R\bigr)
    \Pi_{\mathbf k}^\top
    \preceq a_u\Id_d,\\
    \Gamma_{u,t}
    &=
    \Pi_{\mathbf k}
    \bigl(\Sigma_{u,t}\otimes\Id_R\bigr)
    \Pi_{\mathbf k}^\top
    \succeq (1-a_u)\Id_d.
\end{aligned}
\]

We now fix the auxiliary Brownian motions temporarily. Conditionally on
this randomness, the backward observable is
\[
    P_{\Gamma_{u,t}}H_{\y,w}(\Ucal_{u,t}\x).
\]
Write $\Ucal_j(u):\Rbb^R\to\Rbb^d$ for the $j$-th block of
$\Ucal_{u,t}$. By Lemma \ref{lem:gaussian_polyhedron_smoothing}
and the chain rule,
\[
\begin{aligned}
    \sum_{j=1}^N
    \left\Vert\nabla_j^2g_u(\x)\right\Vert_{\mathrm{HS}}
    &\leqslant
    \frac{C}{w^2+1-a_u}
    \sum_{j=1}^N\Vert\Ucal_j(u)\Vert_{\mathrm{HS}}^2\leqslant
    CQ\frac{a_u}{w^2+1-a_u}.
\end{aligned}
\]
Since this estimate holds uniformly in the auxiliary Brownian motions,
averaging over this randomness gives the same bound for $g_u$. 
Combining this estimate with the bound on $\Delta_j(u)$ from the proof
of Lemma \ref{lem:diagonal_term}, and bounding $\Dscr_ug_u(\z(u))$ as in that proof, we obtain
\[
\left|
    \Eds\left[
        \mathds 1_{\{u<\tau\}}\Dscr_ug_u(\z(u))
        \middle|\bm\lambda
    \right]
\right|
\leqslant
CRQ N^{1+\xi}\ell^{-\frac{3}{2}}
\frac{a_u}{w^2+1-a_u}.
\]

For the off-diagonal term, we repeat the sign-pairing argument from the
proof of Lemma \ref{lem:offdiagonal_term}, now using the rows of the
$P_i$ as the projection directions. Index the coordinates of
$\Rbb^d$ by $a$, and let $\widetilde{\q}_a\in E$ be the deterministic
unit vector corresponding to the relevant row of $P_r$ when $a$
belongs to the $r$-th block. Set
\[
    \mu_i^a(u)=(M_{u,t})_{k_r i}
\]
and
\[
    S_{ab}(u)
    =
    \sum_{|i-j|\geqslant\ell}
      c_{ij}(u)\mu_j^a(u)\mu_i^b(u)    
    \v_i(u,\widetilde{\q}_a)\v_j(u,\widetilde{\q}_b).
\]
The only change from the proof of Lemma \ref{lem:offdiagonal_term} is
that we use the first estimate in \eqref{e:Mut}. 
This shows that for every coordinate $a$,
\[
    \sum_{i=1}^N(\mu_i^a(u))^2\leqslant a_u,
\]
in place of the bound by $1$ used there. Repeating the sign-pairing
estimate from the proof of Lemma \ref{lem:offdiagonal_term} with this
improved bound gives
\[
    \left(
        \Eds\left[
            \mathds{1}_{\{u<\tau\}}|S_{ab}(u)|^2
            \middle\vert\bm\lambda
        \right]
    \right)^{\frac12}
    \leqslant
    CN^{1+\xi}\ell^{-2}a_u.
\]
Further, Lemma~\ref{lem:gaussian_polyhedron_smoothing} gives
\[
    \max_{a,b}
    |\partial_{ab}P_{\Gamma_{u,t}}H_{\y,w}|
    \leqslant
    \frac{C}{w^2+1-a_u}.
\]
Since $d\leqslant Q$, it follows from the proof of Lemma~\ref{lem:offdiagonal_term} that
\[
\left|
    \Eds\left[
        \mathds 1_{\{u<\tau\}}\Oscr_ug_u(\z(u))
        \middle|\bm\lambda
    \right]
\right|
\leqslant
CQ^2N^{1+\xi}\ell^{-2}
\frac{a_u}{w^2+1-a_u}.
\]

The time integral of the last factor satisfies
\[
    \int_0^t
    \frac{a_u}{w^2+1-a_u}\,\d u
    \leqslant
      C\left(\frac{\ell}{N}\right)^{\frac{1}{3}}
\log(1+w^{-2}).    
\]
Using Lemma \ref{lem:duhamel_stopped}, Proposition~\ref{prop:refgen}, and Lemma \ref{lem:good_event}, and taking $\nu$ and the
overwhelming-probability exponent sufficiently large, we obtain
uniformly in $\y$,
\[
\left|
    \Eds H_{\y,w}(\Pi_{\mathbf k}\z(t))
    -
    \Eds H_{\y,w}(Z_d)
\right|\leqslant
C\Kcal_N\log(1+w^{-2})
+
C\sqrt{RN}
\exp\left(-c\left(\frac{Nt^3}{\ell}\right)^{\frac13}\right)
+N^{-D}.
\]

It remains to remove the Gaussian convolution. Let
$\mathbf n_1,\dots,\mathbf n_Q$ be the rows of $L$, so that
$\Vert\mathbf n_a\Vert=1$. For $\Lambda>0$ and an independent
$Z\sim\Ncal(0,\Id_d)$, set $\varepsilon_\Lambda = 2Q\e^{-\Lambda^2/2}$ and note that 
\[
    \Pds\left(
        \max_{1\leqslant a\leqslant Q}
        |\langle\mathbf n_a,Z\rangle|>\Lambda
    \right)
    \leqslant
    2Q\e^{-\Lambda^2/2}. 
\]
Put $\eta=w\Lambda$, let $\mathbf 1=(1,\dots,1)\in\Rbb^Q$, and write
\[
    F_X(\y)
    =
    \Pds\left(L\Pi_{\mathbf k}\z(t)\leqslant\y\right),
    \qquad
    F_G(\y)
    =
    \Pds\left(LZ_d\leqslant\y\right).
\]
On the event
$\max_a|\langle\mathbf n_a,Z\rangle|\leqslant\Lambda$, adding $wZ$ changes each inequality in the definition of $A_\y$ by at most $\eta$. Hence
\[
\begin{aligned}
    F_X(\y)
    &\leqslant
    \frac{\Eds H_{\y+\eta\mathbf 1,w}
    (\Pi_{\mathbf k}\z(t))}{1-\varepsilon_\Lambda},\\
    F_X(\y)
    &\geqslant
    \Eds H_{\y-\eta\mathbf 1,w}
    (\Pi_{\mathbf k}\z(t))
    -\varepsilon_\Lambda,
\end{aligned}
\]
while
\[
\begin{aligned}
    \Eds H_{\y+\eta\mathbf 1,w}(Z_d)
    &\leqslant
    F_G(\y+2\eta\mathbf 1)+\varepsilon_\Lambda,\\
    \Eds H_{\y-\eta\mathbf 1,w}(Z_d)
    &\geqslant
    F_G(\y-2\eta\mathbf 1)-\varepsilon_\Lambda.
\end{aligned}
\]
Assuming $\varepsilon_\Lambda\leqslant\frac12$ and using
Lemma \ref{lem:nazarov}, we get
\[
    \d_{\mathrm K}(X_N(t),G_N)
    \leqslant
    C\Kcal_N\log(1+w^{-2})
    +Cw\Lambda\sqrt{1+\log Q}
    +C\varepsilon_\Lambda+
    C\sqrt{RN}
    \exp\left(-c\left(\frac{Nt^3}{\ell}\right)^{\frac13}\right)
    +N^{-D}.
\]
Finally, choose
\[
    \Lambda
    =
    \sqrt{2\log\left(\frac{4Q}{\Kcal_N}\right)},
    \qquad
    w
    =
    \frac{\Kcal_N}{\Lambda\sqrt{1+\log Q}}.
\]
Then $\varepsilon_\Lambda\leqslant\Kcal_N/2$ and
\[
    w\Lambda\sqrt{1+\log Q}=\Kcal_N,
    \qquad
    \log(1+w^{-2})
    \leqslant
    C\log\left(\frac{\e Q}{\Kcal_N}\right).
\]
Substituting these estimates proves the proposition.
\end{proof}

We are now ready to prove Theorem \ref{theo:kolmogorov}.
    
\begin{proof}[Proof of Theorem \ref{theo:kolmogorov}]
It suffices to consider $0<\kappa<b$, since for $\kappa\geqslant b$
the conclusion follows from $\d_{\mathrm K}\leqslant1$, after
increasing $C$. Choose $\theta>0$ so small that
\[
    \theta<\frac16,
    \qquad
    \frac{17}{3}\theta<\frac{\kappa}{4},
    \qquad
    \frac{23}{3}\theta<b+\frac{\kappa}{4}.
\]
Set
\[
    t=N^{-\theta},
    \qquad
    \ell=N^{1-4\theta},
    \qquad
    \xi=\theta.
\]
Then $Nt^3/\ell=N^\theta$, and the quantity in Proposition
\ref{prop:kolmogorov_from_dynamic} satisfies
\[
    \Kcal_N
    \leqslant
    C\left(
        RQ\,N^{-\frac12+\frac{17}{3}\theta}
        +
        Q^2N^{-1+\frac{23}{3}\theta}
    \right).
\]

By Lemma \ref{lem:reverse_heat_flow}, for this $t$ and every $D>0$,
there exists a generalized Wigner matrix $H_0$ such that
\[
     \d_{\mathrm{TV}}\left(
    W,
    \left(1+\frac{t}{N}\right)^{-\frac12}
    \left(\sqrt{1-t}\,H_0+\sqrt t\,\mathrm{GOE}_N\right)
\right)
\leqslant
N^{-D}.    
\]
The deterministic scalar normalization does not change eigenvectors,
and the corresponding Dyson flow time is comparable to $t$. Since
$R\geqslant1$ and
$RQ\leqslant CN^{\frac12-b}$,
\[
    Q\leqslant RQ,
    \qquad
    Q^2\leqslant C^2N^{1-2b},
\]
and therefore
\[
    \Kcal_N
    \leqslant
    C\left(
        N^{-b+\frac{17}{3}\theta}
        +
        N^{-2b+\frac{23}{3}\theta}
    \right).
\]
By the choice of $\theta$, 
$\Kcal_N\leqslant\frac12$ for all sufficiently large $N$.
Proposition \ref{prop:kolmogorov_from_dynamic} and the total-variation
comparison therefore give
\[
\begin{aligned}
    \d_{\mathrm K}(X_N,G_N)
    \leqslant{}&
    C\Kcal_N\log\left(\frac{\e Q}{\Kcal_N}\right)
    +
    C\sqrt{RN}\exp(-cN^{\theta/3})
    +
    2N^{-D}\le 
    C N^{-b+\kappa},
\end{aligned}
\]
where $D$ is chosen sufficiently large. This proves the theorem.
\end{proof}

\bibliographystyle{abbrv}
\bibliography{eigvect.bib}
\appendix   
\appendixswitch
\section{Main results for complex Hermitian random matrices}
\label{app:hermitian}
    
In this appendix we record the analogues of the main results for complex Hermitian generalized Wigner matrices. The proofs follow the same strategy, replacing the orthogonal Dyson vector flow by its unitary analogue and Rademacher signs by uniform phases. Throughout this appendix we assume, in addition to Definition \ref{def:genwigner}, that there exists $c>0$ such that
\[
    \Eds\left[
        \begin{pmatrix}
            \Re w_{ij}\\
            \Im w_{ij}
        \end{pmatrix}
        \begin{pmatrix}
            \Re w_{ij}&\Im w_{ij}
        \end{pmatrix}
    \right]
    \geqslant \frac{c}{N}\Id_2,
    \qquad i<j,
\]
uniformly in $N$, where the inequality is in the sense of positive semidefinite matrices.

Let $W$ be a complex Hermitian generalized Wigner matrix. We write $U=(\u_1,\dots,\u_N)$ for an orthonormal eigenbasis of $W$. Since the eigenvectors are only defined up to multiplication by phases, we let $(\omega_k)_{k=1}^N$ be independent random variables, independent of $W$, uniformly distributed on $[0,2\pi)$, and define
\[
    \v_k=\sqrt N\,\e^{i\omega_k}\u_k.
\]
The limiting Gaussian variables are standard complex Gaussians: if $g$ is standard complex Gaussian, then
\[
    \Eds[g]=0,
    \qquad
    \Eds[|g|^2]=1,
    \qquad
    \Eds[g^2]=0.
\]

\begin{theorem}
\label{theo:finite_complex}
Let $W$ be a complex Hermitian generalized Wigner matrix. Let $p\geqslant 1$ and $a_1,\dots,a_p\geqslant 1$ be fixed integers, and set $Q=\sum_{i=1}^p a_i.$
Let $k_1,\dots,k_p\in\unn{1}{N}$ be distinct indices, and for every $1\leqslant i\leqslant p$, let
$\q_{i1},\dots,\q_{ia_i}$
be deterministic unit vectors in $\Cbb^N$. We set
\[
    X_N
    =
    \left(
        \scp{\q_{i\alpha}}{\v_{k_i}}
    \right)_{\substack{1\leqslant i\leqslant p\\1\leqslant \alpha\leqslant a_i}}
    \in\Cbb^Q.
\]
Let $G_N=(G_{i\alpha})_{1\leqslant i\leqslant p,\,1\leqslant \alpha\leqslant a_i}$ be the centered complex Gaussian vector in $\Cbb^Q$ with covariance
\[
    \Eds\left[G_{i\alpha}\overline{G_{j\beta}}\right]
    =
      \delta_{ij}\scp{\q_{i\alpha}}{\q_{j\beta}},    
    \qquad
    \Eds\left[G_{i\alpha}G_{j\beta}\right]=0.
\]
Then there exists $\varepsilon=\varepsilon(Q)>0$ such that for every smooth function
$h:\Cbb^Q\simeq\Rbb^{2Q}\to\Rbb$ for which there exist $K_1,K_2>0$ with
\[
    \Vert h\Vert_\infty,\,
    \Vert \nabla^2 h\Vert_\infty
    \leqslant K_1,
    \qquad
    |\partial^n h(\x)|
    \leqslant
    K_2(1+|\x|)^{K_2}
\]
for $|n|\leqslant 5$, we have
\[
    \left|
    \Eds[h(X_N)]
    -
    \Eds[h(G_N)]
    \right|
    \leqslant
    C_hN^{-\varepsilon},
\]
where $C_h=C_h(K_1,K_2,Q)$.
\end{theorem}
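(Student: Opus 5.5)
The proof will follow the three-step scheme used for Theorem \ref{theo:finite}, replacing the orthogonal Dyson vector flow by its unitary analogue. We first realify: for each $\q_{i\alpha}\in\Cbb^N$ we set $z$-coordinates $\z_j=(\Re\scp{\eb_m}{\v_j},\Im\scp{\eb_m}{\v_j})_{m}\in\Rbb^{2R}$, where $\Efr=\{\eb_1,\dots,\eb_R\}$ is a complex orthonormal basis of $E=\mathrm{Span}_{\Cbb}\{\q_{i\alpha}\}$. Then $X_N$ is a real-linear image $(O_i\z_{k_i})_i$, and the target Gaussian is $(O_iZ_i)$ with $Z_i\sim\Ncal(0,\tfrac12\Id_{2R})$. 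With this normalization, the covariance identities $\Eds[G\overline G]=\delta_{ij}\scp{\q_{i\alpha}}{\q_{j\beta}}$ and $\Eds[GG]=0$ follow from the complex structure of $O_i$.

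For the Hermitian Dyson Brownian motion, the eigenvector SDE has complex noise $\d B_{k\ell}$ with $\Eds|\d B_{k\ell}|^2=\d t$. The generator (as in \cite{bourgade2017eigenvector}) is $\tfrac12\sum_{i\neq j}c_{ij}(\Xscr_{ij}^2+\tilde\Xscr_{ij}^2)$, with two rotation fields: the real rotation and the ``imaginary'' rotation $\u_i\mapsto \i\u_j$. Expanding it in the $\z$ variables, we decompose it as in Lemma \ref{lem:decomp_flow}. The result is a short-range part, an Ornstein--Uhlenbeck part $\beta_i(\tfrac12\Delta_i-\z_i\cdot\nabla_i)$ whose invariant measure is the product Gaussian $\Ncal(0,\tfrac12\Id_{2R})^{\otimes N}$, and error operators $\Dscr_t,\Oscr_t$. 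The diagonal error now involves the centered Hermitian quadratic form $\z_i\z_i^\top-\tfrac12\Id_{2R}$. This quantity is controlled by the interval sums of $\v_j(\eb)\overline{\v_j(\eb')}-\scp{\eb}{\eb'}$ and of $\v_j(\eb)\v_j(\eb')$. The second family needs an additional good-event estimate beyond Definition \ref{def:goodevent}. We will prove it like Lemma \ref{lem:good_event}, using the isotropic local law for $\langle \overline{\eb'},G\eb\rangle$ (now with $\overline{\eb'}\neq\eb'$); the deterministic term vanishes since $\scp{\overline{\eb}}{\eb'}$ appears only through $\langle\eb,\overline{\eb'}\rangle$ times $m_{\mathrm{sc}}$. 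This step should be the main obstacle, since it is where the nondegeneracy assumption on the real and imaginary parts enters. I would first check whether averaging over the uniform phases $\omega_j$ simply kills these pseudo-covariance terms in expectation, making them unnecessary on the good event.

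Once the decomposition is set up, the remaining steps transfer verbatim. The entropy decay of Lemma \ref{lem:entropy_decay} and the Gaussian entropy bound of Lemma \ref{lem:entropy_bound} carry over: the skew-symmetric generators are now anti-Hermitian on $\Cbb^N$, so $M_{u,t}$ is still a contraction. The Duhamel bound of Lemma \ref{lem:duhamel_stopped} is unchanged. For the off-diagonal term we use phase averaging in place of Rademacher pairing: $\Eds[\e^{\i(\omega_i-\omega_j+\omega_k-\omega_l)}]$ forces the indices to pair. This yields the analogue of Theorem \ref{theo:maindyn} with the same error $\frac{N^{1+\xi}t}{\ell}(\frac{RQ}{\sqrt\ell}+\frac{Q^2}{\ell})$.

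Finally, we choose $t=N^{-1/13}$ and $\ell=N^{1-4/13}$ as before. We then remove the Gaussian component using the complex version of \cite{erdHos2017dynamical}*{Lemma 16.2}, which matches the first three moments, including the mixed moments of $\Re w_{ij}$ and $\Im w_{ij}$; the nondegeneracy assumption guarantees that the initial matrix $H_0$ exists. The last step is Green function comparison, using the Hermitian version of \cite{knowles2013eigenvector} and the polarization argument with directions $\q\pm\q'$ and $\q\pm\i\q'$, which recovers the products $\v(\q)\overline{\v(\q')}$ and, after phase averaging, the whole observable.
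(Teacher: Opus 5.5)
The overall route is the one the paper intends: its appendix only says to replace the orthogonal vector flow by the unitary one and Rademacher signs by uniform phases. Most of the transfer steps you list are fine, including using the nondegeneracy assumption to construct $H_0$. The problem is the step you single out as the main obstacle, because it comes from a miscomputation of the generator.

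Write $J$ for multiplication by $\i$ on $\Rbb^{2R}\simeq\Cbb^R$. The second rotation field is $\tilde{\Xscr}_{ij}=(J\z_j)\cdot\nabla_i+(J\z_i)\cdot\nabla_j$. The same-index second-order part of $\Xscr_{ij}^2+\tilde{\Xscr}_{ij}^2$ is $\langle \z_j\z_j^\top+J\z_j\z_j^\top J^\top,\nabla_i^2\rangle_{\mathrm{HS}}+\langle \z_i\z_i^\top+J\z_i\z_i^\top J^\top,\nabla_j^2\rangle_{\mathrm{HS}}$, and the drift is $-2(\z_i\cdot\nabla_i+\z_j\cdot\nabla_j)$.

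The matrix $\z_j\z_j^\top+J\z_j\z_j^\top J^\top$ is the realification of the Hermitian matrix $w_jw_j^*$, where $w_j=(\scp{\eb_m}{\v_j})_{m}\in\Cbb^R$. In other words, the pseudo-covariance parts of $\z_j\z_j^\top$ and $J\z_j\z_j^\top J^\top$ cancel exactly. So after subtracting the OU part $\Delta_i-2\z_i\cdot\nabla_i$, the diagonal error $\Dscr_t$ involves only $\z_j\z_j^\top+J\z_j\z_j^\top J^\top-\Id_{2R}$, that is, only $\v_j(\eb_a)\overline{\v_j(\eb_b)}-\delta_{ab}$. These are spectral quantities, $N\,\u_j^*\eb_b\eb_a^*\u_j$, so the complex analogue of $\Afr_{2,s}$ is proved verbatim as in Lemma \ref{lem:good_event}.

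Unconjugated products $\v_i\v_j$ appear only in the mixed term $\Oscr_t$, with $i\neq j$, and your phase pairing disposes of them there. No pseudo-covariance estimate is needed. This is also not where the nondegeneracy assumption enters.

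Your proposed proof of that extra estimate would fail in any case. The sum $\sum_{j\in I}\v_j(\eb)\v_j(\eb')$ involves $\e^{2\i\omega_j}\u_j\u_j^\top$, which depends on the phase gauge and is not a function of $H$, so no resolvent represents it. The quantity $\langle\overline{\eb'},G\eb\rangle=\sum_j \eb'^\top\u_j\,\u_j^*\eb/(\lambda_j-z)$ is a covariance-type sum in the directions $\overline{\eb'}$ and $\eb$. Its deterministic part $m_{\mathrm{sc}}\,\eb'^\top\eb$ does not vanish in general.

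Had such a bound been needed, it would come from Hoeffding's inequality in the independent phases, conditionally on the eigenvectors modulo phases, combined with delocalization. Your fallback of averaging over phases in expectation would not directly work either, because $\nabla_j^2 g_u(\z(u))$ itself depends on all the phases.

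With the decomposition corrected, the rest of your argument goes through as you describe: contraction of $M_{u,t}$, entropy decay, Duhamel, phase pairing in $\Oscr_t$, moment matching and Green function comparison.
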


The quantitative nature of our dynamical result also allows us to identify the extremal process of the coordinates of an eigenvector. We use the uniform subexponential decay assumption and the space $\Mcal_p(\Rbb)$ defined before Theorem \ref{theo:extremal_process}.

\begin{theorem}
\label{theo:extremal_process_complex}
Let $W$ be a complex Hermitian generalized Wigner matrix whose entries have uniformly subexponential decay. For $k\in\unn{1}{N}$, define
\[
    \Xi_{N,k}
    =
    \sum_{\alpha=1}^N
    \delta_{N\vert \u_k(\alpha)\vert^2-b_N},
    \qquad
    b_N=\log N.
\]
Uniformly in $k$, $\Xi_{N,k}$ converges in distribution in $\Mcal_p(\Rbb)$ to a Poisson point process $\Xi$ with intensity
\[
    \nu(\d x)=\e^{-x}\,\d x.
\]
More precisely, for every nonnegative $f\in\Ccal_c(\Rbb)$,
\[
\sup_{k\in\unn{1}{N}}
\left\vert
    \Eds\left[\exp\left(-\int f\,\d\Xi_{N,k}\right)\right]
    -
    \exp\left(-\int_{\Rbb}(1-\e^{-f(x)})\e^{-x}\,\d x\right)
\right\vert
\xrightarrow[N\to\infty]{}0.
\]
Moreover, for every fixed $m\in\Nbb$, the $m$ largest atoms of $\Xi_{N,k}$ converge jointly in distribution to the $m$ largest atoms of $\Xi$, uniformly in $k$.
\end{theorem}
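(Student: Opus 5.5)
The proof follows the real symmetric case, Theorem \ref{theo:extremal_process}, step by step. The changes are the unitary Dyson vector flow, uniform phases in place of Rademacher signs, and complex Gaussian limits. In the complex case each coordinate $\v_k(\alpha)$ is identified with $(\Re\v_k(\alpha),\Im\v_k(\alpha))\in\Rbb^2$. The limiting law is then a standard complex Gaussian $g$, and $\vert g\vert^2$ is exponential with mean one. Hence $\Pds(\vert g\vert^2>b_N+x)=\e^{-b_N-x}=N^{-1}\e^{-x}$ exactly. With $b_N=\log N$ this gives $Nq_N(x)=\e^{-x}$ and the intensity $\e^{-x}\d x$. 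The tail is exact here, so Mills's ratio is no longer needed.

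\textbf{Step 1 (dynamics).} I would redo Section \ref{sec:dysonvector} for the unitary flow. The generator is $\Lscr_t=\sum_{i\neq j}c_{ij}(t)(\Xscr_{ij}\overline{\Xscr}_{ij}+\cdots)$ with $c_{ij}=1/(N(\lambda_i-\lambda_j)^2)$ (see \cite{bourgade2017eigenvector}). Writing $\z_i\in\Cbb^R\simeq\Rbb^{2R}$, the long-range part splits as before into three pieces. The first is an Ornstein--Uhlenbeck generator whose invariant measure is the standard complex Gaussian, i.e. real variance $1/2$ per component. The second is a diagonal remainder involving $\z_i\z_i^*-\Id_R$. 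The third is an off-diagonal remainder. The short-range part is a skew-Hermitian rotation generated by $A^{ij}$ and $\i A^{ij}$-type matrices, so the contraction $\Vert M_{u,s}\Vert_{\mathrm{op}}\leqslant1$ and the identity $\Sigma=\Id-MM^*$ still hold. Invariance of the product Gaussian, the entropy decay (Lemma \ref{lem:entropy_decay}) and the entropy bound (Lemma \ref{lem:entropy_bound}) then go through verbatim. The diagonal term uses the good event $\Afr_{2,s}$, now for $\v_j(\eb)\overline{\v_j(\eb')}-\scp{\eb}{\eb'}$. This is supplied by the complex isotropic local law, applied to real and imaginary parts separately. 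For the off-diagonal term, the sign-pairing argument of Lemma \ref{lem:offdiagonal_term} is replaced by phase averaging. A product $\e^{\i(\pm\omega_i\pm\omega_j\pm\omega_k\pm\omega_l)}$ has nonzero mean only when the indices pair up with opposite conjugations, which gives the same $N^{2+2\xi}\ell^{-4}$ bound. The Bonami--Beckner step in Proposition \ref{prop:relative_comparison} also needs a replacement. There I would use hypercontractivity for low-degree trigonometric polynomials in independent uniform phases, again with $L^q$ norms growing at most like $Cq$ for degree two. This yields the relative comparison \eqref{eq:relative_comparison_extremal} with $G_r$ a standard complex Gaussian in $\Cbb^r$.

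\textbf{Step 2 (joint tails and extremal process along the flow).} I would take the smooth cutoff $\chi_{T,w}(\vert y\vert)$ as a function of $y\in\Cbb$, smoothed radially as $1/(1+\e^{-(\vert y\vert^2-T^2)/w'})$. This keeps the self-bounding Hessian estimate \eqref{eq:self_bounding_hessian_extremal} with $L_N$ polylogarithmic, for $w'=(\log N)^{-3}$. Proposition \ref{prop:gumbel_joint_tails} then gives $\Pds(\vert X_{\alpha_a,k}\vert^2>b_N+x_a\ \forall a)=(1+o(1))\prod_a N^{-1}\e^{-x_a}$. The factorial-moment argument of Proposition \ref{prop:extremal_process_flow} is unchanged and gives the Poisson limit with intensity $\e^{-x}\d x$ for the Gaussian divisible matrix.

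\textbf{Step 3 (comparison).} By \cite{erdHos2017dynamical}*{Lemma 16.2}, applied to the real and imaginary parts, there is a matching of the first three moments, including the mixed $\Re/\Im$ moments, with fourth moments off by $O(N^{-2}t)$. The ellipticity assumption of the appendix is what guarantees the existence of such an $H_0$. Proposition \ref{prop:fixed_order_statistics_comparison} then transfers, because the regularized projections of \cite{BL22LInfinity} and their derivative bounds hold in the Hermitian class. Lemma \ref{lem:soft_order_statistics} applies unchanged. Finally, the truncation argument for Laplace functionals from the proof of Theorem \ref{theo:extremal_process} concludes.

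\textbf{Main obstacle.} I expect the main obstacle to be Step 1's replacement of the Rademacher symmetry, in two places: the phase-hypercontractivity estimate, and checking that the complex generator's remainder still splits into only the $\Dscr$ and $\Oscr$ types. A term involving $\z_j^\top(\cdot)\z_i$ without conjugation might also appear. My expectation is that phase averaging kills it as well, but this needs to be checked.
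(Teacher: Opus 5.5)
Your proposal is correct and follows the paper's route. The paper only states that the Hermitian case comes from the real-symmetric proof by using the unitary Dyson vector flow in place of the orthogonal one and uniform phases in place of Rademacher signs; your Steps 1--3 carry this out, with the exact tail $\Pds(\vert g\vert^2>x)=\e^{-x}$ giving $b_N=\log N$ and intensity $\e^{-x}\,\d x$. On your stated obstacle: complex Brownian increments satisfy $(\d B_{k\ell})^2=0$, so no non-conjugated diagonal term arises, and the non-conjugated $\z_j^\top(\cdot)\,\z_i$ term is exactly the off-diagonal remainder, which is bounded (not cancelled) by the phase-pairing estimate on $\Eds\vert S_{ab}\vert^2$ and its hypercontractive $L^q$ upgrade that you already describe.
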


As a direct consequence, we obtain the Gumbel distribution for the largest entry of an eigenvector.

\begin{corollary}
\label{theo:gumbel_complex}
\label{theo:lower_complex}
Under the assumptions of Theorem \ref{theo:extremal_process_complex}, for every $x\in\Rbb$,
\[
\sup_{k\in\unn{1}{N}}
\left\vert
    \Pds\left(
        N\Vert \u_k\Vert_\infty^2-\log N
        \leqslant x
    \right)
    -\e^{-\e^{-x}}
\right\vert
\xrightarrow[N\to\infty]{}0.
\]
\end{corollary}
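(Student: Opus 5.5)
The proof of Corollary \ref{theo:gumbel_complex} will be read off from Theorem \ref{theo:extremal_process_complex}, exactly as Corollary \ref{theo:gumbel} follows from Theorem \ref{theo:extremal_process}. The largest atom of $\Xi_{N,k}$ is $N\Vert\u_k\Vert_\infty^2-\log N$. The event that this quantity is at most $x$ is the event that $\Xi_{N,k}$ has no atom in $(x,\infty)$. For the limiting Poisson process with intensity $\e^{-y}\d y$, the probability of this event is
\[
\exp\left(-\int_x^\infty \e^{-y}\,\d y\right)=\e^{-\e^{-x}}.
\]

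The key step is to pass from convergence of the point process to convergence of this void probability, uniformly in $k$. For this I will use the last assertion of Theorem \ref{theo:extremal_process_complex} with $m=1$: the largest atom of $\Xi_{N,k}$ converges in distribution to the largest atom $\eta^{(1)}$ of $\Xi$, uniformly in $k$. The law of $\eta^{(1)}$ is the Gumbel law $\e^{-\e^{-x}}$, whose distribution function is continuous. Convergence in distribution therefore gives convergence of distribution functions at every $x$.

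Uniformity in $k$ needs a little care. The statement \lq\lq uniformly in $k$\rq\rq{} in the theorem means the bounded smooth test functions converge uniformly. To get the distribution function, I sandwich the indicator $\mathds 1_{(-\infty,x]}$ between smooth functions $\phi_\pm$ that equal it outside $[x-\delta,x+\delta]$. This gives
\[
\limsup_N\sup_k\left|\Pds\left(N\Vert\u_k\Vert_\infty^2-\log N\leqslant x\right)-\e^{-\e^{-x}}\right|
\leqslant \e^{-\e^{-x-\delta}}-\e^{-\e^{-x+\delta}},
\]
and the right-hand side tends to $0$ as $\delta\to0$.

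The only real content lies in Theorem \ref{theo:extremal_process_complex}, which I assume. The main (minor) obstacle is this uniformity bookkeeping, handled by the sandwich argument. One must also check that the complex normalization $b_N=\log N$ matches the tail $\Pds(|g|^2>T)=\e^{-T}$ of a standard complex Gaussian, which gives intensity $\e^{-x}$ and hence the Gumbel form $\e^{-\e^{-x}}$.
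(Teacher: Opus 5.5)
Your proposal is correct and follows the paper's route: identify the largest atom of $\Xi_{N,k}$ with $N\Vert\u_k\Vert_\infty^2-\log N$, compute the Poisson void probability $\exp(-\int_x^\infty \e^{-y}\,\d y)=\e^{-\e^{-x}}$, and invoke the theorem's uniform convergence of the largest atom. Your smooth sandwich of $\mathds 1_{(-\infty,x]}$ spells out the uniformity in $k$, which the paper leaves implicit.
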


This corollary implies that under the assumptions of Theorem \ref{theo:extremal_process_complex} and for every $\varepsilon>0$,
\[
    \sup_{k\in\unn{1}{N}}
    \Pds\left(
        \left\vert
        \sqrt{\frac N{\log N}}\Vert \u_k\Vert_\infty-1
        \right\vert>\varepsilon
    \right)
    \xrightarrow[N\to\infty]{}0.
\]
The centering and the limiting constant differ from the real case because the tail of a standard complex Gaussian satisfies
\[
    \Pds(\vert g\vert>x)=\e^{-x^2},
    \qquad x\geqslant0.
\]

We now state the growing-dimensional version. We say that $W$ is smooth if the real and imaginary parts of the rescaled entries $\sqrt N w_{ij}$, $i<j$, and the rescaled real diagonal entries have smooth densities satisfying the analogue of Definition \ref{def:smooth}. For the off-diagonal entries, we further require that the pair
$(\sqrt N\Re w_{ij},\sqrt N\Im w_{ij})$
has a joint density proportional to $\e^{-V_{ij}(\x)}$, where $V_{ij}$ are smooth functions on $\Rbb^2$ such that for every $k\in\Nbb$, there exists $C_k>0$ with
\[
    \sup_{i<j}\vert\partial^n V_{ij}(\x)\vert
    \leqslant C_k(1+\vert\x\vert)^{C_k},
    \qquad
    \vert n\vert=k,\quad \x\in\Rbb^2.
\]
For functions on $\Cbb^Q\simeq\Rbb^{2Q}$, derivatives are taken in the real coordinates, and $\Vert\nabla^2h\Vert_\infty$ denotes the supremum of the operator norm of the real Hessian.
   
\begin{theorem}
\label{theo:growing_complex}
Let $W$ be a smooth complex Hermitian generalized Wigner matrix in the sense above. Let $p\geqslant 1$ and $a_1,\dots,a_p\geqslant 1$, and set $Q=\sum_{i=1}^p a_i.$
Let $k_1,\dots,k_p\in\unn{1}{N}$ be distinct indices, and for every $1\leqslant i\leqslant p$, let
$\q_{i1},\dots,\q_{ia_i}$
be deterministic unit vectors in $\Cbb^N$. We denote
\[
    R
    =
    \dim_{\Cbb}
    \left(
        \mathrm{Span}_{\Cbb}
        \{\q_{i\alpha},\,i\in\unn{1}{p},\alpha\in\unn{1}{a_i}\}
    \right).
\]
We set
\[
    X_N
    =
    \left(
        \scp{\q_{i\alpha}}{\v_{k_i}}
    \right)_{\substack{1\leqslant i\leqslant p\\1\leqslant \alpha\leqslant a_i}}
    \in\Cbb^Q.
\]
Let $G_N=(G_{i\alpha})_{1\leqslant i\leqslant p,\,1\leqslant \alpha\leqslant a_i}$ be the centered complex Gaussian vector in $\Cbb^Q$ with covariance
\[
    \Eds\left[G_{i\alpha}\overline{G_{j\beta}}\right]
    =
    \delta_{ij}\scp{\q_{i\alpha}}{\q_{j\beta}},
    \qquad
    \Eds\left[G_{i\alpha}G_{j\beta}\right]=0.
\]
Assume that there exist $b\in(0,\frac{1}{2}]$ and $C>0$ such that
\[
    Q\leqslant RQ\leqslant CN^{\frac{1}{2}-b}.
\]
Let $h:\Cbb^Q\simeq\Rbb^{2Q}\to\Rbb$ be a smooth function such that, for some $\delta_1>0$ and $\delta_2<b$,
\[
    \Vert h\Vert_\infty\leqslant N^{\delta_1}
    \quad\text{and}\quad
    \Vert\nabla^2h\Vert_\infty\leqslant N^{\delta_2}.
\]
For every $\kappa>0$, there exists $C_0>0$ such that
\[
    \left\vert
    \Eds\left[h(X_N)\right]
    -
    \Eds\left[h(G_N)\right]
    \right\vert
    \leqslant C_0N^{-b+\delta_2+\kappa}.
\]
Here $C_0$ may depend on $b,C,\delta_1,\delta_2,\kappa$ and the constants in the assumptions on $W$, but is uniform in the projection family and $h$.
\end{theorem}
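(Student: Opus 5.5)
The plan is to follow the proof of Theorem \ref{theo:growing} line by line, after identifying $\Cbb^N$ with $\Rbb^{2N}$ and the unitary Dyson vector flow with a real flow. First we reduce to the span. We fix a $\Cbb$-orthonormal basis $\Efr=\{\eb_1,\dots,\eb_R\}$ of $E=\mathrm{Span}_{\Cbb}\{\q_{i\alpha}\}$ and write $\q_{i\alpha}=\sum_m (O_i)_{\alpha m}\eb_m$ with $O_i\in\Cbb^{a_i\times R}$. Then $O_iO_i^*=\Gamma_i$ and $\Vert O_i\Vert_{\mathrm{HS}}^2=a_i$. We set $\z_j=(\scp{\eb_m}{\v_j})_{m}\in\Cbb^R\simeq\Rbb^{2R}$.

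Next we decompose the generator. The unitary generator is $\Lscr_t=\sum_{i\neq j}c_{ij}(t)(\Xscr_{ij}\bar\Xscr_{ij}+\dots)$, following \cite{bourgade2017eigenvector}. It splits into rotations between $\v_i$ and $\v_j$ with complex coefficients, together with the phase rotations, and its long-range part is handled as in Lemma \ref{lem:decomp_flow}. The long-range part is replaced by a complex Ornstein--Uhlenbeck operator $\beta_i(t)(\Delta_i-\z_i\cdot\nabla_i)$ in the real coordinates of $\Cbb^R$, normalized so that its invariant measure is the product of standard complex Gaussians. The remainder consists of a diagonal term $\langle \z_i\z_i^*-\Id_R,\cdot\rangle$, plus the analogous term with $\z_i\z_i^\top$, whose interval sums are the complex ETH quantities. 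It also contains an off-diagonal term $\Oscr_t$.

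The reference part of the argument transfers directly. The short-range rotations are skew-Hermitian, hence norm-preserving. Therefore Lemmas \ref{lem:mixentropy}, \ref{lem:entropy_decay} and \ref{lem:entropy_bound} hold verbatim in dimension $2NR$, with $M_{u,t}$ now a unitary-type contraction and $\Sigma_{u,t}=\Id-M_{u,t}M_{u,t}^*$. This gives the complex analogue of Theorem \ref{theo:maindyn} with the same error terms.

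In the comparison part, the diagonal term uses the complex version of $\Afr_{2,s}$. The $\z\z^*$ part is controlled by the local law exactly as in Lemma \ref{lem:good_event}. The pseudo-covariance part $\sum_{j\in I}\v_j(\eb)\v_j(\eb')$ is averaged out by the uniform phases $\e^{2\i\omega_j}$ when it appears multiplied by derivatives. Alternatively, it can be controlled on the good event by the same local law applied to $\eb\bar\eb'^{\top}$; this is where the assumption $\Eds[(\Re w,\Im w)^{\otimes 2}]\succeq c/N$ enters, since it guarantees that the complex isotropic local law and rigidity apply. The off-diagonal term is bounded by the same pairing argument as in Lemma \ref{lem:offdiagonal_term}, with Rademacher signs replaced by uniform phases. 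Under independent uniform phases, $\Eds[\e^{\i(\omega_i-\omega_j+\omega_k-\omega_l)}]$ vanishes unless the indices pair up, so the same bound $CQ^2N^{1+\xi}t\ell^{-2}\Vert\nabla^2h\Vert_\infty$ follows.

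Finally we choose $t=N^{-\theta}$, $\ell=N^{1-4\theta}$ and $\xi=\theta$ as before. We then apply the Hermitian reverse heat flow, which is Lemma \ref{lem:reverse_heat_flow} with $\mathrm{GUE}_N$ and two-dimensional densities $\e^{-V_{ij}}$ on $\Rbb^2$ per off-diagonal entry. The proof of \cite{bourgade2022extreme} works entrywise and extends to $\Rbb^2$-valued entries. Combining these gives $C_0N^{-b+\delta_2+\kappa}$ by the same exponent bookkeeping as in Theorem \ref{theo:growing}.

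I expect the main obstacle to be the diagonal term in the complex setting. There, $\nabla_j^2 g$ pairs with both $\z_i\z_i^*-\Id$ and the pseudo-covariance $\z_i\z_i^\top$, and the latter must be shown small on intervals. I would first try to handle it through phase invariance. The event $\{u<\tau\}$ is phase invariant, and $g_u$ is invariant under a global phase $\z\mapsto\e^{\i\phi}\z$, since the reference flow commutes with multiplication by $\e^{\i\phi}$ and so does the complex Gaussian limit. Combined with independent per-index phases, this should kill the $\z_i\z_i^\top$ contribution in expectation. The remaining task is then to include $\z\z^\top$-type interval sums in $\Afr_{2,s}$ with a local law bound.
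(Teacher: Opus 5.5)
Your plan is the route the paper indicates for the Hermitian case, which it only sketches as replacing the orthogonal flow by the unitary one and Rademacher signs by uniform phases. It does go through. However, the step you single out as the main obstacle comes from a miscomputation of the unitary generator, and the remedies you propose for it would not work.

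In the unitary flow the off-diagonal increments $\d B_{ij}$ are complex Brownian motions, so $(\d B_{ij})^2=0$ while $\vert\d B_{ij}\vert^2=\d t$. With $z_i^a=\v_i(\eb_a)$ this gives $\d z_i^a\,\d z_i^b=0$, whereas $\d z_i^a\,\d\overline{z_i^b}$ is a constant multiple of $\sum_{j\neq i}\frac{z_j^a\overline{z_j^b}}{N(\lambda_i-\lambda_j)^2}\,\d t$. In real coordinates on $\Cbb^R\simeq\Rbb^{2R}$, the diffusion matrix of $\z_i$ is therefore not the full real outer product of $\z_j$, as a transcription of the orthogonal case would suggest. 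It is only the part of that outer product commuting with multiplication by $\i$. After subtracting the circular Ornstein--Uhlenbeck part, the diagonal remainder contains only $\z_j\z_j^*-\Id_R$, and there is no $\z_j\z_j^\top$ term at all.

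Unconjugated products appear only in the off-diagonal term, as $z_j^az_i^b\,\partial_{z_i^a}\partial_{z_j^b}$ plus its conjugate. There the relevant phase factor is $\e^{\i(\omega_i+\omega_j-\omega_k-\omega_l)}$ rather than the one you wrote, and the pairing bound is unchanged. Hence the complex $\Afr_{2,s}$ only involves centered interval sums of $\overline{\v_j(\eb)}\,\v_j(\eb')$. These equal $\sqrt N\langle\u_j,\mathring A\u_j\rangle$ with $A=\sqrt N\,\eb\eb'^{*}$, are phase invariant, and are handled by the local law exactly as in Lemma \ref{lem:good_event}.

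Had the pseudo-covariance term been present, your fixes would not have closed the argument:
\begin{itemize}
\item The quantity $\v_j(\eb)\v_j(\eb')=N\e^{2\i\omega_j}(\eb^*\u_j)(\eb'^*\u_j)$ changes under $\u_j\mapsto\e^{\i\theta}\u_j$, while every $\langle\u_j,A\u_j\rangle$ is invariant. So no local law controls its interval sums; applying it to $\eb\,\overline{\eb'}^{\top}=\eb\eb'^{*}$ just returns the Hermitian product.
\item Putting such sums into $\Afr_{2,s}$ would make $\tau$ depend on the phases. The pairing bound for $\Oscr_t$, as written, uses that $\{u<\tau\}$ is phase invariant.
\item Invariance of $g_u$ under a global phase does not help either, because a term like $z_i^az_i^b\,\partial_{z_j^a}\partial_{z_j^b}g_u$ is itself invariant under a global rotation.
\end{itemize}

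Finally, the non-degeneracy assumption on $(\Re w_{ij},\Im w_{ij})$ is not what makes the isotropic local law and rigidity hold; these only see the variance profile $s_{ij}^2$. Its natural role is in the Gaussian-divisible (reverse heat flow) step, where the covariance of $(\Re w_{ij},\Im w_{ij})$ must dominate that of the $\sqrt t\,\mathrm{GUE}_N$ component.
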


Finally, we record the corresponding Kolmogorov bound. For complex vectors $X,Y\in\Cbb^Q$, define
\[
    \d_{\mathrm K}^{\Cbb}(X,Y)
    =
    \d_{\mathrm K}
    \left(
        (\Re X,\Im X),
        (\Re Y,\Im Y)
    \right),
\]
where the Kolmogorov distance on the right-hand side is the coordinatewise rectangular Kolmogorov distance in $\Rbb^{2Q}$.

\begin{theorem}
\label{theo:kolmogorov_complex}
With the notation and assumptions on $W$ and the projection family from Theorem \ref{theo:growing_complex}, for every $\kappa>0$ there exists $C>0$ such that
\[
    \d_{\mathrm K}^{\Cbb}(X_N,G_N)
    \leqslant
    CN^{-b+\kappa}.
\]
In particular, for a fixed number of eigenvector projections, i.e.
$b=\frac{1}{2}$, we get, for every $\kappa>0$,
\[
    \d_{\mathrm K}^{\Cbb}(X_N,G_N)
    \leqslant
    CN^{-\frac{1}{2}+\kappa}.
\]
\end{theorem}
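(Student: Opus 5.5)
We will reduce Theorem \ref{theo:kolmogorov_complex} to the real Kolmogorov argument, i.e. the proof of Theorem \ref{theo:kolmogorov} via Proposition \ref{prop:kolmogorov_from_dynamic}, by identifying $\Cbb^N$ with a real structure adapted to the unitary Dyson vector flow. The first step is the complex analogue of Theorem \ref{theo:maindyn}. Fix a complex orthonormal basis $\eb_1,\dots,\eb_R$ of the span $E$ and set $\z_j=(\v_j(\eb_m))_{m}\in\Cbb^R\simeq\Rbb^{2R}$. The unitary generator has the form $\frac12\sum_{i\neq j}c_{ij}(\Xscr_{ij}^2+\mathscr{Y}_{ij}^2)$, where $\mathscr Y_{ij}$ is the rotation by the phase $\i$. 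Splitting it at range $\ell$ exactly as in Lemma \ref{lem:decomp_flow} gives a reference generator: short-range rotations plus $\sum_i\beta_i(\Delta_i-\z_i\cdot\nabla_i)$ on $\Rbb^{2R}$, with covariance normalised so that the invariant measure is the standard complex Gaussian $\prod_i\Ncal_{\Cbb}(0,\Id_R)$. The remainder again splits into a diagonal term involving $\z_i\z_i^*-\Id_R$ and the pseudo-covariance $\z_i\z_i^\top$, and an off-diagonal term. The pseudo-covariance interval sums are small by the same local law argument as in Lemma \ref{lem:good_event}, applied to $\langle \overline{\eb},G\eb'\rangle$-type quantities, since $\Eds[g^2]=0$ is the deterministic limit. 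The off-diagonal sign-pairing argument of Lemma \ref{lem:offdiagonal_term} goes through with uniform phases in place of Rademacher signs: only index pairings $i=k,j=l$ and $i=l,j=k$ survive, and this gives the same second-moment bound.

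The reference flow analysis carries over verbatim. Conditionally on the short-range noise, the flow is affine Gaussian, $M_{u,t}\x+\Xi_{u,t}$, where $M_{u,t}$ is now a complex $N\times N$ contraction acting on $\Cbb^{N\times R}$ and $\Xi_{u,t}$ is circular complex Gaussian with covariance $\Id-M_{u,t}M_{u,t}^*$. The bounds $M_{u,t}M_{u,t}^*\preceq a_u\Id$ and $\Sigma_{u,t}\succeq(1-a_u)\Id$ are proved by the same norm computation, since unitary rotations preserve $\Vert\cdot\Vert$. The entropy estimates of Lemmas \ref{lem:entropy_decay}--\ref{lem:entropy_bound} therefore hold with $R$ replaced by $2R$.

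The Kolmogorov step then goes as in Proposition \ref{prop:kolmogorov_from_dynamic}. We write $(\Re X_N,\Im X_N)=L\,\Pi_{\mathbf k}\z(t)\in\Rbb^{2Q}$, where $\Pi_{\mathbf k}$ takes real and imaginary parts of $P_i\z_{k_i}$. Each $P_i$ has complex-orthonormal rows, so $\Pi_{\mathbf k}\Pi_{\mathbf k}^\top=\Id_{2d}$ after realification, and each row of $L$ is a unit vector in $\Rbb^{2d}$. The complex Gaussian target has real coordinates of variance $1/2$, so Lemma \ref{lem:nazarov} applies with $\sigma^2=1/2$. We smooth the orthant indicator with $H_{A_\y,w}$ and use Lemma \ref{lem:gaussian_polyhedron_smoothing} with $\Gamma$ the realified $\Pi_{\mathbf k}(\Sigma_{u,t}\otimes\Id)\Pi_{\mathbf k}^\top\succeq\frac{1-a_u}{2}\Id$. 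This yields the bound $C\Kcal_N\log(\e Q/\Kcal_N)$ plus negligible terms, with the same $\Kcal_N$ up to constants, because $2Q$ and $2R$ only change constants.

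Finally, as in the proof of Theorem \ref{theo:kolmogorov}, we take $t=N^{-\theta}$, $\ell=N^{1-4\theta}$, $\xi=\theta$ and remove the Gaussian component with the Hermitian reverse heat flow (the GUE analogue of Lemma \ref{lem:reverse_heat_flow}). This requires the smoothness of the joint density of $(\Re w_{ij},\Im w_{ij})$ and the nondegeneracy assumption $\succeq\frac cN\Id_2$, so that subtracting a small circular Gaussian component is possible. The main obstacle is this reverse heat flow step. The entries need not be circular, so one must construct $H_0$ with a non-isotropic $2\times2$ covariance while keeping it a generalized Wigner matrix. I would first try the two-dimensional Taylor-expansion construction of \cite{bourgade2022extreme} applied to each pair, absorbing the anisotropy into $H_0$ thanks to the lower bound $c/N$.
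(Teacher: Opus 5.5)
Your proposal follows the paper's route. The appendix gives no separate argument for Theorem \ref{theo:kolmogorov_complex}; it only says that the real proof carries over with the unitary Dyson vector flow and uniform phases in place of Rademacher signs. That is what you carry out: a realified reference flow with circular Gaussian invariant measure, the argument of Proposition \ref{prop:kolmogorov_from_dynamic} with $\sigma^2=\frac12$, and a Hermitian reverse heat flow. In that last step the assumption $\succeq\frac{c}{N}\Id_2$ is what lets a circular GUE component be split off from possibly non-circular entries.

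One correction: no pseudo-covariance term arises. The real rotation $\Xscr_{ij}$ and the phase rotation $\mathscr{Y}_{ij}$ come from the real and imaginary parts of the same complex Brownian increment, so they carry equal weights. Let $J$ denote multiplication by $\i$ on $\Rbb^{2R}$. Then the long-range diagonal remainder pairs $\nabla_i^2 g$ with $\z_j\z_j^\top+(J\z_j)(J\z_j)^\top-\Id_{2R}$, which is exactly the realification of $\z_j\z_j^*-\Id_R$. The pseudo-covariance parts of $\Xscr_{ij}^2$ and $\mathscr{Y}_{ij}^2$ cancel.

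This matters because your proposed justification for that term would not work. The sum $\sum_{j\in I}\v_j(\eb)\v_j(\eb')$ carries the phases $\e^{2\i\omega_j}$ and involves $\u_j\u_j^\top$, which is not determined by $H$. So no resolvent quantity controls it; $\langle\overline{\eb},G\eb'\rangle$ is again sesquilinear in $\u_j$. Had the term been present, you would have needed concentration over the independent phases instead. With this adjustment, your argument matches the paper's.
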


\section{A Convergence Rate Lower Bound}
\label{app:rate}

In this appendix we show that the exponent $\frac{1}{2}$ in Theorems \ref{theo:growing} and \ref{theo:kolmogorov} for a fixed number of eigenvector projections cannot be improved in general. We construct a smooth real symmetric generalized Wigner matrix whose entries have a nonzero third moment. This gives a lower bound of order $N^{-\frac{1}{2}}$ on the difference between the second moment of an eigenvector projection and its Gaussian value, from which we obtain the following estimates.

\begin{theorem}\label{theo:rate_lower_bounds}
There exist a sequence of smooth real symmetric generalized Wigner matrices $W=W_N$ and deterministic indices $k_N\in\unn{1}{N}$ such that the following holds. We set
\[
    \q_N=\frac{1}{\sqrt{N}}(1,\dots,1)^\top,
    \qquad
    X_N=\scp{\q_N}{\v_{k_N}},
\]
where we recall $\v_k=\sqrt{N}\varepsilon_k\u_k$, and let $G_N\sim\Ncal(0,1)$.
For every $0\leqslant\delta_2<\delta_1$, there exist smooth functions $h_N:\Rbb\to\Rbb$ and a constant $c>0$ such that, for all sufficiently large $N$,
\[
    \Vert h_N\Vert_\infty\leqslant N^{\delta_1}
    \quad\text{and}\quad
    \Vert h_N''\Vert_\infty\leqslant N^{\delta_2},
\]
and
\[
    \left\vert
        \Eds\left[h_N(X_N)\right]
        -\Eds\left[h_N(G_N)\right]
    \right\vert
    \geqslant cN^{-\frac{1}{2}+\delta_2}.
\]
Moreover, for every $\kappa>0$, there exists $c_\kappa>0$ such that, for all sufficiently large $N$,
\[
    \d_{\mathrm K}(X_N,G_N)
    \geqslant c_\kappa N^{-\frac{1}{2}-\kappa}.
\]
\end{theorem}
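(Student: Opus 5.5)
Our plan is to produce an explicit $N^{-1/2}$ correction to the second moment $\Eds[X_N^2]$ and then convert it into lower bounds for smooth test functions and for the Kolmogorov distance.

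\textbf{Construction and second moment.} Take $W$ to be a smooth Wigner matrix: $\sqrt N w_{ij}$ i.i.d.\ for $i<j$, with a smooth density of the form $\e^{-V}$, variance one and nonzero third moment $\mu_3$. The diagonal entries are Gaussian of variance $2/N$ (after a harmless renormalization so that Definition \ref{def:genwigner} holds). Let $k_N$ be a bulk index, for instance $k_N=\lfloor N/2\rfloor$. Since $\q_N=\mathbf 1/\sqrt N$, we have $\Eds[X_N^2]=N\,\Eds\langle \q_N,\u_{k}\rangle^2$. We average over a small spectral window $I$ of $N^{\epsilon}$ indices around $k_N$ and express $\frac{1}{\vert I\vert}\sum_{k\in I}\langle\q_N,\u_k\rangle^2$ through $\Im\langle \q_N,G(z)\q_N\rangle$ at $\eta\approx N^{-1+\epsilon}$. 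We then compute the expectation of $\langle\q_N,G\q_N\rangle$ by cumulant expansion. The second-cumulant term reproduces $m_{\mathrm{sc}}$. The third-cumulant term gives a deterministic correction of order $\mu_3N^{-1/2}\sum_{ij}\Eds[G_{ii}G_{jj}\ldots]$. Concretely, $\Eds\langle\mathbf 1,G\mathbf 1\rangle/N=m_{\mathrm{sc}}+\mu_3 N^{-1/2}\Phi(z)+o(N^{-1/2})$, with $\Phi$ an explicit nonzero function of $m_{\mathrm{sc}}$. This is the classical observation that $\mathbf 1$ is not generic when $\mu_3\neq0$, because $\Eds[W\mathbf 1]$ has third-order effects.

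\textbf{From the averaged to the individual moment.} To pass from the averaged statement to the single index $k_N$, we would use the comparison at the level of individual eigenvectors, namely the Green function comparison with a reference ensemble having the same first two moments and symmetric entries. The difference in the single-index second moment is then given to leading order by the same third-moment term, which is smooth in energy. The result we aim for is
\[
\Eds[X_N^2]=1+c_1N^{-1/2}+o(N^{-1/2}),\qquad c_1\neq0.
\]

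\textbf{Test functions and Kolmogorov distance.} Set $h_N(x)=N^{\delta_2}\chi(x)x^2$ with a cutoff $\chi$ supported at scale $\vert x\vert\leqslant N^{(\delta_1-\delta_2)/2}$, which meets both norm constraints. Delocalization gives tails $\Pds(\vert X_N\vert>N^{\epsilon'})\leqslant N^{-D}$, so the truncation is negligible. Therefore
\[
\Eds h_N(X_N)-\Eds h_N(G_N)=N^{\delta_2}\bigl(c_1N^{-1/2}+o(N^{-1/2})\bigr),
\]
which gives the first claim. For the Kolmogorov bound we write $\Eds X^2\mathbf 1_{\vert X\vert\leqslant M}=\int_{-M}^{M}2x(\Pds(X>x)-\mathbf 1_{x<0}\ldots)\,\d x$. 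Using the decomposition into $\Pds(X\leqslant x)$ differences with $M=N^{\kappa/2}$ and the tail bounds, we get $\vert\Eds X_N^2-\Eds G^2\vert\leqslant CM^2\d_{\mathrm K}+N^{-D}$. This yields $\d_{\mathrm K}\geqslant cN^{-1/2-\kappa}$.

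\textbf{Main obstacle.} We expect the main obstacle to be the second step: identifying a nonvanishing $N^{-1/2}$ coefficient for a single eigenvector, rather than for a spectral average, with control of the error at $o(N^{-1/2})$. If the single-index comparison is too lossy, the first thing to try is choosing $k_N$ at the edge or simply replacing the index by an average. A fallback is to choose $W=\sqrt{1-s}\,\tilde W+\sqrt s\,\mathrm{GOE}$ with the nonzero third cumulant in $\tilde W$ and apply the exact eigenvector moment computation conditionally on the Gaussian part.
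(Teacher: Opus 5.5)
Your first step (a smooth Wigner matrix with nonzero third moment) and your third step both match the paper. In the third step, the truncated $N^{\delta_2}x^2$ test function and the bound of $\Eds[\min(X_N^2,T^2)]-\Eds[\min(G_N^2,T^2)]$ by $2T^2\,\d_{\mathrm K}$ with $T=N^{\kappa/2}$ are essentially what the paper does.

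\textbf{The gap is your second step.} You need $\Eds[X_N^2]-1$ for one prescribed index to precision $o(N^{-1/2})$, and no available comparison gives this. Green function comparison for individual eigenvectors passes through resolvents at $\eta\leqslant N^{-1-\epsilon}$ and through level repulsion, and it loses $N^{-\epsilon}$ even under four-moment matching. With only two matched moments, the bulk third-order Lindeberg terms are not even a priori small, let alone identifiable to leading order.

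\textbf{Your concrete choice $k_N=\lfloor N/2\rfloor$ would also give $c_1=0$.} Carrying out your cumulant expansion gives $\Phi=-m_{\mathrm{sc}}^4$. So the $N^{-1/2}$ part of the expected spectral measure of $\q_N$ has density $-\frac{m_3}{\pi}\Im m_{\mathrm{sc}}(E+\i0)^4$. Equivalently, $\Eds[\q_N^\top W^n\q_N]$ picks up an $N^{-1/2}$ term only for odd $n$. This density is odd in $E$ and vanishes at $E=0$, where $m_{\mathrm{sc}}=\i$.

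\textbf{The missing idea is that the statement only asks for the existence of deterministic indices $k_N$.} So no single-eigenvector analysis is needed. The paper argues as follows.
\begin{enumerate}
\item It uses the exact identity $\Eds[\q_N^\top W^3\q_N]=m_3/\sqrt N$, together with $\q_N^\top W^3\q_N=\frac1N\sum_k\lambda_k^3\scp{\q_N}{\v_k}^2$ and $\frac1N\sum_k\scp{\q_N}{\v_k}^2=1$.
\item Rigidity replaces $\lambda_k^3$ by $\gamma_k^3$ at a cost of $O(N^{-2/3+\xi})=o(N^{-1/2})$.
\item By symmetry of the quantiles, $\sum_k\gamma_k^3=8$.
\item With the deterministic quantities $d_k=\Eds[\scp{\q_N}{\v_k}^2]-1$, this gives $\bigl\vert\frac1N\sum_k\gamma_k^3d_k\bigr\vert\geqslant m_3/(2\sqrt N)$.
\item Since $\vert\gamma_k\vert\leqslant2$, the index maximizing $\vert d_k\vert$ is deterministic and satisfies $\vert d_{k_N}\vert\geqslant m_3/(16\sqrt N)$.
\end{enumerate}
Your fallback of replacing the index by an average was one step away. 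An averaged $N^{-1/2}$ correction converts back into a single deterministic index by this pigeonhole argument. That argument also automatically avoids energies such as $E=0$ where the correction vanishes.
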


\begin{proof}
We first construct the entry distribution. Fix $\sigma>0$ and let $Y$ have the Gaussian mixture distribution
\[
    Y\sim\frac{1}{3}\Ncal(2,\sigma^2)
    +\frac{2}{3}\Ncal(-1,\sigma^2).
\]
We set $\zeta=Y/\sqrt{\sigma^2+2}$. Then
\[
    \Eds[\zeta]=0,
    \qquad
    \Eds[\zeta^2]=1,
    \qquad
    m_3=\Eds[\zeta^3]
    =\frac{2}{(\sigma^2+2)^{\frac{3}{2}}}>0.
\]
The density of $\zeta$ has the form
\[
    p(x)=C\e^{-ax^2}
    \left(c_1\e^{b_1x}+c_2\e^{b_2x}\right)
\]
for positive constants $a,c_1,c_2$ and real constants $b_1,b_2$. Thus, if $V=-\log p$, then $V'$ grows at most linearly and $V^{(j)}$ is bounded for every $j\geqslant2$, which verifies the smoothness condition in Definition \ref{def:smooth}.
Let $(\zeta_{ij})_{i\leqslant j}$ be independent copies of $\zeta$ and define the real symmetric matrix $W$ by
\[
    w_{ij}=w_{ji}=\frac{\zeta_{ij}}{\sqrt{N}},
    \qquad i\leqslant j.
\]
All the variances $s_{ij}^2$ are equal to $N^{-1}$, so every column has total variance one. Since $\zeta$ has moments of every order, $W$ is a smooth generalized Wigner matrix.

We now estimate the second moments of the eigenvector projections. Expanding the cubic moment gives
\[
    \q_N^\top W^3\q_N
    =\frac{1}{N}\sum_{a,b,i,j=1}^N w_{ai}w_{ij}w_{jb}.
\]
By independence and centering, a summand can have nonzero expectation only when all three entries correspond to the same unordered pair of indices. Each pair $\{r,s\}$ with $r\neq s$ contributes the two choices $(a,i,j,b)=(r,s,r,s)$ and $(s,r,s,r)$, while each diagonal entry contributes once. Therefore
\begin{equation}\label{eq:rate_cubic_moment}
    \Eds\left[\q_N^\top W^3\q_N\right]
    =\frac{1}{N}\left(2\binom{N}{2}+N\right)
    \frac{m_3}{N^{\frac{3}{2}}}
    =\frac{m_3}{\sqrt{N}}.
\end{equation}
By the spectral decomposition and the orthonormality of the eigenvectors,
\[
    \q_N^\top W^3\q_N
    =\frac{1}{N}\sum_{k=1}^N
    \lambda_k^3\scp{\q_N}{\v_k}^2,
    \qquad
    \frac{1}{N}\sum_{k=1}^N\scp{\q_N}{\v_k}^2=1.
\]
Recall the semicircle quantiles $\gamma_k$ from \eqref{eq:quantiles}, with $\gamma_N=2$. By the rigidity and isotropic delocalization estimates in Lemma \ref{lem:good_event}, applied at time zero, for every sufficiently small $\xi>0$ and every $D>0$, with probability at least $1-N^{-D}$,
\begin{equation}\label{eq:rate_standard_inputs}
    \max_{k\in\unn{1}{N}}\vert\lambda_k^3-\gamma_k^3\vert
    \leqslant CN^{-\frac{2}{3}+\xi},
    \qquad
    \max_{k\in\unn{1}{N}}\scp{\q_N}{\v_k}^2
    \leqslant N^\xi.
\end{equation}
It follows that
\begin{equation}\label{eq:rate_replace_eigenvalues}
    \Eds\left[\q_N^\top W^3\q_N\right]
    =\frac{1}{N}\sum_{k=1}^N\gamma_k^3
    \Eds\left[\scp{\q_N}{\v_k}^2\right]
    +O(N^{-\frac{2}{3}+\xi}).
\end{equation}
To control the complement of the event in \eqref{eq:rate_standard_inputs}, we used Cauchy--Schwarz and the bound
\[
    \Eds\left[\Vert W\Vert^6\right]
    \leqslant\Eds\left[(\Tr W^2)^3\right]
    \leqslant CN^3,
\]
which follows from the entry moment bounds, and chose $D$ sufficiently large.

We set
\[
    d_k=\Eds\left[\scp{\q_N}{\v_k}^2\right]-1.
\]
By symmetry of the semicircle distribution,
\[
    \gamma_{N-k}=-\gamma_k,
    \qquad 1\leqslant k\leqslant N-1,
    \qquad\text{and}\qquad
    \sum_{k=1}^N\gamma_k^3=8.
\]
Combining \eqref{eq:rate_cubic_moment} and \eqref{eq:rate_replace_eigenvalues} gives
\[
    \frac{1}{N}\sum_{k=1}^N\gamma_k^3d_k
    =\frac{m_3}{\sqrt{N}}-\frac{8}{N}
    +O(N^{-\frac{2}{3}+\xi}).
\]
Choosing $\xi<\frac{1}{6}$, we obtain, for all sufficiently large $N$,
\[
    \left\vert\frac{1}{N}\sum_{k=1}^N\gamma_k^3d_k\right\vert
    \geqslant\frac{m_3}{2\sqrt{N}}.
\]
Since $\vert\gamma_k\vert\leqslant2$, choosing $k_N$ to be the smallest index maximizing $\vert d_k\vert$ gives a deterministic index such that
\begin{equation}\label{eq:rate_variance_bias}
    \left\vert\Eds\left[X_N^2\right]-1\right\vert
    =\vert d_{k_N}\vert
    \geqslant\frac{m_3}{16\sqrt{N}}.
\end{equation}

We first prove the estimate for smooth functions. Fix $0\leqslant\delta_2<\delta_1$ and let $\psi:\Rbb\to\Rbb$ be a smooth even function such that
\[
    \psi(x)=x^2\quad\text{for }\vert x\vert\leqslant1,
    \qquad
    \psi(x)=4\quad\text{for }\vert x\vert\geqslant2.
\]
Choose $C_\psi\geqslant1$ such that
$\Vert\psi\Vert_\infty+\Vert\psi''\Vert_\infty\leqslant C_\psi$ and set
\[
    \rho=\frac{\delta_1-\delta_2}{4},
    \qquad T=N^\rho,
    \qquad
    h_N(x)=C_\psi^{-1}N^{\delta_2}T^2\psi(x/T).
\]
Then
\[
    \Vert h_N\Vert_\infty
    \leqslant N^{\delta_2}T^2\leqslant N^{\delta_1},
    \qquad
    \Vert h_N''\Vert_\infty\leqslant N^{\delta_2},
\]
and $h_N(x)=C_\psi^{-1}N^{\delta_2}x^2$ for $\vert x\vert\leqslant T$.
Using \eqref{eq:rate_standard_inputs} with $\xi<2\rho$ and the deterministic bound $X_N^2\leqslant N$, for every $D>0$ we get
\[
    \Eds\left[
        \left\vert h_N(X_N)-C_\psi^{-1}N^{\delta_2}X_N^2\right\vert
    \right]
    \leqslant CN^{\delta_2}(N+T^2)\Pds(\vert X_N\vert>T)
    \leqslant N^{-D},
\]
after increasing the probability exponent in \eqref{eq:rate_standard_inputs}. The corresponding Gaussian error is bounded by
$CN^{\delta_2}(1+T^2)\e^{-cT^2}$. Therefore
\[
    \Eds\left[h_N(X_N)\right]-\Eds\left[h_N(G_N)\right]
    =C_\psi^{-1}N^{\delta_2}
    \left(\Eds\left[X_N^2\right]-1\right)
    +o(N^{-\frac{1}{2}+\delta_2}).
\]
Combining this with \eqref{eq:rate_variance_bias} proves the first estimate.

We finally prove the Kolmogorov lower bound. Fix $\kappa>0$ and set
\[
    T=N^{\frac{\kappa}{2}},
    \qquad f_T(x)=\min\{x^2,T^2\}.
\]
The function $f_T$ is absolutely continuous and satisfies
$\int_{\Rbb}\vert f_T'(x)\vert\d x=2T^2$. Integration by parts therefore gives
\begin{equation}\label{eq:rate_bv_kolmogorov}
    \left\vert
        \Eds\left[f_T(X_N)\right]-\Eds\left[f_T(G_N)\right]
    \right\vert
    \leqslant2T^2\d_{\mathrm K}(X_N,G_N).
\end{equation}
Using \eqref{eq:rate_standard_inputs} with $\xi<\kappa$ and the bound $X_N^2\leqslant N$, for every $D>0$ we have
\[
    \Eds\left[(X_N^2-T^2)_+\right]
    \leqslant N\Pds(X_N^2>N^\kappa)
    \leqslant N^{1-D},
    \qquad
    \Eds\left[(G_N^2-T^2)_+\right]
    \leqslant C\e^{-cN^\kappa}.
\]
Together with \eqref{eq:rate_variance_bias} and \eqref{eq:rate_bv_kolmogorov}, these estimates imply
\[
    \frac{m_3}{16\sqrt{N}}
    \leqslant2N^\kappa\d_{\mathrm K}(X_N,G_N)
    +N^{1-D}+C\e^{-cN^\kappa}.
\]
Choosing $D>2$ and taking $N$ sufficiently large proves the second estimate.
\end{proof}

\begin{remark}
We note that the example above uses a delocalized direction for the projection. It does not rule out a faster rate for the coordinate directions $\q_N=\eb_\alpha$.
\end{remark}

\end{document}